\newcommand*{\transpose}{%
  {\mathpalette\@transpose{}}%
}
\newcommand*{\@transpose}[2]{%
  \raisebox{\depth}{$\m@th#1\intercal$}%
}
\newcommand*{\da@rightarrow}{\mathchar"0\hexnumber@\symAMSa 4B }
\newcommand*{\da@leftarrow}{\mathchar"0\hexnumber@\symAMSa 4C }
\newcommand*{\xdashrightarrow}[2][]{%
  \mathrel{%
    \mathpalette{\da@xarrow{#1}{#2}{}\da@rightarrow{\,}{}}{}%
  }%
}
\newcommand{\xdashleftarrow}[2][]{%
  \mathrel{%
    \mathpalette{\da@xarrow{#1}{#2}\da@leftarrow{}{}{\,}}{}%
  }%
}
\newcommand*{\da@xarrow}[7]{%
  \sbox0{$\ifx#7\scriptstyle\scriptscriptstyle\else\scriptstyle\fi#5#1#6\m@th$}%
  \sbox2{$\ifx#7\scriptstyle\scriptscriptstyle\else\scriptstyle\fi#5#2#6\m@th$}%
  \sbox4{$#7\dabar@\m@th$}%
  \dimen@=\wd0 %
  \ifdim\wd2 >\dimen@
    \dimen@=\wd2 %
  \fi
  \count@=2 %
  \def\da@bars{\dabar@\dabar@}%
  \@whiledim\count@\wd4<\dimen@\do{%
    \advance\count@\@ne
    \expandafter\def\expandafter\da@bars\expandafter{%
      \da@bars
      \dabar@ 
    }%
  }%
  \mathrel{#3}%
  \mathrel{%
    \mathop{\da@bars}\limits
    \ifx\\#1\\%
    \else
      _{\copy0}%
    \fi
    \ifx\\#2\\%
    \else
      ^{\copy2}%
    \fi
  }%
  \mathrel{#4}%
}
\DeclarePairedDelimiter{\paren}{(}{)}
\newcommand{\Of}[2]{{\operatorname{#1}} {\paren*{#2}}}
\DeclareMathOperator{\Mp}{Mp}
\DeclareMathOperator{\Mat}{Mat}
\DeclareMathOperator{\GL}{GL}
\DeclareMathOperator{\Ad}{Ad}
\DeclareMathOperator{\Hom}{Hom}
\DeclareMathOperator{\End}{End}
\def\eps{\varepsilon}
\DeclareMathOperator{\U}{U}
\DeclareMathOperator{\trace}{trace}
\DeclareMathOperator{\ind}{ind}
\DeclareMathOperator{\Aut}{Aut}
\DeclareMathOperator{\Gal}{Gal}
\DeclareMathOperator{\diag}{diag}
\DeclareMathOperator{\Lie}{Lie}
\DeclareMathOperator{\stab}{stab}
\def\O{\operatorname{O}}
\DeclareMathOperator{\rank}{rank}
\DeclareMathOperator{\Ind}{Ind}
\DeclareMathOperator{\vol}{vol}
\theoremstyle{plain} \newtheorem{theorem} {Theorem}  \newtheorem{corollary} [theorem] {Corollary} \newtheorem{proposition} [theorem] {Proposition} 
\theoremstyle{definition} \newtheorem{definition} [theorem] {Definition}
\newtheorem{example} [theorem] {Example}  
    \newtheorem{problem}[theorem] {Problem}    \newtheorem{remark} [theorem] {Remark}
\newtheoremstyle{itplain} 
{6pt}                    
{5pt\topsep}                    
{\itshape}                   
{}                           
{\itshape}                   
{.}                          
{5pt plus 1pt minus 1pt}                       
{}  
\theoremstyle{itplain} 
\newtheorem{lemma}[theorem]{Lemma}
\newtheorem*{lemma*}{Lemma}
\newtheorem*{proposition*}{Proposition}
\newtheorem*{definition*}{Definition}
\newtheorem*{notation*}{Notation}
\newtheorem*{example*}{Example}
\newtheorem*{results*}{Results}
\newtheorem{Assumption}[theorem]{Assumption} \numberwithin{equation}{section} \numberwithin{theorem}{section}
\newcommand{\mfq}{\mathfrak{q}}
\newcommand{\mfp}{\mathfrak{p}}
\newcommand{\mfop}{\mathfrak{l}}
\begin{document}
\title{Subconvex bounds for $U_{n+1}\times U_n$ in horizontal aspects}

\author{Yueke Hu} \address{YMSC, Tsinghua University, Beijing, China} \email{yhumath@tsinghua.edu.cn}

\author{Paul D. Nelson} \address{Aarhus University, Aarhus, Denmark} \email{paul.nelson@math.au.dk}

\subjclass[2010]{Primary 11F67; Secondary 11F70, 11M99}

\begin{abstract}
  For $L$-functions attached to automorphic representations of unitary groups $U_{n+1}\times U_n$, we establish a subconvex bound valid in certain horizontal aspects, where the set of ramified places is allowed to vary.
\end{abstract}
\maketitle
\tableofcontents

\section{Introduction}

\subsection{Overview}
This paper concerns the quantitative analysis of automorphic $L$-functions on higher rank classical groups.  Recent works on this topic include
\begin{itemize}
\item an approach to the analytic test vector problem based on the orbit method, applied to the asymptotic evaluations of moments (see~\cite{nelson-venkatesh-1}), and
\item subconvex bounds on $\U_{n+1} \times \U_n$ and $\GL_n$ (see~\cite{2023arXiv2309.16667, 2020arXiv201202187N, 2021arXiv210915230N}).
\end{itemize}
These works study sequences of automorphic forms whose ramification increases inside some \emph{fixed} finite set of places, a setup known as the \emph{depth} aspect.

By a \emph{horizontal} aspect, we mean one where the set of ramified places is itself allowed to vary.  For example, one can study Dirichlet characters of conductor $p^m$ for fixed $p$ as $m \rightarrow \infty$ (depth) and for fixed $m$ as $p \rightarrow \infty$ (horizontal).

This paper establishes subconvex bounds on $U_{n +1} \times U_n$ in certain horizontal aspects.  Our results are uniform enough to apply also in the depth aspect (indeed, as ``$p^m \rightarrow \infty$''), but this is not the main novelty.

The depth aspect often has a Lie-algebraic flavor, involving tools such as stationary phase analysis and Taylor approximation.  For instance, the works noted above make heavy use of the exponential map and Lie algebra for groups such as $\GL_n(F)$, with $F$ a fixed local field.  These techniques are applied to spectral problems (construction and analysis of test vectors) and geometric problems (the ``volume bound'').

In horizontal aspects, Lie-algebraic techniques are often less relevant.  For this paper, we restrict to ``even depth'' cases (``$p^{2 m}$'') where the spectral analysis can still be carried out using Lie-algebraic techniques, but the geometric analysis requires new arguments, of algebro-geometric rather than Lie-algebraic flavor.

\subsection{Motivating goal: subconvexity for twists}
The main result of this paper is a bit technical to state, but our work is motivated by the simpler problem of establishing subconvexity for twists of standard $L$-functions: for a fixed cuspidal representation $\pi$ of $\GL_n$ and a varying Hecke character $\chi$ of $\GL_1$ (over $\mathbb{Q}$, say), we would like to show that
\begin{equation}\label{eq:cj54locczh}
L (\pi \otimes \chi, \tfrac{1}{2} ) \ll_{\pi} C(\chi)^{n/4-\delta}
\end{equation}
for some fixed $\delta > 0$.  Such estimates have been known for a while when $n \leq 2$ \cite{michel-2009}, and in some essential cases when $n = 3$ \cite{MR3369905, MR3418527, RHPNtwists, 2018arXiv181000539M,  MR4416133}.

For general $n$, the preprint \cite{2021arXiv210915230N} treated (among others) the case
\begin{equation*}
  \chi = |.|^{it}.
\end{equation*}
The proof involves ``compatibly microlocalized'' \cite[\S1.7--1.9]{nelson-venkatesh-1} automorphic forms for $\GL_n$ and $\GL_{n-1}$, integrated over the non-compact adelic quotients of $\GL_{n-1}$.  It divides roughly into estimates pertaining to the ``bulk'' and ``cusp'' of such quotients.  The ``bulk estimates'' were established in the earlier paper \cite{2020arXiv201202187N} and applied there to $L$-functions on certain unitary groups, for which the corresponding quotients are compact.  Such bulk estimates reduce to a ``volume bound''~\cite[Thm 15.2]{2020arXiv201202187N} that was established in generality sufficient for the general ``vertical'' case of \eqref{eq:cj54locczh}, where $\chi$ is taken unramified outside some \emph{fixed} finite set of places.  This includes the case that
\begin{equation*}
 \text{$\chi$  has conductor $p^m$ with $m \rightarrow \infty$ and $p$ fixed.}
\end{equation*}
Marshall \cite{2023arXiv2309.16667} established the volume bound in the ``wall-avoidance'' case (see \S\ref{sec:cj4vjryk3u}), which excludes the twist aspect \eqref{eq:cj54locczh}, via an independent method.

The present paper supplies the ``bulk estimates'' (or ``volume bound'') needed to extend \eqref{eq:cj54locczh} ``horizontally'' to the case that the $\chi$ is merely assumed to have square finite conductor, with no restriction on the set of ramified places.  This includes the case that
\begin{equation*}
\text{$\chi$ has conductor $p^{2 m} \rightarrow \infty$,}
\end{equation*}
with $p$ allowed to vary.  We apply those estimates here to $L$-functions on certain unitary groups, like in \cite{2020arXiv201202187N}.  We expect that one could adapt the ``cusp estimates'' of \cite{2021arXiv210915230N} to the present setting, yielding a true extension of \eqref{eq:cj54locczh}, but leave this to future work.  The remaining essential case of \eqref{eq:cj54locczh} is then when $n \geq 4$ and the finite conductor of $\chi$ is prime.


\subsection{The refined Gan--Gross--Prasad conjectures}\label{sec:cj3siudn3c} 
We aim now to describe main result (Theorem \ref{theorem:cj3ngw7u2s}) and the main ideas of its proof, emphasizing new features encountered in horizontal aspects.  As preparation, we recall the statement of the refined Gan--Gross--Prasad conjectures.  These conjectures, now known in many cases, provide a link between values of $L$-functions and integrals of automorphic forms.  The body of this paper addresses the local problems that arise in estimating such integrals.  The link described here is not otherwise applied in this paper, but provides context for interpreting our results.

Let $F$ be a number field with adele ring $\mathbb{A}$, let $E/F$ be a quadratic extension, let $V$ be an $(n+1)$-dimensional hermitian space over $E$, and let $W$ be an $n$-dimensional nondegenerate subspace of $V$.  Define the pair of unitary groups $(G,H) := (\U(V), \U(W))$ over $F$.  Given a pair of cuspidal automorphic representations $\pi$ and $\sigma$ of $G$ and $H$ that are locally distinguished, one may attach a branching coefficient $\mathcal{L}(\pi,\sigma)$ quantifying how automorphic forms in $\pi$ correlate against those in $\sigma$.  The definition depends upon the choice of a finite set $S$ of places of $F$, taken large enough to contain every place that is archimedean or at which $\pi$ or $\sigma$ is ramified, and requires $\pi$ and $\sigma$ to be (nearly) tempered inside $S$.  It is characterized by the following family of identities: for all $v \in \pi $ and $u \in \sigma$ that are unramified outside $S$, and with suitable normalization of measure,
\begin{equation}\label{eq:cj3tfrrhlc}
  \left\lvert \int_{H(F) \backslash H(\mathbb{A})} v \bar{u}
  \right\rvert^2
  =
  \mathcal{L}(\pi, \sigma)
  \int_{H(F_S)} \langle h v, v   \rangle \langle u, h u \rangle \, d h.
\end{equation}
We refer to~\cite[\S3.7]{2020arXiv201202187N} for further details. It has been conjectured by Ichino--Ikeda~\cite{MR2585578} and N. Harris~\cite{MR3159075} that if $S$ is large enough in the sense recorded in~\cite[\S1]{MR2585578}, then $\mathcal{L}(\pi,\sigma)$ is given by a ratio of $L$-values, namely, with notation as in~\cite{MR4426741},
\begin{equation}\label{eqn:20230517061322}
  \mathcal{L}(\pi,\sigma)
  =
  2^{- \beta}
  \frac{L^{(S)}(\pi_E \otimes  \sigma_E^{\vee},1/2)}{L^{(S)}(\Ad, \pi \boxtimes {\sigma}^{\vee},1)}
  \Delta_G^{(S)}.
\end{equation}
This expectation has been proved at least when $\pi$ and $\sigma$ are tempered at all places~\cite{MR4426741}.

\subsection{Main result}\label{sec:cj4vjf3x2x}
We establish a subconvex bound on $\U_{n+1} \times \U_n$, with $\U_n$ anisotropic, for pairs $(\pi,\sigma)$ whose ramification concentrates at some (possibly varying) finite place $\mathfrak{p}$, provided that the conductor does not drop, under some local assumptions (e.g., principal series of even depth).  In the remainder of this subsection, we describe these conditions and our result in more detail.

We fix $F, E, G, H$ as in \S\ref{sec:cj3siudn3c}, as well as a finite set $S$ of places of $F$, large enough in the sense specified in~\cite[\S3.6]{2020arXiv201202187N}.  We then fix a family $\mathcal{F}$ consisting of tuples $(\pi,\sigma, \mathfrak{p}, \mathfrak{q})$ satisfying the following conditions, whose informal content is that $\pi$ and $\sigma$ are ``uniformly uninteresting'' away from the ``interesting'' place $\mathfrak{p}$, where their local components have ``depth $\mathfrak{q}^2$'' and are ``away from conductor dropping''.
\begin{enumerate}[(a)]
\item $\pi$ and $\sigma$ are cuspidal automorphic representations of $G$ and $H$, respectively, having unitary central characters.
\item $\mathfrak{p} \notin S$ is a non-archimedean place (or prime ideal) of $F$ at which $E/F$ splits, so that (see~\cite[\S3.3]{2020arXiv201202187N})
  \begin{equation*}
    (G(F_\mfp), H(F_\mfp)) \cong (\GL_{n+1}(F_\mfp), \GL_n(F_\mfp)).
  \end{equation*}
\item $\mathfrak{q}$ is a positive power of the prime ideal $\mathfrak{p}$.
\item $(\pi,\sigma)$ is locally distinguished: there is a nonzero $H(\mathbb{A})$-invariant functional $\pi \rightarrow \sigma$.
\item Inside $S$, the representations $\pi$ and $\sigma$ are tempered, and their depth is uniformly bounded as $(\pi,\sigma)$ traverses $\mathcal{F}$.  That is to say, at each place in $S$, there are compact open subgroups, independent of $\mathcal{F}$, under which $\pi$ and $\sigma$ admit invariant vectors (compare with~\cite[\S1.3]{2020arXiv201202187N}).
\item\label{itemize:cj3ubog35g} Outside $S \cup \{\mathfrak{p} \}$, the representations $\pi$ and $\sigma$ are unramified, and $\sigma$ satisfies a uniform bound towards Ramanujan at places where $E/F$ splits: it is $\vartheta$-tempered (see~\cite[\S5.2.1]{2020arXiv201202187N}) at such places for some $0 \leq \vartheta < 1/2$ not depending upon $\mathcal{F}$.
\item\label{enumerate:cj54lhzz2y} At the ``interesting'' place $\mathfrak{p}$, the representations $\pi$ and $\sigma$ are ``stable at depth $\mathfrak{q}^2$'' in the sense illustrated in Example \ref{example:cj54lh128k} and specified in Definition~\ref{definition:let-pi-sigma-be-repr-pair-gener-line-groups-over-f}.
\end{enumerate}

We assume that $V$ (hence also $W$) is positive-definite.  In particular, $H$ is anisotropic, and at each archimedean place of $F$, the groups $G$ and $H$ are compact.  We define the branching coefficients $\mathcal{L}(\pi,\sigma)$ relative to the set $S \sqcup \{\mathfrak{p} \}$.  We use the notation
\begin{equation}\label{eq:cj3ubsuj6v}
  T := \text{the absolute norm of } \mathfrak{q}^2.
\end{equation}

\begin{example}\label{example:cj54lh128k}
  The ``stable at depth $\mathfrak{q}^2$'' hypothesis \eqref{enumerate:cj54lhzz2y} contains the case of the principal series representations induced by characters of conductor dividing $\mathfrak{q}^2$ for which the local analytic conductor for $\mathcal{L}(\pi,\sigma)$ at $\mathfrak{p}$ is as large as possible.  In more detail, suppose the local components at the ``interesting'' place $\mathfrak{p}$ are the normalized inductions
  \begin{equation}\label{eqn:cj2dzujg9c}
    \pi_\mfp = \chi_1 \boxplus \dotsb \boxplus \chi_{n+1}, \qquad \sigma_\mfp = \eta_1 \boxplus \dotsb \boxplus \eta_n
  \end{equation}
  of some characters $\chi_i$ and $\eta_j$ of $F_\mfp^\times$.  For a character $\omega$ of $F_\mfp^\times$, we denote by $C(\omega)$ the analytic conductor, i.e., the absolute norm of the largest integral ideal $\mathfrak{a}$ in $F_\mfp$ such that $\omega(x) = 1$ whenever $x - 1 \in \mathfrak{a}$.  The ``stable at depth $\mathfrak{q}^2$'' assumption then says that
  \begin{equation}\label{eqn:20230516205128}
    C(\chi_i) \leq T, \qquad
    C(\eta_j) \leq T,
  \end{equation}
  \begin{equation}\label{eqn:cj2dzunqa6}
    \prod_{i, j} {C(\chi_i/\eta_j)}^2 = T^{2 n(n+1)},
  \end{equation}
  the latter of which is equivalent to requiring that for all $i$ and $j$,
  \begin{equation}\label{eqn:cj3ngvwrm4}
    C(\chi_i / \eta_j) = T.
  \end{equation}
  We note that these conditions force $T$ to be the maximum of the $C(\chi_i)$ and $C(\eta_j)$.  The left hand side of \eqref{eqn:cj2dzunqa6} is the analytic conductor for $\mathcal{L}(\pi,\sigma)$ at $\mathfrak{p}$, so the condition \eqref{eqn:cj2dzunqa6} says that this conductor is as large as possible subject to the constraint \eqref{eqn:20230516205128}.
\end{example}

Our main result is as follows:
\begin{theorem}\label{theorem:cj3ngw7u2s}
  There exists $\delta_n > 0$ and $c_{\mathcal{F}} \geq 0$ so that for all $(\pi,\sigma, \mathfrak{p}, \mathfrak{q}) \in \mathcal{F}$, we have
  \begin{equation}\label{eq:cj3tfrulcn}
    \mathcal{L}(\pi,\sigma) \leq c_{\mathcal{F}} T^{2 n (n + 1)(1/4 - \delta_n)}.
  \end{equation}
\end{theorem}
In cases where $\mathcal{L}(\pi,\sigma)$ is known to coincide with an $L$-value (e.g., the case that $\pi$ and $\sigma$ are everywhere tempered) and where we have computed the conductor at $\mathfrak{p}$ of $\mathcal{L}(\pi,\sigma)$ (e.g., Example \ref{example:cj54lh128k}, Lemma \ref{Lem:supercuspidalstablepair} and cases derived from those via parabolic induction), the estimate~\eqref{eq:cj3tfrulcn} improves upon the convexity bound, which would assert the same for some $\delta_n \leq 0$ (see~\cite[Proof of Cor 1.2]{2020arXiv201202187N} for details).  In the case $n+1 = 2$, such an estimate has been known for a while~\cite{michel-2009}.  When $n+1 \geq 3$, such an estimate is new (outside the ``$\mathfrak{p}$ fixed'' case discussed below in~\S\ref{sec:cj4vjryk3u}); we will show then that~\eqref{eq:cj3tfrulcn} holds for any
\begin{equation}\label{eq:cj3ubs6lvw}
  \delta_n < \frac{1 - 2 \vartheta }{4 n(n+1) (A + 1 - 2 \vartheta )},
  \quad
  A := (2 {(n + 1 )}^2 - n) (n + 1),
\end{equation}
with $\vartheta$ as in assumption~\eqref{itemize:cj3ubog35g} above (and $c_\mathcal{F}$ allowed to depend upon $\delta_n$).

\begin{remark}\label{remark:cj3u9049bv}
  We do not address the interesting challenge of improving the numerical strength of the exponent~\eqref{eq:cj3ubs6lvw}.  Several avenues for doing so were mentioned in~\cite[Rmk 1.4]{2020arXiv201202187N}, many of which could be pursued in the present context.
\end{remark}
\begin{remark}\label{remark:cj3u9062fs}
  The ``large conductor'' assumption~\eqref{eqn:cj2dzunqa6} is the most serious one --- it is an open problem to give any genuine subconvex bound for $\GL_3$ or higher in any case where the conductor drops (see the final paragraph of~\cite[\S1.4]{2021arXiv210915230N} and references).
\end{remark}

\subsection{Related results}\label{sec:cj4vjryk3u}
The special case of Theorem~\ref{theorem:cj3ngw7u2s} in which the following assumptions are valid was established by Marshall \cite{2023arXiv2309.16667}:
\begin{enumerate}[(i)]
\item\label{enumerate:cj54li4h1k} (Principal series) $\pi$ and $\sigma$ are as in Example \ref{example:cj54lh128k}.
\item\label{enumerate:20230516210021} (Wall-avoidance) The inducing characters for $\pi$ and $\sigma$ at $\mathfrak{p}$ satisfy, for all $i \neq j$,
  \begin{equation}\label{eq:cj54lgf0ez}
    C(\chi_i / \chi_j) = T, \qquad C(\eta_i / \eta_j) = T. 
  \end{equation}
\item\label{enumerate:20230516210903} (Depth aspect) $\mfp$ is a \emph{fixed} (i.e., independent of $\mathcal{F}$) non-archimedean place.
\end{enumerate}
An important input to his arguments is a certain \emph{volume bound}, discussed further in \S\ref{sec:cj3ubuyr8u}, which Marshall established in the required cases (see \cite[Prop 6.2]{2023arXiv2309.16667}).

The paper~\cite{2020arXiv201202187N} gave analogues of Theorem~\ref{theorem:cj3ngw7u2s} at an archimedean place $\mfp$ (without assuming ``principal series'' or ``$E/F$ split at $\mathfrak{p}$'').  Parts of the proof apply to the non-archimedean ``$\mathfrak{p}$ fixed'' case of Theorem~\ref{theorem:cj3ngw7u2s}; for instance, the volume bound~\cite[Thm 15.2]{2020arXiv201202187N} was established over any fixed local field, without assuming wall-avoidance.

In summary, the ``$\mathfrak{p}$ fixed'' case of Theorem~\ref{theorem:cj3ngw7u2s} --- an estimate like~\eqref{eq:cj3tfrulcn}, but with $c_{\mathcal{F}}$ allowed to depend also upon $\mathfrak{p}$ --- is closely related to existing results.  The main novelty here is to allow $\mathfrak{p}$ to vary.

\begin{remark}
  Strictly speaking, one could quantify the available arguments to obtain \emph{some} uniformity, e.g.,~\eqref{eq:cj3tfrulcn} with $c_{\mathcal{F}}$ depending polynomially upon $\mathfrak{p}$.  This yields subconvexity under a ``sufficient depth'' restriction, namely, that $\mathfrak{q} = \mathfrak{p}^m$ with $m$ large enough in terms of the rank $n$.  Our main novelty is thus to allow $\mathfrak{p}$ to vary while $\mathfrak{q}$ is a small power of $\mathfrak{p}$, e.g., $\mathfrak{q} = \mathfrak{p}$.  Such ``horizontal'' cases do not follow from direct quantification.
\end{remark}

\subsection{Division of the proof}\label{sec:cj4vjf3iqs}
Like in previous works, the basic object of study is the integral
\begin{equation}\label{eq:cj3v66d7v4}
  \int_{[H]} \pi(\omega) v \cdot \overline{u}
\end{equation}
for suitable vectors $v \in \pi$ and $u \in \sigma$ and a ``convolution kernel'' or ``amplifier'' $\omega \in C_c^\infty(G(\mathbb{A}))$, chosen so that $\pi(\omega) v$ approximates $v$.  One seeks to bound the integrals~\eqref{eq:cj3v66d7v4} simultaneously
\begin{itemize}
\item from below, using~\eqref{eq:cj3tfrrhlc}, in terms of $\mathcal{L}(\pi,\sigma)$, and
\item from above, by their second moment over ``all'' $\pi$ and $v$, using its ``relative trace formula'' expansion
  \begin{equation}\label{eq:cj3tv4jpta}
    \int_{x, y \in [H]} \bar{u}(x) u(y) \sum_{\gamma \in G(F)}  \omega (x^{-1} \gamma y) \, d x \, d y.
  \end{equation}
\end{itemize}
We work with factorizable vectors $v$ and $u$ and a factorizable test function $\omega$.  Their local components at the ``uninteresting'' places are chosen in a soft and general way~\cite[\S5]{2020arXiv201202187N}.  The key point is to choose the components $v_\mathfrak{p}, u_\mathfrak{p}$ and $\omega_\mathfrak{p}$ at the ``interesting'' place and then to establish the required estimates.  The proof may be divided into roughly the following steps (see~\cite[\S2]{2020arXiv201202187N} for a more leisurely overview).
\begin{enumerate}
\item\label{enumerate:cj3ubymeey} For an individual representation $\pi$ of $G$, a construction of vectors $v \in \pi$ that are \emph{microlocalized} with respect to some parameter $\tau$~\cite[\S1.7]{nelson-venkatesh-1}.
\item\label{enumerate:cj3ubymf5o} For a pair $(\pi,\sigma)$ as above, a study of the pairs of parameters $(\tau, \tau_H)$, at which microlocalized vectors $v \in \pi$ and $u \in \sigma$ exist, that are \emph{compatible} in the sense that $\tau$ restricts to $\tau_H$~\cite[\S1.9, \S13-14]{nelson-venkatesh-1}.
\item\label{enumerate:cj3ubymh5v} Spectral estimates: the estimation (in particular, lower bound) of integrals of matrix coefficients of microlocalized vectors~\cite[\S1.10, \S18]{nelson-venkatesh-1}.
\item\label{enumerate:cj3ubymi4d} Geometric estimates, namely the ``volume bound'', which is the key technical problem that arises when attempting to estimate~\eqref{eq:cj3tv4jpta} (see \cite[\S6]{2023arXiv2309.16667}, \cite[\S15--17]{2020arXiv201202187N}).
\end{enumerate}
The first three of these steps adapt readily to the setting of this paper:
\begin{enumerate}
\item The ``stable at depth $\mathfrak{q}^2$'' assumption makes the construction of microlocalized vectors particularly concrete (see \S\ref{sec:part-body-paper} and Remark~\ref{remark:cj3u9047n5}).
\item The study of compatible parameters was addressed in~\cite[\S13-14]{nelson-venkatesh-1} over any base field of characteristic zero; we have found it convenient here to extend that study to a general base ring (\S\ref{sec:20230514080526}), using arguments quite similar to those in \emph{loc.\ cit.}
\item The spectral estimates proceed in our context as in~\cite[\S18]{nelson-venkatesh-1}, with simplification.
\end{enumerate}
The focus of this paper is thus on the geometric estimates, namely, in establishing a volume bound that is uniform in $\mathfrak{p}$.  We discuss this further starting in \S\ref{sec:cj3ubuyr8u}.

\begin{remark}
  The method and ideas employed here have a long history.  We refer to~\cite[\S2.6]{2020arXiv201202187N} for an overview, but emphasize the use of amplification following Duke--Friedlander--Iwaniec~\cite{MR1942691, DFI94, MR1923476}, the systematic study of period integrals and test vectors following Bernstein--Reznikov~\cite{MR2726097}, Venkatesh~\cite{venkatesh-2005} and Michel--Venkatesh~\cite{michel-2009}, and the influential papers of Sarnak~\cite{MR780071} and Iwaniec--Sarnak~\cite{iwan-sar}.
\end{remark}

\begin{remark}\label{remark:cj3u9047n5}
  The local conditions at $\mathfrak{p}$ in the hypotheses of Theorem \ref{theorem:cj3ngw7u2s} have been formulated (Definition~\ref{definition:let-pi-sigma-be-repr-pair-gener-line-groups-over-f}) in terms of what is needed by our argument, namely, the existence of suitable vectors.  We show in \S\ref{sec:part-body-paper} that these conditions are closed under parabolic induction and apply to certain supercuspidals (e.g., characters), but do not exhaustively classify the representations to which they apply.  Giving such a classification is an interesting problem in the representation theory of supercuspidals that belongs to step~\eqref{enumerate:cj3ubymeey} in the proof strategy outlined above.  For the other steps~\eqref{enumerate:cj3ubymf5o},~\eqref{enumerate:cj3ubymh5v} and~\eqref{enumerate:cj3ubymi4d}, our treatment is general.
\end{remark}
\begin{remark}\label{remark:cj3ubzzdb9}
  It would be desirable to generalize Theorem~\ref{theorem:cj3ngw7u2s} by removing the parity condition on the conductor exponent, i.e., by replacing ``$\mathfrak{q}^2$'' with $\mathfrak{q}$ in the statement, or equivalently, by allowing $T$ to be the norm of any ideal (rather than the square an ideal) in the discussion following Theorem~\ref{theorem:cj3ngw7u2s}.  Such a generalization seems accessible in the depth aspect, but would require new ideas in horizontal aspects.  The issues touch upon all steps~\eqref{enumerate:cj3ubymeey},~\eqref{enumerate:cj3ubymf5o},~\eqref{enumerate:cj3ubymh5v} and~\eqref{enumerate:cj3ubymi4d} in the proof strategy outlined above; for instance, in place of the volume bound, one likely needs to estimate character sums (see the third paragraph of~\cite[\S1.4]{2021arXiv210915230N} and references).  This seems to us an interesting direction for future work.
\end{remark}
\begin{remark}\label{remark:cj3ubsydvn}
  Using the methods of~\cite{2021arXiv210915230N}, it should be possible to extend Theorem~\ref{theorem:cj3ngw7u2s} to the split case $E = F \times F$ and to Eisenstein series.  This would yield subconvex bounds in horizontal aspects for standard $L$-functions away from conductor dropping.  Such bounds should apply to character twists $L(\pi \otimes \chi, \tfrac{1}{2} + i t)$, with $\pi$ fixed and $\chi$ of \emph{square} conductor.  More generally, it should be possible to establish subconvex bounds for standard $L$-functions $L(\pi,\tfrac{1}{2})$ when $\pi$ is induced at each finite place from characters all having the same square conductor.  To remove the squareness assumption would require extending our methods as in Remark~\ref{remark:cj3ubzzdb9}.
\end{remark}
\begin{remark}\label{remark:cj3u905a9t}
  One motivation for studying horizontal aspects comes from conjectures and results of Lapid--Mao~\cite{MR3267120, MR3619910, MR3649366}, which relate quadratic twists of self-dual standard $L$-functions on $\GL_n$ to Fourier coefficients of automorphic forms on $\Mp_{2n}$.  Estimates for such coefficients could have applications to representation problems concerning quadratic forms.  The present work does not apply to such twists due to the squareness restriction in Remark~\ref{remark:cj3ubsydvn}, but may be understood as a step in that direction.
\end{remark}

\subsection{Geometric estimates: the uniform volume bound}\label{sec:cj3ubuyr8u}
Continuing the discussion of \S\ref{sec:cj4vjf3iqs}, we formulate the volume bound below as Problem~\ref{Probleminformal}.  The asymptotic determination of~\eqref{eq:cj3tv4jpta} reduces to the volume bound as in previous work (see~\cite[\S6]{2023arXiv2309.16667}, \cite[\S1.5.3]{2020arXiv201202187N} or~\S\ref{Sec:bilinear}).  Cases of the volume bound sufficient for the depth aspect were established in ~\cite[\S6]{2023arXiv2309.16667} and~\cite[\S15--17]{2020arXiv201202187N}, using reductions specific to that aspect (see \S\ref{sec:cj4t64wm4h} for details).  We develop a different approach (\S\ref{sec:cj4t647ljn}) that gives the volume bound in general, uniformly in $\mathfrak{p}$.

The volume bound is a local assertion concerning the ``interesting'' place $\mathfrak{p}$.  To simplify notation, we write simply $G, H$, etc., for points over $F_\mathfrak{p}$, and drop the subscripts $\mathfrak{p}$.  Thus, $F$ now denotes a non-archimedean local field, arising as the local component ``$F_\mathfrak{p}$'' of the global field considered above.  We write simply $\mathfrak{p}$ for its maximal ideal, $\mathfrak{o}$ for its maximal order, and $q := \left\lvert \mathfrak{o} / \mathfrak{p} \right\rvert$ for the residue field cardinality.  We set
\begin{equation*}
  (G,H,M, M_H) = (\GL_{n+1}(F), \GL_n(F), \Mat_{n+1}(F), \Mat_n(F)),
\end{equation*}
where $\Mat_n$ means ``$n \times n$ matrices''.

Let $K = \GL_{n+1}(\mathfrak{o}) < G$ denote the standard maximal compact subgroup.  Recall from Theorem~\ref{theorem:cj3ngw7u2s} the positive power $\mathfrak{q}$ of $\mathfrak{p}$, with square $\mathfrak{q}^2$ of norm $T$, that controls the depths of the representations $\pi$ of $G$ and $\sigma$ of $H$ that we consider.

Let $\tau \in M$, with upper-left block $\tau_H \in M_H$.  Let $G_\tau$ and $H_{\tau_H}$ denote the respective centralizers of $\tau$ in $G$ and of $\tau_H$ in $H$.  We say that $\tau$ is \emph{stable} if it has no eigenvalues in common with $\tau_H$ over an algebraic closure, or equivalently, if the characteristic polynomials of $\tau$ and $\tau_H$ generate the unit ideal.  This is an avatar for the ``large conductor'' assumption~\eqref{eqn:cj3ngvwrm4} (see~\cite[\S15]{nelson-venkatesh-1}) and is equivalent to the geometric invariant theory notion of stability (see~\cite[\S14]{nelson-venkatesh-1}).

\begin{problem}\label{Probleminformal}
  Fix a stable element $\tau \in M$.  Fix $a \in G - H Z$, where $Z < G$ denotes the center.  Give a nontrivial bound for the volume of the set of all $y \in H_{\tau_H} \cap K$ for which $a y$ is congruent modulo $\mathfrak{q}$ to an element of $H G_\tau$.
\end{problem}
Here ``nontrivial'' means a power saving in $T$ over the trivial bound, $\vol(H_{\tau_H} \cap K)$.  Strictly speaking, we need mild refinements of the problem statement depending quantitatively upon $a$.

This problem was solved by Marshall~\cite[\S6]{2023arXiv2309.16667} in the $p$-adic depth aspect, assuming the ``wall-avoidance'' condition noted in  \S\ref{sec:cj4vjryk3u}.  The general depth aspect (archimedean or $p$-adic, and without assuming ``wall-avoidance'') was treated in~\cite{2020arXiv201202187N}.

A key result of this paper, Theorem~\ref{theorem:volume-bound}, gives a solution to Problem~\ref{sec:cj4t647ljn} that is uniform with respect to variation of the underlying local field $F$.  In \S\ref{sec:cj4t647ljn}, we summarize the proof.  In \S\ref{sec:cj4t64wm4h}, we explain why the available depth aspect treatments are not uniform.

\subsection{The approach of this paper}\label{sec:cj4t647ljn}
The volume bound controls elements of $H_{\tau_H}$ lying close to a certain subvariety, with ``close'' quantified by the ideal $\mathfrak{q}$.  We deduce it from a purely algebraic statement, Theorem~\ref{theorem:cj2dzotgu8}, whose informal content is that, among the equations defining that subvariety, there is at least one whose coefficients are at least as large as the distance from $a$ to $H Z$.

To formulate that algebraic statement, we denote now by $(\mathbf{G}, \mathbf{H}, \mathbf{M}, \mathbf{M}_H)$ the group schemes $(\GL_{n+1}, \GL_n, \Mat_{n+1}, \Mat_n)$ of invertible and all square matrices of the indicated dimensions, and write $\mathbf{Z}$ for the center of $\mathbf{G}$.  Let $R$ be a ring, and let $\tau \in \mathbf{M}(R)$ satisfy the following ``stability'' hypothesis (\S\ref{sec:20230514080526}): the characteristic polynomials of $\tau$ and its upper-left block $\tau_H$ generate the unit ideal.  Let $\mathbf{G}_\tau$ and $\mathbf{H}_{\tau_H}$ denote the centralizers of $\tau$ and $\tau_H$, regarded as subgroup schemes of $\mathbf{G}$ and $\mathbf{H}$ defined over $R$, and let $a \in \mathbf{G}(R)$.  For each $R$-algebra $R'$, we define the following set (compare with Problem~\ref{Probleminformal}):
\begin{equation*}
  \mathbf{X}_{\tau,a}(R') := \left\{ y \in \mathbf{H}_{\tau_H}(R') : a y \in \mathbf{H}(R') \mathbf{G}_{\tau}(R') \right\}.
\end{equation*}
The stability hypothesis on $\tau$ turns out to imply that $\mathbf{X}_{\tau,a}$ is naturally a closed subscheme of $\mathbf{H}_{\tau_H}$ over $R$, defined by finitely many polynomials equations (see Lemma~\ref{lemma:each-ring-extens-r-r-we-have-begin-mathbfhr-m}).

It is easy to see that if $a$ lies in $\mathbf{H}(R) \mathbf{Z}(R)$, then $\mathbf{X}_{\tau,a} = \mathbf{H}_{\tau_H}$ as schemes over $R$ (i.e., their point sets coincide for all $R'$).  We establish a converse:
\begin{theorem}[Theorem~\ref{theorem:main-transversality-general-ring}]\label{theorem:cj2dzotgu8}
  Assume that $n+1 \geq 3$, that $2$ is a unit in $R$, and that $\mathbf{X}_{\tau,a} = \mathbf{H}_{\tau_H}$ as schemes over $R$.  Then $a \in \mathbf{H}(R) \mathbf{Z}(R)$.
\end{theorem}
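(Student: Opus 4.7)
\emph{Plan.} I begin by testing the hypothesis at the identity $y = I \in \mathbf{H}_{\tau_H}(R)$: this gives $a \in \mathbf{H}(R)\mathbf{G}_\tau(R)$, and I may write $a = h_0 g$ with $h_0 \in \mathbf{H}(R)$ and $g \in \mathbf{G}_\tau(R)$. Since $\mathbf{H} \subset \mathbf{H}\mathbf{Z}$, proving the theorem for $g$ in place of $a$ suffices; the reduced hypothesis is that $gy \in \mathbf{H}(R')\mathbf{G}_\tau(R')$ for every $R$-algebra $R'$ and every $y \in \mathbf{H}_{\tau_H}(R')$. I then linearize via dual numbers: for $y = I + \epsilon Y$ with $\epsilon^2 = 0$ and $Y \in \Lie(\mathbf{H}_{\tau_H})$ (identified with its image in $\mathfrak{g} := \mathfrak{gl}_{n+1}(R)$ via the upper-left block), one obtains
\[ \Ad(g)(Y) \in \mathfrak{h} + \mathfrak{g}_\tau \quad \text{for all } Y \in \Lie(\mathbf{H}_{\tau_H}), \]
where $\mathfrak{h} := \Lie(\mathbf{H})$ and $\mathfrak{g}_\tau := \Lie(\mathbf{G}_\tau)$. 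Decomposing $\mathfrak{g} = \mathfrak{h} \oplus \mathfrak{n}$ with $\mathfrak{n}$ spanned by the last row and column, and letting $\pi_{\mathfrak{n}}$ denote the projection, this is equivalent to $\pi_{\mathfrak{n}}(\Ad(g) Y) \in \pi_{\mathfrak{n}}(\mathfrak{g}_\tau)$ for all such $Y$.

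I would then exploit stability. Coprimality of the characteristic polynomials of $\tau$ and $\tau_H$, combined with Cayley--Hamilton, yields a B\'ezout identity in the subring $R[\tau] \subset \mathrm{End}(R^{n+1})$ of the form $q(\tau) \cdot p_{\tau_H}(\tau) = 1$; this inverts $p_{\tau_H}(\tau)$ in $R[\tau]$ and is the key algebraic input. It permits an explicit description of $\mathfrak{g}_\tau$, and hence of $\pi_{\mathfrak{n}}(\mathfrak{g}_\tau)$. Writing $g = \bigl(\begin{smallmatrix} A & b \\ c^T & d \end{smallmatrix}\bigr)$ in block form with $A \in \Mat_n(R)$, $b,c \in R^n$, $d \in R$, and testing with $Y = \bigl(\begin{smallmatrix} p(\tau_H) & 0 \\ 0 & 0 \end{smallmatrix}\bigr)$ for various polynomials $p \in R[t]$ (all of which lie in $\Lie(\mathbf{H}_{\tau_H})$), I would compute $\pi_{\mathfrak{n}}(g Y g^{-1})$ explicitly, compare it with $\pi_{\mathfrak{n}}(\mathfrak{g}_\tau)$, and use the B\'ezout relation to invert the relevant linear operators --- thereby forcing $b = 0$ and $c = 0$.

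Once $b = c = 0$, the block-diagonal form $g = \bigl(\begin{smallmatrix} A & 0 \\ 0 & d \end{smallmatrix}\bigr)$ together with invertibility of $g$ gives $d \in R^\times$, so $g = h \cdot z$ with $h = \bigl(\begin{smallmatrix} d^{-1} A & 0 \\ 0 & 1 \end{smallmatrix}\bigr) \in \mathbf{H}(R)$ and $z = d \cdot I_{n+1} \in \mathbf{Z}(R)$, as desired. The main obstacle is the explicit vanishing argument in the previous step: the condition must be translated carefully into matrix identities, with stability used precisely to invert the right operators. The hypothesis $n + 1 \geq 3$ should enter here, since at $n + 1 = 2$ the centralizer $\Lie(\mathbf{H}_{\tau_H})$ supplies too few test vectors to separately constrain both off-diagonal blocks; the hypothesis $2 \in R^\times$ likely surfaces when inverting a linear map whose determinant would otherwise be divisible by $2$ (e.g., a symmetrization-type operation).
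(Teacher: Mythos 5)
Your setup and reductions are sound up to a point: testing at $y = 1$ and absorbing the $\mathbf{H}$-factor to reduce to $g \in \mathbf{G}_\tau(R)$ matches the paper's \S5.3, and your linearization over $R[\epsilon]/(\epsilon^2)$, giving $\Ad(g)\bigl(\Lie(\mathbf{H}_{\tau_H})\bigr) \subseteq \mathfrak{h} + \mathfrak{g}_\tau$, is the correct first-order condition. The problem is the claim that this linear condition ``forces $b = 0$ and $c = 0$'' (equivalently, $g \in \mathbf{Z}(R)$, since $\mathbf{G}_\tau(R) \cap \mathbf{H}(R)\mathbf{Z}(R) = \mathbf{Z}(R)$ when $\tau$ is stable). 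That is false. What the linear analysis actually yields --- this is the paper's Theorem~\ref{theorem:characterization-of-tangential-points} --- is precisely the weaker statement $g^2 \in \mathbf{Z}(R)$. And there genuinely are elements $g \in \mathbf{G}_\tau(R)$ with $g^2 \in \mathbf{Z}(R)$ but $g \notin \mathbf{Z}(R)$: e.g.\ take $\tau$ cyclic with distinct eigenvalues and $g$ the polynomial in $\tau$ that is $\diag(1,-1,1,\ldots)$ in the eigenbasis. For such $g$, all linear Taylor coefficients of the defining equations of $\mathbf{X}_{\tau,g}$ at $1$ vanish, so no amount of ``testing with $Y = p(\tau_H)$ and inverting via B\'ezout'' can extract $b = c = 0$ from first-order information. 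This failure mode is exactly what Remark~\ref{remark:cj4u2pnri2} and Remark~\ref{remark:gl6-example} of the paper describe.

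The missing idea is a genuine second-order analysis: one works over $R[\epsilon_1,\epsilon_2]/(\epsilon_1^2,\epsilon_2^2)$ and extracts the vanishing of the \emph{quadratic} Taylor coefficients. This produces a multiplicativity constraint $2\bigl(\mu(u_1 u_2) - \mu(u_1)\mu(u_2)\bigr) = 0$ on the map $\mu$ built from the linear data (Lemma~\ref{lemma:second-derivatives-yield-homomorphism-property}), and from there the paper deduces that either $g \in \mathbf{Z}(R)$ outright, or $\tau \in Rg + R$, so $\tau$ satisfies a monic polynomial of degree $2$ over $R$, which contradicts stability once $\operatorname{rank}(\mathbf{V}) \geq 3$ because $e$ is $\tau$-cyclic. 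Note this is also where the hypotheses actually bite, differently from where you placed them: $2 \in R^\times$ is used to divide the quadratic coefficient relations (e.g.\ $2 A_0(g - A_0) = 0 \Rightarrow A_0(g - A_0) = 0$), and $n+1 \geq 3$ is used only at the very last step, to rule out $\tau$ satisfying a degree-$2$ relation. Your proposal as written would, at best, reprove the ``tangential $\Leftrightarrow g^2 \in \mathbf{Z}$'' direction; the theorem needs the further quadratic input.
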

We refer to \S\ref{sec:transversality-statement-results} for refinements and discussion of why the hypotheses are necessary.  Informally, the condition $\mathbf{X}_{\tau,a} = \mathbf{H}_{\tau_H}$ says that the defining equations for $\mathbf{X}_{\tau,a}$ inside $\mathbf{H}_{\tau_H}$ are tautological, i.e., their coefficients are all zero.  The informal content of Theorem~\ref{theorem:cj2dzotgu8} is thus that if $a \notin \mathbf{H}(R) \mathbf{Z}(R)$, then we can find a nonzero (bounded degree) polynomial on $\mathbf{H}_{\tau_H}$ over $R$ whose zero locus contains $\mathbf{X}_{\tau,a}$.  This result, applied in the context of Problem~\ref{Probleminformal} with $R$ a suitable quotient of $\mathfrak{o}/\mathfrak{q}$, implies that the set whose volume we must bound is contained in the locus of a polynomial whose coefficients are not all too small.  The required uniform volume bound (Theorem~\ref{theorem:volume-bound}) then follows via general bounds for solutions to polynomial congruences (\S\ref{sec:cj3v60uw7y}).

To prove Theorem~\ref{theorem:cj2dzotgu8}, we apply the assumed equality between $\mathbf{X}_{\tau,a}$ and $\mathbf{H}_{\tau_H}$ first over the ring of dual numbers $R' = R[\eps]/(\eps^2)$, then over $R'' = R[\eps_1,\eps_2]/(\eps_1,\eps_2)$.  We refer to these steps as linear and quadratic analysis, respectively.  They may be understood as studying the consequences of the vanishing of linear and quadratic coefficients of the defining equations for $\mathbf{X}_{\tau,a}$ inside $\mathbf{H}_{\tau_H}$.  The arguments sketched below were found after extensive numerical study of these coefficients,\footnote{using SAGE~\cite{sage2023} (and its components Singular~\cite{DGPS} and GiNaC) and Emacs Calc} and represent a main novelty of this paper.

The key case is when $a \in \mathbf{G}_\tau(R)$; we must show then that $a \in \mathbf{Z}(R)$.

From the linear analysis, we deduce that $a^2 \in \mathbf{Z}(R)$.  To do so, we determine the linear coefficients with respect to the basis for $\Lie(\mathbf{H}_{\tau_H})$ given by powers of $\tau_H$, and observe that they may be related via an upper-triangular substitution to some invariants of $\tau$ and $a$ whose vanishing forces $a^2$ to be central.  We refer to \S\ref{sec:analysis-first-derivatives} for details.  This step of the argument generalizes the approach of~\cite{2020arXiv201202187N}, recalled below in \S\ref{sec:cj4t64wm4h}, which amounts to studying just the linear coefficient for the central direction in $\Lie(\mathbf{H}_{\tau_H})$.

From the quadratic analysis, we construct (Lemma~\ref{lemma:second-derivatives-yield-homomorphism-property}) a multiplicative linear map $\Lie(\mathbf{H}_{\tau_H}) \rightarrow \Lie(\mathbf{G}_{\tau})$.  Our argument then divides according to whether this map preserves unit elements (residually).  If it does not, then we may apply multiplicativity in the central direction to see that $a$ is central.  If it does, then we show by two applications of the linear analysis --- first for $a$ over $R$, then for suitable ``nearby'' $b \in \mathbf{G}_{\tau}(R')$ over $R' := R[\eps_2]/(\eps_2^2)$, with $a(1+\eps_1\tau_H) \in \mathbf{H}(R') b$ --- that $\tau$ satisfies a monic polynomial of degree $2$, contrary our assumption $n+1\geq 3$.  We refer to \S\ref{sec:analysis-second-derivatives} for details.

\begin{remark}
  In the depth aspect, we may assume via the ``near identity'' reduction (\S\ref{sec:cj4t69an73}) that $a$ is close to the identity. Then $a^2 \in \mathbf{Z}(R) \implies a \in \mathbf{Z}(R)$, so the ``linear'' part of the above argument suffices, yielding a simpler proof than in~\cite{2020arXiv201202187N} of the volume bound required in the depth aspect (see Remark~\ref{Remark:recoverLiealgproof}).
\end{remark}

\subsection{Available treatment in the depth aspect}\label{sec:cj4t64wm4h}
We indicate here why the approaches to the volume bound given in \cite[\S6]{2023arXiv2309.16667} and \cite[\S15--17]{2020arXiv201202187N} do not apply in horizontal aspects, motivating the approach described above and pursued in this paper.

\subsubsection{The short spectral projector $\omega$}
We construct in \S\ref{sec:cj4t6ynizh} a convolution kernel $\omega \in C_c^\infty(K)$, the local component of the global amplifier discussed after~\eqref{eq:cj3v66d7v4}.  It is supported on a certain open subgroup $J_\tau < K$, and given there by a multiple of a certain character of $J_\tau$, depending upon $\tau$ and some additional data.  Roughly, $J_\tau$ is the inverse image modulo $\mathfrak{q}$ of the image of $G_\tau \cap K$.  For example, when $\tau$ is a (regular) diagonal matrix, $J_\tau$ consists of elements of $K$ that are congruent modulo $\mathfrak{q}$ to the diagonal.

\subsubsection{Reductions}
The depth aspect case of the volume bound (i.e., $\mathfrak{p}$ fixed) was addressed in~\cite{2020arXiv201202187N} following a series of reductions:
\begin{enumerate}[(i)]
\item \textbf{Reduction to the near-identity case}. As explained in~\cite[\S15]{2020arXiv201202187N}, it is not necessary to solve Problem~\ref{Probleminformal} in general --- it suffices to treat the ``near-identity'' case, where $a$ lies in an arbitrarily small (but fixed) neighborhood of the identity (see \S\ref{sec:cj4t69an73}).  A similar reduction is employed in \cite[\S5]{2023arXiv2309.16667}.
\item \textbf{Reduction to the Lie algebra}. The near-identity case reduces further, via Lie-theoretic arguments~\cite[\S16]{2020arXiv201202187N}, to a Lie algebra problem, where $\Lie(H_{\tau_H})$ plays the role of $H_{\tau_H}$.
\item \textbf{Reduction to central directions}. The Lie algebra problem concerns certain subspaces of $\Lie(H_{\tau_H})$, whose definition we omit here.  The problem is to show that the subspaces are not the entire space.  This problem was addressed in~\cite[\S17]{2020arXiv201202187N} by showing that the subspaces do not contain the one-dimensional central subspace; in effect, this shows that central cosets in $H_{\tau_H} \cap K$ suffice to solve Problem~\ref{Probleminformal}.
\end{enumerate}
These reductions fail in horizontal aspects, for independent reasons:
\begin{enumerate}[(i)]
\item The near-identity reduction in the depth aspect comes from the freedom to take the convolution kernel $\omega$ supported close to the identity, which is unavailable in horizontal aspects (see \S\ref{sec:cj4t69an73}).
\item The reduction to the Lie algebra may be adapted beyond the near-identity case, but the resulting Lie algebra problem is unsolvable: there are natural families of counterexamples (see Theorem~\ref{theorem:characterization-of-tangential-points} and Remark~\ref{remark:cj4u2pnri2}).
\item Central directions do not suffice to solve the general case of Problem~\ref{Probleminformal}; we have again identified counterexamples (see Remark~\ref{remark:gl6-example}).
\end{enumerate}

\begin{remark}
  Marshall's arguments \cite[Prop 6.2]{2023arXiv2309.16667} do not reduce to the Lie algebra or to central directions, but do reduce to the near-identity case.  Moreover, his method is specific to the ``wall-avoidance'' case noted in \S\ref{sec:cj4vjryk3u}, which excludes the motivating twist aspect noted in Remarks~\ref{remark:cj3ubsydvn} and~\ref{remark:cj3u905a9t}.
\end{remark}

\subsubsection{The near-identity reduction}\label{sec:cj4t69an73}
We conclude \S\ref{sec:cj4t64wm4h} by explaining the ``near-identity'' reduction mentioned above, whose failure in horizontal aspects motivates why the geometric estimates in this paper are of ``algebro-geometric'' rather than ``Lie-algebraic'' flavor compared to those in~\cite{2020arXiv201202187N}.

Fix a small natural number $d$.  Let $K[d]$ denote the $d$th principal congruence subgroup of $K$.  The convolution kernel $\omega$ is supported in $K$, but not in $K[1]$.  It projects onto a ``short'' family: the integral operator $\pi(\omega)$ vanishes unless the irreducible representation $\pi$ lies in such a family. The normalized restriction $\omega^{[d]}$ of $\omega$ to $K[d]$ projects onto a larger family; in global settings, it is larger by a factor of roughly
\begin{equation}\label{eq:cj4t64h1mo}
  q^{{(n+1)}^2 d}.
\end{equation}
\begin{example}
  If $\tau$ is diagonal, then the operator $\pi(\omega)$ (resp.\ $\pi(\omega^{[d]})$) vanishes unless $\pi$ is a principal series representation $\chi_1 \boxplus \dotsb \boxplus \chi_{n+1}$ induced by characters $\chi_j$ of $F^\times$ having prescribed restrictions to $\mathfrak{o}^\times$ (resp.\ to $1 + \mathfrak{p}^d$).
\end{example}
The near-identity reduction arises from the freedom to replace $\omega$, supported on $G_\tau \cap K$, with $\omega^{(d)}$, supported on the smaller group $G_\tau \cap K[d]$.  We can do so in the depth aspect, where $q$ is fixed, because the factors~\eqref{eq:cj4t64h1mo} are then harmless.

In horizontal aspects --- where $q$ is large --- such factors ruin the near-identity reduction: any small power of $q$ saved by amplification is swamped by the large power of $q$ in~\eqref{eq:cj4t64h1mo}, yielding estimates that fail even to recover convexity.  We must thus work with projectors $\omega$ supported on the full maximal compact subgroup $K$.

\subsection{Organization of this paper}
This paper consists almost exclusively of local analysis at the ``interesting'' place $\mathfrak{p}$.  It culminates in our main local result, Theorem~\ref{theorem:main-local-result}, which is a direct analogue of its archimedean counterpart,~\cite[Theorem 4.2]{2020arXiv201202187N}.  The auxiliary arguments of~\cite[\S4--6]{2020arXiv201202187N} (concerning ``uninteresting'' places, amplification, counting, etc.)\ combine with our main local result to yield a subconvex bound, exactly as in~\cite{2020arXiv201202187N}.  While our local analysis is logically self-contained, its global motivation might be clarified by skimming~\cite[\S6]{2020arXiv201202187N}.

We conclude with the detailed breakdown.  \S\ref{sec:notation}--\S\ref{sec:cyclic-matrices} contain preliminaries.  \S\ref{sec:20230514080526} treats ``stability'' for $(\GL_{n+1}, \GL_n)$ over a general ring.  \S\ref{sec:transversality} establishes our key algebraic result (Theorem~\ref{theorem:main-transversality-general-ring}, or Theorem~\ref{theorem:cj2dzotgu8} above).  \S\ref{sec:volumebound} applies that result to derive the uniform volume bound (Theorem~\ref{theorem:volume-bound}), which we further apply in \S\ref{Sec:bilinear} to estimate bilinear forms relevant for~\eqref{eq:cj3tv4jpta}.  \S\ref{sec:part-body-paper} concerns ``microlocal analysis'' involving representations $\pi,\sigma$ and parameters $\tau,\tau_H$. \S\ref{sec:mainlocal} establishes our main local result (Theorem~\ref{theorem:main-local-result}), a uniform non-archimedean analogue of~\cite[Theorem 4.2]{2020arXiv201202187N}. \S\ref{Sec:final} combines our main local result with the auxiliary arguments of~\cite{2020arXiv201202187N} to complete the proof of Theorem~\ref{theorem:cj3ngw7u2s}.

\subsection{Acknowledgment}
Y.H.\ is supported by the National Key Research and Development Program of China (No. 2021YFA1000700).  P.N.\ worked on this project while he was a von Neumann Fellow at the Institute for Advanced Study, supported by the National Science Foundation under Grant No. DMS-1926686.  The paper was completed while P.N.\ was a Villum Investigator based at Aarhus University, supported by a research grant (VIL54509) from VILLUM FONDEN.  We are grateful to Peter Sarnak and Wei Zhang for their helpful feedback on an earlier draft.

\section{Notation}\label{sec:notation}
We record general notation and conventions, used throughout the paper.

\subsection{Vectors and matrices}\label{sec:vectors-matrices}
Let $\mathbf{V}$ be a finite free $\mathbb{Z}$-module, thus $\mathbf{V} \cong \mathbb{Z}^n$ for some $n$, the \emph{rank} of $\mathbf{V}$.  It will occasionally be convenient to denote the rank instead by $n+1$, so that $n$ refers instead to the rank of a codimension one submodule (see \S\ref{sec:general-linear-ggp-pairs} below).

We denote by $\mathbf{V}^* = \Hom(\mathbf{V},\mathbb{Z}) \cong \mathbb{Z}^n$ the dual module and by $\mathbf{M} := \End(\mathbf{V}) \cong \Mat_n(\mathbb{Z})$ the endomorphism ring.  We denote by juxtaposition the natural pairing
\begin{equation*}
  \mathbf{V}^* \otimes \mathbf{V} \rightarrow \mathbb{Z}, \quad \ell \otimes v \mapsto \ell v := \ell(v),
\end{equation*}
as well as the left action, right action and outer product, respectively:
\begin{align*}
  &\mathbf{M} \otimes \mathbf{V} \rightarrow \mathbf{V}, \quad   &&a \otimes v \mapsto a v := a(v), \\
  &\mathbf{V}^* \otimes \mathbf{M} \rightarrow \mathbf{V}^*, \quad  &&\ell \otimes a \mapsto \ell a := [u \mapsto \ell(a(u))] \\
  &\mathbf{V} \otimes \mathbf{V}^* \rightarrow \mathbf{M}, \quad &&v \otimes \ell \mapsto v \ell := [u \mapsto \ell(u) v]. 
\end{align*}
The motivation for the notation is that, by choosing a basis, we may regard $\mathbf{V}$ (resp.\ $\mathbf{V}^*$) as spaces of column (resp.\ row) vectors and $\mathbf{M}$ as a space of matrices, in which case the above pairings are given by matrix multiplication.  We use the same notation for the extensions of these pairings to more general rings given below.

For us, a \emph{ring} $R$ is a commutative ring with identity, an \emph{algebra} or \emph{ring extension} $R'$ of $R$ is a ring map $R \rightarrow R'$, and an \emph{affine scheme} $\mathbf{X}$ is a functor that assigns to each ring $R$ a set $\mathbf{X}(R)$ that functorially identifies with the set of ring maps $\mathbb{Z}[\mathbf{X}] \rightarrow R$ for some ring $\mathbb{Z}[\mathbf{X}]$, the \emph{coordinate ring} of $\mathbf{X}$.  Affine schemes over $R$ are defined similarly, but restricting to $R$-algebras; the above discussion then applies upon replacing $\mathbb{Z}$ with $R$.

By abuse of notation, we denote also by $\mathbf{V}, \mathbf{V}^*$ and $\mathbf{M}$ the affine group schemes over $\mathbb{Z}$ given by
\begin{align*}
  \mathbf{V}(R) &:= \mathbf{V} \otimes R \cong R^n, \\
  \mathbf{V}^*(R) &:= \mathbf{V}^* \otimes R \cong R^n, \\
  \mathbf{M}(R) &:= \End_R(\mathbf{V}(R)) \cong \Mat_n(R).
\end{align*}
The coordinate rings of $\mathbf{V}, \mathbf{V}^*$ and $\mathbf{M}$ identify with the polynomial rings over $\mathbb{Z}$ in $n, n$ and $n^2$ variables, respectively, where $n$ denotes the rank of $\mathbf{V}$.  We denote by $\mathbf{G}$ the affine group scheme
\begin{equation*}
  \mathbf{G}(R) := \Aut_R(\mathbf{V}(R)) \cong \GL_n(R),
\end{equation*}
whose coordinate ring is obtained from that of $\mathbf{M}$ by inverting the determinant.

We denote by $\mathbf{Z}$ the center of $\mathbf{G}$, consisting of scalar matrices:
\begin{equation*}
  \mathbf{Z}(R) = R^\times \hookrightarrow \mathbf{G}(R).
\end{equation*}

For $g \in \mathbf{G}(R)$, we denote by $\Ad(g) : \mathbf{M}(R) \rightarrow \mathbf{M}(R)$ the conjugation map
\begin{equation*}
  \Ad(g) x := g x g ^{-1}.
\end{equation*}

\subsection{General linear GGP pairs}\label{sec:general-linear-ggp-pairs}
In some parts of this paper, we consider just one general linear group $\mathbf{G}$.  In others, we work with inclusions $\mathbf{H} \leq \mathbf{G}$ of general linear groups of neighboring rank.  We then use the following notation.

Suppose given $e \in \mathbf{V}$ and $e^* \in \mathbf{V}^*$ such that $e^* e= 1$.  We then define the submodule
\begin{equation*}
  \mathbf{V}_H := \left\{ v \in \mathbf{V} : e^* v = 0 \right\},
\end{equation*}
which participates in the direct sum decomposition
\begin{equation}\label{eqn:lambda-=-lambda_h-oplus-mathbbz-e-}
  \mathbf{V} = \mathbf{V}_H \oplus \mathbb{Z} e,
\end{equation}
\begin{equation*}
  v = (1 -  e e^*) v + (e^* v) e.
\end{equation*}
This extends to any ring.  We define affine group schemes
\begin{equation*}
  \mathbf{V}_H, \quad \mathbf{V}_H^*, \quad \mathbf{M}_H, \quad \mathbf{H}
\end{equation*}
in terms of $\mathbf{V}_H$, by analogy to the definitions of \S\ref{sec:vectors-matrices}.  These define closed subgroup schemes of $\mathbf{V}, \mathbf{V}^*, \mathbf{M}, \mathbf{G}$, respectively.

For any ring $R$, we denote by $1_H \in \mathbf{M}_H(R)$ the identity operator.  It is given in terms of the identity operator $1 \in \mathbf{M}(R)$ by
\begin{equation*}
  1_H = 1 - e e^*.
\end{equation*}
Given $\tau \in \mathbf{H}(R)$, we denote by
\begin{equation*}
  \tau_H := 1_H \tau 1_H \in \mathbf{M}_H(R)
\end{equation*}
the element induced by the decomposition~\eqref{eqn:lambda-=-lambda_h-oplus-mathbbz-e-}.

\begin{example}
  Suppose $\mathbf{V}$ has rank three.  Let $e_1, e_2$ be a basis for $\mathbf{V}_H$.  Then $e_1,e_2,e$ is a basis for $\mathbf{V}$.  Using this basis to identify $\mathbf{M}$ with the space of $3 \times 3$ matrices, we have
  \begin{equation*}
    \mathbf{M}_H =
    \begin{pmatrix}
      \ast & \ast & 0 \\
      \ast & \ast & 0 \\
      0 & 0 & 0 \\
    \end{pmatrix},
    \quad 
    \mathbf{H} =
    \begin{pmatrix}
      \ast & \ast & 0 \\
      \ast & \ast & 0 \\
      0 & 0 & 1 \\
    \end{pmatrix},
    \quad
    1_H =
    \begin{pmatrix}
      1 & 0 & 0 \\
      0 & 1 & 0 \\
      0 & 0 & 0 \\
    \end{pmatrix},
  \end{equation*}
  \begin{equation*}
    \tau =
    \begin{pmatrix}
      \tau _{11} & \tau _{12} & \tau _{13} \\
      \tau _{21} & \tau _{22} & \tau _{23} \\
      \tau _{31} & \tau _{32} & \tau _{33} \\
    \end{pmatrix}
    \implies
    \tau_H
    =
    \begin{pmatrix}
      \tau _{11} & \tau _{12} & 0 \\
      \tau _{21} & \tau _{22} & 0 \\
      0 & 0 & 0 \\
    \end{pmatrix}.
  \end{equation*}
\end{example}

\subsection{Centralizers}
For $\tau \in \mathbf{M}(R)$ or $\sigma \in \mathbf{M}_{H}(R)$, we denote by
\begin{equation*}
  \mathbf{G}_\tau \leq \mathbf{G}, \quad \mathbf{M}_{\tau} \leq \mathbf{M},
  \quad   \mathbf{H}_{\sigma} \leq \mathbf{H}, \quad \mathbf{M}_{H,\sigma} \leq \mathbf{M}_H
\end{equation*}
the centralizers, regarded as affine group schemes over $R$.

\subsection{Local fields and congruence subgroups}\label{sec:local-fields-congruence-subgroups}
Let $F$ be a non-archimedean local field.  We denote then by
\begin{equation*}
  \mathfrak{o}, \qquad \mathfrak{p}, \qquad \varpi \in \mathfrak{p} ,\qquad q = \lvert \mathfrak{o} / \mathfrak{p}  \rvert
\end{equation*}
its ring of integers, its maximal ideal, a uniformizer, and the cardinality of the residue field.  We denote by $|.|_F$ the normalized valuation.

For each ideal $\mathfrak{a} \subseteq \mathfrak{o}$, we set
\begin{equation}\label{eqn:cj3m0de4ie}
  K(\mathfrak{a}) := \ker(\mathbf{G}(\mathfrak{o}) \rightarrow \mathbf{G}(\mathfrak{o}/\mathfrak{a})),
  \quad
  K_H(\mathfrak{a}) := \ker(\mathbf{H}(\mathfrak{o}) \rightarrow \mathbf{H}(\mathfrak{o}/\mathfrak{a})),
\end{equation}
We sometimes write simply
\begin{equation*}
  K := K(\mathfrak{o}) = \mathbf{G}(\mathfrak{o}), \quad K_H := K_H(\mathfrak{o}) = \mathbf{H}(\mathfrak{o}),
  \quad
  K_Z := \mathbf{Z}(\mathfrak{o})
\end{equation*}
for the standard maximal compact subgroups of $\mathbf{G}(F)$, $\mathbf{H}(F)$ and $\mathbf{Z}(F)$, respectively.  We equip the latter groups with the Haar measure that assigns volume one to their maximal compact subgroups $K _G, K _H$ and $K _Z$.

\section{Cyclic matrices}\label{sec:cyclic-matrices}
Let $R$ be a ring.  In this section, we write $V, M, H$, etc., for the sets of $R$-points $\mathbf{V}(R), \mathbf{M}(R), \mathbf{H}(R)$, etc., of the corresponding bold-faced group schemes.

Let $\tau \in M$.  We denote by
\begin{equation*}
  P_\tau(X) = \det(X - \tau) \in R[X]
\end{equation*}
its characteristic polynomial.  It is a monic polynomial whose degree is the rank of $\mathbf{V}$.  We recall the Cayley--Hamilton theorem (see~\cite[Theorem 4.3]{MR1322960}): $P_\tau(\tau) = 0$.

\begin{definition}\label{definition:cyclic}
  Let $\tau \in M$.  We say that $v \in V$ is \emph{$\tau$-cyclic} if $R[\tau] v = V$.  We say that \emph{$V$ is $\tau$-cyclic} (or simply that \emph{$\tau$ is cyclic} when $V$ is understood, or that \emph{$V$ is cyclic} when $\tau$ is understood) if there exists a $\tau$-cyclic vector $v \in V$.
\end{definition}
\begin{example}
  Any Jordan block is cyclic.
\end{example}
\begin{example}
  It follows from the Vandermonde determinant calculation that a diagonal matrix is cyclic when the pairwise differences between its diagonal entries are invertible.
\end{example}
\begin{example}
  If $v$ is $\tau$-cyclic, then it remains so upon passing to any ring extension $R'$ of $R$.  In particular, the cyclicity of $\tau$ is preserved under base extension.
\end{example}
\begin{example}
  Let $R$ be a local ring.  By Nakayama's lemma, we can test whether $\tau$ is cyclic over the residue field.  When $R$ is a field, the structure theorem for modules over the principal ideal domain $R[X]$ says that $V$ is isomorphic to a direct sum $\oplus_i R[X] / (p_i)$, where $p_i$ are monic primary polynomials (i.e., powers of irreducible polynomials) and $\tau$ acts as multiplication by $X$.  Then $\tau$ is cyclic if and only if the $p_i$ are pairwise relatively prime.  For instance, when $R$ is algebraically closed, this says that no two Jordan blocks for $\tau$ have the same eigenvalue.
\end{example}

\begin{lemma}\label{lemma:v-tau-cycl-if-only-if-map-modul-begin-rn-right-v-}
  Write $n = \rank(\mathbf{V})$.  Then $v$ is $\tau$-cyclic if and only if the map of modules
  \begin{equation}\label{eqn:rn-rightarrow-v}
    R^n \rightarrow V
  \end{equation}
  \begin{equation}\label{eqn:let-tau-in-c0-through-cn1-sum-cj-tau}
    (c_0,\dotsc,c_{n-1})  \mapsto \sum_{j < n} c_j \tau^j v
  \end{equation}
  is surjective, in which case it is an isomorphism.
\end{lemma}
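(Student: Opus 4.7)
The plan is to split the claim into two parts: first, reduce the surjectivity of~\eqref{eqn:rn-rightarrow-v} to the definition of cyclicity; then, upgrade surjectivity to bijectivity using a standard fact about finitely generated modules.

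For the first part, I would observe that the image of the map $(c_0, \dots, c_{n-1}) \mapsto \sum_{j<n} c_j \tau^j v$ is the $R$-submodule of $V$ spanned by $v, \tau v, \dots, \tau^{n-1} v$. The key input is the Cayley--Hamilton theorem, which the excerpt states explicitly: $P_\tau(\tau) = 0$. Since $P_\tau$ is monic of degree $n$, this identity expresses $\tau^n$ as an $R$-linear combination of $1, \tau, \dots, \tau^{n-1}$. By induction on $k \geq n$, every $\tau^k$ lies in the $R$-span of $1, \tau, \dots, \tau^{n-1}$ inside $M$. Applying this to $v$ shows that $R[\tau] v$ coincides with the image of~\eqref{eqn:rn-rightarrow-v}. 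Therefore $v$ is $\tau$-cyclic, i.e.\ $R[\tau] v = V$, if and only if the map~\eqref{eqn:rn-rightarrow-v} is surjective.

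For the second part, assume surjectivity. Since $\mathbf{V}$ is a finite free $\mathbb{Z}$-module of rank $n$, the module $V = \mathbf{V} \otimes R$ is free of rank $n$ over $R$, so the map in question is a surjective $R$-linear map $R^n \twoheadrightarrow R^n$. I would invoke the classical fact (a consequence of the determinant trick/Cayley--Hamilton for modules, sometimes attributed to Vasconcelos) that any surjective endomorphism of a finitely generated module over a commutative ring is an isomorphism. Applied to the composition $R^n \to V \xrightarrow{\sim} R^n$ (with any chosen basis of $V$), this gives injectivity, hence the map is an isomorphism.

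The main potential subtlety is the second step: one must be careful that surjectivity of a map \emph{between} two copies of $R^n$ gives injectivity, which is not obvious without the Vasconcelos-type argument, since $R$ is not assumed Noetherian or a domain. I expect this to be the only nontrivial point, and I would handle it by citing the standard reference (e.g.\ Matsumura) rather than reproducing the determinant-trick proof.
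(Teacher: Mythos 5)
Your proof is correct and follows essentially the same route as the paper: the first equivalence via Cayley--Hamilton, and the surjective-implies-bijective step via the Vasconcelos/determinant-trick fact, which is precisely what the paper cites as~\cite[Corollary 4.4a]{MR1322960}. You spell out the details more explicitly, but the decomposition and key inputs coincide.
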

\begin{proof}
  The first statement follows from the Cayley--Hamilton theorem, the second from the fact that any surjective map of free modules of the same finite rank is an isomorphism~\cite[Corollary 4.4a]{MR1322960}.
\end{proof}

In particular, each cyclic element $\tau$ admits a \emph{cyclic basis} $e_1, \dotsc, e_n$ such that
\begin{equation}\label{eqn:beginpmatrix-0--0--ast-}
  \tau e_j = e_{j+1} \text{ for } j < n,
  \quad
  \text{e.g., }
  \tau = 
  \begin{pmatrix}
    0 & 0 & \ast \\
    1 & 0 & \ast \\
    0 & 1 & \ast \\
  \end{pmatrix}.
\end{equation}

\begin{lemma}\label{lemma:centralizer-description}
  Let $\tau \in M$, and suppose that $v \in V$ is $\tau$-cyclic.  Then the map
  \begin{equation*}
    M_\tau \rightarrow V, \qquad x \mapsto x v
  \end{equation*}
  is an isomorphism of $R$-modules.  Moreover,
  \begin{equation*}
    M_\tau = R[\tau] = \left\{ \sum_{j=0}^{n-1} c_j \tau^j : (c_0,\dotsc,c_{n-1}) \in R^{n} \right\}.
  \end{equation*}
  In particular, $M_\tau$ is a commutative $R$-algebra, and $G_\tau$ is an abelian group.
\end{lemma}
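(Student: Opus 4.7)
The plan is to reduce everything to the isomorphism $\varphi: R^n \xrightarrow{\sim} V$, $(c_0,\ldots,c_{n-1}) \mapsto \sum_j c_j \tau^j v$, provided by Lemma~\ref{lemma:v-tau-cycl-if-only-if-map-modul-begin-rn-right-v-}, by factoring it through the chain
\[
  R^n \xrightarrow{\;f\;} R[\tau] \xhookrightarrow{\;\iota\;} M_\tau \xrightarrow{\;\psi\;} V,
\]
where $f$ sends $(c_j)$ to $\sum_j c_j \tau^j$, $\iota$ is the inclusion (noting $R[\tau] \subseteq M_\tau$ because powers of $\tau$ commute with $\tau$), and $\psi$ is evaluation at $v$. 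All three maps are $R$-linear, and by construction $\psi \circ \iota \circ f = \varphi$.

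I would first show that $\psi$ is surjective: given $x \in M_\tau$, cyclicity of $v$ lets us write $xv = \varphi(c)$ for some $c \in R^n$, so $\psi(\iota f(c)) = xv$. Next, I would show $\psi$ is injective: if $xv=0$ for some $x \in M_\tau$, then for any $u \in V$ I can write $u = pv$ with $p \in R[\tau]$ (again cyclicity), and since $x$ commutes with $\tau$ it commutes with every element of $R[\tau]$, giving $xu = p(xv) = 0$; hence $x = 0$. So $\psi: M_\tau \to V$ is an $R$-module isomorphism.

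To identify $M_\tau$ with $R[\tau]$, I would combine the two isomorphisms $\varphi$ and $\psi$: the relation $\psi \circ \iota \circ f = \varphi$ and the surjectivity of $\varphi$ force $\iota$ to be surjective, i.e., $R[\tau] = M_\tau$. (Equivalently, given $x \in M_\tau$, pick $c \in R^n$ with $\varphi(c) = xv$; then $y := f(c) \in R[\tau] \subseteq M_\tau$ satisfies $\psi(y) = xv = \psi(x)$, and injectivity of $\psi$ gives $x = y \in R[\tau]$.) The explicit description $M_\tau = \{\sum_{j<n} c_j \tau^j\}$ follows, and injectivity of $\varphi$ shows moreover that this expression is unique, so $1, \tau, \ldots, \tau^{n-1}$ form an $R$-basis of $M_\tau$.

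Finally, commutativity of $M_\tau$ is immediate once $M_\tau = R[\tau]$ is established, and $G_\tau = \mathbf{G}(R) \cap M_\tau$ inherits commutativity from $M_\tau$. There is no real obstacle: the whole argument is a bookkeeping exercise around Lemma~\ref{lemma:v-tau-cycl-if-only-if-map-modul-begin-rn-right-v-}, with the only point requiring any thought being the proof that $\psi$ is injective, which uses cyclicity to propagate the equation $xv=0$ from $v$ to all of $V$ via commutativity with powers of $\tau$.
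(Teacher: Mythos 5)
Your proof is correct and takes essentially the same route as the paper's: cyclicity plus Lemma~\ref{lemma:v-tau-cycl-if-only-if-map-modul-begin-rn-right-v-} give an isomorphism $R^n \to V$, injectivity of $x \mapsto xv$ on $M_\tau$ follows from $x\tau^j v = \tau^j x v$, and combining these yields $M_\tau = R[\tau]$ and the module isomorphism. One small remark: your ``surjectivity of $\psi$'' paragraph as worded only shows that $\psi(x)$ lies in the image of $\psi$ (which is vacuous); the intended argument is simply that surjectivity of $\varphi$ together with the factorization $\varphi = \psi\circ\iota\circ f$ forces $\psi$ to be surjective, and you already have that factorization in hand.
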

\begin{proof}
  Since $V$ is spanned by the $\tau^j v$ and $x \tau^j v = \tau^j x v$, we see that $x$ is determined by $x v$.  The map $M_\tau \rightarrow V$ is thus injective.

  We next verify that $M_\tau = R[\tau]$.  Let $x \in M_\tau$.  By Lemma~\ref{lemma:v-tau-cycl-if-only-if-map-modul-begin-rn-right-v-}, we may choose $(c_0,\dotsc,c_{n-1}) \in R^n$ so that $x v = \sum _{j=0}^{n-1} c_j \tau^j v$.  By the noted injectivity of $M_\tau \rightarrow V$, it follows that $x = \sum_{j=0}^{n-1} c_j \tau^j$.  Thus $M_\tau \subseteq R[\tau]$.  The reverse containment is evident.

  The surjectivity of $M_\tau \rightarrow V$ follows from the identity $M_\tau = R[\tau]$ and Lemma~\ref{lemma:v-tau-cycl-if-only-if-map-modul-begin-rn-right-v-}.
\end{proof}

\begin{lemma}\label{lemma:regular-elements-same-characteristic-polynomial-are-conjugate}
  Suppose that $\tau_1, \tau_2 \in M$ are cyclic elements with the same characteristic polynomial.  Then they are conjugate under $G$.
\end{lemma}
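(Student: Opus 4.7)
The plan is to show that any cyclic $\tau \in M$ is conjugate to the companion matrix of its characteristic polynomial; since cyclicity and the characteristic polynomial are the only invariants involved, two cyclic matrices with the same characteristic polynomial will both be conjugate to the same companion matrix, hence to each other.

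First I would, for each $\tau_i$, fix a $\tau_i$-cyclic vector $v_i \in V$. By Lemma~\ref{lemma:v-tau-cycl-if-only-if-map-modul-begin-rn-right-v-}, the tuple $e_1^{(i)} := v_i,\ e_2^{(i)} := \tau_i v_i,\ \dotsc,\ e_n^{(i)} := \tau_i^{n-1} v_i$ is an $R$-basis of $V$. Write $P_{\tau_i}(X) = X^n + \sum_{j<n} c_j^{(i)} X^j$ for the characteristic polynomial. By Cayley--Hamilton we have $\tau_i^n v_i = -\sum_{j<n} c_j^{(i)} \tau_i^j v_i$, so in the basis $e_1^{(i)},\dotsc,e_n^{(i)}$ the endomorphism $\tau_i$ has the companion matrix
\begin{equation*}
C(P_{\tau_i}) = \begin{pmatrix} 0 & 0 & \cdots & 0 & -c_0^{(i)} \\ 1 & 0 & \cdots & 0 & -c_1^{(i)} \\ 0 & 1 & \cdots & 0 & -c_2^{(i)} \\ \vdots & & \ddots & & \vdots \\ 0 & 0 & \cdots & 1 & -c_{n-1}^{(i)} \end{pmatrix},
\end{equation*}
exactly as indicated in~\eqref{eqn:beginpmatrix-0--0--ast-}.

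Next, since by hypothesis $P_{\tau_1} = P_{\tau_2}$, the two companion matrices $C(P_{\tau_1})$ and $C(P_{\tau_2})$ coincide. Let $g \in \End_R(V) = M$ be the unique $R$-linear map sending $e_j^{(1)} \mapsto e_j^{(2)}$ for each $j$. Because it sends one $R$-basis to another, $g$ lies in $\Aut_R(V) = G$. A direct check on basis elements shows $g \tau_1 = \tau_2 g$: indeed, for $j < n$, $g \tau_1 e_j^{(1)} = g e_{j+1}^{(1)} = e_{j+1}^{(2)} = \tau_2 e_j^{(2)} = \tau_2 g e_j^{(1)}$, while for $j = n$ the identity follows by applying the Cayley--Hamilton relation in both bases and using $P_{\tau_1} = P_{\tau_2}$. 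Hence $g \tau_1 g^{-1} = \tau_2$, completing the proof.

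I do not anticipate a genuine obstacle. The one point that requires care over a general ring (as opposed to a field) is that the spanning property of $v_i, \tau_i v_i, \dotsc, \tau_i^{n-1} v_i$ upgrades to a basis and that the corresponding change-of-basis map is invertible; both are supplied directly by Lemma~\ref{lemma:v-tau-cycl-if-only-if-map-modul-begin-rn-right-v-}, which gives an isomorphism $R^n \cong V$ rather than merely a surjection. Everything else is a formal manipulation with companion matrices and the Cayley--Hamilton identity.
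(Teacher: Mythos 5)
Your proof is correct and follows essentially the same route as the paper's: both pick a cyclic basis $v_i, \tau_i v_i, \dotsc, \tau_i^{n-1} v_i$ for each $\tau_i$, observe via Cayley--Hamilton that $\tau_i$ acts by the same companion matrix in each basis since $P_{\tau_1} = P_{\tau_2}$, and define $g$ as the change of basis. Your write-up is merely a bit more explicit (writing out the companion matrix and verbally checking $g\tau_1 = \tau_2 g$ on basis vectors), but there is no substantive difference.
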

\begin{proof}
  Let $e_1^{(j)},\dotsc,e_n^{(j)}$ be a cyclic basis for $\tau_1$ (resp.\ $\tau_2$).  By Cayley--Hamilton, we have $\tau_j e_n^{(j)} = \sum_{i=1}^{n} c_i e_i^{(j)}$, where the coefficients $c_i \in R$ depend only upon the characteristic polynomial, hence are the same for $\tau_1$ and $\tau_2$.  Let $g \in G$ be the unique map that sends $e_i^{(1)}$ to $e_i^{(2)}$.  Then $g \tau_1 = \tau_2 g$, so $\tau_1$ and $\tau_2$ are conjugate.
\end{proof}
\section{Stability}\label{sec:20230514080526}
\begin{definition}
  Let $R$ be a ring and $\tau \in \mathbf{M}(R)$.  We say that $\tau$ is \emph{stable} if the vectors $e$ and $e^*$ are both $\tau$-cyclic.  We denote by $\mathbf{M}_{\stab}(R)$ the subset of stable elements.
\end{definition}
\begin{remark}
  This condition was studied by Rallis--Schiffmann~\cite{2007arXiv0705.2168R} under the name ``regular'', and also in~\cite[\S14]{nelson-venkatesh-1}, where it was related (for $R$ a field of characteristic zero) to the geometric invariant theory notion of stability.  See especially~\cite[Thm 6.1]{2007arXiv0705.2168R} and~\cite[Lem 14.8]{nelson-venkatesh-1}.  Many of the results of this section were established in~\cite[\S13-14]{nelson-venkatesh-1} when $R$ is a field of characteristic zero, and in some respects, more systematically and generally (e.g., also for orthogonal GGP pairs).  We have chosen to give short proofs of what we require, repeating or adapting the arguments of the cited reference.  The adapted arguments could likely be extended to the orthogonal case along the same lines as in~\cite[\S14]{nelson-venkatesh-1}.
\end{remark}
\begin{example}\label{example:cj3twmtcpz}
  Suppose $R$ is a local ring, with maximal ideal $\mathfrak{p}$.  Let $\tau \in \mathbf{M}(R)$, with image $\bar{\tau} \in \mathbf{M}(R/\mathfrak{p})$. Nakayama's lemma implies that $\tau$ is stable if and only if $\bar{\tau}$ is stable.
\end{example}
We denote by $n+1$ the rank of $\mathbf{V}$, so that $n$ is the rank of $\mathbf{V}_H$.

\begin{remark}\label{remark:cj57b61swr}
  For a ring $R$, we define
  \begin{equation*}
    \Delta : \mathbf{M}(R) \rightarrow R
  \end{equation*}
  \begin{equation*}
    \Delta(\tau) := \det (e^*, e^* \tau, \dotsc, e^* \tau ^n ) \det (e, \tau e, \dotsc, \tau ^n e),
  \end{equation*}
  where $\det(\dotsb)$ denotes the determinant of the matrix having the indicated row vectors $e^* \tau^i \in \mathbf{V}^*(R)$ or column vectors $\tau^j e \in \mathbf{V}(R)$, with such vectors realized as $(n+1)$-tuples using some chosen basis for the free $\mathbb{Z}$-module $\mathbf{V}$.  Choosing a different basis has the effect of multiplying the two determinants by mutually inverse factors, hence has no effect on $\Delta(\tau)$.  This family of maps $\Delta$ defines a regular function on the scheme $\mathbf{M}$.  It follows from Lemma~\ref{lemma:v-tau-cycl-if-only-if-map-modul-begin-rn-right-v-} that $\tau$ is stable if and only if $\Delta(\tau)$ is a unit in $R$.  In particular, $\mathbf{M} _{\stab}$ defines an open affine subscheme of $\mathbf{M}$.
\end{remark}

In the remainder of this section, we focus on an individual (arbitrary) ring $R$ and abbreviate $V := \mathbf{V}(R), G := \mathbf{G}(R)$, etc.

For $\tau \in M$, we may form the characteristic polynomials
\begin{equation*}
  P_\tau \in R[X] \quad \text{and} \quad P_{\tau_H} \in R[X].
\end{equation*}
These are monic polynomials of degrees $n+1$ and $n$, respectively.

\begin{lemma}\label{lemma:let-p-p_h-in-rx-be-monic-polyn-degr-n+1-n-resp-the}
  Let $P, P_H \in R[X]$ be monic polynomials of degrees $n+1$ and $n$, respectively.  There exists $\tau \in M$ such that $P_{\tau} = P$ and $P_{\tau_H} = P_H$.
\end{lemma}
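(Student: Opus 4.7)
The plan is to realize $\tau$ explicitly as a block matrix. Choose a basis $e_1, \dotsc, e_n$ of $\mathbf{V}_H$ and extend it by $e$ to a basis of $\mathbf{V}$. Write $P_H(X) = X^n + b_{n-1} X^{n-1} + \dotsb + b_0$ and $P(X) = X^{n+1} + a_n X^n + \dotsb + a_0$, and seek $\tau$ in block form
\[
\tau = \begin{pmatrix} A & u \\ \ell & c \end{pmatrix},
\]
with $A \in \mathbf{M}_H(R)$, $u \in \mathbf{V}_H(R)$, $\ell \in \mathbf{V}_H^*(R)$, and $c \in R$ to be chosen. Then $\tau_H = A$, so I take $A$ to be the companion matrix of $P_H$ (which immediately gives $P_{\tau_H} = P_H$) and $u = e_1$. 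What remains is to choose $\ell$ and $c$ so that $P_\tau = P$.

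To see how $\ell$ and $c$ enter the characteristic polynomial of the whole $\tau$, I plan to establish the Schur-complement identity
\[
P_\tau(X) = P_H(X)(X - c) - \ell \cdot \operatorname{adj}(X I_n - A) \cdot u
\]
as a universal polynomial identity over any commutative ring $R$. This follows by multiplying $X I - \tau$ on the right by $\left(\begin{smallmatrix} \operatorname{adj}(X I - A) & \operatorname{adj}(X I - A) u \\ 0 & P_H(X) \end{smallmatrix}\right)$, taking determinants, and cancelling the common factor $P_H(X)^n$ (a nonzero-divisor in $R[X]$ since $P_H$ is monic). Comparing the $X^n$ coefficients of both sides forces $c := b_{n-1} - a_n$; with this choice, $S(X) := P_H(X)(X - c) - P(X)$ is a polynomial in $X$ of degree at most $n - 1$, and the problem reduces to finding $\ell \in \mathbf{V}_H^*(R)$ with $\ell \cdot \operatorname{adj}(X I_n - A) \cdot e_1 = S(X)$.

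To solve that equation, I plan to compute $\operatorname{adj}(X I_n - A) e_1 = \sum_{k=0}^{n-1} X^k w_k$ explicitly using $(X I_n - A) \operatorname{adj}(X I_n - A) = P_H(X) I_n$. Matching powers of $X$ yields the recursion $w_{n-1} = e_1$ and $w_{k-1} = A w_k + b_k e_1$, which iterates (in the companion basis) to give $w_k = \sum_{j=1}^{n-k} b_{k+j} e_j$ with the convention $b_n = 1$. In particular $w_k$ equals $e_{n-k}$ plus a combination of $e_1, \dotsc, e_{n-k-1}$. Consequently the $R$-linear map $\ell \mapsto \ell \cdot \operatorname{adj}(X I_n - A) \cdot e_1$ from $\mathbf{V}_H^*(R)$ to polynomials of degree $< n$ is triangular with $1$'s on the diagonal in the obvious bases, hence an $R$-module isomorphism. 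So an $\ell$ solving $\ell \cdot \operatorname{adj}(X I_n - A) \cdot e_1 = S(X)$ exists (uniquely), completing the construction.

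There is no genuine obstacle; the only point that needs attention is that the final linear system be solvable over an arbitrary commutative ring $R$, which is guaranteed by the unit-triangular structure arising from the choice of $A$ as the companion matrix of $P_H$ and $u$ as the cyclic vector $e_1$.
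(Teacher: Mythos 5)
Your proof is correct. It is in the same spirit as the paper's: both prove the lemma by an explicit companion-matrix-style construction and a triangular substitution argument. The paper (following Zhang) takes $\tau$ with the ``$a_j$'' data filling the first column of the upper-left block, $1$'s on the first superdiagonal, and the ``$b_j$'' data in the bottom row, and observes informally that the coefficients of $P_{\tau_H}$ and $P_\tau$ are $\pm a_j$ and $\pm(b_j + c_j)$ respectively, where $c_j$ depends only on earlier data, so one can solve recursively. You instead normalize so that $\tau_H$ is literally the companion matrix of $P_H$ (making $P_{\tau_H} = P_H$ immediate), take the remaining column above the corner to be the cyclic vector $e_1$, and determine the corner entry $c$ and the bottom row $\ell$ by the block-determinant identity $P_\tau(X) = (X-c)P_H(X) - \ell\,\operatorname{adj}(XI-A)\,u$, which you establish by the clean device of multiplying $XI - \tau$ by a block matrix and cancelling $P_H(X)^n$. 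The recursion $w_{n-1} = e_1$, $w_{k-1} = Aw_k + b_k e_1$ for $\operatorname{adj}(XI-A)e_1 = \sum X^k w_k$ is right, and the resulting unit-triangular structure in $\ell_1,\dotsc,\ell_n$ is exactly the same kind of ``solve inductively'' step that the paper invokes. Your version is more self-contained and verifies carefully (over any commutative ring) what the paper asserts more tersely; otherwise the two arguments buy the same thing.
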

\begin{proof}
  One can deduce the existence of $\tau$ directly from the spherical property of $H \hookrightarrow G \times H$ (see~\cite[\S14.6]{nelson-venkatesh-1}).  For variety of exposition, we record an ``explicit'' construction, following~\cite[\S6.2]{MR3164988}.  Supposing for instance that $H = \GL_2(R) \hookrightarrow G = \GL_3(R)$, embedded as the upper-left block, we take
  \begin{equation*}
    \tau =
    \begin{pmatrix}
      a _1  & 1 & 0 \\
      a _2 & 0 & 1 \\
      b _2  & b _1  & b _0  \\
    \end{pmatrix}.
  \end{equation*}
  The coefficients of $\Of{charpoly}{\tau_H}$ are then $\pm a_j$, while the coefficients of $\Of{charpoly}{\tau}$ are of the form $\pm (b_j + c_j)$, where $c_j$ depends upon the $a_k$ and $b_l$ for $l<j$. By choosing suitable $a_j$ and $b_j$ inductively, we may thus arrange that $\tau$ and $\tau_H$ have prescribed characteristic polynomials.
\end{proof}

\begin{lemma}\label{lemma:stability-equivalences}
  For $\tau \in M$, the following are equivalent.
  \begin{enumerate}[(i)]
  \item\label{enumerate:p_tau-p_tau_h-generate-unit-ideal.-} $P_\tau$ and $P_{\tau_H}$ generate the unit ideal.
  \item\label{enumerate:there-are-no-nontr-tau-invar-subsp-v_h-or-v_h.-} There are no nonzero $\tau$-invariant submodules of $V_H$ or of $V_H^*$.
  \item\label{enumerate:vectors-e-f-are-tau-cyclic.-} $\tau \in M_{\stab}$.
  \end{enumerate}
\end{lemma}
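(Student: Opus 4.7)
For (iii)$\Rightarrow$(ii) I argue directly via the pairing: if $e,e^*$ are both $\tau$-cyclic and $W\subseteq V_H$ is a nonzero $\tau$-invariant submodule, then $\tau^iW\subseteq W\subseteq V_H=\ker e^*$, so $(e^*\tau^i)W=0$ for every $i\ge 0$. Cyclicity of $e^*$ yields $V^*=\sum_{0\le i\le n}R\cdot e^*\tau^i$, hence $V^*\cdot W=0$, and the non-degeneracy of the evaluation pairing on the free $R$-module $V$ forces $W=0$. The statement for $V_H^*$ is symmetric, using cyclicity of $e$. I then plan to establish (i)$\Leftrightarrow$(iii) and (ii)$\Rightarrow$(i) to close the equivalence.

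For (i)$\Leftrightarrow$(iii), condition (iii) is already equivalent to $\Delta(\tau)\in R^\times$ by the discussion following the definition of $\Delta$, and for monic polynomials in $R[X]$ the condition $(P_\tau,P_{\tau_H})=R[X]$ is equivalent to $\Of{Res}{P_\tau,P_{\tau_H}}\in R^\times$. These two equivalences are reconciled by the universal polynomial identity
\begin{equation*}
  \Delta(\tau)=\pm\Of{Res}{P_\tau,P_{\tau_H}}.
\end{equation*}
Its proof goes by a universal principle: both sides are polynomial functions of the entries of $\tau$ with integer coefficients, so it suffices to verify the identity in $\mathbb{Z}[\tau_{ij}]$ and, by Zariski density, on the open locus where $\tau$ has $n+1$ distinct eigenvalues in an algebraic closure. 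On that locus, the block form $\tau=\begin{pmatrix}\tau_H & b\\ c^{\transpose} & d\end{pmatrix}$ together with the Schur-complement identity $P_\tau(X)=(X-d)P_{\tau_H}(X)-c^{\transpose}\Of{adj}{XI-\tau_H}b$ lets one expand both sides into matching products over differences of eigenvalues, weighted by the $(b,c,e,e^*)$-components in an eigenbasis.

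For (ii)$\Rightarrow$(i) I argue contrapositively. If $(P_\tau,P_{\tau_H})$ is a proper ideal of $R[X]$, choose a maximal ideal $\mathfrak{m}\subset R[X]$ containing it; over the residue field $k=R[X]/\mathfrak{m}$ the class of $X$ is a common root of $P_\tau$ and $P_{\tau_H}$, hence a common eigenvalue of $\bar\tau$ and $\bar\tau_H$. Classical linear algebra over $k$ then produces a nonzero $\bar\tau$-invariant subspace inside $V_H\otimes k$ or $V_H^*\otimes k$---for instance, via an appropriate intersection of the generalized eigenspaces for this common eigenvalue with the $H$-side---which contradicts (ii).

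The main obstacle is the universal identity $\Delta(\tau)=\pm\Of{Res}{P_\tau,P_{\tau_H}}$: although elementary in principle, its verification demands careful sign and degree bookkeeping in the Vandermonde-type expansions, and it is the algebraic heart of the equivalence. Once it is in hand, (i)$\Leftrightarrow$(iii) is formal, and (ii)$\Rightarrow$(i) reduces to the classical field-case construction of invariant subspaces from common eigenvalues; combined with (iii)$\Rightarrow$(ii) this closes the triangle and yields the lemma.
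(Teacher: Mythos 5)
Your direction (iii)$\Rightarrow$(ii) is correct and is in fact tighter than the paper's route, which establishes (i)$\Rightarrow$(ii) (via a Bezout identity $aP_\tau + bP_{\tau_H}=1$, Cayley--Hamilton, and the observation $f(\tau)|_U = f(\tau_H)|_U$) rather than proving (iii)$\Rightarrow$(ii) directly. Your proposed (i)$\Leftrightarrow$(iii) via the universal identity $\Delta(\tau)=\pm\Res(P_\tau,P_{\tau_H})$ is a genuinely different route: the paper proves (iii)$\Rightarrow$(i) by passing to an algebraically closed residue field and extracting a common eigenvector (following~\cite[Lemma 14.4]{nelson-venkatesh-1}), and never introduces the resultant. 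Your identity, if established, is more structural and gives (i)$\Leftrightarrow$(iii) in one stroke; but it is only sketched and, as you say yourself, it is the actual content of that equivalence. As written, the resultant route is a plan, not a proof.

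The genuine gap is in your (ii)$\Rightarrow$(i), and you do need one of (ii)$\Rightarrow$(i) or (ii)$\Rightarrow$(iii) to close the triangle. Passing to the residue field $k=R[X]/\mathfrak{m}$ produces a nonzero $\bar\tau$-invariant subspace of $V_H\otimes_R k$, but condition (ii) is a statement about $R$-submodules of $V_H$ inside $V$; the preimage of such a $k$-subspace contains $\mathfrak{p}V_H$ (with $\mathfrak{p}=R\cap\mathfrak{m}$) and is typically not $\tau$-invariant, since $\tau(\mathfrak{p}V_H)\not\subseteq V_H$. So the produced object does not contradict (ii). Worse, the implication you are aiming for is genuinely delicate over an arbitrary ring: with $R=\mathbb{Z}$, $\rank(\mathbf{V})=2$, $e=e_2$ and $\tau=\begin{pmatrix}0&p\\ p&0\end{pmatrix}$, one checks that (ii) holds (the only $\tau$-stable submodule of $V_H=\mathbb{Z}e_1$ or $V_H^*=\mathbb{Z}e_1^*$ is $0$, since $\tau e_1=pe_2\notin\mathbb{Z}e_1$), while $P_\tau=X^2-p^2$ and $P_{\tau_H}=X$ generate $(X,p^2)\neq(1)$, and $R[\tau]e=\mathbb{Z}e_2+p\mathbb{Z}e_1\neq V$, so (i) and (iii) both fail. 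This also bites the paper's own (ii)$\Rightarrow$(iii) step, which sets $U:=(R[\tau]e)^\perp$ and asserts $U\neq 0$; here $U=0$ because the quotient $V/R[\tau]e\cong\mathbb{Z}/p$ is torsion. So the direction out of (ii) cannot be proved as stated over a general $R$; it holds over a field (where orthogonal complements of proper subspaces are nonzero, and your reduction would then be legitimate because there is no base change), and one could repair the general statement by restricting $R$ or by phrasing (ii) scheme-theoretically (quantifying over base extensions). Note that the rest of the paper only ever invokes (i)$\Leftrightarrow$(iii) and (iii)$\Rightarrow$(ii), so the faulty direction is never actually used, but your outline needs to either adopt the same restriction or rephrase (ii) before the triangle can be closed.
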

\begin{proof}
  We show first that~\eqref{enumerate:p_tau-p_tau_h-generate-unit-ideal.-} implies~\eqref{enumerate:there-are-no-nontr-tau-invar-subsp-v_h-or-v_h.-}. By assumption, we may write $a P_\tau + b P_{\tau_H} = 1$ for some $a,b \in R[X]$.  Since $P_\tau(\tau) = 0$, it follows that $b(\tau) P_{\tau_H}(\tau) = 1$.  On the other hand, if $U$ is a $\tau$-invariant submodule of $V_H$ and $u \in U$, then $f(\tau) u = f(\tau_H) u$ for each $f \in R[x]$, hence $P_{\tau_H}(\tau) u = P_{\tau_H}(\tau_H) u = 0$.  Therefore $u =0$, as required.

  We show next that~\eqref{enumerate:there-are-no-nontr-tau-invar-subsp-v_h-or-v_h.-} implies~\eqref{enumerate:vectors-e-f-are-tau-cyclic.-}.  We must check that $e$ and $e^*$ are $\tau$-cyclic.  Suppose otherwise that $R[\tau] e \neq V$ or that $e^* R[\tau] \neq V^*$.  In the first case, we set $U := {(R[\tau] e)}^\perp \subseteq V^*$; in the second, $U := {(e^* R [\tau ])}^\perp \subseteq V$.  In either case, we see that $U$ is a non-zero $\tau$-invariant submodule of $V_H^*$ or $V_H$, contrary to~\eqref{enumerate:there-are-no-nontr-tau-invar-subsp-v_h-or-v_h.-}.

  We show finally that~\eqref{enumerate:vectors-e-f-are-tau-cyclic.-} implies~\eqref{enumerate:p_tau-p_tau_h-generate-unit-ideal.-}.  Suppose that $\tau$ is stable, but that the ideal $\mathfrak{a} \subseteq R[X]$ generated by $P_\tau$ and $P_{\tau_H}$ is not the unit ideal (1).  Let $\mathfrak{m}$ be a maximal ideal that contains $\mathfrak{a}$, and set $\mathfrak{p} := R \cap \mathfrak{m}$.  The stability of $\tau$ and the fact that $\mathfrak{a} \neq (1)$ are unaffected by modding out by $\mathfrak{p}$, passing to the field of fractions of $R / \mathfrak{p}$, and then passing to the algebraic closure of that field.  We thereby reduce to the case that $R$ is an algebraically closed field.  Then $P_\tau$ and $P_{\tau_H}$ share a common root $c$.  The remainder of the proof closely follows that of~\cite[Lemma 14.4]{nelson-venkatesh-1}.  We may find
  \begin{itemize}
  \item an eigenvector $v \in V_H$ for $\tau_H$ with eigenvalue $c$, and also
  \item an eigenvector $\ell \in V^*$ for $\tau$ with eigenvalue $c$.
  \end{itemize}
  Then
  \begin{equation}\label{eqn:ell-cdot-tau-c-cdot-v-=-0.-}
    \ell \cdot (\tau - c) \cdot v = 0.
  \end{equation}
  We consider two cases.
  \begin{itemize}
  \item $(\tau - c) v = 0$, so that $v \in V_H$ is an eigenvector for $\tau$.
  \item $(\tau - c) v \neq 0$.  Since $(\tau_H - c) v = 0$, it follows that $e$ is a multiple of $(\tau - c) v$, hence by~\eqref{eqn:ell-cdot-tau-c-cdot-v-=-0.-} that $\ell e = 0$, so that $\ell \in V_H^*$ is an eigenvector for $\tau$.
  \end{itemize}
  In either case, we have produced an eigenvector for $\tau$ in $V_H$ or in $V_H^*$.  Since $R$ is a field, the orthogonal complement of that eigenvector defines a proper $\tau$-invariant subspace of $V^*$ (resp.\ $V$) that contains the vector $e^*$ (resp.\ $e$), contrary to our assumption that that vector is cyclic.
\end{proof}

\begin{lemma}\label{lemma:stable-implies-trivial-stabilizer}
  Let $\tau \in M_{\stab}$.  Then $H \cap G_\tau = \{1\}$.
\end{lemma}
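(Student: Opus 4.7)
The statement is quite short to prove, because the stability hypothesis gives us more than we need: we will only use that $e$ is $\tau$-cyclic.

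My plan is the following. Let $g \in H \cap G_\tau$. The embedding $\mathbf{H} \hookrightarrow \mathbf{G}$ is the one induced by the decomposition $\mathbf{V} = \mathbf{V}_H \oplus \mathbb{Z} e$, i.e., an element of $H$ acts on $V = V_H \oplus R e$ as an automorphism of $V_H$ and as the identity on $R e$. In particular $g e = e$. Since $g$ commutes with $\tau$, we have, for every $j \geq 0$,
\[
g (\tau^j e) \;=\; \tau^j (g e) \;=\; \tau^j e.
\]
Therefore $g$ fixes every element of the submodule $R[\tau] e \subseteq V$. By the assumed $\tau$-cyclicity of $e$ (part of the stability of $\tau$), this submodule equals $V$. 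Hence $g$ acts as the identity on $V$, i.e., $g = 1$.

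There is no real obstacle here; the only subtlety is to remember the precise definition of the embedding $\mathbf{H} \hookrightarrow \mathbf{G}$ from \S\ref{sec:general-linear-ggp-pairs}, which forces $g$ to fix $e$ on the nose. After that, one invocation of Definition~\ref{definition:cyclic} finishes the argument. Note that cyclicity of $e^*$ is not needed; by duality, a completely symmetric argument using $e^* g = e^*$ and cyclicity of $e^*$ would work equally well, which is consistent with the fact that stability is a symmetric condition.
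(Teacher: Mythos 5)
Your proof is correct and is essentially the same as the paper's: the paper's one-line argument is precisely that $e$ is $\tau$-cyclic and fixed by $H$, so any $h \in H \cap G_\tau$ fixes $R[\tau]e = V$ and hence equals $1$. Your closing remark about the dual argument via $e^*$-cyclicity is a nice observation, consistent with the symmetry of the stability condition, but not needed.
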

\begin{proof}
  This follows from the fact that $e$ is $\tau$-cyclic and fixed by $H$, which shows that if $h \in H$ commutes with $\tau$, then it fixes all of $V$, hence $h = 1$.
\end{proof}

\begin{lemma}\label{lemma:tau-stable-implies-tauH-cyclic}
  Let $\tau \in M_{\stab}$.  Then $\tau_H$ is cyclic, that is to say, $V_H$ and, equivalently, $V_H^*$, are $\tau_H$-cyclic.
\end{lemma}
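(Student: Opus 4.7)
The plan is to produce an explicit $\tau_H$-cyclic vector in $V_H$ directly from the assumed $\tau$-cyclicity of $e \in V$, and then to argue symmetrically for $V_H^*$.

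Write $n+1 = \rank(\mathbf{V})$. By Lemma~\ref{lemma:v-tau-cycl-if-only-if-map-modul-begin-rn-right-v-} and the stability of $\tau$, the vectors $e, \tau e, \ldots, \tau^n e$ form an $R$-basis of $V$. Decompose each along $V = V_H \oplus R e$ by setting $a_j := e^* \tau^j e \in R$ and $w_j := 1_H \tau^j e \in V_H$, so that $\tau^j e = a_j e + w_j$ and in particular $w_0 = 0$. Since $e, \tau e, \ldots, \tau^n e$ is an $R$-basis of $V$ and $e$ spans the complementary line $R e$, the remaining $n$ projections $w_1, \ldots, w_n$ must form an $R$-basis of $V_H$.

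The natural candidate for a $\tau_H$-cyclic vector is $v := w_1 = 1_H \tau e$. To verify this, expand $\tau_H w_j$ in the $w_\bullet$-basis. Using $\tau_H = 1_H \tau 1_H$ together with $1_H \tau^j e = \tau^j e - a_j e$, one computes
\[
\tau_H w_j \;=\; 1_H \tau (\tau^j e - a_j e) \;=\; w_{j+1} - a_j w_1
\]
for $1 \le j \le n-1$. Equivalently, $w_{j+1} = \tau_H w_j + a_j v$, and an easy induction yields $w_{j+1} = P_j(\tau_H)\,v$ for some monic polynomial $P_j \in R[X]$ of degree $j$. Hence the transition matrix from $\{v, \tau_H v, \ldots, \tau_H^{n-1} v\}$ to the basis $\{w_1, \ldots, w_n\}$ of $V_H$ is upper triangular with $1$s on the diagonal, and in particular invertible. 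Therefore $v, \tau_H v, \ldots, \tau_H^{n-1} v$ is also an $R$-basis of $V_H$, and Lemma~\ref{lemma:v-tau-cycl-if-only-if-map-modul-begin-rn-right-v-}, applied in the reverse direction, shows that $v$ is $\tau_H$-cyclic.

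For the parallel statement about $V_H^*$, the stability hypothesis also provides the $\tau$-cyclicity of $e^*$. Running the same argument in the ``dual'' setting (with the right action $\ell \mapsto \ell \tau$ on $V^*$ and $1_H$ applied on the right) exhibits $e^* \tau \cdot 1_H \in V_H^*$ as a $\tau_H$-cyclic vector. The only point requiring genuine care is the bookkeeping with the projection $1_H$ and the index shift $w_0 = 0$, which is exactly what accounts for the drop in rank from $n+1$ to $n$ after projection; no step looks like a real obstacle.
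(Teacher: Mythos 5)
Your proof is correct, and it takes a genuinely different route from the paper's, though both exhibit essentially the same cyclic vector: you use $1_H \tau e$ for $V_H$, the paper uses $e^* \tau\,1_H$ for $V_H^*$, and these are duals of each other.

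The difference lies in how the cyclicity is verified. The paper reduces to a local ring and then, via Nakayama's lemma, to a field, and argues by contradiction: if $\ell R[\tau_H] \neq V_H^*$, the orthogonal complement is a nonzero $\tau_H$-invariant subspace $U$ of $V_H$ on which $\tau|_U = \tau_H|_U$, so $U$ is $\tau$-invariant, contradicting part~\eqref{enumerate:there-are-no-nontr-tau-invar-subsp-v_h-or-v_h.-} of Lemma~\ref{lemma:stability-equivalences}. You instead compute directly over the general ring $R$: starting from the $\tau$-cyclic basis $e, \tau e, \dotsc, \tau^n e$ of $V$, you project along $R e$ to get the basis $w_1, \dotsc, w_n$ of $V_H$, establish the recursion $\tau_H w_j = w_{j+1} - a_j w_1$, and observe that the transition matrix from $\{\tau_H^{j} v\}_{0 \le j < n}$ to $\{w_j\}_{1 \le j \le n}$ is unitriangular, hence invertible. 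What your approach buys is the avoidance of any localization or Nakayama reduction, giving a fully constructive argument over an arbitrary commutative ring; what the paper's approach buys is conceptual alignment with the invariant-subspace characterization of stability (Lemma~\ref{lemma:stability-equivalences}\eqref{enumerate:there-are-no-nontr-tau-invar-subsp-v_h-or-v_h.-}), making the stability hypothesis do the work directly rather than through an auxiliary basis computation. Both are clean; yours is arguably more elementary.
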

\begin{proof}
  We will show that $V_H^*$ is cyclic; a similar argument applies to $V_H$.  We will verify more precisely that
  \begin{equation*}
    \ell : V_H \rightarrow R, \qquad \ell(v) := e^* \tau v
  \end{equation*}
  defines a $\tau_H$-cyclic vector $\ell \in V_H^*$, that is to say,
  \begin{equation}\label{eqn:ell-rtau_h-=-v_h.-}
    \ell R[\tau_H] = V_H^*.
  \end{equation}
  In verifying this, we may assume that $R$ is a local ring, and then (by Nakayama's lemma) that $R$ is a field.  In that case, if~\eqref{eqn:ell-rtau_h-=-v_h.-} fails, then the orthogonal complement of $\ell R[\tau_H]$ is a nonzero $\tau_H$-invariant subspace $U$ of $V_H$.  By construction, $0 = \ell(U) = e^* \tau U$, so $\tau|_{U} = \tau_H|_{U}$.  Thus $U$ is in fact a $\tau$-invariant subspace of $V_H$, which by part~\eqref{enumerate:there-are-no-nontr-tau-invar-subsp-v_h-or-v_h.-} of Lemma~\ref{lemma:stability-equivalences} yields $U = 0$, giving the required contradiction.
\end{proof}

\section{Transversality}\label{sec:transversality}

\subsection{Statement of results}\label{sec:transversality-statement-results}
For $\tau \in \mathbf{M}_{\stab}(R)$, $a \in \mathbf{G}(R)$, we define a subfunctor $\mathbf{X}_{\tau,a}$ of $\mathbf{H}_{\tau_H}$ over $R$, as follows: for each ring extension $R'$ of $R$,
\begin{equation*}
  \mathbf{X}_{\tau,a}(R') := \left\{ y \in \mathbf{H}_{\tau_H}(R') : a y \in \mathbf{H}(R') \mathbf{G}_{\tau}(R') \right\}.
\end{equation*}
As we explain below (Lemma~\ref{lemma:each-ring-extens-r-r-we-have-begin-mathbfhr-m}), it defines a closed subscheme.

We observe that if $a$ lies in $\mathbf{H}(R) \mathbf{Z}(R)$, then $\mathbf{X}_{\tau,a} = \mathbf{H}_{\tau_H}$, that is, their point sets coincide for all ring extensions $R'$ of $R$.  Indeed, if $a = h z$ with $(h,z) \in \mathbf{H}(R) \times  \mathbf{Z}(R)$, then for each $y \in \mathbf{H}_{\tau_H}(R')$, we have
\begin{equation*}
  a y = h z y = (h y) z \in \mathbf{H}(R') \mathbf{G}_{\tau}(R').
\end{equation*}
The main result of this section is the following converse:

\begin{theorem}\label{theorem:main-transversality-general-ring}
  Assume that $\rank(\mathbf{V}) \geq 3$.  Let $R$ be a ring in which $2$ is a unit.  Let $(\tau,a) \in \mathbf{M}_{\stab}(R) \times \mathbf{G}(R)$.  Suppose that $\mathbf{X}_{\tau,a} = \mathbf{H}_{\tau_H}$ (as schemes over $R$).  Then $a \in \mathbf{H}(R) \mathbf{Z}(R)$.
\end{theorem}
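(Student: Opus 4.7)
The strategy follows the outline sketched in \S1.7: exploit that $\mathbf{X}_{\tau,a} = \mathbf{H}_{\tau_H}$ holds as an equality of \emph{functors}, which lets us probe the hypothesis over the rings of dual numbers and extract linear and quadratic Taylor information.

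\textbf{Initial reduction.} Setting $y = 1 \in \mathbf{H}_{\tau_H}(R)$ in the hypothesis gives $a \in \mathbf{H}(R)\,\mathbf{G}_{\tau}(R)$, so I may write $a = h c$ with $h \in \mathbf{H}(R)$, $c \in \mathbf{G}_\tau(R)$. Since the functor $\mathbf{X}_{\tau, a}$ is invariant under left multiplication of $a$ by $\mathbf{H}(R)$, I may replace $a$ by $c$ and assume $a \in \mathbf{G}_\tau(R)$ from the outset, reducing the claim to $a \in \mathbf{Z}(R)$.

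\textbf{Linear analysis.} Over $R' = R[\varepsilon]/(\varepsilon^2)$, test the hypothesis on $y = 1 + \varepsilon x$ for $x \in \mathbf{M}_{H,\tau_H}(R)$. Write the resulting factorization as $a(1 + \varepsilon x) = (1 + \varepsilon h_1)\,a\,(1 + \varepsilon c_1)$ with $h_1 \in \mathbf{M}_H(R)$ and $c_1 \in \mathbf{M}_\tau(R)$; matching first-order terms gives
\begin{equation*}
a\,\mathbf{M}_{H,\tau_H}(R)\,a^{-1} \;\subseteq\; \mathbf{M}_H(R) + \mathbf{M}_\tau(R).
\end{equation*}
Now $\mathbf{M}_{H,\tau_H}(R)$ has cyclic basis $1_H, \tau_H, \ldots, \tau_H^{n-1}$ (Lemma~\ref{lemma:tau-stable-implies-tauH-cyclic} and Lemma~\ref{lemma:centralizer-description}). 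I would expand this containment in that basis, producing a triangular system whose leading coefficients are units by stability of $\tau$. The consequences of that system, read off one power of $\tau_H$ at a time, should be exactly the vanishing of certain commutator invariants of $a$ with powers of $\tau$; assembling these yields the conclusion $a^2 \in \mathbf{Z}(R)$.

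\textbf{Quadratic analysis and case split.} Over $R'' = R[\varepsilon_1, \varepsilon_2]/(\varepsilon_1^2, \varepsilon_2^2)$, test $y = (1 + \varepsilon_1 x_1)(1 + \varepsilon_2 x_2)$ for $x_1, x_2 \in \mathbf{M}_{H,\tau_H}(R)$. The coefficients of $\varepsilon_i$ reproduce the splittings $x_i = \alpha(x_i) + \beta(x_i)$ from Step~2 (unique thanks to $\mathbf{H}\cap\mathbf{G}_\tau = \{1\}$, Lemma~\ref{lemma:stable-implies-trivial-stabilizer}), where $\alpha(x_i) \in \Ad(a^{-1})\mathbf{M}_H(R)$ and $\beta(x_i) \in \mathbf{M}_\tau(R)$. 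Symmetrizing the mixed $\varepsilon_1\varepsilon_2$ coefficient, which requires dividing by $2$, should force the $R$-linear map $\beta : \mathbf{M}_{H,\tau_H}(R) \to \mathbf{M}_\tau(R)$ to respect products, producing the multiplicative linear map alluded to in \S1.7. I would then split: if $\beta$ fails to be unital residually at some maximal ideal, multiplicativity applied to $1_H$ together with a generator of $\mathbf{M}_\tau$ would force $a \in \mathbf{Z}(R)$ after localization and globalization; if instead $\beta(1_H) = 1$ everywhere, I would re-run the linear analysis at a perturbed base point $b \in \mathbf{G}_\tau(R[\varepsilon_2]/(\varepsilon_2^2))$ obtained from the quadratic data via $a(1 + \varepsilon_1\tau_H) \in \mathbf{H}(R'')\,b$, and combine the two linear constraints to produce a monic polynomial of degree $2$ annihilating $\tau$. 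Cyclicity of $e$ combined with $n+1 \geq 3$ then contradicts Lemma~\ref{lemma:v-tau-cycl-if-only-if-map-modul-begin-rn-right-v-}, closing the argument.

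The routine dual-numbers calculus in Step~2 is the longest bookkeeping, but the real obstacle will be the quadratic analysis: identifying the correct multiplicative structure on $\beta$ and executing the dichotomy cleanly, especially the re-entry into the linear analysis at the perturbed base point $b$. The hypotheses are essential exactly where one expects: invertibility of $2$ to symmetrize the mixed second-order term, and $\rank(\mathbf{V}) \geq 3$ so that the degree-$2$ relation extracted in the final case genuinely contradicts the cyclicity provided by stability.
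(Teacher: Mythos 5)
Your proposal is correct and follows essentially the same route as the paper's own proof: reduce to $a \in \mathbf{G}_\tau(R)$, apply the dual-numbers linear analysis in the $\tau_H$-power basis (using stability to make the triangular system invertible) to get $a^2 \in \mathbf{Z}(R)$, promote the second-order vanishing to multiplicativity of the induced map $\mathbf{M}_{H,\tau_H}\to\mathbf{M}_\tau$ after dividing by $2$, split on whether that map is residually unital, and in the unital case rerun the linear analysis at the perturbed point $1+\eps\tau_H$ to force a monic degree-$2$ relation on $\tau$, contradicting $\rank\mathbf{V}\geq 3$. The only caveat is that your sketch leaves the two real computations implicit, exactly where the paper expends the most effort: the chain of identities equating the linear tangency condition to $A_j a^{-1} = B_j a$ and hence to $a^2 \in \mathbf{Z}(R)$, and the explicit squaring of $b$ that extracts $\tau \in Ra + R$ from $b^2 \in \mathbf{Z}(R[\eps]/(\eps^2))$; both require genuine care but no idea beyond what you have identified.
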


\begin{remark}[Sharpness with respect to the rank]
  The conclusion of Theorem~\ref{theorem:main-transversality-general-ring} clearly fails when $\rank(\mathbf{V}) = 1$, since in that case, $\mathbf{H}_{\tau_H}$ is trivial.  The conclusion may fail also when $\rank(\mathbf{V}) = 2$.  For example, take $\mathbf{V} = \mathbb{Z}^2$ with standard basis $e_1,e_2$.  Let $e_1^*,e_2^*$ denote the corresponding dual basis of $\mathbf{V}^*$.  Take $e := e_2$ and $e^* := e_2^*$, so that
  \begin{equation*}
    \mathbf{H} =
    \begin{pmatrix}
      \ast & 0 \\
      0 & 1 \\
    \end{pmatrix} \leq \mathbf{G}.
  \end{equation*}
  Take
  \begin{equation*}
    \tau := a :=
    \begin{pmatrix}
      0 & 1 \\
      1 & 0 \\
    \end{pmatrix}.
  \end{equation*}
  Then for any ring $R$, we see that $a$ does not lie in $\mathbf{H}(R) \mathbf{Z}(R)$ (although it does normalize it).  On the other hand, for any
  \begin{equation*}
    y
    =
    \begin{pmatrix}
      y_1 & 0 \\
      0 & 1 \\
    \end{pmatrix}
    \in \mathbf{H}_{\tau_H}(R) = \mathbf{H}(R),
  \end{equation*}
  we have
  \begin{equation*}
    a y = h b \quad \text{ with }
    h :=
    \begin{pmatrix}
      y_1^{-1} & 0 \\
      0 & 1 \\
    \end{pmatrix} \in \mathbf{H}(R),
    \quad
    b :=
    \begin{pmatrix}
      0 & y_1 \\
      y_1 & 0 \\
    \end{pmatrix} \in \mathbf{G}_\tau(R).    
  \end{equation*}
  Thus $\mathbf{X}_{\tau,a} = \mathbf{H}_{\tau_H}$.  The proof of Theorem~\ref{theorem:main-transversality-general-ring} can be adapted to show that these are essentially the only counterexamples when $\rank(\mathbf{V}) = 2$.  This discussion suggests that Theorem~\ref{theorem:main-transversality-general-ring} should be related to the fact that $\mathbf{H} \mathbf{Z}$ has trivial normalizer in $\mathbf{G}$ when $\rank(\mathbf{V}) \geq 3$, although we did not spot a direct way to relate the two conditions.
\end{remark}

\begin{remark}[Sharpness with respect to the ring]\label{remark:we-have-seen-comp-calc-with-grobn-bases-that-when-}
  We have seen by computer calculation with Gr\"{o}bner bases that when $\rank(\mathbf{V}) \in \{3,4\}$, there exist fields $R$ of characteristic $2$ over which there are counterexamples to the conclusion of Theorem~\ref{theorem:main-transversality-general-ring}.  We see no reason why such examples should not exist in any rank.
\end{remark}

Theorem~\ref{theorem:main-transversality-general-ring} is a consequence of some more precise assertions that we now formulate.

\begin{definition}\label{definition:let-r-be-ring.-let-a-in-mathbfgr-y-in-mathbfx_t-ar}
  Let $R$ be a ring.  We define the extension rings
  \begin{equation*}
    R' := R[\eps]/(\eps^2), \quad
    R'' := R[\eps_1,\eps_2]/(\eps_1^2,\eps_2^2),
  \end{equation*}
  which come with natural maps $R' \rightarrow R$ and $R'' \rightarrow R$ obtained by sending $\eps, \eps_1, \eps_2$ to zero.  For a scheme $\mathbf{X}$, we say that $y' \in \mathbf{X}(R')$ lies over $y \in \mathbf{X}(R)$ if $y' \mapsto y$ under the induced map $\mathbf{X}(R') \rightarrow \mathbf{X}(R)$, and similarly for $R''$.

  Let $\tau \in \mathbf{M}_{\stab}(R)$, $a \in \mathbf{G}(R)$ and $y \in \mathbf{X}_{\tau,a}(R)$.
  \begin{enumerate}
  \item We say that $\mathbf{X}_{\tau,a}$ is \emph{tangential at $y$ over $R$} if for all $y' \in \mathbf{H}_{\tau_H}(R')$ lying over $y$, we have $y' \in \mathbf{X}_{\tau,a}(R')$.
  \item We say that $\mathbf{X}_{\tau,a}$ is \emph{doubly-tangential at $y$ over $R$} if for all $y'' \in \mathbf{H}_{\tau_H}(R'')$ lying over $y$, we have $y'' \in \mathbf{X}_{\tau,a}(R'')$.
  \end{enumerate}
\end{definition}
The informal content is that $\mathbf{X}_{\tau,a}$ is tangential (resp.\ doubly-tangential) at $y$ over $R$ if the polynomials whose vanishing defines $\mathbf{X}_{\tau,a}$ over $R$ have the property that their Taylor series at $y$ vanish to order at least $2$ (resp.\ $3$).

\begin{theorem}\label{theorem:characterization-of-tangential-points}
  Retain the setting of Definition~\ref{definition:let-r-be-ring.-let-a-in-mathbfgr-y-in-mathbfx_t-ar}.  The following are equivalent:
  \begin{enumerate}[(i)]
  \item $\mathbf{X}_{\tau,a}$ is tangential at $y$ over $R$.
  \item For the (unique) elements $(h,b) \in \mathbf{H}(R) \times \mathbf{G}_\tau(R)$ defined by writing $a y = h b$, we have $b^2 \in \mathbf{Z}(R)$.
  \end{enumerate}
\end{theorem}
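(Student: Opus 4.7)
The plan is to convert tangentiality into a concrete algebraic condition on $b$, and then show that this condition is equivalent to $b^2 \in \mathbf{Z}(R)$.

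First I would reduce to the case $y = 1$, $h = 1$. Any lift $y' \in \mathbf{H}_{\tau_H}(R')$ of $y$ factors as $y' = yz$ with $z$ lifting $1 \in \mathbf{H}_{\tau_H}(R)$, and $ay' = h(bz)$; thus $y' \in \mathbf{X}_{\tau,a}(R')$ iff $bz \in \mathbf{H}(R')\mathbf{G}_\tau(R')$, so tangentiality at $y$ for $\mathbf{X}_{\tau,a}$ is equivalent to tangentiality at $1$ for $\mathbf{X}_{\tau,b}$. By Lemmas~\ref{lemma:tau-stable-implies-tauH-cyclic} and~\ref{lemma:centralizer-description}, $\mathbf{M}_{H,\tau_H}(R) = R[\tau_H]$ and $\mathbf{M}_\tau(R) = R[\tau]$, so lifts of $1$ in $\mathbf{H}_{\tau_H}(R')$ are $1 + \eps u$ with $u \in R[\tau_H]$, and lifts of $b$ in $\mathbf{H}(R')\mathbf{G}_\tau(R')$ have the form $(1 + \eps h_0)\, b\, (1 + \eps c_0)$ with $h_0 \in \mathbf{M}_H(R)$ and $c_0 \in R[\tau]$. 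Matching the $\eps$-coefficient in $by' = h'b'$ yields the equivalent condition
\begin{equation*}
  \Ad(b)\bigl(R[\tau_H]\bigr) \subseteq \mathbf{M}_H(R) + R[\tau]. \qquad (\star)
\end{equation*}

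The core task is then to show $(\star) \iff b^2 \in \mathbf{Z}(R)$. A useful observation is $b^2 \in \mathbf{Z}(R) \iff \Ad(b)^2 = \id$ on $\mathbf{M}$, i.e., $\Ad(b)$ is an involution. For the reverse direction, assuming $b^2 = \lambda \in R^\times$, we have $\Ad(b)(u) = bub/\lambda$, so $(\star)$ reduces to verifying $b \tau_H^k b \in \mathbf{M}_H(R) + R[\tau]$ for $k = 0, 1, \dots, n-1$; unpacking $\tau_H^k = 1_H(\tau 1_H)^k$ and using $b^2 = \lambda$ to collapse internal copies of $b$ should yield this by direct computation. For the forward direction, I would carry out the linear analysis: compute the residues $\Ad(b)(\tau_H^k) \bmod (\mathbf{M}_H + R[\tau])$ for $k = 0, \dots, n-1$ in a fixed basis of the quotient, then apply an upper-triangular change of basis to recast these as the vanishing of simpler invariants of $(\tau,b)$, built from scalars such as $e^* \tau^i b \tau^j e$ for various $i,j$.

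The main obstacle is the forward direction: pinpointing the correct family of invariants and proving that their collective vanishing forces $b^2 \in \mathbf{Z}(R)$. This is a delicate commutative-algebra statement about the polynomial $p \in R[X]$ realizing $b = p(\tau)$, likely requiring careful organization, perhaps aided by computer-algebra experimentation.
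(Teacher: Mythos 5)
Your reformulation of tangentiality at $1$ as $(\star)$ is correct and is exactly the paper's condition ``$\mu=\nu$'' in Proposition~\ref{lemma:let-r-be-ring-tau-in-mathbfm_st-a-in-mathbfg_t-fol}: indeed $\mu(u)=\nu(u)$ iff $b u b^{-1}\in\mathbf{M}_H(R)+\mathbf{M}_\tau(R)$, since the difference $b u b^{-1}-\mu(u)$ is then annihilated both by $e^*\bullet$ and by $\bullet e$ and hence lies in $\mathbf{M}_H(R)$, and conversely. But you explicitly stop short of proving the forward implication $(\star)\Rightarrow b^2\in\mathbf{Z}(R)$, which is the entire content of the theorem; what you have is a restatement, not a proof.

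The two missing ideas are precisely what the paper supplies. First, Lemma~\ref{lemma:there-exist-c_ij-in-rtau-0-leq-i-leq-j-with-c_i-i-}: with $A_j:=e^* b\tau^j e$ and $B_j:=e^* b^{-1}\tau^j e$, one has $\mu(\tau_H^j)-\nu(\tau_H^j)=\sum_{i\le j}c_{ij}\,(B_i b - A_i b^{-1})$ with $c_{jj}=1$, an \emph{upper-triangular} relation that converts $(\star)$ into the decoupled scalar equations $A_j b^{-1}=B_j b$ for all $j$. Your guessed invariants $e^*\tau^i b\tau^j e$ collapse (since $b$ commutes with $\tau$) to the $A_{i+j}$ alone, but you are missing the partner invariants $B_j$ and, crucially, the triangular structure that lets you peel off one relation at a time. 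Second, a short linear-algebra step: writing $A_j b^{-1}=B_j b$ entrywise as $(b^2)_{k\ell}\,B=\delta_{k\ell}\,A$ for the row vectors $A=(A_0,\dotsc,A_n)$ and $B=(B_0,\dotsc,B_n)$, using stability to invert $P:=(e,\tau e,\dotsc,\tau^n e)$, and right-multiplying by $P^{-1}be$ (which satisfies $B P^{-1} b e = 1$ and $A P^{-1} b e = e^* b^2 e$) gives $(b^2)_{k\ell}=\delta_{k\ell}\,e^* b^2 e$, i.e.\ $b^2\in\mathbf{Z}(R)$. Without these two steps the ``linear analysis'' is an acknowledged but genuine gap; the easy converse you sketch is fine and the paper also treats it lightly.
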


\begin{theorem}\label{theorem:reta-sett-defin-refd-r-be-ring.-let-mathbfgr-y-mat-doubly-tangential}
  Retain the setting of Definition~\ref{definition:let-r-be-ring.-let-a-in-mathbfgr-y-in-mathbfx_t-ar}.  Assume that
  \begin{itemize}
  \item $2$ is a unit in $R$, and
  \item $\rank(\mathbf{V}) \geq 3$.
  \end{itemize}
  Then the following are equivalent:
  \begin{enumerate}[(i)]
  \item $\mathbf{X}_{\tau,a}$ is doubly-tangential at $y$ over $R$.
  \item We have $a \in \mathbf{H}(R) \mathbf{Z}(R)$.
  \end{enumerate}
\end{theorem}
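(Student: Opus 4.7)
The implication $(ii)\Rightarrow(i)$ is immediate: if $a=hz$ with $h\in\mathbf{H}(R)$ and $z\in\mathbf{Z}(R)$, then $ay''=(hy'')z\in\mathbf{H}(R'')\mathbf{G}_\tau(R'')$ for every $y''\in\mathbf{H}_{\tau_H}(R'')$. For the converse, my plan is to reduce to the case $y=1$ and $a\in\mathbf{G}_\tau(R)$. Writing $ay=hb$ with $h\in\mathbf{H}(R)$ and $b\in\mathbf{G}_\tau(R)$ (unique by Lemma~\ref{lemma:stable-implies-trivial-stabilizer}), I would replace $(a,y)$ by $(b,1)$ via the group translation $y'\mapsto yy'$ on $\mathbf{H}_{\tau_H}$: this transports $\mathbf{X}_{\tau,a}$ at $y$ to $\mathbf{X}_{\tau,b}$ at $1$, preserves the doubly-tangential hypothesis, and translates the target $a\in\mathbf{H}(R)\mathbf{Z}(R)$ into $b\in\mathbf{Z}(R)$ (using $\mathbf{H}(R)\mathbf{Z}(R)\cap\mathbf{G}_\tau(R)=\mathbf{Z}(R)$, which follows from the same lemma). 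Renaming $b$ as $a$, it suffices to show: if $a\in\mathbf{G}_\tau(R)$ and $\mathbf{X}_{\tau,a}$ is doubly-tangential at $1$ over $R$, then $a\in\mathbf{Z}(R)$.

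Since doubly-tangential trivially implies tangential, Theorem~\ref{theorem:characterization-of-tangential-points} at $y=1$ supplies the \emph{linear} output $a^2\in\mathbf{Z}(R)$. The new \emph{quadratic} input comes from expanding the factorization $ay''=h''b''$ over $R''=R[\eps_1,\eps_2]/(\eps_1^2,\eps_2^2)$ with $y''=(1+\eps_1 x_1)(1+\eps_2 x_2)$ for $x_1,x_2\in R[\tau_H]$, and reading off the $\eps_1\eps_2$-coefficient. Combining this with the first-order identities of the form $ax_i=L(x_i)a+\phi(x_i)$ (with $L(x_i)\in\mathbf{M}_H(R)$ and $\phi(x_i)\in R[\tau]$) supplied by tangentiality, I would extract a multiplicative linear map $\phi\colon R[\tau_H]=\Lie(\mathbf{H}_{\tau_H})\to R[\tau]=\Lie(\mathbf{G}_\tau)$; this is the content of Lemma~\ref{lemma:second-derivatives-yield-homomorphism-property}.

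Finally, I would split according to whether $\phi(1_H)$ is residually equal to $1$. If it is, I would apply the linear analysis a \emph{second} time, now to the element $b\in\mathbf{G}_\tau(R')$ over $R':=R[\eps_2]/(\eps_2^2)$ defined by the relation $a(1+\eps_1\tau_H)\in\mathbf{H}(R')b$; the output is a monic degree-two polynomial identity satisfied by $\tau$, which contradicts the hypothesis $\rank(\mathbf{V})=n+1\geq 3$, since stability of $\tau$ forces (via Lemma~\ref{lemma:tau-stable-implies-tauH-cyclic}) the minimal polynomial of $\tau$ to have degree $n+1$. Thus this case is impossible. In the remaining case, $\phi(1_H)\not\equiv 1$ residually, and multiplicativity applied in the central (scalar) direction of $R[\tau_H]$, together with $a^2\in\mathbf{Z}(R)$, pins down the ambiguity in the decomposition $a=L(1_H)a+\phi(1_H)$ (using $1_H\cdot x=x$ in $R[\tau_H]$) enough to force $a\in\mathbf{Z}(R)$.

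The main obstacle will be the quadratic step. The first-order decomposition $ax=L(x)a+\phi(x)$ is not canonically unique---its kernel is controlled by $\mathbf{M}_H(R)\cdot a\cap R[\tau]$---so turning the $\eps_1\eps_2$-identity into an honest multiplicative structure requires a careful choice of representatives and substantial book-keeping. The hypothesis $2\in R^\times$ enters at several points, notably in extracting $a$-information from the square relation $a^2\in\mathbf{Z}(R)$, and its necessity is consistent with the characteristic-$2$ counterexamples noted in Remark~\ref{remark:we-have-seen-comp-calc-with-grobn-bases-that-when-}.
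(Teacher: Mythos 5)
Your proposal follows the same route as the paper: reduce to $y=1$ and $a\in\mathbf{G}_\tau(R)$; apply the linear analysis (Theorem~\ref{theorem:characterization-of-tangential-points}) to get $a^2\in\mathbf{Z}(R)$; use the quadratic analysis (Lemma~\ref{lemma:second-derivatives-yield-homomorphism-property}) to build a multiplicative map; split on residual behavior of $\phi(1_H)$ (equivalently, of $A_0=e^*ae$); and in the hard case, run the linear analysis a second time over $R'$ to obtain a monic degree-two relation for $\tau$, contradicting $\rank(\mathbf{V})\geq 3$. This matches the paper's proof in structure and in each key step.

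Two clarifications. First, one must begin by localizing $R$ at a prime where the image of $a$ fails to lie in $\mathbf{H}\mathbf{Z}$ — you implicitly work over a local ring when you say ``residually,'' but this reduction should be stated, since the theorem is asserted for an arbitrary ring $R$. Second, the ``main obstacle'' you flag is not an obstacle: for $a\in\mathbf{G}_\tau(R)$, the decomposition $au=\ell a+t$ with $\ell\in\mathbf{M}_H(R)$, $t\in\mathbf{M}_\tau(R)$ is in fact \emph{unique}. Any $m\in\mathbf{M}_H(R)\cap\mathbf{M}_\tau(R)$ kills $e$ and commutes with $\tau$, hence kills $R[\tau]e=\mathbf{V}(R)$, so $m=0$; and if $\ell a\in\mathbf{M}_\tau(R)$ then $\ell=(\ell a)a^{-1}\in\mathbf{M}_H(R)\cap\mathbf{M}_\tau(R)=\{0\}$. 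This uniqueness (essentially the infinitesimal form of Lemma~\ref{lemma:stable-implies-trivial-stabilizer}) is exactly what the paper encodes in the explicit maps $\mu$ and $\nu$ via ${[\bullet e]}^{-1}$ and ${[e^*\bullet]}^{-1}$, so there is no book-keeping issue. (Also a small citation slip: the contradiction at the end uses that $e$ is $\tau$-cyclic — the definition of stability — so that $\mathbf{V}(R)=R[\tau]e=Re+R\tau e$ cannot have rank $\geq 3$; Lemma~\ref{lemma:tau-stable-implies-tauH-cyclic} concerns $\tau_H$ rather than $\tau$.)
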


Theorem~\ref{theorem:reta-sett-defin-refd-r-be-ring.-let-mathbfgr-y-mat-doubly-tangential} implies in particular that, under the hypotheses of Theorem~\ref{theorem:main-transversality-general-ring}, if $a \notin \mathbf{H}(R) \mathbf{Z}(R)$, then $\mathbf{X}_{\tau,a}$ fails to be doubly-tangential at every point $y \in \mathbf{X}_{\tau,a}(R)$.  In particular, $\mathbf{X}_{\tau,a} \neq \mathbf{H}_{\tau_H}$, so the conclusion of Theorem~\ref{theorem:main-transversality-general-ring} holds.  The remainder of this section is devoted to the proofs of Theorems~\ref{theorem:characterization-of-tangential-points} and~\ref{theorem:reta-sett-defin-refd-r-be-ring.-let-mathbfgr-y-mat-doubly-tangential}.

\subsection{Defining $\mathbf{X}_{\tau,a}$ by polynomial equations}\label{Sec:polyEqn}
Let $R$ be a ring, and let $\tau \in \mathbf{M}_{\stab}(R)$.  By Lemma~\ref{lemma:centralizer-description}, the maps
\begin{align*}
  &\bullet e : \mathbf{M}_{\tau}(R) \rightarrow \mathbf{V}(R), \quad &&x \mapsto x e \\
  &e^* \bullet  : \mathbf{M}_{\tau}(R) \rightarrow \mathbf{V}^*(R), \quad &&x \mapsto e^* x 
\end{align*}
are linear isomorphisms (i.e., isomorphisms of $R$-modules).  We denote by
\begin{align*}
  &{[\bullet e]}^{-1} : \mathbf{V}(R) \rightarrow \mathbf{M}_\tau(R), \quad \\
  &{[e^* \bullet]}^{-1} : \mathbf{V}^*(R) \rightarrow \mathbf{M}_\tau(R) 
\end{align*}
their inverses.  These extend to linear isomorphisms over any ring extension $R'$ of $R$.

We note the following equivariance property:
\begin{lemma}\label{lemma:equivariance-property-of-bullet-e-e-star-inverse-maps}
  For any $v \in \mathbf{V}(R)$ (resp.\ $v^* \in \mathbf{V}^*(R)$) and $a \in \mathbf{G}_{\tau}(R)$, we have
  \begin{equation*} {[\bullet e]}^{-1}(a v) = a {[\bullet e]}^{-1}(v), \qquad {[e^* \bullet ]}^{-1}(v^* a) = {[e^* \bullet]}^{-1}(v^*) a.
  \end{equation*}
\end{lemma}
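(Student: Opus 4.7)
The plan is to verify both identities directly by unfolding the definitions, using Lemma~\ref{lemma:centralizer-description} to handle the only nontrivial point, namely that the claimed elements lie in $\mathbf{M}_\tau(R)$.

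For the first identity, I would set $x := {[\bullet e]}^{-1}(v) \in \mathbf{M}_\tau(R)$, so by definition $x e = v$. Since $a \in \mathbf{G}_\tau(R) \subseteq \mathbf{M}_\tau(R)$ and $\mathbf{M}_\tau(R)$ is a commutative $R$-algebra (Lemma~\ref{lemma:centralizer-description}), the product $a x$ again lies in $\mathbf{M}_\tau(R)$. Evaluating at $e$ gives $(a x) e = a (x e) = a v$, so $a x$ is the unique element of $\mathbf{M}_\tau(R)$ mapping to $a v$ under $\bullet e$, that is, $a x = {[\bullet e]}^{-1}(a v)$. This yields the stated equality. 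The second identity is symmetric: setting $y := {[e^* \bullet]}^{-1}(v^*)$, we have $e^* y = v^*$ and $y a \in \mathbf{M}_\tau(R)$, and $e^* (y a) = (e^* y) a = v^* a$ identifies $y a$ with ${[e^* \bullet]}^{-1}(v^* a)$.

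There is no real obstacle here; the content is simply that $\mathbf{M}_\tau(R)$ is an associative commutative algebra containing $\mathbf{G}_\tau(R)$, so left/right multiplication by $a$ preserves $\mathbf{M}_\tau(R)$ and intertwines with the evaluation-at-$e$ (resp.\ pairing-with-$e^*$) isomorphism. The argument extends verbatim to an arbitrary ring extension $R'$ of $R$, since stability of $\tau$ and the hypotheses of Lemma~\ref{lemma:centralizer-description} are preserved under base change.
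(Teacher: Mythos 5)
Your proof is correct and follows essentially the same route as the paper: both arguments simply unfold the characterization of the inverse maps via evaluation at $e$ (resp.\ pairing with $e^*$) and observe that $a x$ (resp.\ $y a$) is the element of $\mathbf{M}_\tau(R)$ hitting $a v$ (resp.\ $v^* a$). The only cosmetic point is that you cite commutativity of $\mathbf{M}_\tau(R)$ to justify $a x \in \mathbf{M}_\tau(R)$, whereas this only needs the fact that the centralizer is a subring (both are contained in Lemma~\ref{lemma:centralizer-description}), so nothing is amiss.
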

\begin{proof}
  By definition, ${[\bullet e]}^{-1}(v)$ is the unique element of $\mathbf{M}_\tau(R)$ with ${[\bullet e]}^{-1}(v) e = v$, while ${[\bullet e]}^{-1}(a v)$ is the unique element with ${[\bullet e]}^{-1}(a v) e = a v$.  It is clear then that $a {[\bullet e]}^{-1}(v) = {[\bullet e]}^{-1}(a v)$.  A similar argument gives the second identity.
\end{proof}

\begin{lemma}\label{lemma:each-ring-extens-r-r-we-have-begin-mathbfhr-m}
  For each ring extension $R'$ of $R$, we have
  \begin{equation}\label{eqn:mathbfhr-mathbfg_t-=-left-g-in-mathbfgr-:-bull-e-1}
    \mathbf{H}(R') \mathbf{G}_\tau(R') =
    \left\{
      g \in \mathbf{G}(R') :
      {[e^* \bullet]}^{-1}( e^* g) {[\bullet e]}^{-1}(g^{-1} e)  = 1
    \right\}.
  \end{equation}
  In particular, $R' \mapsto \mathbf{H}(R') \mathbf{G}_\tau(R')$ defines a closed subscheme of $\mathbf{G}$ over $R$.
\end{lemma}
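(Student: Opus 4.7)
The strategy is to establish the set-theoretic equality~\eqref{eqn:mathbfhr-mathbfg_t-=-left-g-in-mathbfgr-:-bull-e-1} by direct double containment, and then to observe that the right-hand side is cut out by the vanishing of a regular function on $\mathbf{G}$, hence defines a closed subscheme. The key input is that $\mathbf{H}$ is characterized inside $\mathbf{G}$ as the subgroup fixing both $e$ (under the left action on $\mathbf{V}$) and $e^*$ (under the right action on $\mathbf{V}^*$), i.e., $h \in \mathbf{H}(R')$ iff $he=e$ and $e^*h=e^*$; this follows at once from the decomposition~\eqref{eqn:lambda-=-lambda_h-oplus-mathbbz-e-} together with the definition of $\mathbf{H}$ as the automorphisms of $\mathbf{V}_H$ extended by the identity on $e$.

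For the forward inclusion, I take $g = hb$ with $h \in \mathbf{H}(R')$ and $b \in \mathbf{G}_\tau(R') \subseteq \mathbf{M}_\tau(R')$. Then $g^{-1}e = b^{-1}h^{-1}e = b^{-1}e$ and $e^*g = e^*hb = e^*b$. Since $b^{\pm 1}$ both lie in $\mathbf{M}_\tau(R')$, the inverse maps of Lemma~\ref{lemma:centralizer-description} recover ${[\bullet e]}^{-1}(g^{-1}e) = b^{-1}$ and ${[e^*\bullet]}^{-1}(e^*g) = b$, whose product in $\mathbf{M}_\tau(R')$ equals $1$. For the reverse inclusion, given $g \in \mathbf{G}(R')$ satisfying the displayed identity, I set $b := {[e^*\bullet]}^{-1}(e^*g) \in \mathbf{M}_\tau(R')$ and $c := {[\bullet e]}^{-1}(g^{-1}e) \in \mathbf{M}_\tau(R')$. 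The hypothesis $bc=1$ shows that $b$ is a unit in $\mathbf{M}_\tau(R')$ with inverse $c$; since invertibility in $\mathbf{M}_\tau \subseteq \mathbf{M}$ implies invertibility in $\mathbf{M}$, we have $b \in \mathbf{G}_\tau(R')$. Defining $h := gb^{-1}$, the construction of $b$ and $c$ yields $e^*b = e^*g$ and $b^{-1}e = ce = g^{-1}e$, which translate to $e^*h = e^*$ and $he = e$. Hence $h \in \mathbf{H}(R')$ and $g = hb \in \mathbf{H}(R')\mathbf{G}_\tau(R')$.

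The closed subscheme assertion is then essentially formal. The maps $g \mapsto e^*g$ and $g \mapsto g^{-1}e$ are regular on $\mathbf{G}$ (the latter because $\mathbf{G}$ is the open subscheme of $\mathbf{M}$ where the determinant is invertible, on which the inverse is regular). The inverse maps ${[e^*\bullet]}^{-1}$ and ${[\bullet e]}^{-1}$ are $R$-linear isomorphisms between free $R$-modules, hence regular. Composing with multiplication $\mathbf{M}_\tau \times \mathbf{M}_\tau \to \mathbf{M}$ and subtracting $1$ produces a regular function $\mathbf{G} \to \mathbf{M}$ whose zero locus, over any $R'$, is precisely the right-hand side of~\eqref{eqn:mathbfhr-mathbfg_t-=-left-g-in-mathbfgr-:-bull-e-1}. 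I do not anticipate any real obstacle; the only point requiring a moment's care is that the inverse of $b$ computed inside $\mathbf{M}_\tau$ agrees with its inverse in $\mathbf{M}$, which holds because $\mathbf{M}_\tau$ is a subring.
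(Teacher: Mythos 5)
Your proof is correct and follows essentially the same route as the paper's: the forward inclusion by computing $g^{-1}e = b^{-1}e$, $e^*g = e^*b$ and applying the inverse maps; the reverse inclusion by setting $b := {[e^*\bullet]}^{-1}(e^*g)$, using the identity to see $b$ is invertible in the commutative ring $\mathbf{M}_\tau(R')$ (hence lies in $\mathbf{G}_\tau(R')$), and checking $h := gb^{-1}$ fixes $e$ and $e^*$. Your elaboration of the closed-subscheme assertion and of the characterization $h \in \mathbf{H}(R') \Leftrightarrow he=e, e^*h=e^*$ is a reasonable spelling out of what the paper leaves implicit.
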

\begin{proof}
  Suppose $g = h b$ with $(h,b) \in \mathbf{H}(R') \times \mathbf{G}_\tau(R')$.  Then $g^{-1} e = b^{-1} e$ and $e^* g = e^* b$, so
  \begin{equation*}
    {[e^* \bullet]}^{-1}( e^* g) {[\bullet e]}^{-1}(g^{-1} e) = b b^{-1} = 1.
  \end{equation*}
  
  Conversely, suppose $g$ belongs to the right hand side of~\eqref{eqn:mathbfhr-mathbfg_t-=-left-g-in-mathbfgr-:-bull-e-1}, Then, defining $b, b' \in \mathbf{M}_{\tau}(R')$ by $b' := {[\bullet e]}^{-1}(g^{-1} e)$ and $b := {[e^* \bullet]}^{-1}(e^* g)$, we have $b' b = 1$.  It follows that $b \in \mathbf{G}_\tau(R')$.  Moreover, $h := g b^{-1}$ satisfies $h e = e$ and $e^* h = e^*$, hence $h \in \mathbf{H}(R')$.
\end{proof}
\begin{corollary}\label{corollary:let-begin-f-:-mathbfh_t-right-mathbfm_t-}
  Let
  \begin{equation*}
    f : \mathbf{H}_{\tau_H} \rightarrow \mathbf{M}_\tau
  \end{equation*}
  denote the map of schemes over $R$ defined by
  \begin{equation*}
    f(y) := {[e^* \bullet]}^{-1}( e^* a y) {[\bullet e]}^{-1}({(a y)}^{-1} e).
  \end{equation*}
  Then
  \begin{equation*}
    \mathbf{X}_{\tau, a}(R')
    =
    \left\{
      y \in \mathbf{H}_{\tau_H}(R') :
      f(y) = 1
    \right\}.
  \end{equation*}
  In particular, $\mathbf{X}_{\tau,a}$ defines a closed subscheme of $\mathbf{H}_{\tau_H}$ over $R$.
\end{corollary}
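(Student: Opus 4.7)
The statement is essentially a specialization of Lemma~\ref{lemma:each-ring-extens-r-r-we-have-begin-mathbfhr-m} to the element $g = ay$, together with the observation that the resulting condition cuts out a closed subscheme of $\mathbf{H}_{\tau_H}$. The plan is to carry out precisely this.

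First I would fix a ring extension $R'$ of $R$ and $y \in \mathbf{H}_{\tau_H}(R')$. Since $a \in \mathbf{G}(R) \subseteq \mathbf{G}(R')$ and $y \in \mathbf{H}(R') \subseteq \mathbf{G}(R')$ are both invertible, their product $g := ay$ lies in $\mathbf{G}(R')$, so Lemma~\ref{lemma:each-ring-extens-r-r-we-have-begin-mathbfhr-m} applies. By definition $y \in \mathbf{X}_{\tau,a}(R')$ means $g \in \mathbf{H}(R')\mathbf{G}_\tau(R')$, which the lemma rewrites as ${[e^* \bullet]}^{-1}(e^* g)\,{[\bullet e]}^{-1}(g^{-1} e) = 1$, i.e., $f(y) = 1$. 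This gives the set-theoretic description claimed for $\mathbf{X}_{\tau,a}(R')$.

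For the closed subscheme claim, I would check that $f \colon \mathbf{H}_{\tau_H} \to \mathbf{M}_\tau$ is a morphism of affine schemes over $R$. Multiplication by $a$ and inversion on $\mathbf{G}$ are standard morphisms of affine group schemes; the linear isomorphisms $\bullet e$ and $e^* \bullet$ furnished by Lemma~\ref{lemma:centralizer-description} (together with their inverses) extend to isomorphisms of affine schemes over $R$, as already noted in \S\ref{Sec:polyEqn}; and multiplication in $\mathbf{M}_\tau$ is a morphism. The identity $1$ is a closed point of $\mathbf{M}_\tau$, so $\mathbf{X}_{\tau,a} = f^{-1}(\{1\})$ is a closed subscheme of $\mathbf{H}_{\tau_H}$. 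Since everything reduces to Lemma~\ref{lemma:each-ring-extens-r-r-we-have-begin-mathbfhr-m} and routine verifications that the constituent maps are morphisms, I anticipate no genuine obstacle; the only mildly subtle point is the well-definedness of ${[\bullet e]}^{-1}$ and ${[e^* \bullet]}^{-1}$ over arbitrary $R'$, which is built into their construction in \S\ref{Sec:polyEqn} (stability of $\tau$ being preserved under base change).
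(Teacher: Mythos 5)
Your proof is correct and takes essentially the same route the paper intends: the Corollary is an immediate specialization of Lemma~\ref{lemma:each-ring-extens-r-r-we-have-begin-mathbfhr-m} to $g = ay$, plus the routine observation that $f$ is a morphism of affine $R$-schemes so its fiber over the identity section is closed. The paper in fact supplies no separate proof for the Corollary, so your spelled-out argument matches exactly what was left to the reader.
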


\begin{remark}\label{remark:rewrite-defining-equations-for-X-as-polynomials}
  We may rewrite the defining equation $f(y) = 1$ for $\mathbf{X}_{\tau,a}$ using Cramer's rule for $y^{-1}$, as follows:
  \begin{equation*} {[e^* \bullet]}^{-1}( e^* a y) {[\bullet e]}^{-1}(y^{\mathrm{adj}} a^{-1} e) = \det(y).
  \end{equation*}
  Here $y^{\mathrm{adj}}$ denotes the adjugate or cofactor matrix, characterized by the identity $y y^{\mathrm{adj}} = \det(y)$.  Writing $n+1 = \rank(\mathbf{V})$, so that $\mathbf{M}_{\tau}$ has rank $n+1$, one can see that $\mathbf{X}_{\tau,a}$ is defined by $n+1$ polynomial equations of degree at most $n$ in the entries of $y$.  Indeed, one can parameterize $\mathbf{H}_{\tau_H}$ using Lemma~\ref{lemma:centralizer-description} with $n$ variables, say, $y_1,\cdots y_n$.  Then
  \begin{itemize}
  \item each entry of the row vector $e^* a y$ is an affine function of the $y_i$ (i.e., a linear combination of the $y_i$ and $1$),
  \item the coefficients of ${[e^* \bullet]}^{-1}( e^* a y)\in \mathbf{M}_\tau$ are likewise affine functions of the $y_i$,
  \item the matrices $y^{\mathrm{adj}}, y^{\mathrm{adj}} a^{-1} e$ and ${[\bullet e]}^{-1}(y^{\mathrm{adj}} a^{-1} e) $ have entries given by polynomials in the $y_i$ of total degree at most $n-1$, and
  \item $\det(y)$ is a polynomial in the $y_i$ of total degree $n$.
  \end{itemize}
  Finally, the condition
  \begin{equation*} {[e^* \bullet]}^{-1}( e^* a y) {[\bullet e]}^{-1}(y^{\mathrm{adj}} a^{-1} e) = \det(y)
  \end{equation*}
  is an equality of elements of $\mathbf{G}_\tau$, whose $n+1$ coefficients give rise to $n+1$ polynomial equations of degree at most $n$ in the $y_i$.
\end{remark}

\subsection{Reduction to the centralizer}\label{sec:reduction-centralizer}
The set $\mathbf{X}_{\tau,a}(R)$ is empty unless we can find $h \in \mathbf{H}(R)$ and $y \in \mathbf{H}_{\tau_H}(R)$ so that $b := h a y \in \mathbf{G}_\tau(R)$.  Then for all ring extensions $R'$ of $R$, we have
\begin{equation*}
  \mathbf{X}_{\tau,a}(R') = y \mathbf{X}_{\tau,b}(R').
\end{equation*}
Furthermore, for each point $z\in \mathbf{X}_{\tau, b}(R)$, the following are equivalent:
\begin{enumerate}
\item $\mathbf{X}_{\tau,a}$ is tangential (resp.\ doubly-tangential) at $yz$.
\item $\mathbf{X}_{\tau,b}$ is tangential (resp.\ doubly-tangential) at $z$.
\end{enumerate}
This last equivalence holds in particular for $z = 1$, which lies in $\mathbf{X}_{\tau,b}(R)$ because $b$ lies in $\mathbf{G}_\tau(R)$.  In this way, many questions concerning the $\mathbf{X}_{\tau,a}$ may be reduced to the case $a \in \mathbf{G}_\tau(R)$ and $y = 1$.

\begin{definition}
  Given $a \in \mathbf{G}_\tau(R)$, we denote by
  \begin{equation*}
    \mu, \nu : \mathbf{M}_{H,\tau_H} \rightarrow \mathbf{M}_{\tau}
  \end{equation*}
  the linear maps of free $R$-modules given by
  \begin{equation*}
    \mu(u) := {[e^* \bullet]}^{-1}( e^* a u a^{-1} ),
  \end{equation*}
  \begin{equation*}
    \nu(u) :=  {[\bullet e]}^{-1}(a u a^{-1}  e).
  \end{equation*}
\end{definition}
\begin{lemma}\label{lemma:cj56eyn0v4}
  For $a \in \mathbf{G}_\tau(R)$, the map $f$ from Corollary~\ref{corollary:let-begin-f-:-mathbfh_t-right-mathbfm_t-} is given by
  \begin{equation*}
    f(y) = \bigl(1 + \mu(y-1)\bigr) \bigl(1 + \nu(y^{-1} - 1)\bigr).
  \end{equation*}
\end{lemma}
\begin{proof}
  By the equivariance property noted in Lemma~\ref{lemma:equivariance-property-of-bullet-e-e-star-inverse-maps}, we have, for any extension $R'$ of $R$ and $y \in \mathbf{H}_{\tau_H}(R')$,
  \begin{equation*} [e^\ast \bullet ]^{-1} (e^\ast a y) = [e^\ast \bullet ]^{-1} (e^\ast a y a^{-1} ) a,
  \end{equation*}
  \begin{equation*} [\bullet e]^{-1} ( (a y)^{-1} e) = a^{-1} [\bullet e]^{-1} ( (a y a^{-1} )^{-1} e),
  \end{equation*}
  thus
  \begin{equation*}
    f(y) =
    [e^\ast \bullet ]^{-1} (e^\ast a y a^{-1} )
    [\bullet e]^{-1} ( (a y a^{-1} )^{-1} e).
  \end{equation*}
  The claimed formula follows by writing $y = 1 + (y-1)$ and $y^{-1} = 1 + (y^{-1} - 1)$, so that the parenthetical terms lie in $\mathbf{M}_{H,\tau_H}(R')$, and using that $[e^\ast \bullet ]^{-1} (e^\ast ) = [\bullet e ]^{-1} (e) = 1 \in \mathbf{M}_{\tau}(R')$.
\end{proof}

\subsection{Linear analysis}\label{sec:analysis-first-derivatives}
In this section, we prove Theorem~\ref{theorem:characterization-of-tangential-points}.  By replacing $a$ with $a y$, we may reduce to the case $y = 1$.  By then replacing $a$ with $h^{-1} a$, we may reduce further to the case that $a = b \in \mathbf{G}_\tau(R)$.  The required equivalence then follows from that between~\eqref{itemize:mathbfx_tau-a-has-first-order-tangency-at-1.-} and~\eqref{itemize:we-have-a2-in-mathbfzr.-} in Proposition~\ref{lemma:let-r-be-ring-tau-in-mathbfm_st-a-in-mathbfg_t-fol}, below.

\begin{proposition}\label{lemma:let-r-be-ring-tau-in-mathbfm_st-a-in-mathbfg_t-fol}
  Let $R$ be a ring, $\tau \in \mathbf{M}_{\stab}(R)$, $a \in \mathbf{G}_\tau(R)$.  The following are equivalent:
  \begin{enumerate}[(i)]
  \item\label{itemize:mathbfx_tau-a-has-first-order-tangency-at-1.-} $\mathbf{X}_{\tau,a}$ is tangential at $1$ over $R$.
  \item\label{itemize:line-isom-begin-mu-nu-:-mathbfm_h-tau_hr-right-mat} The maps $\mu$ and $\nu$ attached to $a$ as in \S\ref{sec:reduction-centralizer} satisfy
    \begin{equation}\label{eqn:mu-=-nu.-}
      \mu = \nu.
    \end{equation}
  \item\label{itemize:j-geq-0-define-begin-a_j-:=-e-tauj-e-quad-b_j-:=-e} For $j \geq 0$, define
    \begin{equation*}
      A_j := e^* a \tau^j e, \quad B_j := e^* a^{-1} \tau^j e.
    \end{equation*}
    Then
    \begin{equation}\label{eqn:a_j-a-1-=-b_j-}
      A_j a^{-1} = B_j a.
    \end{equation}
  \item\label{itemize:we-have-a2-in-mathbfzr.-} We have $a^2 \in \mathbf{Z}(R)$.
  \end{enumerate}
\end{proposition}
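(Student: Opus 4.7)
The plan is to establish the cycle of implications $(\mathrm{i}) \Leftrightarrow (\mathrm{ii}) \Leftrightarrow (\mathrm{iii}) \Leftrightarrow (\mathrm{iv})$, with the middle equivalence requiring the most work.

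For $(\mathrm{i}) \Leftrightarrow (\mathrm{ii})$: Any $y' \in \mathbf{H}_{\tau_H}(R')$ lying over $1$ is uniquely of the form $y' = 1 + \eps u$ for some $u \in \mathbf{M}_{H,\tau_H}(R)$, with $(y')^{-1} = 1 - \eps u$. The defining formulas give $\mu(1) = \nu(1) = 1$, so upon extending $\mu, \nu$ $R$-linearly and using the identity $f(y) = \mu(y)\nu(y^{-1})$ from \S\ref{sec:reduction-centralizer},
\[
  f(y') = (1 + \eps\mu(u))(1 - \eps\nu(u)) = 1 + \eps\bigl(\mu(u) - \nu(u)\bigr).
\]
Tangentiality at $1$ amounts to $f(y') = 1$ for every such $y'$, which is therefore equivalent to $\mu = \nu$ on $\mathbf{M}_{H,\tau_H}(R)$.

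For $(\mathrm{iii}) \Leftrightarrow (\mathrm{iv})$: Right-multiplying $A_j a^{-1} = B_j a$ by $a$ yields the equivalent relation $A_j \cdot 1 = B_j \cdot a^2$ in $\mathbf{M}(R)$. The direction $(\mathrm{iv}) \Rightarrow (\mathrm{iii})$ is immediate: if $a^2 = c \in \mathbf{Z}(R)$, then $a = c a^{-1}$, so $A_j = c B_j$, and therefore $B_j a^2 = c B_j = A_j$. For the converse, I take $R$-linear combinations to obtain $e^* a p(\tau) e \cdot 1 = e^* a^{-1} p(\tau) e \cdot a^2$ for all $p \in R[X]$; by the $\tau$-cyclicity of $e$ (Lemma~\ref{lemma:stability-equivalences}), this yields $(e^* a v) \cdot 1 = (e^* a^{-1} v) \cdot a^2$ for all $v \in \mathbf{V}(R)$. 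Specializing to $v = ae$ makes the scalar $e^* a^{-1} v = e^* e = 1$, so $a^2 = (e^* a^2 e) \cdot 1 \in \mathbf{Z}(R)$.

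For $(\mathrm{ii}) \Leftrightarrow (\mathrm{iii})$: Since $\tau_H$ is cyclic (Lemma~\ref{lemma:tau-stable-implies-tauH-cyclic}), Lemma~\ref{lemma:centralizer-description} gives $\mathbf{M}_{H,\tau_H}(R) = R[\tau_H]$, so $\mu = \nu$ reduces to the equations $\mu(\tau_H^j) = \nu(\tau_H^j)$ for $j \geq 0$. Extending $\mu, \nu$ by the same formulas to $\mathbf{M}(R)$ and using $a \in \mathbf{G}_\tau(R)$ to commute $a$ past $\tau^k$, one checks directly that $(\mu-\nu)(\tau^i) = 0$ and $(\mu-\nu)(\tau^k ee^*\tau^l) = A_k \tau^l a^{-1} - B_l \tau^k a$. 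Expanding $\tau_H$ as a rank-two perturbation of $\tau$, namely $\tau_H = \tau - \tau ee^* - ee^*\tau + (e^*\tau e)\, ee^*$, and using the collapsing identity $ee^*\tau^k ee^* = (e^*\tau^k e)\, ee^*$ to reduce products of multiple $ee^*$-factors to single ones, one expresses each $\tau_H^j$ as a linear combination of $\tau^i$ and $\tau^k ee^*\tau^l$ with coefficients that are polynomials in the $e^*\tau^k e$. I would then prove by induction on $j$ the triangular identity
\[
  (\mu - \nu)(\tau_H^j) = -\bigl(A_j a^{-1} - B_j a\bigr) + \sum_{i < j} c_{j,i}(\tau)\,\bigl(A_i a^{-1} - B_i a\bigr)
\]
for some $c_{j,i}(\tau) \in R[\tau]$, with the leading coefficient $-1$ of $A_j a^{-1} - B_j a$ arising from the coefficient $-1$ attached to each of the top-degree monomials $\tau^j ee^*$ and $ee^*\tau^j$ in the expansion of $\tau_H^j$. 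The upper-triangular structure with unit diagonal then delivers $(\mathrm{ii}) \Leftrightarrow (\mathrm{iii})$ in both directions.

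The main obstacle is the inductive verification of this triangular formula. The key inputs $(\mu-\nu)(\tau^i) = 0$ and $(\mu-\nu)(\tau^k ee^*\tau^l) = A_k \tau^l a^{-1} - B_l \tau^k a$ are straightforward from $a \in \mathbf{G}_\tau(R)$, and the collapse $ee^*\tau^k ee^* = (e^*\tau^k e)\, ee^*$ ensures that $\tau_H^j$ lies in the finite-dimensional $R$-span of $\{\tau^i\} \cup \{\tau^k ee^*\tau^l\}$. The bookkeeping then consists in tracking how lower-degree monomials produced when expanding $(\tau - \tau ee^* - ee^*\tau + (e^*\tau e)\, ee^*)^j$ contribute to the coefficients $c_{j,i}(\tau)$ for $i < j$; only the leading $-(A_j a^{-1} - B_j a)$ contribution needs to be tracked explicitly, the rest being absorbed into the lower-order bucket.
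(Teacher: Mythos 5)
Your proof is correct and follows essentially the same route as the paper. The equivalence (i)$\Leftrightarrow$(ii) is the same first-order expansion of $f(1+\eps u)$; the equivalence (ii)$\Leftrightarrow$(iii) is established via the same upper-triangular relation (the paper's Lemma~\ref{lemma:there-exist-c_ij-in-rtau-0-leq-i-leq-j-with-c_i-i-}, with the same expansion of $\tau_H^j$ in terms of $\tau^i$ and $\tau^{j_1}ee^*\tau^{j_2}$ and the same two building-block identities $(\mu-\nu)(\tau^i)=0$, $(\mu-\nu)(\tau^k ee^*\tau^l)=A_k\tau^l a^{-1}-B_l\tau^k a$); and your treatment of (iii)$\Leftrightarrow$(iv) via $R$-linear combinations and $\tau$-cyclicity of $e$, specializing to $v=ae$, is the same computation the paper carries out in matrix language using the invertible matrix $P$ with columns $e,\tau e,\dotsc,\tau^n e$.
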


The proof uses the following key lemma.
\begin{lemma}\label{lemma:there-exist-c_ij-in-rtau-0-leq-i-leq-j-with-c_i-i-}
  There exist $c_{ij} \in M_\tau$ ($0 \leq i \leq j$), with $c_{i i} = 1$, so that for each $j \geq 0$,
  \begin{equation}\label{eqn:mu-tau_hj--nu-tau_hj--=-sum_i-leq-j-c_i-j--left-a}
    \mu  (\tau_H^j ) - \nu (\tau_H^j )
    = \sum_{i \leq j}
    c_{i j }  \left( B_i a - A_i a^{-1} \right).
  \end{equation}
  Here and henceforth, we adopt the convention
  \begin{equation*}
    \tau_H^0 := 1_H.
  \end{equation*}
\end{lemma}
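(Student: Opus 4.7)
The plan is to extend $\mu$ and $\nu$ by the same formulas to $R$-linear maps $\tilde\mu,\tilde\nu : M \to M_\tau$, and observe that both extensions restrict to the identity on $M_\tau$: since $a \in M_\tau$ commutes with every $x \in M_\tau$ we have $axa^{-1}=x$, so $\tilde\mu(x) = [e^*\bullet]^{-1}(e^*x) = x$ by definition of the inverse map, and similarly for $\tilde\nu(x)$. In particular $\tilde\mu(\tau^j) = \tilde\nu(\tau^j) = \tau^j$, so setting $\alpha_j := \tau^j - \tau_H^j$ reduces the task to computing
\begin{equation*}
\mu(\tau_H^j) - \nu(\tau_H^j) = \tilde\nu(\alpha_j) - \tilde\mu(\alpha_j).
\end{equation*}

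First I would obtain a closed form for $\alpha_j$. The identity $\tau_H^{j-1}\cdot 1_H = \tau_H^{j-1}$ (valid because $\tau_H^{j-1} e = 0$) yields $\tau_H^j = \tau_H^{j-1}\tau - \tau_H^{j-1}\tau ee^*$, so $\alpha_j = \alpha_{j-1}\tau + \tau_H^{j-1}\tau ee^*$ with $\alpha_0 = ee^*$. Iterating,
\begin{equation*}
\alpha_j = ee^*\tau^j + \sum_{k=1}^{j}\tau_H^{j-k}\tau\, ee^*\,\tau^{k-1}.
\end{equation*}
Every summand has the shape $xee^*y$ with $y \in M_\tau$.

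Next I would evaluate $\tilde\mu, \tilde\nu$ on such monomials. Using that $[e^*\bullet]^{-1}$ is right-$M_\tau$-equivariant and $[\bullet e]^{-1}$ is left-$M_\tau$-equivariant (both consequences of the commutativity of $M_\tau$), together with the outer-product rewriting $axee^*ya^{-1} = (axe)(e^*ya^{-1})$, one finds for $x \in M$ and $y \in M_\tau$:
\begin{equation*}
\tilde\mu(xee^*y) = (e^*axe)\,ya^{-1}, \qquad \tilde\nu(xee^*y) = (e^*a^{-1}ye)\,a\,r(xe),
\end{equation*}
where $r := [\bullet e]^{-1}$. To apply these to the summands of $\alpha_j$, I would make the vectors $u_m := \tau_H^m\tau e$ explicit. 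Using $V = M_\tau e$ and the recursion $u_{m+1} = 1_H\tau u_m$ (valid because $u_m \in V_H$ for $m \geq 1$, and directly for $m = 0$), one shows $u_m = p_m(\tau)e$ for a unique monic polynomial $p_m \in R[X]$ of degree $m+1$, determined by $p_0(X) = X - e^*\tau e$ and $p_{m+1}(X) = Xp_m(X) - e^*\tau p_m(\tau)e$. The coefficient recursion yields the diagonal identity $p_m^{(i)} = p_{m-i}^{(0)}$ for $0 \leq i \leq m$, together with $p_m^{(m+1)} = 1$.

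Substituting $r(au_{j-k}) = a\,p_{j-k}(\tau)$ and $e^*au_{j-k} = \sum_i p_{j-k}^{(i)} A_i$ into the formulas above and summing, the $\tilde\nu$-side collapses via the index change $i = k-1$ into $\bigl[B_j + \sum_{i=0}^{j-1} p_{j-i-1}(\tau)B_i\bigr]a$. The main obstacle is the matching rearrangement on the $\tilde\mu$-side: this begins as the double sum $A_0\tau^j a^{-1} + \sum_{k=1}^j\sum_i p_{j-k}^{(i)}A_i\tau^{k-1}a^{-1}$, whose terms must be recollected by $A_i$ and recognized as the same polynomial coefficients $p_{j-i-1}(\tau)$; this is precisely where the diagonal identity $p_m^{(i)} = p_{m-i}^{(0)}$ (along with the boundary value $p_{i-1}^{(i)} = 1$) is needed. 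Subtracting then gives the claimed formula with $c_{jj} = 1$ and $c_{ij} = p_{j-i-1}(\tau) \in M_\tau$ for $i < j$.
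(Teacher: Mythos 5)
Your argument is correct and takes a genuinely different route from the paper's proof. The paper expands $\tau_H^j$ directly as $\tau^j - ee^*\tau^j - \tau^j ee^* + \cdots$, observing that the residual terms $\tau^{j_1}ee^*\tau^{j_2}$ (with $j_1, j_2 < j$) enter with coefficients symmetric in $(j_1,j_2)$; under this symmetry the $\mu$- and $\nu$-images recombine into the target shape $\sum c_{ij}(B_i a - A_i a^{-1})$ essentially for free, without any need to identify the $c_{ij}$. You instead funnel all the $ee^*$ factors to one side via the recursion $\alpha_j = \alpha_{j-1}\tau + \tau_H^{j-1}\tau\, ee^*$, which destroys that symmetry; the compensating work is the explicit bookkeeping through the monic polynomials $p_m$, whose coefficient identities $p_m^{(i)} = p_{m-i}^{(0)}$ and $p_{i-1}^{(i)}=1$ are precisely what is needed to see that the $\tilde\mu$- and $\tilde\nu$-sides recollect into matching $M_\tau$-coefficients. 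In exchange you obtain a closed form $c_{ij} = p_{j-i-1}(\tau)$ for $i<j$, which the paper's argument does not produce. Your preliminary reduction via the extensions $\tilde\mu,\tilde\nu : M \to M_\tau$ restricting to the identity on $M_\tau$, so that $\mu(\tau_H^j)-\nu(\tau_H^j) = \tilde\nu(\alpha_j) - \tilde\mu(\alpha_j)$, is also a clean structural observation absent from the paper's proof.
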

\begin{proof}
  We induct on $j$.  We consider first the case $j = 0$.  By definition, $\nu(1_H)$ is the element of $\mathbf{M}_\tau(R)$ such that
  \begin{equation*}
    \nu(1_H) e = a 1_H a ^{-1} e.
  \end{equation*}
  Expanding out $1_H = 1 - e e^*$ gives
  \begin{equation*}
    a 1_H a^{-1} e = e - a e e^* a ^{-1} e.
  \end{equation*}
  Recognizing $e^* a^{-1} e$ as the scalar $B_0$, we deduce that
  \begin{equation*}
    \nu(1_H) e = (1 - B_0 a) e.
  \end{equation*}
  Since $1 - B_0 a \in \mathbf{M}_{\tau}(R)$, it follows from the definition of $\nu$ that
  \begin{equation*}
    \nu(1_H) = 1 - B_0 a.
  \end{equation*}
  A similar argument gives
  \begin{equation}\label{eqn:nu1_h-=-1-a_0-a-1.-}
    \mu(1_H) = 1 - A_0 a^{-1}.
  \end{equation}
  Thus the required identity~\eqref{eqn:mu-tau_hj--nu-tau_hj--=-sum_i-leq-j-c_i-j--left-a} holds for $j = 0$ (with $c_{0 0} = 1$).

  We now aim to verify~\eqref{eqn:mu-tau_hj--nu-tau_hj--=-sum_i-leq-j-c_i-j--left-a} for given $j \geq 1$, assuming that it holds for all smaller values of $j$.  We expand
  \begin{align*}
    \tau_H^j &= (1 - e e^*) \tau (1 - e e^\ast) \tau \dotsb (1 - e e^*) \tau (1 - e e^*) \\
          &=
            \tau^j - e e^* \tau^j - \tau^j e e^* + \dotsb,
  \end{align*}
  where $\dotsb$ denotes a linear combination of terms of the form
  \begin{equation}\label{eqn:tauj_1-e-e-tauj_2-quad-0-leq-j_1-j_2--j-}
    \tau^{j_1} e e^* \tau^{j_2} \quad (0 \leq j_1, j_2 < j)
  \end{equation}
  that is symmetric with respect to $j_1$ and $j_2$.  For instance, when $j = 1$, we have
  \begin{equation*}
    \dotsb =
    e e^\ast \tau e e^\ast
    =
    e (e^\ast \tau e) e^\ast
    =
    (e^* \tau e) e e^*,
  \end{equation*}
  which is a multiple of the vector $e e^\ast$ (i.e.,~\eqref{eqn:tauj_1-e-e-tauj_2-quad-0-leq-j_1-j_2--j-} with $j_1 = j_2 = 0$).  Arguing like in the $j=0$ case, we obtain
  \begin{align}\label{eqn:20230516005320}
    \mu(\tau_H^j) &= \tau^j - A_j a^{-1} - A_0 a^{-1} \tau ^j + \dotsb, \\  \nonumber
    \nu(\tau_H^j) &= \tau^j - B_j a - B_0 a \tau ^j + \dotsb,
  \end{align}
  where $\dotsb$ denotes a linear combination of terms given in the first case by $A_{j_1} \tau^{j_2} a^{-1}$ and in the second by $B_{j_1} a \tau^{j_2}$.  Taking differences, we obtain
  \begin{equation*}
    \mu(\tau_H^j) - \nu(\tau_H^j) = (B_j a - A_j a^{-1} ) + \tau^j (B_0 a -  A_0 a^{-1}) + \dotsb,
  \end{equation*}
  where $\dotsb$ denotes a linear combination of differences
  \begin{equation*}
    \tau^{j_2} (B_{j_1} a - A_{j_1} a^{-1}).
  \end{equation*}
  with $j_1,j_2 < j$.
\end{proof}

\begin{proof}[Proof of Proposition~\ref{lemma:let-r-be-ring-tau-in-mathbfm_st-a-in-mathbfg_t-fol}]
  We check first that~\eqref{itemize:mathbfx_tau-a-has-first-order-tangency-at-1.-} is equivalent to~\eqref{itemize:line-isom-begin-mu-nu-:-mathbfm_h-tau_hr-right-mat}.  For $u \in \mathbf{M}_{H,\tau_H}(R)$, set
  \begin{equation*}
    y := 1 + \eps u \in \mathbf{H}_{\tau_H}(R').
  \end{equation*}
  Then $y^{-1} = 1 - \eps u$.  By Lemma~\ref{lemma:cj56eyn0v4} and the linearity of $\mu$ and $\nu$, we obtain
  \begin{align*}
    f(y) &= \left( 1 + \eps \mu(u) \right) \left( 1 - \eps \nu(u) \right) \\
         &= 1 + \eps \left( \mu(u) - \nu(u) \right).
  \end{align*}
  Thus $f(y) = 1$ for all $u$ if and only if the identity~\eqref{eqn:mu-=-nu.-} holds.

  To verify the equivalence between~\eqref{itemize:line-isom-begin-mu-nu-:-mathbfm_h-tau_hr-right-mat} and~\eqref{itemize:j-geq-0-define-begin-a_j-:=-e-tauj-e-quad-b_j-:=-e}, observe first that powers $\tau_H^j$ for $j \geq 0$ span $\mathbf{M}_{H,\tau_H}(R)$, so the identity~\eqref{eqn:mu-=-nu.-} is equivalent to the identity
  \begin{equation}\label{eqn:mutau_hj-=-nutau_hj-quad-text-all--j-geq-0.-}
    \mu(\tau_H^j) = \nu(\tau_H^j)
  \end{equation}
  holding for all $j \geq 0$.  By inductive application of Lemma~\ref{lemma:there-exist-c_ij-in-rtau-0-leq-i-leq-j-with-c_i-i-}, we see that~\eqref{eqn:mutau_hj-=-nutau_hj-quad-text-all--j-geq-0.-} holds for all $j$ if and only if~\eqref{eqn:a_j-a-1-=-b_j-} does.  The required equivalence follows.

  We verify finally that~\eqref{itemize:j-geq-0-define-begin-a_j-:=-e-tauj-e-quad-b_j-:=-e} and~\eqref{itemize:we-have-a2-in-mathbfzr.-} are equivalent.  The relation~\eqref{eqn:a_j-a-1-=-b_j-} maybe be rewritten
  \begin{equation*}
    B_j a^2 = A_j 1_G.
  \end{equation*}
  For notational clarity in what follows, let us choose a basis of $\mathbf{V}(R)$, so that we may describe $a^2$ and $1_G$ by their matrix entries $a_{ij}$ and $\delta_{i j}$.  Let us also define the row vectors $A = (A _0, \dotsc, A _n )$ and $B = (B _0, \dotsc, B _n)$.  Then we further rewrite
  \begin{equation}\label{eqn:delta_i-j-=-b-a2_ij-.-} {(a^2)}_{ij } B = \delta_{i j} A.
  \end{equation}
  Let $P$ denote the matrix with columns $e, \tau e, \dotsc, \tau^n e$.  Since $\tau \in \mathbf{M}_{\stab}(R)$, the matrix $P$ is invertible.  We compute
  \begin{equation*}
    A = e^* a P, \quad  B = e^* a^{-1} P,
  \end{equation*}
  \begin{equation*}
    B P^{-1} a e = e^* e = 1, \quad 
  \end{equation*}
  \begin{equation*}
    A P^{-1} a e = e^* a^2 e.
  \end{equation*}
  Multiplying the relation~\eqref{eqn:delta_i-j-=-b-a2_ij-.-} on the right by $P^{-1} a e$ gives
  \begin{equation*} {(a^2)}_{i j} = \delta_{i j} e^* a^2 e,
  \end{equation*}
  hence
  \begin{equation*}
    a^2 = (e^* a^2 e) 1_G \in \mathbf{Z}(R).
  \end{equation*}
  This shows that~\eqref{itemize:j-geq-0-define-begin-a_j-:=-e-tauj-e-quad-b_j-:=-e} implies~\eqref{itemize:we-have-a2-in-mathbfzr.-}.  The converse may be verified by reversing the above steps, or directly from the definitions.
\end{proof}

\begin{remark}\label{Remark:recoverLiealgproof}
  The arguments used to establish Theorem~\ref{theorem:characterization-of-tangential-points} lead to a much simpler proof of the following generalization of \cite[Theorem 16.3]{2020arXiv201202187N}: \emph{for each ring} $R$ \emph{with} $2 \in R^\times$\emph{, each} $\tau \in \mathbf{M}_{\stab}(R)$ \emph{and} $x \in \mathbf{M}_\tau(R) - R$\emph{, we have}
  \begin{equation}\label{eqn:x-1_h-notin-mathbfm_hr-+-mathbfm_taur.-} [x,1_H] \notin \mathbf{M}_{H}(R) + \mathbf{M}_{\tau}(R).
  \end{equation}
  (The quoted result asserts the same conclusion when $R$ is a field of characteristic zero.)  To see this, suppose otherwise that $[x,1_H] \in t + \mathbf{M}_H(R)$ for some $t \in \mathbf{M}_\tau(R)$.  Then
  \begin{equation}\label{eqn:e-t-=-ex-1_h-quad-t-e-=-x-1_h-e.-}
    e^* t = e^*[x,1_H], \quad t e = [x,1_H] e.
  \end{equation}
  Using that $1_H = 1 - e e^*$ and identities like $e^* x e e^* = e^* (e^* x e)$, we compute that
  \begin{equation}\label{eqn:e-x-1_h-=-e-x-e-x-e-quad-x-1_h-e-=-x}
    e^* [x,1_H] = e^* (x - e^* x e), \quad [x,1_H] e = - (x - e^* x e) e.
  \end{equation}
  The stability of $\tau$ implies that $t \in \mathbf{M}_{\tau}(R)$ is determined by either of the identities in~\eqref{eqn:e-t-=-ex-1_h-quad-t-e-=-x-1_h-e.-}.  Since both factors in~\eqref{eqn:e-x-1_h-=-e-x-e-x-e-quad-x-1_h-e-=-x} lie in $\mathbf{M}_{\tau}(R)$, it follows that
  \begin{equation*}
    x - e^* x e = t = - x + e^* x e,
  \end{equation*}
  which simplifies to $2 x = 2 e^* x e$.  Since $2 \in R^\times$, we deduce that $x = e^* x e \in R$, contrary to assumption.
\end{remark}

\begin{remark}
  There are many examples of $\tau$ for which we have the implication
  \begin{equation*}
    a \in \mathbf{G}_\tau(R),
    \,
    a^2 \in \mathbf{Z}(R) \implies a \in \mathbf{Z}(R).
  \end{equation*}
  For instance, this implication holds if $R$ is a field of characteristic $\neq 2$ and $a$ is a cyclic element that is not semisimple.  Theorem~\ref{theorem:characterization-of-tangential-points} implies that the conclusion of Theorem~\ref{theorem:main-transversality-general-ring} holds for such $\tau$.
\end{remark}

\begin{remark}\label{remark:cj4u2pnri2}
  The basic cases where Theorem~\ref{theorem:characterization-of-tangential-points} on its own does not suffice to prove Theorem~\ref{theorem:main-transversality-general-ring} are when, with respect to some basis, $\tau$ and $a$ are both diagonal, the entries of $a$ equal to $\pm 1$ but not all the same (cf. Remark~\ref{remark:gl6-example}).  When $R$ is a field, the stability assumption then says that $e$ and $e^*$ are vectors (with inner product $e^* e = 1$) all of whose entries are nonzero.
\end{remark}

\subsection{Quadratic analysis}\label{sec:analysis-second-derivatives}
Here we prove Theorem~\ref{theorem:reta-sett-defin-refd-r-be-ring.-let-mathbfgr-y-mat-doubly-tangential}, following several preparatory lemmas.

\begin{lemma}\label{lemma:second-derivatives-yield-homomorphism-property}
  Let $R$ be ring, $\tau \in \mathbf{M}_{\stab}(R)$ and $a \in \mathbf{G}_{\tau}(R)$.  Assume that $\mathbf{X}_{\tau,a}$ is doubly-tangential at $1$ over $R$; in particular, it is tangential, so by Proposition~\ref{lemma:let-r-be-ring-tau-in-mathbfm_st-a-in-mathbfg_t-fol}, the linear maps $\mu$ and $\nu$ from \S\ref{sec:reduction-centralizer} coincide.  Then
  \begin{equation}\label{eqn:cool-homomorphism-property-u1-u2}
    2 \left( \mu (u _1 u _2 ) - \mu (u _1 ) \mu (u _2 ) \right) = 0
    \quad
    \text{ for all } u_1, u_2 \in \mathbf{M}_{H,\tau_H}(R).
  \end{equation}
\end{lemma}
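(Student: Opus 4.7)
The plan is to feed the specific element $y := (1+\eps_1 u_1)(1+\eps_2 u_2)$ into the doubly-tangential hypothesis, producing an identity in $R''$ whose $\eps_1\eps_2$ coefficient is essentially the desired relation.

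First I would verify that $y \in \mathbf{H}_{\tau_H}(R'')$ and lies over $1$. Since $\tau_H$ is cyclic by Lemma~\ref{lemma:tau-stable-implies-tauH-cyclic}, Lemma~\ref{lemma:centralizer-description} identifies $\mathbf{M}_{H,\tau_H}(R)$ with $R[\tau_H]$, so $u_1$ and $u_2$ commute and $u_1 u_2$ again lies in $\mathbf{M}_{H,\tau_H}(R)$. Each factor $(1+\eps_i u_i)$ is invertible with inverse $1 - \eps_i u_i$, commutes with $\tau_H$, and fixes $e$ and $e^*$ since $u_i \in \mathbf{M}_H$. In particular, $y^{-1} = 1 - \eps_1 u_1 - \eps_2 u_2 + \eps_1\eps_2 u_1 u_2$.

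Next I would expand the product $f(y) = \mu(y)\nu(y^{-1})$ from \S\ref{sec:reduction-centralizer}, using that $\mu$ and $\nu$ extend additively to arguments of the form $1 + (\text{element of } \mathbf{M}_{H,\tau_H}(R''))$ with $\mu(1) = \nu(1) = 1$ (immediate from the definitions and from $a \in \mathbf{M}_\tau$, via Lemma~\ref{lemma:equivariance-property-of-bullet-e-e-star-inverse-maps}). Discarding terms containing $\eps_i^2 = 0$, a direct computation gives
\begin{equation*}
f(y) = 1 + \eps_1(\mu(u_1) - \nu(u_1)) + \eps_2(\mu(u_2) - \nu(u_2)) + \eps_1\eps_2 \, T,
\end{equation*}
where $T := \mu(u_1 u_2) + \nu(u_1 u_2) - \mu(u_1)\nu(u_2) - \mu(u_2)\nu(u_1)$.

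Finally, the double-tangential hypothesis says $f(y) = 1$, forcing $T = 0$ (the $\eps_i$ coefficients merely reproduce the tangency identity $\mu = \nu$ already supplied by Proposition~\ref{lemma:let-r-be-ring-tau-in-mathbfm_st-a-in-mathbfg_t-fol}). Substituting $\mu = \nu$ into $T = 0$ and using the commutativity of $\mathbf{M}_\tau(R)$ (Lemma~\ref{lemma:centralizer-description}) to collapse $\mu(u_1)\mu(u_2) + \mu(u_2)\mu(u_1) = 2\mu(u_1)\mu(u_2)$ yields the claimed $2(\mu(u_1 u_2) - \mu(u_1)\mu(u_2)) = 0$. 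The only genuinely delicate step is guessing the right test element $y$; once that is in hand, the computation is essentially forced, so I do not expect a substantive obstacle.
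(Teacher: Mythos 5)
Your proof is correct and takes essentially the same approach as the paper: feed a specific $y \in \mathbf{H}_{\tau_H}(R'')$ lying over $1$ into the doubly-tangential hypothesis, expand $f(y)=\mu(y)\nu(y^{-1})=1$, and read off the $\eps_1\eps_2$-coefficient using $\mu=\nu$ and the commutativity of $\mathbf{M}_\tau(R)$. The only difference is the choice of test element --- you take $y=(1+\eps_1 u_1)(1+\eps_2 u_2)$, whereas the paper takes the additive perturbation $y=1+\eps_1 u_1 + \eps_2 u_2$ (so its $y^{-1}$ acquires the coefficient $2\eps_1\eps_2 u_1 u_2$) --- but both yield the same identity.
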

\begin{proof}
  With $y := 1 + \eps_1 u_1 + \eps_2 u_2$, we see by a short calculation that
  \begin{equation*}
    y^{-1} = 1 - \eps_1 u_1 - \eps_2 u_2 + 2 \eps_1 \eps_2 u_1 u_2,
  \end{equation*}
  hence
  \begin{equation*}
    \mu (y - 1) = \eps _1 \mu (u _1 ) + \eps _2  \mu (u _2 ).
  \end{equation*}
  \begin{equation*}
    \nu(y^{-1} - 1)
    =  - \eps _1  \nu(u _1 )
    - \eps _2 \nu (u _2 ) + 2 \eps _1 \eps _2 \nu (u _1 u _2 ),
  \end{equation*}
  By our doubly-tangential hypothesis and Lemma~\ref{lemma:cj56eyn0v4}, we obtain the relation
  \begin{equation*}
    \bigl( 1 + \eps_1 \mu (u_1 ) + \eps_2 \mu (u_2 ) \bigr)
    \bigl( 1 - \eps_1 \nu (u_1 ) - \eps_2 \nu (u_2 ) + 2 \eps_1 \eps_2 \nu (u_1 u_2 ) \bigr)
    =
    1.
  \end{equation*}
  Expanding this out, and using that $\mu = \nu$, gives
  \begin{equation*}
    2 \mu (u _1 u _2 )
    =
    \mu  (u _1 ) \mu  (u _2 )
    +
    \mu (u _2 ) \mu (u _1 ).
  \end{equation*}
  The claimed identity follows now from the commutativity of $\mathbf{M}_\tau(R)$ (Lemma~\ref{lemma:centralizer-description}).
\end{proof}
\begin{lemma}\label{lemma:A0-times-a-minus-A0-vanishes}
  Under the same hypotheses as Lemma~\ref{lemma:second-derivatives-yield-homomorphism-property}, and with $A_0 := e^* a e$ as in Proposition~\ref{lemma:let-r-be-ring-tau-in-mathbfm_st-a-in-mathbfg_t-fol}, we have
  \begin{equation}\label{eqn:2-a_0--a_0-=-0.-}
    2 A_0 ( a - A_0) = 0.
  \end{equation}
\end{lemma}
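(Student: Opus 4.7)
The plan is to apply the ``homomorphism-like'' identity~\eqref{eqn:cool-homomorphism-property-u1-u2} from Lemma~\ref{lemma:second-derivatives-yield-homomorphism-property} in the simplest available way, namely with $u_1 = u_2 = 1_H$. This should collapse the whole statement to an identity between scalars and the element $a$.

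First I would check that $1_H$ is idempotent in $\mathbf{M}_{H,\tau_H}(R)$: using $e^{*} e = 1$, a direct expansion gives
\[
1_H^2 = (1 - e e^{*})^2 = 1 - 2 e e^{*} + e (e^{*} e) e^{*} = 1 - e e^{*} = 1_H.
\]
Substituting $u_1 = u_2 = 1_H$ into~\eqref{eqn:cool-homomorphism-property-u1-u2} and using the commutativity of $\mathbf{M}_\tau(R)$ (Lemma~\ref{lemma:centralizer-description}) then yields
\[
2\,\mu(1_H)\bigl(1 - \mu(1_H)\bigr) = 0.
\]

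Next I would insert the explicit formula $\mu(1_H) = 1 - A_0 a^{-1}$ obtained in~\eqref{eqn:nu1_h-=-1-a_0-a-1.-}. Since $A_0 \in R$ is a scalar and $a^{-1} \in \mathbf{M}_\tau(R)$, all of the factors commute inside $\mathbf{M}_\tau(R)$, and the previous display becomes
\[
2\, A_0 a^{-1} \bigl(1 - A_0 a^{-1}\bigr) = 2 A_0 a^{-2}(a - A_0) = 0.
\]
Multiplying through by $a^2$, which is a unit in $\mathbf{M}_\tau(R)$, then produces the desired identity $2 A_0 (a - A_0) = 0$.

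I do not anticipate any genuine obstacle: the heavy lifting sits inside Lemma~\ref{lemma:second-derivatives-yield-homomorphism-property}, and the present statement is essentially its scalar shadow. The only point of minor care is that $A_0$ is a scalar while $a$ lives in $\mathbf{M}_\tau(R)$, so the final identity should be read as an equality in $\mathbf{M}_\tau(R)$; but since $\mathbf{M}_\tau(R)$ is a free $R$-module (Lemma~\ref{lemma:centralizer-description}), this presents no ambiguity.
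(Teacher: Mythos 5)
Your proposal is correct and follows exactly the route the paper takes: specialize the multiplicativity identity~\eqref{eqn:cool-homomorphism-property-u1-u2} to $u_1 = u_2 = 1_H$ (using $1_H^2 = 1_H$), substitute $\mu(1_H) = 1 - A_0 a^{-1}$, and clear denominators by multiplying by $a^2$. The only difference is that you spell out the idempotency check and the intermediate algebra, which the paper compresses into ``a short calculation''.
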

\begin{proof}
  We specialize~\eqref{eqn:cool-homomorphism-property-u1-u2} to $u_1 = u_2 = 1_H$.  The identity~\eqref{eqn:nu1_h-=-1-a_0-a-1.-} for $\mu = \nu$ gives
  \begin{equation*}
    \mu(1_H) = 1 - A_0 a^{-1}.
  \end{equation*}
  A short calculation then gives
  \begin{equation*}
    A_0(a - A_0) = a^2 (\mu(1_H) - {\mu(1_H)}^2).
  \end{equation*}
  The conclusion now follows from~\eqref{eqn:cool-homomorphism-property-u1-u2}.
\end{proof}

\begin{remark}\label{remark:sketch-proof-when-A0-A1-vanish}
  We pause to sketch a proof of Theorem~\ref{theorem:main-transversality-general-ring} in the special case
  \begin{equation}\label{eq:cj3szcth5l}
    A_0 = A_1 = 0
  \end{equation}
  by an argument that seems more illuminating than the general argument given below.  (We note that the hypothesis~\eqref{eq:cj3szcth5l} excludes the case $a \in \mathbf{H}(R) \mathbf{Z}(R)$, since then $A_0$ is a unit.)  Since $A_0 = 0$, we have
  \begin{equation*}
    \mu(1_H) = 1.
  \end{equation*}
  Since $A_1 = 0$, we see from the formula~\eqref{eqn:20230516005320} (specialized to $j=1$) that
  \begin{equation*}
    \mu(\tau_H) = \tau.
  \end{equation*}
  Since $2 \in R^\times$, we deduce from~\eqref{eqn:cool-homomorphism-property-u1-u2} the homomorphism property
  \begin{equation*}
    \mu(u_1) \mu(u_2) = \mu(u_1 u_2).
  \end{equation*}
  By iterating this and applying the Cayley--Hamilton theorem to $\tau_H$, we obtain
  \begin{equation*}
    0 = \mu(P_{\tau_H}(\tau_H)) = P_{\tau_H}(\mu(\tau_H)) = P_{\tau_H}(\tau).
  \end{equation*}
  Since $\tau$ is stable, the characteristic polynomials $P_\tau$ and $P_{\tau_H}$ generate the unit ideal (Lemma~\ref{lemma:stability-equivalences}), contradicting the Cayley--Hamilton theorem for $\tau$.
\end{remark}

\begin{remark}\label{remark:gl6-example}
  The argument sketched in~\ref{remark:sketch-proof-when-A0-A1-vanish}, while aesthetically pleasing, does not suffice to establish the general case of Theorem~\ref{theorem:main-transversality-general-ring}.  We record a counterexample with $\rank(\mathbf{V}) = 6$.  Take $R = \mathbb{Q}(\alpha)$ with $\alpha^2 = 2$,
  \begin{equation*}
    \tau = \diag(0,1,2,2 \alpha,1 + 2 \alpha,2 + 2 \alpha),
  \end{equation*}
  \begin{equation*}
    e^* = (1,1,1,1,1,1), \quad e = \tfrac{1}{6} {(1,1,1,1,1,1)}^t,
  \end{equation*}
  \begin{equation*}
    a = \diag(1,1,1,-1,-1,-1).
  \end{equation*}
  One can verify that $A_0 =B_0= 0$, $A_1 = -\alpha$,
  \begin{equation*}
    \mu(1_H) = 1, \quad \mu(\tau_H) =
    \diag \left(
      \alpha, \alpha + 1, \alpha + 2, \alpha, \alpha + 1, \alpha + 2
    \right),
  \end{equation*}
  and that $P_{\tau_H}(\mu(\tau_H)) = 0$.  We have checked by computer calculation with Gr\"{o}bner bases that no similar examples exist when $\rank(\mathbf{V}) \in \{3,4\}$.

  One can also check in this example that $\mathbf{X}_{\tau,a}$ contains the center of $\mathbf{H}$.  (Of course, once we have proved Theorem~\ref{theorem:main-transversality-general-ring}, we will know that $\mathbf{X}_{\tau,a}$ does not contain the full centralizer $\mathbf{H}_{\tau_H}$.)
\end{remark}

The following lemma contains the final main idea for the proof of Theorem~\ref{theorem:reta-sett-defin-refd-r-be-ring.-let-mathbfgr-y-mat-doubly-tangential}.
\begin{lemma}\label{lemma:let-r-mathfr-be-local-ring-with-2-in-rtim-let-tau-a-in-Z-or-tau-in-rank-two}
  Let $(R,\mathfrak{m})$ be a local ring with $2 \in R^\times$.  Let $\tau \in \mathbf{M}_{\stab}(R)$, $a \in \mathbf{G}_{\tau}(R)$.  Assume that $\mathbf{X}_{\tau,a}$ is doubly-tangential at $1$ over $R$.  Then one of the following is true:
  \begin{enumerate}[(i)]
  \item $a \in \mathbf{Z}(R)$.
  \item $a \notin \mathbf{Z}(R)$, $a^2 \in \mathbf{Z}(R)$ and
    \begin{equation*}
      \tau \in R a + R \subseteq \mathbf{M}(R).
    \end{equation*}
  \end{enumerate}
\end{lemma}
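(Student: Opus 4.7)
The plan is to combine the structural consequences of doubly-tangentiality already established (namely, Proposition~\ref{lemma:let-r-be-ring-tau-in-mathbfm_st-a-in-mathbfg_t-fol}, Lemma~\ref{lemma:second-derivatives-yield-homomorphism-property}, and Lemma~\ref{lemma:A0-times-a-minus-A0-vanishes}) with a Cayley--Hamilton argument exploiting the stability of $\tau$. From the tangential part of the hypothesis we obtain $\mu = \nu$ and $a^2 \in \mathbf{Z}(R)$; set $\lambda := a^2 \in R^\times$. From doubly-tangentiality together with $2 \in R^\times$, we know $\mu$ is multiplicative and $A_0(a - A_0) = 0$. Since $R$ is local, we split according to whether $A_0$ is a unit. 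If $A_0 \in R^\times$, then $a = A_0 \in \mathbf{Z}(R)$ and alternative~(i) holds, so we may assume $A_0 \in \mathfrak{m}$ and aim for~(ii).

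Under this assumption, I would first upgrade $\mu$ to a unit-preserving $R$-algebra homomorphism. Specializing multiplicativity to $1_H = 1_H^2$ shows that $\mu(1_H) = 1 - A_0 a^{-1}$ is idempotent. Since $R[\tau] = \mathbf{M}_\tau(R)$ is a finite commutative algebra over the local ring $R$, the ideal $\mathfrak{m} R[\tau]$ lies in its Jacobson radical, so $1 - A_0 a^{-1}$ is a unit; an idempotent unit must equal $1$, forcing $A_0 = 0$. The direct formula~\eqref{eqn:20230516005320} then specializes to $\mu(\tau_H) = \tau - A_1 a^{-1}$, and applying $\mu$ to the Cayley--Hamilton identity $P_{\tau_H}(\tau_H) = 0$ yields $P_{\tau_H}(\tau - A_1 a^{-1}) = 0$.

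The main obstacle is to show that $A_1$ is a unit. The key observation is that Taylor expansion in the commutative ring $R[\tau]$ gives $P_{\tau_H}(\tau - A_1 a^{-1}) - P_{\tau_H}(\tau) \in A_1 \cdot R[\tau]$, so $P_{\tau_H}(\tau) \in A_1 R[\tau]$. If $A_1$ were in $\mathfrak{m}$, this would force $P_{\tau_H}(\tau) \in \mathfrak{m} R[\tau]$; reducing modulo $\mathfrak{m}$ would give $\overline{P_{\tau_H}}(\bar\tau) = 0$ in $(R/\mathfrak{m})[\bar\tau]$. But by Example~\ref{example:cj3twmtcpz} the image $\bar\tau$ remains stable over the residue field, so by Lemma~\ref{lemma:centralizer-description} the powers $1, \bar\tau, \ldots, \bar\tau^n$ are linearly independent over $R/\mathfrak{m}$, making $\overline{P_{\tau_H}}(\bar\tau)$ (whose leading term is $\bar\tau^n$) nonzero. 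This contradiction forces $A_1 \in R^\times$.

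To finish, I would extract the linear relation from the multiplicativity of $\mu$ applied at level $\tau_H^2$. Direct expansion of $\tau_H^2 = 1_H \tau 1_H \tau 1_H$ (with $A_0 = 0$) gives
$$\mu(\tau_H^2) = \tau^2 - A_1 \tau a^{-1} - A_2 a^{-1} + A_1 \alpha_1 a^{-1}, \qquad \alpha_1 := e^*\tau e,$$
while multiplicativity gives $\mu(\tau_H)^2 = \tau^2 - 2 A_1 \tau a^{-1} + A_1^2 \lambda^{-1}$. Subtracting and multiplying by $a$ collapses the quadratic-in-$\tau$ terms to
$$A_1 \tau = A_2 - A_1 \alpha_1 + A_1^2 \lambda^{-1} a.$$
Dividing by $A_1 \in R^\times$ yields $\tau = (A_1^{-1} A_2 - \alpha_1) + A_1 \lambda^{-1} a \in Ra + R$. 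Combined with $a^2 = \lambda \in \mathbf{Z}(R)$ and the observation that $a \in \mathbf{Z}(R)$ would force $A_0 = a \in R^\times$ (contradicting $A_0 \in \mathfrak{m}$), alternative~(ii) follows.
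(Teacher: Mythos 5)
Your proof is correct but proceeds by a genuinely different route from the paper's after the common first steps (both deduce $A_0(a-A_0)=0$ from Lemma~\ref{lemma:A0-times-a-minus-A0-vanishes}, split on $A_0 \in \mathfrak{m}$, settle the unit case, and invoke $a^2 \in \mathbf{Z}(R)$). The paper then applies tangentiality at the nearby point $y = 1 + \eps\tau_H$ over $R' = R[\eps]/(\eps^2)$: writing $ay = hb$, Theorem~\ref{theorem:characterization-of-tangential-points} gives $b^2 \in \mathbf{Z}(R')$, and squaring the explicit formula for $b$ and comparing with $a^2$ yields $a(a-A_0)\tau \in Ra + R$, after which invertibility of $a(a-A_0)$ (using $A_0 \in \mathfrak{m}$) concludes. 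Your argument instead exploits the multiplicativity of $\mu$ from Lemma~\ref{lemma:second-derivatives-yield-homomorphism-property} directly: the idempotent-unit trick forces the sharper conclusion $A_0 = 0$ (the paper never establishes this), and Cayley--Hamilton applied through $\mu$ yields $P_{\tau_H}(\tau - A_1 a^{-1}) = 0$. Your Jacobson-radical argument showing $A_1 \in R^\times$ is correct (it is equivalent to observing that $P_{\tau_H}(\tau)$ has $\tau^n$-coordinate $1 \notin \mathfrak{m}$ in the free basis $1, \tau, \ldots, \tau^n$ of $R[\tau]$), and the explicit comparison $\mu(\tau_H^2) = \mu(\tau_H)^2$ extracts the linear relation. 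Both routes are legitimate; yours has the merit of making the algebra structure of $\mathbf{M}_\tau$ and the incompatibility of the characteristic polynomials more visible (echoing Remark~\ref{remark:sketch-proof-when-A0-A1-vanish}), while the paper's ``nearby point over dual numbers'' argument more directly reflects the second-order nature of the doubly-tangential hypothesis and avoids the extra step of proving $A_1 \in R^\times$.
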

\begin{proof}
  The hypotheses of Lemma~\ref{lemma:A0-times-a-minus-A0-vanishes} apply, so the identity~\eqref{eqn:2-a_0--a_0-=-0.-} holds.  The argument divides according to whether $A_0$ lies in $\mathfrak{m}$ or not. If $A_0 \notin \mathfrak{m}$, so that $A_0$ is a unit, then from~\eqref{eqn:2-a_0--a_0-=-0.-} and our assumption $2 \in R^\times$, we see that $a \in \mathbf{Z}(R)$.  The remainder of the proof concerns the case $A_0 \in \mathfrak{m}$.

  By Theorem~\ref{theorem:characterization-of-tangential-points} and our hypothesis that $\mathbf{X}_{\tau,a}$ is (doubly-)tangential at $1$ over $R$, we have
  \begin{equation*}
    a^2 \in \mathbf{Z}(R).
  \end{equation*}
  This hypothesis further implies that for
  \begin{equation*}
    R' := R[\eps] / (\eps^2), \quad
    y := 1 + \eps \tau _H \in \mathbf{H}_{\tau_H}(R'),
  \end{equation*}
  there exists $(h,b) \in \mathbf{H}(R') \times \mathbf{G}_\tau(R')$ such that $a y = h b$.  It implies finally that $\mathbf{X}_{\tau,a}$ is tangential at $y$ over $R'$, hence by Theorem~\ref{theorem:characterization-of-tangential-points} that
  \begin{equation*}
    b^2 \in \mathbf{Z}(R').
  \end{equation*}

  We retain the notation $A_j, B_j$ from Proposition~\ref{lemma:let-r-be-ring-tau-in-mathbfm_st-a-in-mathbfg_t-fol}.  From the calculation
  \begin{equation*}
    e^* a \tau_H = e^* a (1 - e e^*) \tau ( 1-  e e^*)
    =  e^* \left( a \tau - A_0 \tau - A_1 + A_0 (e^* \tau e) \right),
  \end{equation*}
  we see that
  \begin{equation*}
    b = a + \eps \left(a \tau - A _0 \tau - A _1 + A _0 (e^* \tau e)\right).
  \end{equation*}
  Squaring this relation gives
  \begin{equation}\label{eqn:b2-=-a2-+-2-eps-lefta-tau-_0-tau-_1-+-_0-e-tau}
    b^2 = a^2 + 2 \eps a \left(a \tau - A _0 \tau - A _1 + A _0 (e^* \tau e)\right),
  \end{equation}
  using here that $a$ and $\tau$ commute.  Using now that both $a^2 $ and $b^2$ lie in $\mathbf{Z}(R')$, together with our assumption $2 \in R^\times$, we deduce that
  \begin{equation*}
    a (a - A_0) \tau \in R a + R \subseteq \mathbf{M}(R).
  \end{equation*}
  Using next our assumption $A_0 \in \mathfrak{m}$, we see that $a(a-A_0)$ is invertible.  Using again that $a^2 \in \mathbf{Z}(R)$, we conclude that $\tau \in R a + R$.
\end{proof}

\begin{proof}[Proof of Theorem~\ref{theorem:reta-sett-defin-refd-r-be-ring.-let-mathbfgr-y-mat-doubly-tangential}]
  We have already noted, at the start of \S\ref{sec:transversality-statement-results}, that if $a \in \mathbf{H}(R) \mathbf{Z}(R)$, then $\mathbf{X}_{\tau,a} = \mathbf{H}_{\tau_H}$; in particular, $\mathbf{X}_{\tau,a}$ is doubly-tangential at $y$ over $R$.  The converse is the interesting direction.

  Suppose, thus, that $2$ is a unit in $R$, $\rank(\mathbf{V}) \geq 3$, $\tau \in \mathbf{M}_{\stab}$ and $a \notin\mathbf{H}(R) \mathbf{Z}(R)$, but $\mathbf{X}_{\tau,a}$ is doubly-tangential at some $y \in \mathbf{X}_{\tau,a}(R)$ over $R$.  We aim to derive a contradiction.

  By passing to the localization of $R$ at a prime $\mathfrak{p}$ for which the image of $a$ does not lie in $\mathbf{H}(R_\mathfrak{p}) \mathbf{Z}(R_\mathfrak{p})$, we may assume that $R$ is a local ring.  We may reduce further to the case $a \in \mathbf{G}_\tau(R) - \mathbf{Z}(R)$ and $y=1$, for the same reasons as noted at the beginning of \S\ref{sec:analysis-first-derivatives}.

  Since $\mathbf{X}_{\tau,a}$ is doubly-tangential at $1$ over $R$, we know from Theorem~\ref{theorem:characterization-of-tangential-points} and Lemma~\ref{lemma:let-r-mathfr-be-local-ring-with-2-in-rtim-let-tau-a-in-Z-or-tau-in-rank-two} that $a^2 \in \mathbf{Z}(R)$ and $\tau \in R a + R$.  In particular, we may find $c_0,c_1,c_2 \in R$ so that $\tau = c_0 + c_1 a$ and $a^2 = c_2$.  Then $\tau ^2 = c _0 ^2 + c _1^2 c _2 + 2 c _0 c _1 a$, hence $\tau ^2 - 2 c _0 \tau + c _0 ^2 - c _1 ^2 c _2 =0$.  Since $\tau$ is stable, the vector $e$ is $\tau$-cyclic, so
  \begin{equation*}
    \mathbf{V}(R) = R [\tau] e = R e + R \tau e.
  \end{equation*}
  Since the rank of $\mathbf{V}$ is $\geq 3$, we obtain a contradiction.
\end{proof}
\section{Volume bounds}\label{sec:volumebound}
We now apply the results of \S\ref{sec:transversality} to deduce a uniform solution to Problem~\ref{Probleminformal}.

\subsection{Bounds for polynomial congruences}\label{sec:cj3v60uw7y}
Let $n$ and $d$ be natural numbers.  In \S\ref{sec:cj3v60uw7y}, we use the equivalent notations $A \ll B$ and $A = \O(B)$ to denote that $|A| \leq C |B|$, where $C$ depends at most upon $n$ and $d$.

\begin{lemma}\label{lemma:hypersurface-count}
  Let $F$ be a finite field, of cardinality $q$.  Let $P \in F[x_1,\dotsc,x_n]$ be a polynomial of degree at most $d$ whose coefficients are not all zero.  Then
  \begin{equation*}
    \lvert \{x \in F^n : P(x) = 0\} \rvert \ll q^{n-1}.
  \end{equation*}
\end{lemma}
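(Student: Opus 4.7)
The plan is to prove this by induction on the number of variables $n$, using the standard Schwartz–Zippel style argument, which yields the explicit bound $d \cdot q^{n-1}$ (with implicit constant depending only on $n$ and $d$).

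The base case $n = 1$ is immediate: a nonzero univariate polynomial of degree at most $d$ has at most $d$ roots in $F$, and $d \ll 1 = q^0$.

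For the inductive step ($n \geq 2$), I would write
\[
  P(x_1,\dotsc,x_n) = \sum_{i=0}^{d} P_i(x_2,\dotsc,x_n)\, x_1^{i},
\]
where each $P_i \in F[x_2,\dotsc,x_n]$ has degree at most $d-i$, and where some $P_i$ is nonzero by hypothesis. Let $j$ be the largest index with $P_j \neq 0$. Now partition the set of zeros of $P$ in $F^n$ according to whether $P_j$ vanishes on the tuple $(x_2,\dotsc,x_n)$ or not.
\begin{itemize}
\item If $P_j(x_2,\dotsc,x_n) \neq 0$, then $P(\cdot,x_2,\dotsc,x_n)$ is a nonzero univariate polynomial in $x_1$ of degree exactly $j \leq d$, so has at most $d$ roots. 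Summed over all $q^{n-1}$ possible tuples $(x_2,\dotsc,x_n)$, this contributes at most $d\, q^{n-1}$ zeros.
\item If $P_j(x_2,\dotsc,x_n) = 0$, then by the inductive hypothesis applied to the nonzero polynomial $P_j$ in $n-1$ variables of degree at most $d$, the number of such tuples is $\ll q^{n-2}$. For each, trivially at most $q$ choices of $x_1$ yield a zero, giving $\ll q^{n-1}$ zeros in total.
\end{itemize}
Summing the two contributions gives the desired bound $\ll q^{n-1}$.

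There is no real obstacle here; this is a routine induction. The only minor point to be careful about is keeping the implicit constant uniform in $q$ (it depends only on $n$ and $d$), which is automatic since in both cases the constants produced by the induction step and by the base case depend only on $n$ and $d$.
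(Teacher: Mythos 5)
Your proof is correct and is the standard Schwartz--Zippel induction on the number of variables, which is essentially the argument the paper sketches (the paper phrases it as iteratively specializing $x_1, x_2, \dotsc$ and noting that all but $\O(1)$ specializations leave a nonzero polynomial, but the underlying decomposition is the same). The only cosmetic difference is that you split on whether the leading coefficient $P_j(x_2,\dotsc,x_n)$ vanishes, while the paper splits on whether the specialized polynomial $P(a_1, x_2, \dotsc, x_n)$ vanishes; both give the same $\ll q^{n-1}$ bound with constants depending only on $n$ and $d$.
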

\begin{proof}
  By the fundamental theorem of algebra, we know that for all but $\O(1)$ many $x_1$, the polynomial that we get by specializing to that value is nonzero.  By iterating this observation for $x_2, x_3$, and so on, we obtain the required estimate.
\end{proof}

\begin{lemma}\label{lemma:let-d-geq-0.-supp-given-subs-mathc-subs-mathfr-p}
  Let $m$ be a natural number.  Suppose given a subset $\mathcal{D} \subseteq {(\mathfrak{o}/\mathfrak{p})}^n$ and a polynomial $P \in (\mathfrak{o}/\mathfrak{p}^m)[X_1,\dotsc,X_n]$ of degree $\leq d$.  Let $\mathcal{D}_m \subseteq {(\mathfrak{o}/\mathfrak{p}^m)}^n$ denote the inverse image of $\mathcal{D}$.
  \begin{enumerate}[(i)]
  \item\label{enumerate:assume-that-each-y-in-mathc-one-line-tayl-coeff-p-} Assume that for each $y \in \mathcal{D}_m$, at least one of the linear Taylor coefficients for $P$ at $y$ is a unit.  Then
    \begin{equation*}
      \lvert \left\{ y \in \mathcal{D}_m : P(y) \in \mathfrak{p}^m \right\} \rvert \ll q^{m n - m}.
    \end{equation*}
  \item\label{enumerate:assume-that-each-y-in-mathc-one-line-or-quadr-tayl} Assume that for each $y \in \mathcal{D}_m$, at least one of the linear or quadratic Taylor coefficients for $P$ at $y$ is a unit.  Then
    \begin{equation*}
      \lvert \left\{ y \in \mathcal{D}_m : P(y) \in \mathfrak{p}^m \right\} \rvert \ll q^{m n - \lceil m/2 \rceil}.
    \end{equation*}
  \end{enumerate}
\end{lemma}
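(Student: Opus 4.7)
The plan is to prove both parts by induction on $m$, combining Hensel-style lifts with Lemma \ref{lemma:hypersurface-count}.

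For part (i), the base case $m=1$ is immediate: the hypothesis that some $\partial_i P$ takes a unit value at every $\bar y \in \mathcal{D}$ implies that $P$ reduces to a nonzero polynomial modulo $\mathfrak{p}$, so Lemma \ref{lemma:hypersurface-count} gives $\ll q^{n-1}$ residue-class solutions. For the inductive step from $m$ to $m+1$, I would parametrize lifts of a solution $y_0 \in (\mathfrak{o}/\mathfrak{p}^m)^n$ as $y_0 + \varpi^m z$ with $z \in (\mathfrak{o}/\mathfrak{p})^n$, and use the Taylor expansion
\[
  P(y_0 + \varpi^m z) \equiv P(y_0) + \varpi^m \nabla P(y_0)\cdot z \pmod{\mathfrak{p}^{m+1}}
\]
(the remainder lying in $\mathfrak{p}^{2m} \subseteq \mathfrak{p}^{m+1}$). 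The desired congruence reduces to an affine condition in $z$ over $\mathfrak{o}/\mathfrak{p}$ whose linear part is nonzero, so has exactly $q^{n-1}$ solutions; the bound $M_m \ll q^{mn-m}$ follows.

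For part (ii), I would split $\mathcal{D} = \mathcal{D}^A \sqcup \mathcal{D}^B$, where $\mathcal{D}^A$ consists of those $\bar y$ at which some linear Taylor coefficient is already a unit. Solutions with $\bar y \in \mathcal{D}^A$ contribute $\ll q^{mn-m} \leq q^{mn - \lceil m/2 \rceil}$ by part (i). For $\bar y \in \mathcal{D}^B$, all linear coefficients of $P$ lie in $\mathfrak{p}$, while by hypothesis some quadratic coefficient $\partial_i \partial_j P(\bar y)$ is a unit; the latter forces $\partial_i P$ to be a nonzero polynomial (its $j$-partial evaluates to a unit), so Lemma \ref{lemma:hypersurface-count} yields $|\mathcal{D}^B| \ll q^{n-1}$. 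For fixed $\bar y_0 \in \mathcal{D}^B$, I would pick an integral lift $\tilde y_0$, write $\nabla P(\tilde y_0) = \varpi u$, and parametrize $y = \tilde y_0 + \varpi w$ for $w \in (\mathfrak{o}/\mathfrak{p}^{m-1})^n$. Taylor expansion then factors as $P(y) = P(\tilde y_0) + \varpi^2 Q(w)$, where
\[
  Q(w) = u\cdot w + \tfrac{1}{2}\,w^T H(\tilde y_0)\,w + \varpi\, T_3(w) + \varpi^2\, T_4(w) + \cdots
\]
is a polynomial of degree $\leq d$ whose Hessian at every point coincides with $H(\tilde y_0)$ modulo $\mathfrak{p}$ and hence contains a unit entry. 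The condition $P(y) \equiv 0 \pmod{\mathfrak{p}^m}$ reduces to $Q(w) \equiv c \pmod{\mathfrak{p}^{m-2}}$ for some $c$ (nontrivial only when $P(\tilde y_0) \in \mathfrak{p}^2$), and this only sees $w$ through its image in $(\mathfrak{o}/\mathfrak{p}^{m-2})^n$. Applying the inductive hypothesis of part (ii) to $Q$, which still satisfies the unit-quadratic hypothesis at every point, bounds the count of such $w'$ by $\ll q^{(m-2)n - \lceil (m-2)/2 \rceil}$; multiplying by $q^n$ (for the free top digit of $w$) and summing over $\bar y_0 \in \mathcal{D}^B$ gives
\[
  M_m^B \ll |\mathcal{D}^B|\cdot q^n \cdot q^{(m-2)n - \lceil (m-2)/2 \rceil} \ll q^{mn - \lceil m/2 \rceil},
\]
which closes the induction; the small cases $m \in \{1,2\}$ either follow directly from Lemma \ref{lemma:hypersurface-count} or arise as degenerate instances of the recursion (with the convention $q^0 = 1$).

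The main delicacy is the bookkeeping in part (ii): one must verify that the shifted-and-rescaled polynomial $Q$ retains the unit-Hessian hypothesis uniformly in its argument (which it does, since its Hessian agrees with $H(\tilde y_0) \bmod \mathfrak{p}$ regardless of $w$), and that the count $|\mathcal{D}^B| \ll q^{n-1}$ is tight enough for the quadratic regime to contribute within the target bound. The latter hinges on the key observation that the unit-Hessian hypothesis forces at least one $\partial_i P$ to be nonzero as a polynomial, placing $\mathcal{D}^B$ inside a proper hypersurface and unlocking Lemma \ref{lemma:hypersurface-count}.
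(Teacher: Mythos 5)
Your proposal is correct (assuming odd residue characteristic) and follows the same broad strategy as the paper: reduce to the hypersurface count at $m=1$, Hensel-lift in the regime where some linear Taylor coefficient is a unit, and rescale by $\varpi^{-2}$ in the regime where only a quadratic coefficient is a unit, descending in steps of two. The organizational differences are mild: you partition $\mathcal{D}$ into $\mathcal{D}^A \sqcup \mathcal{D}^B$ up front, whereas the paper first bounds the number of residue classes meeting $\{\bar{P}=0\}$ via the $m=1$ case and then translates the bad class to the origin before the case split. One step of yours is genuinely different, however: your bound $|\mathcal{D}^B| \ll q^{n-1}$ proceeds by placing $\mathcal{D}^B$ inside the zero locus of some $\partial_i P$, with $\partial_i P \not\equiv 0 \bmod \mathfrak{p}$ inferred from a unit second-order Taylor coefficient. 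That inference can fail in residue characteristic $2$: for $P = X_1^2$ the quadratic Taylor coefficient $c_{2 e_1}$ equals $1$, yet $\partial_1 P \equiv 0$, since $c_{2 e_i} = \tfrac{1}{2}\partial_i^2 P$ only when $2$ is invertible. The paper's count, which instead bounds $|\{\bar{y} : \bar{P}(\bar{y}) = 0\}| \ll q^{n-1}$ by applying the $m=1$ case to $\bar{P}$ itself, works in all characteristics. This caveat is harmless for the application (Theorem~\ref{theorem:volume-bound} assumes $q$ odd), but since the present lemma imposes no parity restriction, your argument would be cleaner if you bounded the number of contributing residue classes by the size of the vanishing locus of $\bar{P}$ as the paper does, rather than of its gradient.
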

\begin{proof}
  In the case $m=1$, either assertion reduces to the hypersurface bound (Lemma~\ref{lemma:hypersurface-count}).

  The case $m \geq 2$ of part~\eqref{enumerate:assume-that-each-y-in-mathc-one-line-tayl-coeff-p-} reduces via Hensel's lemma to the case $m = 1$.  Indeed, under the stated assumptions, we have more precisely that each fiber of $\mathcal{D}_m \rightarrow \mathcal{D}$ contains at most $q^{(m-1)(n-1)}$ elements $y$ for which $P(y) \in \mathfrak{p}^m$.  To see this, suppose for instance that the coefficient of $X_1$ in the Taylor expansion of $P$ at $y$ is a unit.  We may then use Hensel's lemma and the condition $P(y) \in \mathfrak{p}^m$ to determine $y_1 \in \mathfrak{o}/\mathfrak{p}^m$ in terms of the other coefficients.  The number of possibilities for each of $y_2,\dotsc,y_n$ is at most $q^{m-1}$, giving the required estimate.

  We turn to part~\eqref{enumerate:assume-that-each-y-in-mathc-one-line-or-quadr-tayl}.  The case $m=2$ follows formally from the case $m=1$, using that solutions modulo $\mathfrak{p}^2$ map to solutions modulo $\mathfrak{p}$.  We address the cases $m \geq 3$ by induction.  We first observe that if $P(y) \in \mathfrak{p}^m$, then certainly $P(y) \in \mathfrak{p}$, and the latter condition depends only upon the class of $y$ modulo $\mathfrak{p}$.  By the case $m=1$ that we have already addressed, the number of such classes is $\ll q^{n-1}$.  By replacing $P$ with a translate, we reduce to showing that when $P(0) \in \mathfrak{p}^m$, we have
  \begin{equation*}
    \lvert \left\{ y \in {(\mathfrak{p}/\mathfrak{p}^m)}^n : P(y) \in \mathfrak{p}^m \right\} \rvert \ll q^{m n  - \lceil m/2 \rceil - (n-1)}.
  \end{equation*}
  We now consider two cases separately:

  The first is when some linear Taylor coefficient of $P$ is a unit, say the coefficient of $X_1$.  Then, arguing as in part~\eqref{enumerate:assume-that-each-y-in-mathc-one-line-tayl-coeff-p-}, we see that the cardinality in question is at most $q^{(m-1) (n-1)}$, which is better than the required estimate because $m - 1 \geq \lceil m/2 \rceil$ for $m \geq 3$.

  The second is when all linear Taylor coefficients of $P$ lie in $\mathfrak{p}$.  We then form the polynomial
  \begin{equation*}
    Q(y) := \varpi^{-2} P(\varpi y) \in (\mathfrak{o} / \mathfrak{p}^{m-2})[X_1,\dotsc,X_n]
  \end{equation*}
  and we see that
  \begin{equation*}
    \lvert \left\{ y \in {(\mathfrak{p}/\mathfrak{p}^m)}^n : P(y) \in \mathfrak{p}^m \right\} \rvert
    =
    q^n \lvert \left\{ y \in {(\mathfrak{o}/\mathfrak{p}^{m-2})}^n : Q(y) \in \mathfrak{p}^{m-2} \right\} \rvert.
  \end{equation*}
  By hypothesis, some quadratic Taylor coefficient of $P$ is a unit, hence the same holds for $Q$.  We may thus apply our inductive hypothesis to bound the cardinality on the right hand side, giving that the left hand side satisfies the estimate
  \begin{equation*}
    \ll q^n q ^{(m- 2) n - \lceil (m-2)/2 \rceil}
    =
    q ^{m n - \lceil m/2 \rceil - (n-1)},
  \end{equation*}
  as required.
\end{proof}

\subsection{Norms and distance functions}\label{sec:norms-distance-functions}
For each $m \geq 0$, we equip the vector space $F^m$ with the norm
\begin{equation*}
  \left\lvert  (v_1,\dotsc,v_m)  \right\rvert_F := \max_{1 \leq j \leq m} \lvert v _j  \rvert_F.
\end{equation*}
We apply this notation more generally to any vector space that comes with a natural basis, e.g., to the space of $m_1 \times m_2$ matrices over $F$.

We choose a basis $\{e_1,\dotsc,e_n\}$ of $\mathbf{V}_H(\mathfrak{o})$, so that $\{e_1,\dotsc,e_n,e\}$ is a basis of $\mathbf{V}(\mathfrak{o})$, and use these bases to identify $\mathbf{V}(F), \mathbf{V}^*(F)$ and $\mathbf{G}(F)$ with matrices over $F$.  The above notation then applies.

For $g \in \mathbf{G}(F)$, we write
\begin{equation*}
  g =
  \begin{pmatrix}
    a & b \\
    c & d \\
  \end{pmatrix},
  \quad
  g^{-1} =
  \begin{pmatrix}
    a' & b' \\
    c' & d' \\
  \end{pmatrix},
\end{equation*}
where $a$ and $a'$ are $n \times n$ block matrices, and set
\begin{equation*}
  d_H(g) := \min\left(1,
    \max
    \left\{
      \lvert b/d \rvert_F,
      \lvert b'/d' \rvert_F,
      \lvert c/d \rvert_F,
      \lvert c'/d' \rvert_F
    \right\}
  \right).
\end{equation*}
This definition differs mildly from that in~\cite[\S4.2]{2020arXiv201202187N}, but agrees up to a constant factor, for reasons explained there.

\begin{lemma}\label{lemma:let-d-geq-0-g-in-k_g-be-such-that-begin-g-in-k_h-k}
  Let ${\ell} \geq 0$ and $g \in K$ be such that
  \begin{equation}\label{eqn:g-in-k_h-k_z-k_gmathfrakpd.-}
    g \in K_H K_Z K(\mathfrak{p}^\ell).
  \end{equation}
  Then $d_H(g) \leq q^{-\ell}$, with equality if $g \notin K_H K_Z K(\mathfrak{p}^{\ell+1})$.
\end{lemma}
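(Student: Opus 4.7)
The plan is a direct block computation. For the inequality $d_H(g) \leq q^{-\ell}$, I would write $g = \eta z k$ per the hypothesis with $\eta = \diag(\eta_0,1) \in K_H$, $z = \zeta I \in K_Z$, and $k \in K(\mathfrak{p}^\ell)$, and expand in $(n+1) \times (n+1)$ blocks. Writing $k = \bigl(\begin{smallmatrix} k_{11} & k_{12} \\ k_{21} & k_{22} \end{smallmatrix}\bigr)$, the congruence $k \equiv I \pmod{\mathfrak{p}^\ell}$ gives $k_{12}, k_{21}$ with entries in $\mathfrak{p}^\ell$ and, for $\ell \geq 1$, $k_{22} \in 1 + \mathfrak{p}^\ell \subseteq \mathfrak{o}^\times$. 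Multiplying out yields $d = \zeta k_{22} \in \mathfrak{o}^\times$ together with $b/d = (\eta_0 k_{12})/k_{22}$ and $c/d = k_{21}/k_{22}$, both with entries in $\mathfrak{p}^\ell$. Applying the same reasoning to $g^{-1} = k^{-1} z^{-1} \eta^{-1}$ (using that $k^{-1} \in K(\mathfrak{p}^\ell)$ since the latter is a subgroup) controls $b'/d'$ and $c'/d'$. This gives $d_H(g) \leq q^{-\ell}$; the case $\ell = 0$ is vacuous since $d_H \leq 1$ by definition.

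For the equality clause under $g \notin K_H K_Z K(\mathfrak{p}^{\ell+1})$, I would argue the contrapositive: if $d_H(g) \leq q^{-\ell-1}$, then $g \in K_H K_Z K(\mathfrak{p}^{\ell+1})$. The first part already produced $d, d' \in \mathfrak{o}^\times$, so the bound $d_H(g) \leq q^{-\ell-1}$ upgrades to $b, c, b', c'$ having entries in $\mathfrak{p}^{\ell+1}$. To reconstruct the refined decomposition, I need $a \in \GL_n(\mathfrak{o})$; this follows from the relation $a a' + b c' = I_n$ coming from $g g^{-1} = I$, because $b c'$ then has entries in $\mathfrak{p}^{2\ell+2} \subseteq \mathfrak{p}$, so $a a' \equiv I_n \pmod{\mathfrak{p}}$, giving $a \in \GL_n(\mathfrak{o})$. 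Then the choice $\eta := \diag(d^{-1} a, 1) \in K_H$, $z := d \cdot I \in K_Z$ yields
\[ (\eta z)^{-1} g = \begin{pmatrix} 1 & a^{-1} b \\ d^{-1} c & 1 \end{pmatrix} \in K(\mathfrak{p}^{\ell+1}), \]
contradicting the hypothesis.

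The argument is elementary and algorithmic; no conceptual obstacle arises. The only slightly nonroutine step is using $g g^{-1} = I$ to deduce $a \in \GL_n(\mathfrak{o})$, which is also where the ``inverse-side'' quantities $b'/d', c'/d'$ in the definition of $d_H$ play their role.
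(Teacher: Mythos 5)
Your approach matches the paper's, which is likewise a direct block computation; the paper's one-sentence proof records essentially the same observation (for $\ell \geq 1$, membership in $K_H K_Z K(\mathfrak{p}^\ell)$ forces $d, d' \in \mathfrak{o}^\times$ and the off-diagonal blocks of $g$ and $g^{-1}$ into $\mathfrak{p}^\ell$). Your first part, and your reconstruction of the decomposition in the second part, are both correct for $\ell \geq 1$.

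The one genuine gap is the $\ell = 0$ case of the equality clause. There you invoke ``the first part already produced $d, d' \in \mathfrak{o}^\times$,'' but when $\ell = 0$ the hypothesis $g \in K_H K_Z K(\mathfrak{o}) = K$ is vacuous, so the first part yields nothing about $d$ or $d'$. You need a separate short argument that $d_H(g) \leq q^{-1}$ together with $g \in K$ already forces $d, d' \in \mathfrak{o}^\times$: if $d \in \mathfrak{p}$, then $\lvert b_i/d \rvert_F \leq q^{-1}$ forces every entry $b_i$ of $b$ into $d\,\mathfrak{p} \subseteq \mathfrak{p}$, so the entire last column of $g$ lies in $\mathfrak{p}$, whence $\det g \in \mathfrak{p}$, contradicting $g \in K$; the same applied to $g^{-1}$ handles $d'$. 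With that patch inserted, the rest of your argument --- the relation $a a' + b c' = I_n$ to deduce $a \in \GL_n(\mathfrak{o})$, and the explicit choice of $\eta$ and $z$ --- goes through for all $\ell \geq 0$. (The paper's own proof is stated only for $\ell \geq 1$ and is silent on this same boundary case.)
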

\begin{proof}
  This follows readily from the following observation: for $\ell \geq 1$, the membership~\eqref{eqn:g-in-k_h-k_z-k_gmathfrakpd.-} says that $d, d' \in \mathfrak{o}^\times$ and that at least one $b$ or $c$ (and $b'$ or $c'$) lies in $\mathfrak{p}^{\ell}$.
\end{proof}

In other words, for $g \in K = \mathbf{G}(\mathfrak{o})$, the quantity $d_H(g)$ is the infimum of $q^{-\ell}$ taken over all $\ell \geq 0$ for which the image of $g$ in $\mathbf{G}(\mathfrak{o}/\mathfrak{p}^{\ell})$ lies in $\mathbf{H}(\mathfrak{o}/\mathfrak{p}^{\ell})\mathbf{Z}(\mathfrak{o}/\mathfrak{p}^{\ell})$. For example, if the image of $g$ does not lie in $\mathbf{H} (\mathfrak{o}/\mathfrak{p}) $, then $d_H(g) = 1$, while if $g$ lies in $\mathbf{Z}(\mathfrak{o}) \mathbf{H}(\mathfrak{o})$, then $d_H(g) = 0$.  We abbreviate
\begin{equation*} {d_H(g)}^\infty :=
  \begin{cases}
    1 & \text{ if } d_H(g) = 1, \\
    0 &  \text{ if } d_H(g) < 1.
  \end{cases}
\end{equation*}
We note that for a nonzero ideal $\mfq \subseteq \mathfrak{p}$ and $a \in \mathbf{G}(\mathfrak{o}/\mfq) - \mathbf{H}(\mathfrak{o}/\mfq) \mathbf{Z}(\mathfrak{o}/\mfq)$, the quantity $d_H(g)$ is well-defined.

\subsection{Miscellaneous lemmas}
Let $(F,\mathfrak{o},\mathfrak{p},q)$ be a non-archimedean local field, and let $\mfq \subseteq \mathfrak{p}$ be a nonzero $\mathfrak{o}$-ideal.

\begin{lemma}\label{lemma:supp-char-mathfr-odd.-let-a-in-mathbfgm}
  Assume that $q$ is odd.  Let $a \in \mathbf{G}(\mathfrak{o}/\mfq)$ such that
  \begin{enumerate}[(i)]
  \item the image of $a$ in $\mathbf{G}(\mathfrak{o}/\mathfrak{p})$ lies in $\mathbf{Z}(\mathfrak{o}/\mathfrak{p})$, and
  \item $a^2 \in \mathbf{Z}(\mathfrak{o}/\mfq)$.
  \end{enumerate}
  Then $a \in \mathbf{Z}(\mathfrak{o}/\mfq)$.
\end{lemma}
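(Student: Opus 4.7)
The plan is to reduce to an inductive argument on the exponent $m \geq 1$ defined by $\mfq = \mathfrak{p}^m$, peeling off one power of $\mathfrak{p}$ at a time.

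The base case $m=1$ is exactly hypothesis (i). For the inductive step, assume the conclusion holds modulo $\mathfrak{p}^{m-1}$, so that the image of $a$ in $\mathbf{G}(\mathfrak{o}/\mathfrak{p}^{m-1})$ equals $c_0 \cdot 1$ for some $c_0 \in (\mathfrak{o}/\mathfrak{p}^{m-1})^\times$. Pick any lift $c \in \mathfrak{o}/\mathfrak{p}^m$ of $c_0$. Then $a - c \cdot 1$ lies in $\varpi^{m-1} \mathbf{M}(\mathfrak{o}/\mathfrak{p}^m)$, so I can write
\begin{equation*}
  a = c \cdot 1 + \varpi^{m-1} b
\end{equation*}
for some $b \in \mathbf{M}(\mathfrak{o}/\mathfrak{p})$ (well-defined because multiplication by $\varpi^{m-1}$ kills $\mathfrak{p}$). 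Note that $c$ is a unit in $\mathfrak{o}/\mathfrak{p}^m$, since $a$ is invertible and $c$ reduces to $c_0 \in (\mathfrak{o}/\mathfrak{p})^\times$.

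Next, square the expression for $a$. Since $m \geq 2$, we have $2(m-1) \geq m$, so the $\varpi^{2(m-1)} b^2$ term vanishes in $\mathfrak{o}/\mathfrak{p}^m$, giving
\begin{equation*}
  a^2 = c^2 \cdot 1 + 2 c \varpi^{m-1} b.
\end{equation*}
Hypothesis (ii) says $a^2$ is central, so $2 c \varpi^{m-1} b$ is a scalar in $\mathbf{M}(\mathfrak{o}/\mathfrak{p}^m)$, equivalently $2 c b$ is a scalar in $\mathbf{M}(\mathfrak{o}/\mathfrak{p})$. Because $q$ is odd and $c$ is a unit, $2c$ is a unit in $\mathfrak{o}/\mathfrak{p}$, so $b$ itself is a scalar in $\mathbf{M}(\mathfrak{o}/\mathfrak{p})$. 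Plugging this back into the expression $a = c + \varpi^{m-1} b$ shows $a \in \mathbf{Z}(\mathfrak{o}/\mathfrak{p}^m)$, completing the induction.

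There is no real obstacle here: the only subtle point is ensuring that the $b^2$ contribution drops out (which is automatic once $m\geq 2$) and that $2c$ is invertible (which uses both assumptions on $q$ and on the invertibility of $a$). The argument is essentially a Hensel-type lift in the associated graded direction.
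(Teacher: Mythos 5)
Your proof is correct, and it is essentially the same argument as the paper's, just packaged as an induction on $m$ rather than as a direct argument by contradiction: the paper writes $a = \lambda(1+\varpi^{\ell}x)$ with $\ell$ the first level at which $a$ fails to be scalar and reduces $a^2$ modulo $\mathfrak{p}^{\ell+1}$, whereas you peel off the top layer, but the core computation --- the $\varpi^{2(m-1)}b^2$ (resp.\ $\varpi^{2\ell}x^2$) term vanishing, so that centrality of $a^2$ together with $2\in R^\times$ forces $b$ (resp.\ $x$) to be scalar --- is identical.
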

\begin{proof}
  Write $\mfq = \mathfrak{p}^m$.  If the conclusion fails, then we may write $a = \lambda (1 + \varpi^{\ell} x)$, where $\lambda \in {(\mathfrak{o}/\mfq)}^\times$, $1 \leq \ell < m$ and $x \in \mathbf{M}(\mathfrak{o}/\mfq)$ has the property that its image $\bar{x}$ in $\mathbf{M}(\mathfrak{o}/\mathfrak{p})$ does not lie in $\mathbf{Z}(\mathfrak{o}/\mathfrak{p})$.  Then
  \begin{equation*}
    \mathbf{Z}(\mathfrak{o}/\mfq) \ni a^2 / \lambda^2 = 1 + 2 \varpi^{\ell} x + \varpi^{2 \ell} x^2.
  \end{equation*}
  Reducing this identity modulo $\mathfrak{p}^{\ell+1}$ and using that $2$ is a unit, we deduce that $\bar{x}$ lies in $\mathbf{Z}(\mathfrak{o}/\mathfrak{p})$, giving the required contradiction.
\end{proof}

Recall from Definition~\ref{definition:let-r-be-ring.-let-a-in-mathbfgr-y-in-mathbfx_t-ar} the meaning of ``tangential''.
\begin{lemma}\label{lemma:supp-char-mathfr-odd.-let-a-in-mathbfgm-not-tangential}
  Assume that $q$ is odd.  Let $a \in \mathbf{G}(\mathfrak{o}/\mfq) - \mathbf{H}(\mathfrak{o}/\mfq) \mathbf{Z}(\mathfrak{o}/\mfq)$ with $d_H(a) < 1$.  Then for each $y \in \mathbf{X}_{\tau,a}(\mathfrak{o}/\mfq)$, we have that $\mathbf{X}_{\tau,a}$ is not tangential at $y$ over $\mathfrak{o}/\mfq$.
\end{lemma}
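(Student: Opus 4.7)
The plan is to argue by contradiction using Theorem~\ref{theorem:characterization-of-tangential-points} together with Lemma~\ref{lemma:supp-char-mathfr-odd.-let-a-in-mathbfgm}. Suppose that $\mathbf{X}_{\tau,a}$ is tangential at some $y \in \mathbf{X}_{\tau,a}(\mathfrak{o}/\mfq)$. Write $R := \mathfrak{o}/\mfq$ and decompose $ay = h b$ with $(h,b) \in \mathbf{H}(R) \times \mathbf{G}_{\tau}(R)$ as in \S\ref{sec:reduction-centralizer}. By Theorem~\ref{theorem:characterization-of-tangential-points}, this decomposition satisfies
\begin{equation*}
  b^2 \in \mathbf{Z}(R).
\end{equation*}

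The main idea is now to show that $b$ itself is central, whence $ay = hb \in \mathbf{H}(R)\mathbf{Z}(R)$ and hence (since $y \in \mathbf{H}_{\tau_H}(R) \subseteq \mathbf{H}(R)$) $a \in \mathbf{H}(R)\mathbf{Z}(R)$, contradicting the hypothesis. The centrality of $b$ will follow from Lemma~\ref{lemma:supp-char-mathfr-odd.-let-a-in-mathbfgm} once we verify its two hypotheses, namely that $b$ reduces to a scalar modulo $\mathfrak{p}$, and that $b^2$ is central; the second is immediate from the above.

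To verify the first hypothesis, I would reduce modulo $\mathfrak{p}$ and exploit the assumption $d_H(a) < 1$. By the characterization of $d_H$ in \S\ref{sec:norms-distance-functions}, the image of $a$ in $\mathbf{G}(\mathfrak{o}/\mathfrak{p})$ lies in $\mathbf{H}(\mathfrak{o}/\mathfrak{p})\mathbf{Z}(\mathfrak{o}/\mathfrak{p})$. Since $y \in \mathbf{H}(R)$, the same is true of $ay$, so the decomposition $ay = hb$ gives $\bar{b} \in \bar{h}^{-1}\cdot \mathbf{H}(\mathfrak{o}/\mathfrak{p})\mathbf{Z}(\mathfrak{o}/\mathfrak{p}) = \mathbf{H}(\mathfrak{o}/\mathfrak{p})\mathbf{Z}(\mathfrak{o}/\mathfrak{p})$. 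On the other hand, $\bar{b}$ lies in $\mathbf{G}_{\tau}(\mathfrak{o}/\mathfrak{p})$. Writing $\bar{b} = \bar{h}_1 \bar{z}$ with $\bar{h}_1 \in \mathbf{H}(\mathfrak{o}/\mathfrak{p})$ and $\bar{z} \in \mathbf{Z}(\mathfrak{o}/\mathfrak{p}) \subseteq \mathbf{G}_{\tau}(\mathfrak{o}/\mathfrak{p})$, we see that $\bar{h}_1 \in \mathbf{H}(\mathfrak{o}/\mathfrak{p}) \cap \mathbf{G}_{\tau}(\mathfrak{o}/\mathfrak{p})$. Stability is preserved under base extension (Example~\ref{example:cj3twmtcpz}), so Lemma~\ref{lemma:stable-implies-trivial-stabilizer} applies over $\mathfrak{o}/\mathfrak{p}$ to give $\bar{h}_1 = 1$, whence $\bar{b} = \bar{z} \in \mathbf{Z}(\mathfrak{o}/\mathfrak{p})$.

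With both hypotheses of Lemma~\ref{lemma:supp-char-mathfr-odd.-let-a-in-mathbfgm} in hand, we conclude $b \in \mathbf{Z}(R)$ and thus $a \in \mathbf{H}(R)\mathbf{Z}(R)$, the desired contradiction. The argument is essentially mechanical once the right reductions are assembled; the only nontrivial step is observing that $\mathbf{H} \cap \mathbf{G}_{\tau}$ is trivial after base change to the residue field, which is guaranteed by the preservation of stability under reduction.
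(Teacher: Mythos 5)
Your proof is correct and follows essentially the same route as the paper's: contradiction via Theorem~\ref{theorem:characterization-of-tangential-points} to get $b^2 \in \mathbf{Z}(\mathfrak{o}/\mfq)$, reduction modulo $\mathfrak{p}$ together with $d_H(a)<1$ and Lemma~\ref{lemma:stable-implies-trivial-stabilizer} to get $\bar b \in \mathbf{Z}(\mathfrak{o}/\mathfrak{p})$, then Lemma~\ref{lemma:supp-char-mathfr-odd.-let-a-in-mathbfgm} to conclude $b \in \mathbf{Z}(\mathfrak{o}/\mfq)$ and hence $a \in \mathbf{H}(\mathfrak{o}/\mfq)\mathbf{Z}(\mathfrak{o}/\mfq)$. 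The paper compresses the middle step to ``by the uniqueness of the decomposition $\bar a \bar y = \bar h \bar b$,'' which you have simply unwound explicitly.
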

\begin{proof}
  Suppose otherwise that $\mathbf{X}_{\tau,a}$ is tangential at $y$ over $\mathfrak{o}/\mfq$. Writing $a y = h b$ with $(h,b) \in \mathbf{H}(\mathfrak{o}/\mfq) \times \mathbf{G}_\tau(\mathfrak{o}/\mfq)$, it follows then by Theorem~\ref{theorem:characterization-of-tangential-points} that $b^2 \in \mathbf{Z}(\mathfrak{o}/\mfq)$.  On the other hand, since $d_H(a) < 1$, the image $\bar{a}$ of $a$ modulo $\mathfrak{p}$ lies in $\mathbf{H}(\mathfrak{o}/\mathfrak{p}) \mathbf{Z}(\mathfrak{o}/\mathfrak{p})$. By the uniqueness of the decomposition $\bar{a} \bar{y} = \bar{h} \bar{b}$, it follows that $\bar{b}$ lies in $\mathbf{Z}(\mathfrak{o}/\mathfrak{p})$.  By Lemma~\ref{lemma:supp-char-mathfr-odd.-let-a-in-mathbfgm}, it follows that $b \in \mathbf{Z}(\mathfrak{o}/\mfq)$.  But then $a \in \mathbf{H}(\mathfrak{o}/\mfq) \mathbf{Z}(\mathfrak{o}/\mfq)$, contrary to assumption.
\end{proof}

\begin{lemma}\label{lemma:cardinality-centralizer}
  Let $\tau \in \mathbf{M}(\mathfrak{o}/\mfq)$ be cyclic.  Write $n$ for the rank of $\mathbf{V}$.  Then the group $\mathbf{G}_{\tau}(\mathfrak{o}/\mfq)$ has cardinality at least ${(1 - q^{-1})}^n {[\mathfrak{o}:\mfq]}^n$.
\end{lemma}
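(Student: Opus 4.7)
My plan is to identify $\mathbf{M}_\tau(\mathfrak{o}/\mfq)$ with a quotient ring and then count units there using the local/residue structure of $\mathfrak{o}/\mfq$.

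First, I would invoke Lemma~\ref{lemma:centralizer-description}: since $\tau$ is cyclic, there is a cyclic vector $v$, and $x \mapsto xv$ gives an isomorphism of $\mathfrak{o}/\mfq$-modules $\mathbf{M}_\tau(\mathfrak{o}/\mfq) \xrightarrow{\sim} \mathbf{V}(\mathfrak{o}/\mfq)$, so $\mathbf{M}_\tau(\mathfrak{o}/\mfq)$ is free of rank $n$ over $\mathfrak{o}/\mfq$. The same lemma tells us that $\mathbf{M}_\tau(\mathfrak{o}/\mfq) = (\mathfrak{o}/\mfq)[\tau]$, and since $P_\tau(\tau)=0$ by Cayley--Hamilton and both sides are free of rank $n$, the evaluation map $(\mathfrak{o}/\mfq)[X]/(P_\tau) \to \mathbf{M}_\tau(\mathfrak{o}/\mfq)$ is an isomorphism of rings. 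Setting $A := (\mathfrak{o}/\mfq)[X]/(P_\tau)$, the task is to show $|A^\times| \geq (1-q^{-1})^n [\mathfrak{o}:\mfq]^n$.

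Next, I would reduce modulo $\mathfrak{p}$ to work over the residue field $k := \mathfrak{o}/\mathfrak{p}$. The reduction $A \twoheadrightarrow \bar A := k[X]/(\bar P_\tau)$ is a surjection of $\mathfrak{o}/\mfq$-modules with kernel of size $|A|/|\bar A| = [\mathfrak{o}:\mfq]^n / q^n$. An element of $A$ is a unit if and only if its image in $\bar A$ is a unit (a standard fact for finite commutative rings, or one notes $\mathfrak{p}/\mfq$ is nilpotent in $\mathfrak{o}/\mfq$). Hence
\begin{equation*}
|A^\times| = \frac{[\mathfrak{o}:\mfq]^n}{q^n} \cdot |\bar A^\times|,
\end{equation*}
reducing matters to a lower bound $|\bar A^\times| \geq q^n (1-q^{-1})^n$.

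Finally, I would factor $\bar P_\tau = \prod_{i=1}^{r} p_i^{e_i}$ into a product of powers of distinct monic irreducibles in $k[X]$, apply the Chinese Remainder Theorem to get $\bar A \cong \prod_i k[X]/(p_i^{e_i})$, and note that each factor is a finite local ring whose residue field has cardinality $q^{\deg p_i}$, giving
\begin{equation*}
|(k[X]/(p_i^{e_i}))^\times| = q^{e_i \deg p_i}(1 - q^{-\deg p_i}) \geq q^{e_i \deg p_i}(1 - q^{-1}).
\end{equation*}
Multiplying these and using $\sum_i e_i \deg p_i = n$ and $r \leq n$ (so that $(1-q^{-1})^r \geq (1-q^{-1})^n$) yields the desired bound. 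No step here is a genuine obstacle; the only point requiring modest care is the identification of $\mathbf{M}_\tau(\mathfrak{o}/\mfq)$ with $(\mathfrak{o}/\mfq)[X]/(P_\tau)$ as rings, which is immediate from Lemma~\ref{lemma:centralizer-description} combined with Cayley--Hamilton and a rank comparison.
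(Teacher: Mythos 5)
Your proposal is correct and follows essentially the same route as the paper: both reduce modulo $\mathfrak{p}$ to the residue field (observing that an element of $\mathbf{M}_\tau(\mathfrak{o}/\mfq)$ is a unit iff its image mod $\mathfrak{p}$ is), both appeal to Lemma~\ref{lemma:centralizer-description} together with the structure of $k[X]$-modules to write the mod-$\mathfrak{p}$ centralizer as a product of local rings $k[X]/(p_i^{e_i})$, and both conclude by counting units factor-by-factor and using that the number of factors is at most $n$. The only cosmetic difference is that you first make the ring identification $\mathbf{M}_\tau(\mathfrak{o}/\mfq) \cong (\mathfrak{o}/\mfq)[X]/(P_\tau)$ before reducing, whereas the paper reduces to $\mfq = \mathfrak{p}$ at the outset; the content is the same.
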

\begin{proof}
  Since $\mathbf{G}_\tau(\mathfrak{o}/\mfq) = {\left( \mathbf{M}_\tau(\mathfrak{o}/\mfq) \twoheadrightarrow \mathbf{M}_\tau(\mathfrak{o}/\mathfrak{p}) \right)}^{-1}(\mathbf{G}_\tau(\mathfrak{o}/\mathfrak{p}))$, we may reduce to the case $\mfq = \mathfrak{p}$.  Since $k := \mathfrak{o}/\mathfrak{p}$ is a field, we may appeal to Lemma~\ref{lemma:centralizer-description} and the structure theorem for modules over the PID to see that the ring $\mathbf{M}_\tau(k)$ is isomorphic to a product of rings $k[X]/(p_i)$, where the $p_i$ are primary polynomials whose degrees sum to $n$.  Let $d_i$ denote the degree of the irreducible polyomial of which $p_i$ is a power.  Taking unit groups, we obtain $\lvert \mathbf{G}_\tau(k) \rvert = q^n \prod_i (1 - q^{-d_i})$, and the claim follows from the fact that $\sum_i d_i \leq n$.
\end{proof}

\subsection{Main result}\label{sec:cj3v60wplb}

\begin{theorem}\label{theorem:volume-bound}
  Let $(F,\mathfrak{o},\mathfrak{p},q)$ be a non-archimedean local field with $q$ odd.  Let $\mfq \subseteq \mathfrak{p}$ be a nonzero $\mathfrak{o}$-ideal.  Denote by $Q := [\mathfrak{o}:\mfq]$ its absolute norm.  Set
  \begin{equation*}
    Q^* := q^{\lceil m/2 \rceil} \text{ if } Q = q^m.
  \end{equation*}
  Let $\tau \in \mathbf{M}_{\stab}(\mathfrak{o}/\mfq)$ and $a \in \mathbf{G}(\mathfrak{o}/\mfq) - \mathbf{H}(\mathfrak{o}/\mfq) \mathbf{Z}(\mathfrak{o}/\mfq)$.  Then
  \begin{equation*}
    \left\lvert \mathbf{X}_{\tau,a}(\mathfrak{o}/\mfq) \right\rvert
    \leq C \left( \frac{1}{1 + Q d_H(a)} + \frac{{d_H(a)}^\infty }{Q^{*}} \right) \lvert \mathbf{H}_{\tau_H}(\mathfrak{o}/\mfq) \rvert,
  \end{equation*}
  where $C \geq 0$ depends at most upon the rank of $\mathbf{V}$.
\end{theorem}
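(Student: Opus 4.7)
The approach combines the transversality results of Section~\ref{sec:transversality} (in particular Theorem~\ref{theorem:reta-sett-defin-refd-r-be-ring.-let-mathbfgr-y-mat-doubly-tangential} and Lemma~\ref{lemma:supp-char-mathfr-odd.-let-a-in-mathbfgm-not-tangential}) with the polynomial congruence bounds of Lemma~\ref{lemma:let-d-geq-0.-supp-given-subs-mathc-subs-mathfr-p}. By Lemma~\ref{lemma:tau-stable-implies-tauH-cyclic}, $\tau_H$ is cyclic, so I would parametrize $\mathbf{H}_{\tau_H}(\mathfrak{o}/\mfq)$ by coordinates $(y_1,\dotsc,y_n) \in (\mathfrak{o}/\mfq)^n$ using the basis $1_H, \tau_H, \dotsc, \tau_H^{n-1}$ for $\mathbf{M}_{H,\tau_H}(\mathfrak{o}/\mfq)$. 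By Remark~\ref{remark:rewrite-defining-equations-for-X-as-polynomials}, the closed subscheme $\mathbf{X}_{\tau,a}$ is then cut out by $n+1$ polynomial equations $P_i(y_1,\dotsc,y_n) = 0$ of total degree at most $n$. Lemma~\ref{lemma:cardinality-centralizer}, applied to $\tau_H$, gives $\lvert \mathbf{H}_{\tau_H}(\mathfrak{o}/\mfq) \rvert \asymp Q^n$, so it suffices to bound the joint zero set of the $P_i$. I would then split into two cases on $d_H(a)$.

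\emph{Case $d_H(a) = 1$.} The reduction $\bar a$ lies in $\mathbf{G}(\mathfrak{o}/\mathfrak{p}) \setminus \mathbf{H}(\mathfrak{o}/\mathfrak{p})\mathbf{Z}(\mathfrak{o}/\mathfrak{p})$, so Theorem~\ref{theorem:reta-sett-defin-refd-r-be-ring.-let-mathbfgr-y-mat-doubly-tangential} over the residue field shows that $\mathbf{X}_{\bar\tau,\bar a}$ fails to be doubly-tangential at every point. Equivalently, at each $\bar y \in \mathbf{X}_{\bar\tau,\bar a}(\mathfrak{o}/\mathfrak{p})$ some $P_i$ has a linear or quadratic Taylor coefficient at $\bar y$ that is nonzero modulo $\mathfrak{p}$, hence a unit in $\mathfrak{o}/\mfq$. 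Partitioning the residual set into subsets $\mathcal{D}_i \subseteq (\mathfrak{o}/\mathfrak{p})^n$ according to which $P_i$ carries such a coefficient, and applying Lemma~\ref{lemma:let-d-geq-0.-supp-given-subs-mathc-subs-mathfr-p}(ii) to each pair $(P_i, \mathcal{D}_i)$, summing gives $\lvert \mathbf{X}_{\tau,a}(\mathfrak{o}/\mfq) \rvert \ll q^{mn - \lceil m/2\rceil} \asymp Q^n/Q^*$, as required.

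\emph{Case $d_H(a) < 1$.} Write $d_H(a) = q^{-\ell}$ with $1 \leq \ell \leq m-1$. Since the image of $a$ in $\mathbf{G}(\mathfrak{o}/\mathfrak{p}^\ell)$ lies in $\mathbf{H}(\mathfrak{o}/\mathfrak{p}^\ell)\mathbf{Z}(\mathfrak{o}/\mathfrak{p}^\ell)$, the scheme $\mathbf{X}_{\tau,a}$ over $\mathfrak{o}/\mathfrak{p}^\ell$ coincides with $\mathbf{H}_{\tau_H}$, forcing every coefficient of each $P_i$ to lie in $\mathfrak{p}^\ell/\mfq$. Set $\tilde P_i := P_i/\varpi^\ell \in (\mathfrak{o}/\mathfrak{p}^{m-\ell})[y_1,\dotsc,y_n]$, so that $P_i(y) = 0$ in $\mathfrak{o}/\mfq$ is equivalent to $\tilde P_i(y') = 0$ in $\mathfrak{o}/\mathfrak{p}^{m-\ell}$, where $y'$ is the reduction of $y$. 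Next, applying Lemma~\ref{lemma:supp-char-mathfr-odd.-let-a-in-mathbfgm-not-tangential} to the image of $a$ in $\mathbf{G}(\mathfrak{o}/\mathfrak{p}^{\ell+1})$ (which still has $d_H < 1$ and avoids $\mathbf{H}\mathbf{Z}$) yields that at every $y \in \mathbf{X}_{\tau,a}(\mathfrak{o}/\mathfrak{p}^{\ell+1})$ some linear Taylor coefficient of some $P_i$ is nonzero in $\mathfrak{o}/\mathfrak{p}^{\ell+1}$; combined with the vanishing modulo $\mathfrak{p}^\ell$, this coefficient has valuation exactly $\ell$, so the corresponding Taylor coefficient of $\tilde P_i$ is a unit in $\mathfrak{o}/\mathfrak{p}$ and thus in $\mathfrak{o}/\mathfrak{p}^{m-\ell}$. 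Partitioning again and applying Lemma~\ref{lemma:let-d-geq-0.-supp-given-subs-mathc-subs-mathfr-p}(i) to $\tilde P_i$ over $\mathfrak{o}/\mathfrak{p}^{m-\ell}$ bounds the number of $y'$ with $\tilde P_i(y') = 0$ by $\ll q^{(m-\ell)n-(m-\ell)}$; lifting back to $\mathfrak{o}/\mfq$ multiplies by the fiber size $q^{n\ell}$, giving $\lvert \mathbf{X}_{\tau,a}(\mathfrak{o}/\mfq) \rvert \ll q^{mn}/q^{m-\ell} = Q^n/(Q d_H(a))$.

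The main technical difficulty is the precise valuation alignment in the second case: the Taylor coefficient whose nonvanishing is guaranteed by Lemma~\ref{lemma:supp-char-mathfr-odd.-let-a-in-mathbfgm-not-tangential} must have valuation exactly $\ell$—no more, no less—for the rescaled polynomial $\tilde P_i$ to inherit a unit linear Taylor coefficient, so that Lemma~\ref{lemma:let-d-geq-0.-supp-given-subs-mathc-subs-mathfr-p}(i) delivers precisely the power saving $q^{m-\ell} = Q d_H(a)$ advertised in the statement. The lower bound on the valuation comes from the identity $\mathbf{X}_{\tau,a} = \mathbf{H}_{\tau_H}$ over $\mathfrak{o}/\mathfrak{p}^\ell$, while the matching upper bound comes from applying Lemma~\ref{lemma:supp-char-mathfr-odd.-let-a-in-mathbfgm-not-tangential} at level $\mathfrak{p}^{\ell+1}$; these two estimates pinch the valuation together.
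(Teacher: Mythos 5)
Your proposal is correct and follows essentially the same route as the paper: the same case split on $d_H(a)=1$ versus $d_H(a)<1$, the same use of Remark~\ref{remark:rewrite-defining-equations-for-X-as-polynomials} to produce $n+1$ defining polynomials of bounded degree, Lemma~\ref{lemma:supp-char-mathfr-odd.-let-a-in-mathbfgm-not-tangential} (combined with the vanishing of coefficients mod $\mathfrak{p}^{\ell}$) to pinch a linear Taylor coefficient to valuation exactly $\ell$ when $d_H(a)<1$, Theorem~\ref{theorem:reta-sett-defin-refd-r-be-ring.-let-mathbfgr-y-mat-doubly-tangential} over the residue field when $d_H(a)=1$, and Lemma~\ref{lemma:let-d-geq-0.-supp-given-subs-mathc-subs-mathfr-p}~(i) and (ii) respectively to count solutions. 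The only cosmetic difference is that you make explicit the partition of the residual base set according to which $P_i$ carries the unit Taylor coefficient, a point the paper leaves implicit.
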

\begin{proof}
  By Lemma~\ref{lemma:cardinality-centralizer}, the cardinality of $\mathbf{H}_{\tau_H}(\mathfrak{o}/\mfq)$ is comparable to $Q^{n}$, where we write
  \begin{equation*}
    \rank(\mathbf{V}) = n+1.
  \end{equation*}
  For this reason, it suffices to establish the modified estimate obtained by replacing $\lvert \mathbf{H}_{\tau_H}(\mathfrak{o}/\mfq) \rvert$ with $Q^n$.  Moreover, in view of the trivial bound $\lvert \mathbf{X}_{\tau,a}(\mathfrak{o}/\mfq) \rvert \leq \lvert \mathbf{H}_{\tau_H}(\mathfrak{o}/\mfq) \rvert$, it suffices to show that
  \begin{equation*}
    \left\lvert \mathbf{X}_{\tau,a}(\mathfrak{o}/\mfq) \right\rvert
    \ll
    \left( \frac{1}{Q d_H(a)} + \frac{{d_H(a)}^\infty }{Q^{*}} \right) Q^n,
  \end{equation*}
  where $\ll$ means ``bounded in magnitude by a scalar depending at most upon the rank of $\mathbf{V}$''.  We will establish this separately when $d_H(a) < 1$ and when $d_H(a) = 1$.

  Recall from Remark~\ref{remark:rewrite-defining-equations-for-X-as-polynomials} that $\mathbf{X}_{\tau,a}$ is defined by a system of $n+1$ polynomial equations in the entries of $y \in \mathbf{H}_{\tau_H}(\mathfrak{o}/\mfq)$, each of degree at most $n$.

  Consider first the case $d_H(a) < 1$.  Write $Q = q^m$ and $d_H(a) = q^{-\ell}$, so that $1 \leq \ell < m$.  The image of $a$ lies in $\mathbf{H}(\mathfrak{o}/\mathfrak{p}^{\ell}) \mathbf{Z}(\mathfrak{o}/\mathfrak{p}^{\ell})$, thus the defining polynomial equations are trivial mod $\mathfrak{p}^{\ell}$. The image of $a$ does not lie in $\mathbf{H}(\mathfrak{o}/\mathfrak{p}^{\ell+1}) \mathbf{Z}(\mathfrak{o}/\mathfrak{p}^{\ell+1})$, so by Lemma~\ref{lemma:supp-char-mathfr-odd.-let-a-in-mathbfgm-not-tangential}, we see that $\mathbf{X}_{\tau,a}$ is not tangential over $\mathfrak{o} / \mathfrak{p}^{\ell+1}$ at any $y \in \mathbf{X}_{\tau,a}(\mathfrak{o}/\mathfrak{p}^{\ell+1})$.  This implies that at least one of the polynomial equations in our system has the property that at least one of the linear Taylor coefficients at $y$ lies in $\mathfrak{p}^{\ell}-\mathfrak{p}^{\ell+1}$.  Let us divide that polynomial congruence by $\varpi^{\ell}$ and view it now as a polynomial congruence taken modulo $\mathfrak{p}^{m-\ell}$.  Each solution of the new congruence corresponds to exactly $q^{\ell n}$ solutions of the old congruence, but at least one linear Taylor coefficient for the new congruence is a unit, so by part~\eqref{enumerate:assume-that-each-y-in-mathc-one-line-tayl-coeff-p-} of Lemma~\ref{lemma:let-d-geq-0.-supp-given-subs-mathc-subs-mathfr-p} (applied with $\mathcal{D} = \mathbf{H}_{\tau_H}(\mathfrak{o}/\mathfrak{p})$), we obtain
  \begin{equation*}
    \lvert \mathbf{X}_{\tau,a}(\mathfrak{o}/\mfq) \rvert \ll
    q^{\ell n}
    q^{(m-\ell) n - (m-\ell)}
    = Q^{n-1} q^{\ell} = \frac{1}{Q d_H(a)} Q^n,
  \end{equation*}
  as required in this case.

  It remains to consider the case $d_H(a) = 1$, where our goal bound is now
  \begin{equation*}
    \lvert \mathbf{X}_{\tau,a}(\mathfrak{o}/\mfq) \rvert \ll
    \frac{1}{Q^*} Q^n.
  \end{equation*}
  We now have that the image of $a$ does not lie in $\mathbf{H}(\mathfrak{o}/\mathfrak{p}) \mathbf{Z}(\mathfrak{o}/\mathfrak{p})$.  We may thus apply Theorem~\ref{theorem:reta-sett-defin-refd-r-be-ring.-let-mathbfgr-y-mat-doubly-tangential} to see that for each $y \in \mathbf{X}_{\tau,a}(\mathfrak{o}/\mathfrak{p})$, at least one of the linear or quadratic Taylor coefficients for one of the defining polynomials for $\mathbf{X}_{\tau,a}$ is a unit.  By part~\eqref{enumerate:assume-that-each-y-in-mathc-one-line-or-quadr-tayl} of Lemma~\ref{lemma:let-d-geq-0.-supp-given-subs-mathc-subs-mathfr-p}, we deduce the required bound.
\end{proof}

\begin{remark}
  For applications to subconvexity involving a \emph{fixed} local field of characteristic zero, the ``near-identity reduction'' (see~\S\ref{sec:cj4t69an73} and~\cite[\S15.6]{2020arXiv201202187N}) reduces our task, as far as volume bounds like in Theorem~\ref{theorem:volume-bound} are concerned, to the case that $a$ is close to the identity element.  In that case, an adequate volume bound follows from~\cite[Theorem 15.2]{2020arXiv201202187N}.  The novelty of Theorem~\ref{theorem:volume-bound} relative to the cited result is its validity for all $a$ in the maximal compact subgroup, uniformly in the local field.  The restriction to characteristic $\neq 2$ in Theorem~\ref{theorem:volume-bound} is thus harmless in applications.  With additional work, this restriction could be eliminated in the case of a non-archimedean local field of characteristic zero and residue characteristic $2$, basically because the proof of Theorem~\ref{theorem:main-transversality-general-ring} requires only a bounded number of divisions by $2$.

  There remains the case of a local field of characteristic $2$, where the conclusion of Theorem~\ref{theorem:volume-bound} fails for the reasons indicated in Remark~\ref{remark:we-have-seen-comp-calc-with-grobn-bases-that-when-}.  That case is not relevant for the subconvexity problem over number fields, but would be relevant for studying moments of $L$-functions in horizontal aspects over a function field of characteristic $2$.  Estimating such moments thus presents an interesting challenge to which our results do not apply.
\end{remark}


\section{Bilinear forms estimates}\label{Sec:bilinear}
We now apply the volume bound to estimate bilinear forms relevant for~\eqref{eq:cj3tv4jpta}.

Let $(F,\mathfrak{o},\mathfrak{p},\varpi,q)$ be a non-archimedean local field of odd residue characteristic.  Let $\mfq \subseteq \mathfrak{p}$ be a nonzero $\mathfrak{o}$-ideal.
In this section, we abbreviate
\begin{equation*}
  G := \mathbf{G}(\mathfrak{o}/\mfq),
  \qquad
  H := \mathbf{H}(\mathfrak{o}/\mfq),
\end{equation*}
etc.  Write $Q := [\mathfrak{o}:\mfq]$, and define $Q^*$ as in the statement of Theorem~\ref{theorem:volume-bound}.  Recall from \S\ref{sec:norms-distance-functions} the ``distance from $H Z$'' function $d_H$.

\begin{theorem}\label{theorem:bilinear-forms-estimate}
  Let $\tau \in M_{\stab}$.  Let $u_1, u_2 : H \rightarrow \mathbb{R}_{\geq 0}$ be nonnegative functions that are right-invariant under $H_{\tau_H}$.  Let $\gamma \in G$.  Then the quantity
  \begin{equation}\label{eqn:i-:=-frac1lv-h-rvert2-sum-_-subst-x-y-in-h-:-}
    I := 
    \frac{1}{\lvert H \rvert^2}
    \sum _{
      \substack{
        x, y \in H :  \\
        x ^{-1} \gamma y \in G_\tau
      }
    }
    u_1(x) u_2(y)
  \end{equation}
  satisfies the trivial bound
  \begin{equation}\label{eqn:volume-bound-trivial}
    I \leq \frac{1}{|H|} \lVert u_1 \rVert_{L^2} \lVert u_2 \rVert_{L^2}
  \end{equation}
  and, for $\gamma \notin H Z$, the refined bound
  \begin{equation}\label{eqn:volume-bound-refined}
    I \leq
    \frac{C}{|H|} \left( \frac{1}{1 + Q d_H(\gamma)} + \frac{{d_H(\gamma)}^\infty }{Q^*} \right) \lVert u_1 \rVert_{L^2} \lVert u_2 \rVert_{L^2}.
  \end{equation}
  Here $C$ depends at most upon $\rank(\mathbf{V})$, and may be taken to be the same constant as in Theorem~\ref{theorem:volume-bound}.  The $L^2$-norms are defined using the invariant probability measures.
\end{theorem}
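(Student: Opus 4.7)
The trivial bound~\eqref{eqn:volume-bound-trivial} is immediate from Cauchy--Schwarz combined with the consequence of Lemma~\ref{lemma:stable-implies-trivial-stabilizer} that $H \cap G_\tau = \{1\}$: for each fixed $y \in H$ the constraint $x^{-1}\gamma y \in G_\tau$ has at most one solution $x \in H$, and symmetrically. Hence each restricted $\ell^2$-sum $\sum_{(x,y): x^{-1}\gamma y \in G_\tau} u_i(\cdot)^2$ is bounded by $|H|\,\|u_i\|_{L^2}^2$, which yields $I \leq \|u_1\|_{L^2}\|u_2\|_{L^2}/|H|$.

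For the refined bound~\eqref{eqn:volume-bound-refined}, the plan is to exploit the $H_{\tau_H}$-right-invariance of $u_1$ and $u_2$ to bring the variety $\mathbf{X}_{\tau,\cdot}$ into play, then invoke Theorem~\ref{theorem:volume-bound}. First I will average $u_2$ over $H_{\tau_H}$-right-translates and change variables, using the elementary identity
\begin{equation*}
  |\{z \in H_{\tau_H} : g z^{-1} \in G_\tau\}| = \mathbf{1}[g \in G_\tau H_{\tau_H}]
\end{equation*}
(which follows from $H_{\tau_H} \cap G_\tau = \{1\}$, itself a consequence of stability) to rewrite the bilinear form as
\begin{equation*}
  I = \frac{1}{|H|^2\,|H_{\tau_H}|} \sum_{(x,y) \in \tilde S} u_1(x) u_2(y), \qquad \tilde S := \{(x,y) \in H\times H : x^{-1}\gamma y \in G_\tau H_{\tau_H}\},
\end{equation*}
and then apply Cauchy--Schwarz. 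For each fixed $y$, the fiber $\{x \in H : (x,y) \in \tilde S\}$ is in bijection with $\mathbf{X}_{\tau,\gamma y}(\mathfrak{o}/\mfq)$ via the unique decomposition $x^{-1}\gamma y = bz$ with $(b,z) \in G_\tau \times H_{\tau_H}$. Since $d_H$ is $H$-bi-invariant and inversion-invariant (because $K(\mathfrak{p}^\ell)$ is normal in $K$ and $HZ$ is a group), $d_H(\gamma y) = d_H(\gamma)$, so Theorem~\ref{theorem:volume-bound} delivers $|\mathbf{X}_{\tau,\gamma y}| \leq C\Phi|H_{\tau_H}|$, where $\Phi := \frac{1}{1+Qd_H(\gamma)} + \frac{d_H(\gamma)^\infty}{Q^*}$.

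For the symmetric half, the naive fiber count $|\{y : (x,y) \in \tilde S\}| = |H_{\tau_H}|$ (whenever nonzero) would only produce a factor $\sqrt{\Phi}$ in the end; to recover the full $\Phi$, I will apply the same averaging trick to $u_1$ inside the $u_1$-sum, which by an identical computation converts the column count into $|\mathbf{X}_{\tau,\gamma^{-1} x}| \leq C\Phi|H_{\tau_H}|$ (using $d_H(\gamma^{-1}x) = d_H(\gamma)$). Assembling the two halves via Cauchy--Schwarz then yields the claimed bound~\eqref{eqn:volume-bound-refined}. The main care-point is the bookkeeping for these fiber identifications, which hinges on the stability-driven intersections $H \cap G_\tau = \{1\} = H_{\tau_H}\cap G_\tau$ and on the invariance properties of $d_H$; once those are set up, Theorem~\ref{theorem:volume-bound} does all the analytic work.
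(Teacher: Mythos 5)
Your proof is correct and rests on the same ingredients as the paper's (Cauchy--Schwarz, the stability intersections $H \cap G_\tau = \{1\} = H_{\tau_H} \cap G_\tau$ from Lemma~\ref{lemma:stable-implies-trivial-stabilizer}, right $H_{\tau_H}$-invariance of the $u_i$, and the volume bound of Theorem~\ref{theorem:volume-bound}), but you reorder the steps: you average $u_2$ over $H_{\tau_H}$ first, landing on the set $\tilde S = \{(x,y) : x^{-1}\gamma y \in G_\tau H_{\tau_H}\}$, and only then apply Cauchy--Schwarz. This reordering buys a clean direct identification of the $x$-fiber over each $y$ with $\mathbf{X}_{\tau,\gamma y}$, but, as you observe, it destroys the symmetry of the two halves: the $y$-fiber over $x$ has the ``wrong'' count $|H_{\tau_H}|$, and to recover the full factor $\Phi$ (rather than $\sqrt{\Phi}$) you re-average $u_1$ inside the squared sum, at which point you are essentially re-deriving the paper's step. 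The paper applies Cauchy--Schwarz first, reducing each of $I_1, I_2$ to a sum weighted by a $\{0,1\}$-valued count (via $H \cap G_\tau = \{1\}$), and then averages once over $H_{\tau_H}$; this keeps the two halves fully symmetric and handles both with a single computation. Unwinding your argument, the final expressions coincide with $\sqrt{I_1 I_2}$ as in the paper, so the two proofs are the same modulo this cosmetic reordering; the paper's ordering is a bit more economical, while yours makes the role of $\mathbf{X}_{\tau,\gamma y}$ visible at an earlier stage.
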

\begin{proof}
  We closely follow the proof of~\cite[Thm 15.1, Lem 15.3]{2020arXiv201202187N}.  By Cauchy--Schwarz, we have $I \leq \sqrt{I_1 I_2}$, where
  \begin{equation*}
    I_1 :=\frac{1}{\lvert H \rvert^2}
    \sum _{
      \substack{
        x, y \in H :  \\
        x ^{-1} \gamma y \in G_{\tau} 
      }
    }
    {u_1(x)}^2,
  \end{equation*}
  \begin{equation*}
    I_2 :=\frac{1}{\lvert H \rvert^2}
    \sum _{
      \substack{
        x, y \in H :  \\
        x ^{-1} \gamma y \in G_{\tau} 
      }
    }
    {u_2(y)}^2.
  \end{equation*}
  The set $G_{\tau}$, the condition $\gamma \notin H Z$ and the quantity $d_H(\gamma)$ are invariant under inversion, so it will suffice to obtain a suitable estimate for $I_1$, as the same argument then applies to $I_2$. By definition,
  \begin{equation*}
    I_1 = \frac{1}{|H|^2} \sum _{x \in H} {u_1(x)}^2 \nu(x),
    \quad
    \nu(x) := \left\lvert \left\{ y \in H : x ^{-1} \gamma y \in G_{\tau}  \right\} \right\rvert.
  \end{equation*}
  By Lemma~\ref{lemma:stable-implies-trivial-stabilizer}, we have $H \cap G_{\tau} = \{1\}$, and so $\nu(x) \in \{0, 1\}$.  Thus
  \begin{equation*}
    I_1 \leq \frac{1}{|H|} I_1', \quad
    I_1' := \frac{1}{|H|} \sum _{
      \substack{
        x \in H :  \\
        \gamma ^{-1} x \in H G_{\tau}
      }
    }
    {u_1(x)}^2.
  \end{equation*}
  The trivial bound $I_1' \leq \lVert u_1 \rVert_{L^2}^2$, obtained by dropping the summation condition, yields the first required estimate~\eqref{eqn:volume-bound-trivial}.

  For the second required estimate, we appeal to the right $H_{\tau_H}$-invariance of $u_1$ to write
  \begin{equation*}
    I_1' := \frac{1}{|H|} \sum _{
      x \in H
    }
    {u_1(x)}^2
    \mu(x),
    \quad
    \mu(x) :=
    \frac{
      \left\lvert
        \left\{
          u \in H_{\tau_H} : 
          \gamma ^{-1} x u \in H G_{\tau}
        \right\}
      \right\rvert
    }{
      \left\lvert H_{\tau_H}  \right\rvert
    }.    
  \end{equation*}
  If $\gamma \notin H Z$, then we have $\gamma ^{-1} x \notin H Z$ and $d_H(\gamma^{-1} x) = d_H(\gamma)$ for all $x \in H$.  Estimating $\mu(x)$ via Theorem~\ref{theorem:volume-bound} yields the second estimate~\eqref{eqn:volume-bound-refined}.
\end{proof}

\section{Construction of test vectors}\label{sec:part-body-paper}
The primary aim of this section is to make Theorem~\ref{theorem:cj3ngw7u2s} precise by specifying the local conditions required at the distinguished place (Remark~\ref{remark:cj3u9047n5}).  To that end, we define ``stable'' pairs of representations of general linear groups (Definition~\ref{definition:let-pi-sigma-be-repr-pair-gener-line-groups-over-f}); informally, such representations contain ``microlocalized'' test vectors whose localization parameters have no matching eigenvalues.  To apply this definition, we need to check when it holds.  Since representation-theoretic issues are orthogonal to the main novelty of this paper, we are content here to check that our definition is preserved under parabolic induction (Example~\ref{example:cj3m0jsjik}) and to investigate fully the case of principal series (Example~\ref{example:20230516204714}), leaving open the case of general supercuspidals (see Example~\ref{example:cj3m0jzefl} for some special cases).

We note that many of the ideas and results recorded in this section are well-known in the type theory literature, going back to work of Howe from the 1970s (see, e.g.,~\cite{MR0579176, MR0327982} and~\cite[\S5]{MR2016587}).  The cited works would be most relevant to the special cases of our results where $\tau$ and $\tau_H$ are diagonalizable, a condition that corresponds to the ``wall-avoidance'' hypothesis~\eqref{enumerate:20230516210021} mentioned in \S\ref{sec:cj4vjryk3u}.  Since we do not wish to impose such hypotheses, we give short proofs of what we need here, without claiming particular novelty.

\subsection{Overview}\label{sec:20230516194628}
In \S\ref{sec:20230516194628}, we summarize the main definitions and results of \S\ref{sec:part-body-paper}; the remaining subsections are then devoted to the proofs of these results.

Let $F$ be a non-archimedean local field, with ring of integers $\mathfrak{o}$ and maximal ideal $\mathfrak{p}$. Let $\mathfrak{q} \subseteq \mathfrak{p}$ be an $\mathfrak{o}$-ideal.  Let $\psi$ be a unitary character of $\mfq$ that is trivial on $\mfq^2$, but not on $\mathfrak{p}^{-1}\mfq^2$.

For $\tau \in \mathbf{M}(\mathfrak{o}/\mfq)$, we denote by $\chi_\tau$ the character of the group $K(\mfq)/K(\mfq^2)$ given by
\begin{equation}\label{eqn:definition-of-chi-tau-as-psi-of-trace-x-tau}
  \chi _\tau (1 + x) = \psi (\trace (x \tau )).
\end{equation}
Every character of that group arises in this way.

In what follows, \emph{representation} always means ``complex representation''.  We introduce some terminology concerning representations $\pi$ of $\mathbf{G}(\mathfrak{o})$.  We will apply this terminology also to representations of $\mathbf{G}(F)$, through their restrictions.
\begin{definition}\label{definition:we-say-that-pi-emphr-at-depth-mathfr-if-there-cycl-regular-depth-parameter-polynomial}
  Let $\pi$ be a representation of $\mathbf{G}(\mathfrak{o})$.  We say that $\pi$ is \emph{regular at depth $\mfq^2$} if there is a cyclic element $\tau \in \mathbf{M}(\mathfrak{o}/\mfq)$ (see \S\ref{sec:cyclic-matrices}) such that $\pi$ contains a nonzero vector $v$ that transforms under $K (\mfq )$ via the character $\chi_\tau$, i.e.,
  \begin{equation}\label{eqn:g-v-=-chi_taug-v-quad-text-all--g-in-kmathfrakq.-}
    g v = \chi_\tau(g) v \quad \text{ for all } g \in K(\mfq).
  \end{equation}
  In that case, we refer to $\tau$ as a \emph{regular parameter for} $\pi$ \emph{at depth} $\mfq^2$, to its characteristic polynomial $P \in (\mathfrak{o}/\mfq)[X]$ as a \emph{polynomial for} $\pi$ \emph{at depth} $\mfq^2 $, and to the dimension of the space of $v$ satisfying~\eqref{eqn:g-v-=-chi_taug-v-quad-text-all--g-in-kmathfrakq.-} as the \emph{multiplicity} of $\tau$ or of $P$ in $\pi$.  (These notions depend, of course, upon the choice of $\psi$.)
\end{definition}

\begin{example}\label{example:cj2i1fuxey}
  Let $\chi$ be a character of $\GL_1(\mathfrak{o}) = \mathfrak{o}^\times$, regarded also as a one-dimensional representation.  We denote by
  \begin{equation*}
    c(\chi) \subseteq \mathfrak{o}
  \end{equation*}
  the largest ideal $\mathfrak{a}$ of $\mathfrak{o}$ for which $\chi$ has trivial restriction to $\mathfrak{o}^\times \cap (1 + \mathfrak{a})$.  It is clear then that
  \begin{equation*}
    c(\chi) \supseteq \mfq^2 \iff \text{$\chi$ is regular at depth $\mfq^2$}.
  \end{equation*}
\end{example}

\begin{example}\label{example:cj2i1fu0az}
  We verify in Proposition~\ref{proposition:regular-stable-closed-under-parabolic-induction} that being regular at depth $\mfq^2$ is closed under parabolic induction.  That is to say, suppose given a partition $n = m_1 + \dotsb + m_k$, corresponding to a standard parabolic subgroup of $\GL_n(\mathfrak{o})$.  For each $j \in \{1, \dotsc, k\}$, let $\pi_j$ be a representation of $\GL_{m_j}(\mathfrak{o})$ that is regular at depth $\mfq^2$ with polynomial $P_j$.  Then the same holds for the parabolic induction of $\pi_1 \otimes \dotsb \otimes \pi_k$ to $\GL_n(\mathfrak{o})$, with polynomial at depth $\mfq^2$ given by $P=\prod\limits_{i} P_i$ (unique if the $P_i$ are).
\end{example}
\begin{example}\label{Example:principalregular}
  By combining Examples~\ref{example:cj2i1fuxey} and~\ref{example:cj2i1fu0az}, we deduce that a principal series representation induced by characters $\chi_i$ with $c(\chi_i) \supseteq \mfq^2$ is regular at depth $\mfq^2$.  Its unique polynomial at depth $\mfq^2$ is the product of linear factors $\prod_{i} (X - \xi_i)$, where $\xi_i$ is characterized by the identity $\chi_i(1+y) = \psi(y \xi_i)$ for all $y \in \mfq$.
\end{example}
\begin{example}\label{Example:supercuspidaloverview}
  We verify in \S\ref{sec:supercuspidals} that certain supercuspidal representations are regular at depth $\mfq^2$.  We do not determine precisely which supercuspidal representations are regular at depth $\mfq^2$, but it may be possible to do so using known classifications of the latter (see Remark~\ref{Rem:supercuspidalclassification}).
\end{example}

\begin{definition}\label{definition:let-pi-sigma-be-repr-pair-gener-line-groups-over-f}
  Let $m$ and $n$ be natural numbers.  Let $\pi$ and $\sigma$ be representations of $\GL_{n}(\mathfrak{o})$ and $\GL_m(\mathfrak{o})$.  We say that $(\pi,\sigma)$ is \emph{stable at depth} $\mfq^2$ if
  \begin{itemize}
  \item both $\pi$ and $\sigma$ are regular at depth $\mfq^2$, and
  \item there are polynomials $P_\pi$ (resp.\ $P_\sigma$) for $\pi$ (resp.\ $\sigma$) at depth $\mfq^2$ that generate the unit ideal of $(\mathfrak{o}/\mfq)[X]$.
  \end{itemize}
\end{definition}

\begin{example}\label{example:cj3m0jsjik}
  Being stable at depth $\mfq^2$ is closed under parabolic induction (Proposition~\ref{proposition:stable-pairs-closed-under-parabolic-induction}): if $\pi$ (resp.\ $\sigma$) is induced by some collection of representations $\pi_i$ (resp.\ $\sigma_j$) of $\GL_{n_i}(\mathfrak{o})$ (resp.\ $\GL_{m_j}(\mathfrak{o})$) such that each pair $(\pi_i,\sigma_j)$ is stable at depth $\mfq^2$, then the same holds for $(\pi,\sigma)$.
\end{example}
\begin{example}\label{example:20230516204714}
  If $\pi$ (resp.\ $\sigma$) is a principal series representation induced by characters $\chi_i$ (resp.\ $\eta_j$), then the pair $(\pi,\sigma)$ is stable at depth $\mfq^2$ provided that for all relevant indices,
  \begin{equation}\label{eqn:20230516200125}
    c(\chi_i) \supseteq \mfq^2, \qquad  c(\eta_j) \supseteq \mfq^2,
    \qquad
    c (\chi_i / \eta_j) = \mfq^2.
  \end{equation}
  Assuming the first two conditions in~\eqref{eqn:20230516200125}, the third condition is equivalent to
  \begin{equation}\label{eqn:20230516200301}
    \prod_{i,j} [\mathfrak{o}:c(\chi_i/\eta_j)] = {[\mathfrak{o}:\mfq^2]}^{n(n+1)}.
  \end{equation}
  The left hand side may be understood as the conductor of the Rankin--Selberg convolution $\pi \times \sigma^\vee$, and the equality~\eqref{eqn:20230516200301} may be understood as an analogue of the ``no conductor dropping'' or ``uniform growth'' conditions considered in~\cite{2020arXiv201202187N} and~\cite{2021arXiv210915230N}.
\end{example}

\begin{example}\label{example:cj3m0jzefl}
  When $(\pi,\sigma)$ is a pair of regular supercuspidal representations of $(\GL_{m},\GL_{n})$ as discussed in Example~\ref{Example:supercuspidaloverview} or \S\ref{sec:supercuspidals}, it is automatically stable when $m\neq n$; when $m=n$, one needs to impose additional conditions similar to the case of principal series representations (see Lemma~\ref{Lem:supercuspidalstablepair}).
\end{example}

\subsection{Characters}\label{sec:characters-in-construction-test-vectors}
Let $\mfq \subseteq \mathfrak{p}$ be an $\mathfrak{o}$-ideal.  Let $\psi : \mfq / \mfq^2 \rightarrow \mathbb{C}^\times$ be a character that is nontrivial on $\mathfrak{p} ^{-1} \mfq^2 / \mfq^2$.  The character $\psi$ induces an identification
\begin{equation*}
  \mathfrak{o} / \mfq \xrightarrow{\cong } \left\{ \text{characters } \mfq / \mfq^2 \rightarrow \mathbb{C} ^\times  \right\},
\end{equation*}
\begin{equation*}
  \xi \mapsto \psi(\xi \bullet).
\end{equation*}
For a character $\chi$ of $1 + \mfq$ that is trivial on $1 + \mfq^2$, we denote by $\xi_\chi \in \mathfrak{o}/\mfq$ the unique element with
\begin{equation*}
  \chi(1 + x) = \psi(\xi_\chi x) \quad \text{ for } x \in \mfq.
\end{equation*}

\subsection{Centralizers of characters}\label{sec:centralizers-characters-J-tau}
Given $\tau \in \mathbf{M}(\mathfrak{o}/\mfq)$, we denote by
\begin{equation*}
  J_\tau \leq \mathbf{G}(\mathfrak{o})
\end{equation*}
the inverse image of the mod-$\mfq$ centralizer $\mathbf{G}_\tau(\mathfrak{o}/\mfq)$.  It is a subgroup that contains $K(\mfq)$.

\subsection{Preliminary lemmas regarding regular parameters}\label{sec:regular-uniform-representations}

\begin{lemma}\label{lemma:let-p-be-polyn-pi-at-depth-mathfr-let-tau-in-mathb}
  Let $P$ be a polynomial for $\pi$ at depth $\mfq^2$.  Then any cyclic element $\tau \in \mathbf{M}(\mathfrak{o}/\mfq)$ whose characteristic polynomial is $P$ is a regular parameter for $\pi$ at depth $\mfq^2$.
\end{lemma}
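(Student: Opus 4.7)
The plan is to use Lemma~\ref{lemma:regular-elements-same-characteristic-polynomial-are-conjugate} to conjugate the given $\tau$ to the regular parameter already known to exist, and then to transport the corresponding test vector accordingly. Concretely, by the definition of ``$P$ is a polynomial for $\pi$ at depth $\mfq^2$'', there is some cyclic element $\tau_0 \in \mathbf{M}(\mathfrak{o}/\mfq)$ with characteristic polynomial $P$ and a nonzero $v \in \pi$ satisfying $\pi(k) v = \chi_{\tau_0}(k) v$ for all $k \in K(\mfq)$.

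Since $\tau$ and $\tau_0$ are both cyclic with the same characteristic polynomial $P$, Lemma~\ref{lemma:regular-elements-same-characteristic-polynomial-are-conjugate} (applied over $\mathfrak{o}/\mfq$) yields $g \in \mathbf{G}(\mathfrak{o}/\mfq)$ with $\tau = g \tau_0 g^{-1}$. The reduction map $\mathbf{G}(\mathfrak{o}) \twoheadrightarrow \mathbf{G}(\mathfrak{o}/\mfq)$ is surjective, so we may lift $g$ to some $\tilde{g} \in \mathbf{G}(\mathfrak{o})$. I then propose the candidate test vector
\[
v' := \pi(\tilde{g}) v \in \pi.
\]
To verify that $\tau$ is a regular parameter with $v'$ witnessing the required transformation law, let $k = 1 + x \in K(\mfq)$ with $x \in \mfq \cdot \mathbf{M}(\mathfrak{o})$. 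Since $K(\mfq)$ is normal in $\mathbf{G}(\mathfrak{o})$, the element $\tilde{g}^{-1} k \tilde{g} = 1 + \tilde{g}^{-1} x \tilde{g}$ again lies in $K(\mfq)$, and
\[
\pi(k) v' = \pi(\tilde{g}) \pi(\tilde{g}^{-1} k \tilde{g}) v = \chi_{\tau_0}(\tilde{g}^{-1} k \tilde{g}) \, v'.
\]
It remains to check $\chi_{\tau_0}(\tilde{g}^{-1} k \tilde{g}) = \chi_\tau(k)$.

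This last identity is the key computation, and it is essentially a trace manipulation. By definition~\eqref{eqn:definition-of-chi-tau-as-psi-of-trace-x-tau},
\[
\chi_{\tau_0}(\tilde{g}^{-1} k \tilde{g}) = \psi\bigl(\trace(\tilde{g}^{-1} x \tilde{g} \cdot \tau_0)\bigr) = \psi\bigl(\trace(x \cdot \tilde{g} \tau_0 \tilde{g}^{-1})\bigr),
\]
using cyclicity of the trace; and $\tilde{g} \tau_0 \tilde{g}^{-1} \equiv \tau \pmod{\mfq}$ by the choice of $\tilde{g}$, so the argument of $\psi$ equals $\trace(x\tau)$ in $\mfq/\mfq^2$, giving exactly $\chi_\tau(k)$. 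This shows $v'$ transforms by $\chi_\tau$ under $K(\mfq)$, and since $v' \neq 0$ (because $\pi(\tilde{g})$ is invertible), $\tau$ is a regular parameter for $\pi$ at depth $\mfq^2$. There is no serious obstacle; the only point requiring any care is ensuring that the trace lifting and the normality of $K(\mfq)$ are used consistently to reduce everything to the given identity $\tau \equiv \tilde{g} \tau_0 \tilde{g}^{-1} \pmod{\mfq}$.
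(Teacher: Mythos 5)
Your proof is correct and takes essentially the same route as the paper. The paper's proof consists of exactly two sentences: it notes that the set of regular parameters for $\pi$ at depth $\mfq^2$ is invariant under conjugation by $\mathbf{G}(\mathfrak{o}/\mfq)$, and then invokes Lemma~\ref{lemma:regular-elements-same-characteristic-polynomial-are-conjugate} to conclude. Your argument is precisely the unpacking of the first sentence — lifting $g$ to $\tilde{g}\in\mathbf{G}(\mathfrak{o})$, taking $v' = \pi(\tilde{g})v$, and checking via trace cyclicity that $v'$ transforms by $\chi_\tau$ — so there is no real difference in approach, only in the level of detail made explicit.
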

\begin{proof}
  We observe first that the set of regular parameters for $\pi$ at depth $\mfq^2$ is invariant under conjugation by $\mathbf{G}(\mathfrak{o}/\mfq)$.  The conclusion of the lemma then follows from the fact (Lemma~\ref{lemma:regular-elements-same-characteristic-polynomial-are-conjugate}) that any two cyclic elements having the same characteristic polynomial are conjugate.
\end{proof}

\begin{lemma}\label{lemma:cj3m0fjios}
  Let $\tau \in \mathbf{M}(\mathfrak{o}/\mathfrak{q})$ be cyclic.  Then $\ker(\chi_\tau)$ is a normal subgroup of $J_\tau$, and the quotient group $J_\tau / \ker(\chi_\tau)$ is abelian.
\end{lemma}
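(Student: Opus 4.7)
The plan is to establish the two conclusions separately, reducing the second to the first.

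First, I would show that $\chi_\tau$ is invariant under $J_\tau$-conjugation on $K(\mfq)$, which immediately yields normality of $\ker(\chi_\tau)$ in $J_\tau$ (since $K(\mfq)$ is itself normal in $\mathbf{G}(\mathfrak{o})$, a fortiori in $J_\tau$). For $g \in J_\tau$ and $k = 1+x \in K(\mfq)$ with $x \in \mfq\,\mathbf{M}(\mathfrak{o})$, the definition~\eqref{eqn:definition-of-chi-tau-as-psi-of-trace-x-tau} and cyclicity of trace give
\[
  \chi_\tau(g k g^{-1}) = \psi\bigl(\trace(g x g^{-1}\tau)\bigr) = \psi\bigl(\trace(x\,g^{-1}\tau g)\bigr).
\]
The condition $g \in J_\tau$ means $g^{-1}\tau g - \tau \in \mfq\,\mathbf{M}(\mathfrak{o})$, so $x(g^{-1}\tau g - \tau) \in \mfq^2\,\mathbf{M}(\mathfrak{o})$, on which $\psi \circ \trace$ is trivial. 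Hence $\chi_\tau(g k g^{-1}) = \chi_\tau(k)$.

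Next, for the quotient being abelian, the key idea is to use the cyclicity of $\tau$ to construct a commuting lift of $\mathbf{G}_\tau(\mathfrak{o}/\mfq)$ to $\mathbf{G}(\mathfrak{o})$. By Lemma~\ref{lemma:centralizer-description} we have $\mathbf{M}_\tau(\mathfrak{o}/\mfq) = (\mathfrak{o}/\mfq)[\tau]$; picking any lift $\tilde\tau \in \mathbf{M}(\mathfrak{o})$ of $\tau$, the commutative subring $\mathfrak{o}[\tilde\tau] \subseteq \mathbf{M}(\mathfrak{o})$ surjects mod $\mfq$ onto $\mathbf{M}_\tau(\mathfrak{o}/\mfq)$, and units lift to units (determinants that are units mod $\mathfrak{p}$ are units in $\mathfrak{o}$). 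Hence every $g \in J_\tau$ factors as $g = \tilde g \, k$ with $\tilde g \in \mathfrak{o}[\tilde\tau]^\times \cap \mathbf{G}(\mathfrak{o}) \subseteq J_\tau$ and $k \in K(\mfq)$, and any two such lifts commute.

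Finally, given $g_i = \tilde g_i k_i \in J_\tau$ ($i=1,2$), I would compute $[g_1,g_2]$ by moving each $k_i$ past a $\tilde g_j$ at the cost of a $K(\mfq)$-conjugate. Using $\tilde g_1 \tilde g_2 = \tilde g_2 \tilde g_1$, the commutator becomes a conjugate by $\tilde g_1 \tilde g_2 \in J_\tau$ of an element of $K(\mfq)$ of the form $k_1' k_2 k_1^{-1} k_2'^{-1}$, where $k_j' := \tilde g_{3-j}^{-1} k_j \tilde g_{3-j}$. Using that $K(\mfq)/K(\mfq^2)$ is abelian and $K(\mfq^2) \subseteq \ker(\chi_\tau)$, this factors (under $\chi_\tau$) as $\chi_\tau(k_1' k_1^{-1})\,\chi_\tau(k_2 k_2'^{-1})$; both factors are trivial by the first part. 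This confirms $[g_1,g_2] \in \ker(\chi_\tau)$ (noting it lies in $K(\mfq)$ because $\mathbf{G}_\tau(\mathfrak{o}/\mfq)$ is abelian by Lemma~\ref{lemma:centralizer-description}). The main bookkeeping obstacle is ensuring the commutator rearrangement is carried out with correct signs and conjugations; this is a direct verification once the commuting lift is in hand.
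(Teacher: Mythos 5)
Your proof is correct and follows essentially the same route as the paper: normality is obtained from the conjugation-invariance of $\chi_\tau$ under $J_\tau$ (which the paper phrases as ``$x$ commutes with $\tau$ modulo $\mathfrak{q}$''), and abelianness is obtained by lifting $\mathbf{G}_\tau(\mathfrak{o}/\mfq)$ through the commutative ring $\mathfrak{o}[\tilde\tau]$ and reducing to commutators of generators. The paper's version of the second step is slightly more economical --- it simply observes that the three generator-type commutators $(a_1,a_2)$, $(a_1,b_1)$, $(b_1,b_2)$ all lie in $\ker(\chi_\tau)$, rather than tracking the explicit rearrangement of $[g_1,g_2]$ --- but the underlying idea is identical.
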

\begin{proof}
  We verify first that the kernel is normal.  Let $x \in J_\tau$ and $y \in \ker(\chi_\tau)$.  Then $x y x^{-1}$ lies in $K(\mathfrak{q})$ (because $y$ lies in $K(\mathfrak{q})$, which is normal in $\mathbf{G}(\mathfrak{o})$) and also in $\ker(\chi_\tau)$ (because $x$ commutes with $\tau$ modulo $\mathfrak{q}$).

  We verify next that the quotient is abelian.  Choose an arbitrary lift $\tilde{\tau} \in \mathbf{G}(\mathfrak{o})$ of $\tau$.  Then, since $\mathbf{M}_\tau(\mathfrak{o}/\mathfrak{q})$ consists of polynomials in $\tau$ (Lemma~\ref{lemma:centralizer-description}), we see that any element of $J_\tau$ may be written as a product $a b$, where $a$ is a polynomial in $\tilde{\tau}$ and $b$ lies in $K(\mathfrak{q})$.  The images of such elements generate the quotient group $J_\tau / \ker(\chi_\tau)$, so to verify that the latter is abelian, we reduce to checking that any commutator $(a_1,a_2)$ or $(a_1,b_1)$ or $(b_1,b_2)$ lies in $\ker(\chi_\tau)$, where the $a_i$ and $b_j$ are as before.  Indeed, we have $(a_1,a_2) = 1$ (because any two polynomials in $\tilde{\tau}$ commute with one another) and $\chi_\tau((a_1,b_1)) = 1$ (because $a_1$ commutes with $\tau$ modulo $\mathfrak{q}$) and $\chi_\tau((b_1,b_2)) = 1$ (because $\chi_\tau$ is a character of $K(\mathfrak{q})$).
\end{proof}

\begin{lemma}\label{lemma:let-tau-in-mathbfm-mathfr--mathfr-be-regul-param-p-extension-chi-tau-to-J-tau}
  Let $\tau \in \mathbf{M} (\mathfrak{o} / \mfq)$ be a regular parameter for $\pi$ at depth $\mfq^2$.  Then there is an extension of $\chi_\tau$ to a character $\tilde{\chi}_\tau$ of $J_\tau$ and a nonzero vector $v \in \pi$ that transforms under $J_\tau $ via $\tilde{\chi }_\tau $.  If $\tau$ occurs with multiplicity one, then $v$ is unique up to scalar multiple.
\end{lemma}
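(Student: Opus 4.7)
The plan is to exhibit the desired $\tilde{\chi}_\tau$ and $v$ by decomposing the $\chi_\tau$-isotypic subspace of $\pi$ (for the action of $K(\mfq)$) into characters of the overgroup $J_\tau$. Write $V_\tau \subseteq \pi$ for the space of all $v \in \pi$ satisfying~\eqref{eqn:g-v-=-chi_taug-v-quad-text-all--g-in-kmathfrakq.-}. By the regular-parameter hypothesis, $V_\tau \neq 0$; indeed its dimension is the multiplicity of $\tau$ in $\pi$.

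First I would verify that $J_\tau$ preserves $V_\tau$.  Since $K(\mfq)$ is normal in $\mathbf{G}(\mathfrak{o})$, the group $J_\tau$ acts on $K(\mfq)$ by conjugation, and it suffices to check that this action fixes the character $\chi_\tau$.  For $j \in J_\tau$ and $k = 1 + x \in K(\mfq)$, writing $j^{-1} \tau j = \tau + y$ with $y \in \mfq \cdot \mathbf{M}(\mathfrak{o})$ (using $j \in J_\tau$), we get
\begin{equation*}
\chi_\tau(j k j^{-1}) = \psi(\trace(x \cdot j^{-1} \tau j)) = \psi(\trace(x\tau) + \trace(x y)),
\end{equation*}
and $\trace(xy) \in \mfq^2$ is killed by $\psi$; so the action fixes $\chi_\tau$ and hence stabilizes $V_\tau$.

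Next I would reduce the action of $J_\tau$ on $V_\tau$ to that of a finite abelian group.  Since $\psi$ is trivial on $\mfq^2$, any element $1 + x \in K(\mfq^2)$ lies in $\ker(\chi_\tau)$; thus $K(\mfq^2) \subseteq \ker(\chi_\tau)$, and the quotient $J_\tau / \ker(\chi_\tau)$ embeds into the finite group $\mathbf{G}(\mathfrak{o}/\mfq^2)$.  Elements of $\ker(\chi_\tau)$ are contained in $K(\mfq)$ and so act trivially on $V_\tau$ by the definition of $V_\tau$, so the $J_\tau$-action on $V_\tau$ factors through this finite quotient.  By Lemma~\ref{lemma:cj3m0fjios}, $J_\tau / \ker(\chi_\tau)$ is abelian.

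Finally, any complex representation of a finite abelian group (even an infinite-dimensional one, via isotypic decomposition) splits as a direct sum of character eigenspaces; since $V_\tau \neq 0$, some eigenspace is nonzero.  Choosing any nonzero $v$ therein yields a character $\tilde{\chi}_\tau : J_\tau \to \mathbb{C}^\times$ (automatically extending $\chi_\tau$, by construction of $V_\tau$) and a vector $v \in \pi$ transforming under $J_\tau$ via $\tilde{\chi}_\tau$, as required.  In the multiplicity-one case $\dim V_\tau = 1$, the above decomposition has a single nonzero summand, forcing $\tilde{\chi}_\tau$ to be uniquely determined and $v$ to be unique up to scalar.  The only non-formal input is the conjugation calculation and the invocation of Lemma~\ref{lemma:cj3m0fjios}; I expect no serious obstacle.
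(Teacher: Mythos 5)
Your proof is correct and takes essentially the same route as the paper's: both invoke Lemma~\ref{lemma:cj3m0fjios} and then decompose a finite-dimensional space under the finite abelian quotient $J_\tau/\ker(\chi_\tau)$ into character eigenlines. The one minor difference is that you work with the full $\chi_\tau$-isotypic subspace $V_\tau$ (first checking via the conjugation calculation that $J_\tau$ stabilizes it), while the paper instead takes the cyclic subspace $J_\tau v_0$ spanned by a single vector, which is automatically $J_\tau$-stable; your explicit verification that conjugation by $J_\tau$ fixes $\chi_\tau$ is a useful point that the paper leaves implicit.
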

\begin{proof}
  Let $v_0$ be a nonzero vector in $\pi$ that transforms under $K(\mfq)$ via $\chi_\tau$.  Let $V$ denote the span of $v_0$ under $J_\tau$.  This is a vector space of dimension at most $[J_\tau:K(\mfq)]$, on which $K(\mfq^2)$ acts trivially.  By Lemma~\ref{lemma:cj3m0fjios}, the action of $J_\tau$ on $V$ factors through that of the quotient group $J_\tau / \ker(\chi_\tau)$.  The latter is a finite abelian group, so its action on $V$ decomposes as a direct sum of one-dimensional subspaces.  Take $v$ to lie in one of these subspaces, and $\tilde{\chi}_\tau$ to be the character describing the eigenvalues for $J_\tau$ acting on $v$.  The final assertion is clear.
\end{proof}

\begin{example}\label{example:let-tau-in-mathbfmm-be-cycl-it-defin-char-chi_t-o}
  Let $\tau \in \mathbf{M}(\mathfrak{o}/\mfq)$ be cyclic.  It defines a character $\chi_\tau$ of the subgroup $K(\mfq)$ of $J_\tau$.  Suppose given an extension $\tilde{\chi}_\tau$ of $\chi_\tau$ to a character of $J_\tau$.  Let $\pi$ denote the representation of $\mathbf{G}(\mathfrak{o})$ induced from $\tilde{\chi}_\tau$.  Then $\pi$ is regular at depth $\mfq^2$, with regular parameter $\tau$.  These elementary observations imply that certain supercuspidal representations satisfy the above definitions (specifically, those of ``odd depth'' and ``generic inducing data'' --- see \S\ref{sec:supercuspidals}).
\end{example}

\subsection{Some linear algebra}\label{sec:cj3m0c59nb}
We record some general lemmas to be applied in the following subsection.  Here we focus on an individual ring $R$ and abbreviate $V := \mathbf{V}(R)$, $M := \mathbf{M}(R)$, etc.

\begin{lemma}\label{lemma:v-tau-cycl-if-only-if-v-tau-cycl-more-prec-if-vect}
  Let $\tau \in M$.  Then $V$ is $\tau$-cyclic if and only if $V^*$ is $\tau$-cyclic.  More precisely, if the vector $v \in V$ is $\tau$-cyclic, then functional $v^* \in V^*$ given by $\sum c _j \tau ^j v \mapsto c_{n-1}$ is $\tau$-cyclic.
\end{lemma}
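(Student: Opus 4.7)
The plan is to work in the cyclic basis for $\tau$ associated to $v$. By Lemma~\ref{lemma:v-tau-cycl-if-only-if-map-modul-begin-rn-right-v-}, the $\tau$-cyclicity of $v$ means that the vectors $e_j := \tau^{j-1} v$ for $j = 1, \dotsc, n$ form a basis of $V$; let $e_1^*, \dotsc, e_n^*$ denote the corresponding dual basis of $V^*$. I will first observe that the functional $v^*$ described in the statement is exactly $e_n^*$: indeed, $e_n^*(\tau^j v) = e_n^*(e_{j+1}) = \delta_{j, n-1}$ matches the prescription $\sum c_j \tau^j v \mapsto c_{n-1}$.

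To prove that $v^*$ is $\tau$-cyclic, I will verify that $v^*, v^* \tau, \dotsc, v^* \tau^{n-1}$ span $V^*$ by writing out their matrix of coordinates in the dual basis. The $(k, j)$ entry (with $k \in \{0, \dotsc, n-1\}$ and $j \in \{1, \dotsc, n\}$) equals $(v^* \tau^k)(e_j) = e_n^*(\tau^{k+j-1} v)$. For $k + j \leq n$ we have $\tau^{k+j-1} v = e_{k+j}$, so this entry is $\delta_{k+j, n}$; for $k + j > n$, the entry is some expression determined by Cayley--Hamilton applied to $\tau$ acting on $v$. Consequently the matrix is upper anti-triangular with ones on the anti-diagonal, and so has determinant $\pm 1$, independently of $R$ or of the characteristic polynomial of $\tau$. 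Hence the matrix is invertible over $R$, the vectors $v^* \tau^k$ span $V^*$, and, by Lemma~\ref{lemma:v-tau-cycl-if-only-if-map-modul-begin-rn-right-v-} applied to $V^*$, this means $v^*$ is $\tau$-cyclic.

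This establishes the ``more precise'' half of the statement. The qualitative equivalence then follows by symmetry: applying the same construction to the pair $(V^*, \tau)$ (using the canonical identification $V^{**} \cong V$ for a finite free module) shows that a $\tau$-cyclic vector in $V^*$ produces a $\tau$-cyclic vector in $V$. The only real obstacle is the index bookkeeping needed to confirm the anti-triangular shape; no arithmetic or ring-theoretic subtlety arises, because the entries below the anti-diagonal — those involving the coefficients of $P_\tau$ — do not contribute to the determinant and can be left unspecified.
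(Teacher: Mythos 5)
Your proof is correct and takes essentially the same route as the paper, which simply observes that the transpose of a companion matrix of the form~\eqref{eqn:beginpmatrix-0--0--ast-} is cyclic and leaves the verification to the reader. Your anti-triangular matrix argument (with the observation that the Leibniz expansion collapses to the single permutation $j = n - k$ because the zeros force $k + \sigma(k) \geq n$ with total $n^2$) is precisely the computation the paper is deferring, and the bookkeeping is correct.
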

\begin{proof}
  This is the observation that the transpose of a matrix like~\eqref{eqn:beginpmatrix-0--0--ast-} is cyclic; we leave it to the reader.
\end{proof}

In what follows, by a \emph{flag} $V_0 \subset \dotsb \subset V_k$ in $V$, we mean a sequence of free submodules, with $V_0 = \{0\}$ and $V_k = V$, such that each quotient $V_j / V_{j-1}$ is also free.  We say that $\tau \in M$ preserves this flag if $\tau V_j \subseteq V_j$ for each $j \in \{1, \dotsc, k\}$.

\begin{lemma}\label{lemma:let-v_0-be-free-tau-invar-subm-v.-assume-tau-regul}
  Let $\tau \in M$ be cyclic.  Let $V_0 \subset \dotsb \subset V_k$ be a flag.  Suppose that $\tau$ preserves the flag, so that $\tau$ induces endomorphisms $\tau_j \in \End(V_j/ V_{j-1})$.  Then each $\tau_j$ is cyclic.
\end{lemma}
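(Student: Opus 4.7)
The plan is to induct on $k$, reducing the general flag case to the key statement: if $\tau \in M$ is cyclic on $V$ and $U \subseteq V$ is a $\tau$-invariant submodule with $V/U$ free, then the induced endomorphisms $\tau|_U$ on $U$ and $\bar{\tau}$ on $V/U$ are both cyclic. Given this key statement, applying it to $V_{k-1} \subseteq V$ (which is legitimate since the flag definition makes $V/V_{k-1}$ free) handles $\tau_k$, and reduces cyclicity of $\tau_1, \dots, \tau_{k-1}$ to the inductive hypothesis applied to the shorter flag $V_0 \subset \dots \subset V_{k-1}$ of $V_{k-1}$, using that $\tau|_{V_{k-1}}$ is cyclic and that the quotients $V_j/V_{j-1}$ are unchanged.

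The quotient half of the key statement is immediate: if $v \in V$ is a $\tau$-cyclic vector, then its image $\bar{v} \in V/U$ satisfies $R[\bar{\tau}] \bar{v} = \overline{R[\tau] v} = V/U$, so $\bar{v}$ is $\bar{\tau}$-cyclic.

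The submodule half is the main point, and I would establish it by dualization. Because $V/U$ is free, the short exact sequence $0 \to U \to V \to V/U \to 0$ splits as a sequence of $R$-modules, so $U$ is free and the dual sequence $0 \to U^\perp \to V^* \to U^* \to 0$ is exact, where $U^\perp := \{\ell \in V^* : \ell U = 0\}$ is naturally identified with $(V/U)^*$. Since $U$ is $\tau$-invariant, $U^\perp$ is invariant under the right action of $\tau$ on $V^*$, and the induced right action of $\tau$ on $V^*/U^\perp \cong U^*$ coincides under that identification with the right action of $\tau|_U$ on $U^*$. By Lemma~\ref{lemma:v-tau-cycl-if-only-if-v-tau-cycl-more-prec-if-vect}, the $\tau$-cyclicity of $V$ transfers to $\tau$-cyclicity of $V^*$; by the already-established quotient half, $V^*/U^\perp \cong U^*$ is cyclic for the induced action; and invoking Lemma~\ref{lemma:v-tau-cycl-if-only-if-v-tau-cycl-more-prec-if-vect} once more, now in the reverse direction, we conclude that $U$ is $\tau|_U$-cyclic.

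The only subtlety I foresee is the bookkeeping around freeness needed to make the duality work, namely to ensure that $U^\perp \cong (V/U)^*$ and that $V^* \twoheadrightarrow U^*$ is surjective with kernel $U^\perp$; both facts rest on the splitting of the short exact sequence, which in turn relies on the flag definition requiring the successive quotients (and hence, inductively, the submodules) to be free. No characteristic or PID hypothesis on $R$ is needed.
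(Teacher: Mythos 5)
Your proposal is correct and follows essentially the same strategy as the paper: induct to reduce to a two-step flag, handle the quotient case by pushing a cyclic vector forward, and handle the submodule case by dualizing (via Lemma~\ref{lemma:v-tau-cycl-if-only-if-v-tau-cycl-more-prec-if-vect}) so that $V_1^* \cong V^*/V_1^\perp$ becomes a quotient and the first observation applies. The only difference is expository: you spell out the splitting $0 \to U \to V \to V/U \to 0$ and the $\tau$-equivariance of $U^\perp$ more fully, whereas the paper compresses this into ``By identifying $V_1^*$ with the quotient $V^*/V_1^\perp$, we reduce to the previous observation.'' Your argument matches the paper's.
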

\begin{proof}
  By induction, we may reduce to the case $k = 2$.  It is clear that $\tau_2$ is cyclic, because the image in $V/V_1$ of any cyclic vector for $\tau$ is a cyclic vector for $\tau_2$.  It remains to show that $\tau_1$ is cyclic.  By Lemma~\ref{lemma:v-tau-cycl-if-only-if-v-tau-cycl-more-prec-if-vect}, we know that $\tau^* \in \End(V^*)$ is cyclic, and it suffices to show that $\tau_1^* \in \End(V_1^*)$ is cyclic.  By identifying $V_1^*$ with the quotient $V^* / V_1^\perp$, we reduce to the previous observation.
\end{proof}

\begin{lemma}\label{lemma:if-tau-cycl-then-char-polyn-p_tau-gener-annih-idea}
  If $\tau$ is cyclic, then its characteristic polynomial $P_\tau$ generates the annihilator ideal $\{f \in R[X] : f(\tau) = 0\}$.
\end{lemma}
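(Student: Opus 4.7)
The plan is to divide an arbitrary annihilating polynomial by $P_\tau$ and show the remainder must vanish, using the cyclic basis property.

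First, by Cayley--Hamilton we have $P_\tau(\tau) = 0$, so $P_\tau$ lies in the annihilator ideal. Conversely, let $f \in R[X]$ satisfy $f(\tau) = 0$. Since $P_\tau$ is monic of degree $n = \rank(\mathbf{V})$, I can perform polynomial long division over $R[X]$ to write $f = q P_\tau + r$ with $q, r \in R[X]$ and either $r = 0$ or $\deg r < n$. Then $r(\tau) = f(\tau) - q(\tau) P_\tau(\tau) = 0$.

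Next, I want to show $r = 0$. Pick a $\tau$-cyclic vector $v \in V$. Applying $r(\tau) = 0$ to $v$ gives $\sum_{j=0}^{n-1} r_j \tau^j v = 0$, where $r_j$ are the coefficients of $r$. By Lemma~\ref{lemma:v-tau-cycl-if-only-if-map-modul-begin-rn-right-v-}, the map $(c_0, \ldots, c_{n-1}) \mapsto \sum c_j \tau^j v$ is an isomorphism $R^n \xrightarrow{\cong} V$, so the vectors $v, \tau v, \ldots, \tau^{n-1} v$ are $R$-linearly independent. Hence all $r_j = 0$, i.e., $r = 0$, so $f = q P_\tau$. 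This shows the annihilator ideal is exactly $(P_\tau)$.

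There is essentially no obstacle here: the only thing to check carefully is that polynomial division by a monic polynomial works over a general (commutative) ring, which is standard. The cyclicity hypothesis enters precisely to guarantee the linear independence of $v, \tau v, \ldots, \tau^{n-1} v$, which is exactly what we established in Lemma~\ref{lemma:v-tau-cycl-if-only-if-map-modul-begin-rn-right-v-}.
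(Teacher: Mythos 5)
Your proof is correct and takes essentially the same approach as the paper: both use division with remainder by the monic polynomial $P_\tau$ together with the isomorphism from Lemma~\ref{lemma:v-tau-cycl-if-only-if-map-modul-begin-rn-right-v-}. The only difference is that you argue directly while the paper phrases it as a minimal-degree contradiction; the mathematical content is identical.
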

\begin{proof}
  Otherwise, let $f$ be polynomial of minimal degree that is not divisible by $P_\tau$ and yet for which $f(\tau) = 0$.  If the degree of $f$ is less than the rank of $\mathbf{V}$, then we obtain a contradiction from Lemma~\ref{lemma:v-tau-cycl-if-only-if-map-modul-begin-rn-right-v-}.  Otherwise, we can apply division with remainder using the monic polynomial $P_\tau$ to contradict the minimality of $f$.
\end{proof}

\begin{lemma}\label{lemma:let-v_1-subs-dotsb-subs-v_k-be-flag-let-tau_j-in-e}
  Let $V_1 \subset \dotsb \subset V_k$ be a flag, and let $\tau_j \in \End(V_j / V_{j-1})$ be cyclic elements.  Then there is a cyclic element $\tau \in \End(V)$ that preserves the flag and for which the induced action on $V_j / V_{j-1}$ is $\tau_j$.
\end{lemma}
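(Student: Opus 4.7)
The plan is to induct on $k$, with the main work being the case $k=2$.  For the inductive step with $k \geq 3$, I first apply the lemma to the truncated flag $V_1 \subset \dotsb \subset V_{k-1}$ to obtain a cyclic $\tau' \in \End(V_{k-1})$ inducing the $\tau_j$ for $j < k$, and then apply the $k=2$ case to the flag $V_{k-1} \subset V$ equipped with the pair $(\tau', \tau_k)$.  Since the quotients in the original flag are free, the flag $V_1 \subset \dotsb \subset V_{k-1}$ is again a flag in the sense of \S\ref{sec:cj3m0c59nb}, so the induction is clean.

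For the case $k=2$, set $n_1 := \rank(V_1)$ and $n_2 := \rank(V/V_1)$.  Fix a cyclic basis $e_1, \dotsc, e_{n_1}$ for $V_1$ with respect to $\tau_1$ (see~\eqref{eqn:beginpmatrix-0--0--ast-}), and a $\tau_2$-cyclic vector $\bar f_1 \in V/V_1$.  Set $\bar f_j := \tau_2^{j-1} \bar f_1$, which form a basis of $V/V_1$ by Lemma~\ref{lemma:v-tau-cycl-if-only-if-map-modul-begin-rn-right-v-}; lift these to elements $f_1, \dotsc, f_{n_2} \in V$, which is possible since the quotient is free, and note that $\{e_i\} \cup \{f_j\}$ is then a basis of $V$.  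Write $\tau_2 \bar f_{n_2} = \sum_{i=1}^{n_2} c_i \bar f_i$ for uniquely determined $c_i \in R$, and define $\tau \in \End(V)$ by
\[
\tau e_j := \tau_1 e_j, \qquad \tau f_j := f_{j+1} \text{ for } 1 \leq j < n_2, \qquad \tau f_{n_2} := \sum_{i=1}^{n_2} c_i f_i + e_1.
\]
By construction, $\tau$ preserves $V_1$, restricts there to $\tau_1$, and induces $\tau_2$ on $V/V_1$.

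The key verification is that $f_1$ is then $\tau$-cyclic.  Iterating the definition, $\tau^{j-1} f_1 = f_j$ for $1 \leq j \leq n_2$, so all the $f_j$ lie in $R[\tau] f_1$.  Next, $\tau^{n_2} f_1 = \sum_i c_i f_i + e_1$, so subtracting $\sum_i c_i \tau^{i-1} f_1$ shows $e_1 \in R[\tau] f_1$.  Finally, $\tau^{j} e_1 = \tau_1^{j} e_1 = e_{j+1}$ for $j < n_1$ places every $e_i$ in $R[\tau] f_1$, giving $R[\tau] f_1 = V$ as required.  I do not anticipate a serious obstacle: the entire argument is a direct generalization of the block construction in Lemma~\ref{lemma:let-p-p_h-in-rx-be-monic-polyn-degr-n+1-n-resp-the}.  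The only conceptual point is the ``correction term'' $+e_1$ appended to $\tau f_{n_2}$, which is precisely what couples $V_1$ to $V/V_1$ under iteration and ensures cyclicity of the whole space rather than merely of each quotient; without it, $\tau$ would act block-diagonally on the splitting and need not be cyclic.
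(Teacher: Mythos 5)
Your proof is correct and follows essentially the same route as the paper: the reduction to $k=2$, the choice of cyclic bases for $V_1$ and $V/V_1$, the definition of $\tau$ via the ``correction term'' $+e_1$, and the candidate cyclic vector $f_1$ all coincide with the paper's construction.  The one small difference is in the verification that $f_1$ is cyclic: you isolate $e_1 = \tau^{n_2}f_1 - \sum_i c_i \tau^{i-1} f_1$ and then apply powers of $\tau$ to it, which is a little more direct than the paper's observation that $\{\tau^{n_2} f_1,\dotsc,\tau^{n_1+n_2-1} f_1\}$ spans $V_1$ modulo the complement of $V_1$, but both yield the same conclusion.
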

\begin{proof}
  By induction, we may reduce to the case $k = 2$ and $V_k = V$.
  
  We may find a basis $e_1,\dotsc,e_m$ for $V_1$ with respect to which $\tau_1$ is cyclic and a basis $\bar{f}_1,\dotsc,\bar{f}_n$ for $V/V_1$ with respect to which $\tau_2$ is cyclic, i.e.,
  \begin{equation*}
    \tau_1 e_i = e_{i+1}  \quad (i  < m), \qquad
    \tau_2 \bar{f}_j = \bar{f}_{j+1} \quad (j < n).
  \end{equation*}

  By choosing a splitting $V = V_1 \oplus V_2$, we identify $\tau_2$ with an endomorphism of $V_2$, extended by zero to $V$.  We then define $\tau \in \End(V)$ by taking
  \begin{equation*}
    \tau e_i = \tau_1 e_{i},
    \qquad
    \tau f_j =
    \tau_2 f_j
    +
    \begin{cases}
      e_1 & \text{ if } j = n, \\
      0 & \text{ otherwise.}
    \end{cases}
  \end{equation*}
  We claim then that $f_1$ is a cyclic vector for $\tau$.  Indeed, the set $\{f_1, \tau f_1, \dotsc, \tau^{n-1} f_1\} = \{f_1, f_2, \dotsc, f_{n}\}$ spans $V_2$, while the set $\{\tau^n f_1, \dotsc, \tau^{n+m-1} f_1\}$ is congruent modulo $V_2$ to the spanning set $\{e_1, \dotsc, e_m\}$ for $V_1$.
\end{proof}

\begin{lemma}\label{lemma:let-r-be-ring.-write-v-:=-mathbfvr-m-:=-mathbfmr-e}
  Let $V_1 \subset \dotsb \subset V_k$ be a flag.  Let $\tau \in M$ be a cyclic element that preserves this flag.  Suppose that $g \in G$ has the following properties:
  \begin{enumerate}[(i)]
  \item $\Ad(g) \tau$ preserves the flag.
  \item For each $j$, the induced actions of $\tau$ and $\Ad(g) \tau$ on $V_j / V_{j-1}$ have the same characteristic polynomial.
  \end{enumerate}
  Then $g$ preserves the flag.
\end{lemma}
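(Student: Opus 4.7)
My plan is to reduce the lemma to the case of a two-step flag and then to characterize the intermediate subspace intrinsically using the cyclicity of $\tau$. For the reduction, it suffices to show that $gV_j \subseteq V_j$ for each $j \in \{1,\dotsc,k\}$. I would obtain this by applying the lemma to each two-step sub-flag $0 \subset V_j \subset V$, which inherits the required hypotheses: it is preserved by both $\tau$ and $\tau' := \Ad(g)\tau$, and the induced characteristic polynomials match on $V_j$ (both equal to $P_1 \dotsb P_j$) and on $V/V_j$ (both equal to $P_{j+1} \dotsb P_k$), where $P_i$ denotes the common characteristic polynomial on $V_i/V_{i-1}$.

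So consider a two-step flag $0 \subset V_1 \subset V$. Let $P_1$ denote the characteristic polynomial of $\tau_1 := \tau|_{V_1}$, and factor $P_\tau = P_1 \cdot P_1^c$ via the block structure of $\tau$ with respect to an $R$-module splitting $V = V_1 \oplus (V/V_1)$, which exists because $V/V_1$ is free and hence projective. Choosing a cyclic vector for $\tau$ and invoking Lemma~\ref{lemma:if-tau-cycl-then-char-polyn-p_tau-gener-annih-idea}, identify $V \cong R[X]/(P_\tau)$ as $R[X]$-modules. The key claim is the intrinsic characterization
\[
V_1 \;=\; P_1^c(\tau) \cdot V.
\]
By Cayley--Hamilton applied to $\tau_1$ we have $V_1 \subseteq \ker P_1(\tau)$, and a short computation in $R[X]/(P_\tau)$, using that $P_1$ is monic (hence a non-zero-divisor in $R[X]$), identifies $\ker P_1(\tau)$ with $P_1^c(\tau) V$. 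The latter is free over $R$ of rank $\deg P_1 = \rank V_1$, via multiplication by $P_1^c$ giving $R[X]/(P_1) \cong P_1^c(\tau) V$; moreover $V/P_1^c(\tau) V \cong R[X]/(P_1^c)$ is free of rank $\rank(V/V_1)$. The inclusion $V_1 \subseteq P_1^c(\tau) V$ therefore induces a surjection $V/V_1 \twoheadrightarrow V/P_1^c(\tau) V$ between free $R$-modules of the same finite rank. By Vasconcelos's theorem (a surjective $R$-endomorphism of a finitely generated module is an isomorphism, applied after composing with an $R$-isomorphism between the two sides), this surjection is in fact an isomorphism, so its kernel $P_1^c(\tau) V/V_1$ vanishes, establishing the claim. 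Applying the same characterization to $\tau'$---which is cyclic on $V$ (with cyclic vector $gv$ whenever $v$ is cyclic for $\tau$), satisfies $P_{\tau'} = P_\tau$, and has $\tau'|_{V_1}$ of characteristic polynomial $P_1$---yields $V_1 = P_1^c(\tau') V = g\, P_1^c(\tau)\, g^{-1} V = gV_1$.

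The main obstacle is this intrinsic characterization over a general ring. Over a field it is routine from the structure theorem for finitely generated modules over the PID $R[X]$, but over an arbitrary commutative $R$ the submodule structure of $R[X]/(P_\tau)$ is more delicate, and one cannot argue naively by dimension count. Vasconcelos's theorem is precisely what converts the equality of $R$-ranks of $V_1$ and $P_1^c(\tau) V$ into their equality as submodules of $V$, and appears to be the essential technical input supplementing the cyclicity hypothesis.
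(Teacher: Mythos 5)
Your proof is correct, and it reaches the conclusion by a genuinely different route from the paper's. The paper's proof also reduces to the two-step case but then argues element-by-element: writing an arbitrary element of $V_1$ as $f(\tau)v$ with $v$ $\tau$-cyclic, it deduces from cyclicity of the quotient action and Lemma~\ref{lemma:if-tau-cycl-then-char-polyn-p_tau-gener-annih-idea} that the characteristic polynomial $P_{\tau_2}$ of $\tau$ on $V/V_1$ divides $f$, and then applies Cayley--Hamilton for $\Ad(g)\tau$ on $V/V_1$ to conclude $g\,f(\tau)v = f(\Ad(g)\tau)\,gv \in V_1$. You instead prove the equality of submodules $V_1 = P_{\tau_2}(\tau)\,V$ and observe that this expression is conjugation-equivariant in $\tau$, so the hypothesis on $\Ad(g)\tau$ forces $gV_1 = V_1$. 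Your version is more illuminating structurally --- it produces a clean intrinsic description of $V_1$ --- but it costs an extra step: establishing the reverse containment $P_{\tau_2}(\tau)V \subseteq V_1$ requires the rank-count plus the surjection-implies-isomorphism lemma (Vasconcelos / \cite[Corollary 4.4a]{MR1322960}, i.e., the same tool as in Lemma~\ref{lemma:v-tau-cycl-if-only-if-map-modul-begin-rn-right-v-}). The paper sidesteps this by only needing the forward implication $f(\tau)v\in V_1 \Rightarrow P_{\tau_2}\mid f$ together with Cayley--Hamilton applied to $\Ad(g)\tau$, making its argument a bit leaner. Your reduction to two-step flags by applying the base case to each $0\subset V_j\subset V$ is also fine (the characteristic polynomial of $\tau|_{V_j}$ is indeed the product $P_1\dotsm P_j$ by block triangularity over any commutative ring) and is, if anything, tidier than an explicit induction.
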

\begin{proof}
  By induction, we may reduce to the case $k = 2$.  Let $v \in V$ be $\tau$-cyclic.  Then every element of $V$ may be written $f(\tau) v$ for some polynomial $f \in R[X]$, so it suffices to show that if $f(\tau) v \in V_1$, then $g f(\tau) v \in V_1$.  Suppose, thus, that $f(\tau) v \in V_1$.  Let $v_2 \in V/ V_1$ denote the image of $v$, and write $\tau_2 \in \End(V/V_1)$ for the induced action of $\tau$ on the quotient.  Then $v_2$ is $\tau_2$-cyclic and $f(\tau_2) v_2 = 0$, hence $f(\tau_2) = 0$.  It follows that $f$ is divisible by the characteristic polynomial of $\tau_2$ (Lemma~\ref{lemma:if-tau-cycl-then-char-polyn-p_tau-gener-annih-idea}).  Since the induced action of $\Ad(g) \tau$ on $V / V_1$ has the same characteristic polynomial as $\tau_2$, it follows from the Cayley--Hamilton theorem that $f(\Ad(g) \tau) V \subseteq V_1$.  But then $g f(\tau) v = f(\Ad(g) \tau) g v \in V_1$, as required.
\end{proof}

\subsection{Parabolic induction}\label{sec:cj3m0c6a7b}
The following proposition shows that the existence and uniqueness of localized vectors is preserved by induction.  (The ``uniqueness'' assertions will not be used here, but might be useful for certain extensions of the work of this paper.)
\begin{proposition}\label{proposition:regular-stable-closed-under-parabolic-induction}
  The class of representations of general linear groups over $\mathfrak{o}$ that are regular at depth $\mfq^2$ is closed under parabolic induction.  That is to say, writing $n = \rank(\mathbf{V})$, suppose given a partition $n = m_1 + \dotsb + m_k$, corresponding to a parabolic subgroup $\mathbf{P}$ of $\mathbf{G}$, together with a collection of representations $\pi_j$ ($j = 1, \dotsc, k$) of $\GL_{m_j}(\mathfrak{o})$ each of which is regular at depth $\mfq^2$.  Then the same holds for the parabolic induction
  \begin{equation*}
    \pi := \Ind_{\mathbf{P}(\mathfrak{o})}^{\mathbf{G}(\mathfrak{o})}(\pi_1 \otimes \dotsb \otimes \pi_k).
  \end{equation*}
  Moreover:
  \begin{enumerate}[(i)]
  \item Every polynomial for $\pi$ at depth $\mfq^2$ is a product of polynomials for $\pi_1,\dotsc,\pi_k$ at depth $\mfq^2$.
  \item For any polynomials $P_1,\dotsc,P_k$ for $\pi_1,\dotsc,\pi_k$ at depth $\mfq^2$, the product $P_1 \dotsm P_k$ is a polynomial for $\pi$ at depth $\mfq^2$.
  \item If $\pi_i$ admits a unique polynomial $P_i$ at depth $\mfq^2$, and if $P_i$ occurs with multiplicity one, then $P_1 \dotsb P_k$ is likewise the unique polynomial for $\pi$ at depth $\mfq^2$, and occurs with multiplicity one.
  \end{enumerate}
\end{proposition}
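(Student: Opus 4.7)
The plan is to restrict $\pi$ to $K(\mathfrak{q})$ via Mackey's formula, then use Frobenius reciprocity together with the Levi decomposition to identify which characters $\chi_\tau$ appear and with what multiplicity. Since $K(\mathfrak{q})$ is normal in $\mathbf{G}(\mathfrak{o})$, the double coset space $\mathbf{P}(\mathfrak{o})\backslash \mathbf{G}(\mathfrak{o}) / K(\mathfrak{q})$ collapses to $\mathbf{P}(\mathfrak{o}/\mathfrak{q}) \backslash \mathbf{G}(\mathfrak{o}/\mathfrak{q})$, which I identify with the set $\mathcal{F}$ of flags $F = (0 = F_0 \subset F_1 \subset \dotsb \subset F_k = \mathbf{V}(\mathfrak{o}/\mathfrak{q}))$ of type $(m_1,\dotsc,m_k)$. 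Mackey thus presents $\pi|_{K(\mathfrak{q})}$ as a direct sum, over $F \in \mathcal{F}$, of representations induced from $g_F^{-1}(\mathbf{P}(\mathfrak{o}) \cap K(\mathfrak{q})) g_F$, where $g_F \in \mathbf{G}(\mathfrak{o})$ lifts a coset representative carrying $F$ to the standard flag and $\pi_L := \pi_1 \otimes \dotsb \otimes \pi_k$ is inflated from the Levi to $\mathbf{P}$.

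For the Levi decomposition $\mathbf{P} = \mathbf{L} \ltimes \mathbf{N}$, one has $\mathbf{P}(\mathfrak{o}) \cap K(\mathfrak{q}) = L_1 N_1$ with $L_1 := \mathbf{L}(\mathfrak{o}) \cap K(\mathfrak{q})$ and $N_1 := \mathbf{N}(\mathfrak{o}) \cap K(\mathfrak{q})$. Frobenius reciprocity together with the identity $g \cdot \chi_\tau = \chi_{\Ad(g)\tau}$ reduces the $\chi_\tau$-isotypic space of each summand to $\Hom_{L_1 N_1}(\chi_{\Ad(g_F)\tau}, \pi_L)$. Since $N_1$ acts trivially on $\pi_L$ and the trace pairing is nondegenerate, this vanishes unless $\Ad(g_F)\tau$ is block upper-triangular with respect to the standard flag, i.e., unless $\tau$ preserves $F$; in that case, writing $\tau'_j \in \mathbf{M}_{m_j}(\mathfrak{o}/\mathfrak{q})$ for the induced action on $F_j/F_{j-1}$, the contribution becomes $\bigotimes_j \Hom_{L_{1,j}}(\chi_{\tau'_j}, \pi_j)$, of dimension $\prod_j \mult(\tau'_j, \pi_j)$.

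Assertions (i) and (ii) follow readily. If $\tau$ is a regular parameter for $\pi$, then some $F$ contributes, so by Lemma~\ref{lemma:let-v_0-be-free-tau-invar-subm-v.-assume-tau-regul} each $\tau'_j$ is cyclic, each $P_{\tau'_j}$ is a polynomial for $\pi_j$ by Lemma~\ref{lemma:let-p-be-polyn-pi-at-depth-mathfr-let-tau-in-mathb}, and the block upper-triangular shape of $\Ad(g_F)\tau$ gives $P_\tau = \prod_j P_{\tau'_j}$; conversely, given polynomials $P_j$ for the $\pi_j$'s, Lemma~\ref{lemma:let-v_1-subs-dotsb-subs-v_k-be-flag-let-tau_j-in-e} produces a cyclic $\tau \in \mathbf{M}(\mathfrak{o}/\mathfrak{q})$ preserving the standard flag with diagonal blocks having characteristic polynomials $P_j$, so the standard flag contributes and $\prod_j P_j$ is a polynomial for $\pi$.

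For (iii), uniqueness of the polynomial for $\pi$ is immediate from (i). For multiplicity one, the hypothesis on each $\pi_j$ reduces $\mult(\chi_\tau, \pi)$ to the number of $\tau$-invariant flags $F$ with $P_{\tau'_j(F)} = P_j$ for all $j$, and the plan is to show this number equals one. Using cyclicity of $\tau$ to identify $\mathbf{V}(\mathfrak{o}/\mathfrak{q}) \cong A := (\mathfrak{o}/\mathfrak{q})[X]/(P)$ as $\mathbf{M}_\tau$-modules, with $P = P_1 \cdots P_k$, Cayley--Hamilton applied to $\tau|_{F_j}$ and to the induced action of $\tau$ on $\mathbf{V}/F_j$ yields the two containments $F_j \subseteq \ker (P_1 \cdots P_j)(\tau)$ and $(P_{j+1} \cdots P_k)(\tau)\,\mathbf{V} \subseteq F_j$; both sides identify with the principal ideal $(P_{j+1} \cdots P_k) \cdot A$ (the kernel computation uses that $P_1 \cdots P_j$ is monic and hence a non-zero-divisor in $(\mathfrak{o}/\mathfrak{q})[X]$), pinning $F_j$ down uniquely. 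The hard part is this last step: since $\mathfrak{o}/\mathfrak{q}$ is not generally a principal ideal ring, the structure theorem for finitely generated modules over a PID is unavailable, and the uniqueness of the flag must be extracted directly from the cyclic module picture together with the non-zero-divisor property of monic polynomials in $(\mathfrak{o}/\mathfrak{q})[X]$.
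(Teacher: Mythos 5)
Your proof is correct and follows essentially the same route as the paper: the Mackey decomposition of $\pi|_{K(\mfq)}$ over $\mathbf{P}(\mathfrak{o}/\mfq)\backslash\mathbf{G}(\mathfrak{o}/\mfq)$, the reduction to block-diagonal conditions via triviality of the unipotent action, and Lemmas~\ref{lemma:let-v_0-be-free-tau-invar-subm-v.-assume-tau-regul} and~\ref{lemma:let-v_1-subs-dotsb-subs-v_k-be-flag-let-tau_j-in-e} for parts (i) and (ii) all match the paper's proof. The one mild divergence is in part (iii): the paper conjugates $\tau$ to preserve the standard flag and then invokes Lemma~\ref{lemma:let-r-be-ring.-write-v-:=-mathbfvr-m-:=-mathbfmr-e} to show the only contributing $g$ lies in the trivial double coset, whereas you argue directly that any $\tau$-invariant flag of the right type is pinned down by $F_j = (P_{j+1}\cdots P_k)A$ inside the cyclic module $A \cong V$. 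These are morally the same computation --- Lemma~\ref{lemma:let-r-be-ring.-write-v-:=-mathbfvr-m-:=-mathbfmr-e} is proved precisely by the divisibility/non-zero-divisor argument you redo --- so your version amounts to an inlined and slightly more explicit reproof of that lemma's content; it has the cosmetic advantage of exhibiting the unique flag concretely as a chain of principal ideals. The subtlety you flag at the end (unavailability of the PID structure theorem over $\mathfrak{o}/\mfq$) is real but is already fully handled by your containment argument together with the observation that monic polynomials are non-zero-divisors; there is no remaining gap. One small slip: in part (i) you cite Lemma~\ref{lemma:let-p-be-polyn-pi-at-depth-mathfr-let-tau-in-mathb} to deduce that each $P_{\tau'_j}$ is a polynomial for $\pi_j$, but that lemma runs in the opposite direction; the claim you need is immediate from Definition~\ref{definition:we-say-that-pi-emphr-at-depth-mathfr-if-there-cycl-regular-depth-parameter-polynomial} (once $\pi_j^{\tau'_j}\neq 0$ and $\tau'_j$ is cyclic, $P_{\tau'_j}$ is by definition a polynomial for $\pi_j$).
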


\begin{proof}
  For $\tau \in \mathbf{M}(\mathfrak{o}/\mfq)$, let us temporarily write $\pi^{\tau}$ for the subspace of $\pi$ on which $K(\mfq)$ acts via $\chi_\tau$.  Similarly, for an $m_i \times m_i$ matrix $\tau_i$ over $\mathfrak{o}/\mfq$, write $\pi_i^{\tau_i}$ for the associated subspace of $\pi_i$.  Let us take for $\mathbf{P}$ the standard upper-triangular parabolic subgroup associated to the given partition.  Let $\mathbf{U} < \mathbf{P}$ denote the unipotent radical and $\mathbf{L}$ the block-diagonal Levi factor, thus $\mathbf{L} \cong \prod_{i=1}^k \mathbf{G}_i$, where each $\mathbf{G}_i$ is a general linear group of rank $m_i$.  We may regard
  \begin{equation*}
    \Pi := \otimes_i \pi_i
  \end{equation*}
  as a representation of $\mathbf{L}(\mathfrak{o})$.  We extend it to a representation of $\mathbf{P}(\mathfrak{o})$ by letting $\mathbf{U}(\mathfrak{o})$ act trivially, and realize $\pi$ as the space of functions $v : \mathbf{G}(\mathfrak{o}) \rightarrow \Pi$ such that for all $p \in \mathbf{P}(\mathfrak{o})$,
  \begin{equation*}
    v(p g) = \Pi(p) v(g),
  \end{equation*}
  with the action given by right translation.

  By Mackey theory, a basis for $\pi^\tau$ is indexed by representatives $g$ for the double quotient
  \begin{equation}\label{eqn:mathbfp-mathfr-backsl-mathbfgm--kmathfr-cong-mathb}
    \mathbf{P} (\mathfrak{o}) \backslash \mathbf{G}(\mathfrak{o}) / K(\mfq)
    \cong \mathbf{P}(\mathfrak{o}/\mfq) \backslash \mathbf{G}(\mathfrak{o}/\mfq)    
  \end{equation}
  together with a basis for the space of vectors $t \in \Pi$ such that the formula
  \begin{equation*}
    v_g (p g h) =
    \begin{cases}
      \chi_\tau(h) \Pi(p) t  & \text{ if } (p,h) \in \mathbf{P}(\mathfrak{o}) \times K(\mfq), \\
      0 & \text{ otherwise}
    \end{cases}
  \end{equation*}
  is well-defined, that is, for which $p g h = g \implies \chi_\tau(h) \Pi(p) t = t$, or equivalently,
  \begin{equation}\label{eqn:p-in-mathbfpm-times-g-kmathfr-g-1-impl-chi_-tau-p-}
    p \in \mathbf{P}(\mathfrak{o}) \cap  g K(\mfq) g^{-1}
    \implies
    \chi_{\Ad(g) \tau }(p) t = \chi_\tau(g^{-1} p g) t = \Pi(p) t.
  \end{equation}
  Since $K(\mfq)$ is normal in $\mathbf{G}(\mathfrak{o})$, we have
  \begin{equation*}
    \mathbf{P}(\mathfrak{o}) \cap g K(\mfq) g^{-1} = K_P(\mfq) := \ker (
    \mathbf{P}(\mathfrak{o}) \rightarrow \mathbf{P}(\mathfrak{o}/\mfq)
    ).
  \end{equation*}
  Any element $p$ of the latter may be written uniquely as $p = m u$, where $(m, u)$ lies in $K_L(\mfq) \times K_U(\mfq)$ (with each factor defined like $K_P(\mathfrak{q})$).  Assuming that the vector $t$ is nonzero, the condition~\eqref{eqn:p-in-mathbfpm-times-g-kmathfr-g-1-impl-chi_-tau-p-} is then equivalent to the following pair of assertions.
  \begin{itemize}
  \item $\chi_{\Ad(g) \tau}(u) = 1$ for all $u \in K_U(\mfq)$.  Translating this assertion via the trace pairing, it says that the element $\Ad(g) \tau$ of $\mathbf{M}(\mathfrak{o}/\mfq)$ preserves the flag $\mathcal{F}$ in $\mathbf{M}(\mathfrak{o}/\mfq)$ of which $\mathbf{P}(\mathfrak{o}/\mfq)$ is the stabilizer.
  \item $\Pi(m) t = \chi_{\Ad(g) \tau}(m) t$ for all $m \in K_L(\mfq)$.  Writing ${(\Ad(g) \tau )}_{ii}$ for the $i$th block diagonal $m_i \times m_i$ matrix, this is equivalent to the condition $t \in \otimes_{i=1}^k \pi_i^{{(\Ad(g) \tau)}_{i i}}$.
  \end{itemize}
  In summary,
  \begin{equation}\label{eqn:pitau--=-oplus_-subst-g-in-mathbfpm-backsl-mathbfg}
    \pi^{\tau } = \oplus_{
      \substack{
        g \in \mathbf{P}(\mathfrak{o}) \backslash \mathbf{G}(\mathfrak{o}) / K(\mfq) :  \\
        \Ad(g) \tau \text{ preserves } \mathcal{F} 
      }
    }
    \otimes_{i=1}^k \pi_i^{{(\Ad(g) \tau)}_{i i}}.
  \end{equation}

  The matrices ${(\Ad(g) \tau)}_{i i}$ are cyclic, as their images in $\mathbf{M}_{m_i}(\mathfrak{o}/\mfq)$ are cyclic by Lemma~\ref{lemma:let-v_0-be-free-tau-invar-subm-v.-assume-tau-regul}.  If the spaces $\pi_i^{{(\Ad(g) \tau )}_{i i}}$ are nonzero, then the characteristic polynomial of ${(\Ad(g) \tau )}_{i i}$ must be a polynomial for $\pi_j$ at depth $\mfq^2$.  Thus every polynomial for $\pi$ is a product of polynomials for the $\pi_j$ inside the ring $(\mathfrak{o}/\mfq)[X]$.

  Conversely, suppose given polynomials $P_1, \dotsc, P_k$ for $\pi_1,\dotsc,\pi_k$ at depth $\mfq^2$.  By Lemma~\ref{lemma:let-v_1-subs-dotsb-subs-v_k-be-flag-let-tau_j-in-e}, there is a cyclic element $\tau$ which preserves the flag and for which each $\tau_{ii}$ has characteristic polynomial $P_i$.  Then the characteristic polynomial of $\tau$ is $P_1 \dotsb P_k$.  On the other hand, the contribution to~\eqref{eqn:pitau--=-oplus_-subst-g-in-mathbfpm-backsl-mathbfg} from $g = 1$ is positive-dimensional, so $\pi^\tau$ is nonzero.  Thus $P_1 \dotsb P_k$ is a polynomial for $\pi$.

  It remains to verify the final assertion concerning uniqueness and multiplicity one.  Suppose that $\pi^\tau$ is nonzero for some cyclic $\tau \in M$.  By~\eqref{eqn:pitau--=-oplus_-subst-g-in-mathbfpm-backsl-mathbfg}, we may assume (after conjugating $\tau$ if necessary) that $\tau$ preserves $\mathcal{F}$ and that each $\pi_i^{\tau_{ii}}$ is nonzero.  By hypothesis, the characteristic polynomial of $\tau_{ii}$ is $P_i$ and $\pi_i^{\tau_{ii}}$ is one-dimensional.  Moreover, by Lemma~\ref{lemma:let-r-be-ring.-write-v-:=-mathbfvr-m-:=-mathbfmr-e} (applied over $R = \mathfrak{o} / \mfq$), the only $g \in \mathbf{G}(\mathfrak{o})$ for which $\Ad(g) \tau$ preserves $\mathcal{F}$ and ${(\Ad(g) \tau)}_{ii}$ has characteristic polynomial $P_i$ lie in the trivial double coset. Thus~\eqref{eqn:pitau--=-oplus_-subst-g-in-mathbfpm-backsl-mathbfg} reduces to the one-dimensional summand $\otimes_{i=1}^k \pi_i^{\tau_{ii}}$.  This proves that $P_1 \dotsb P_k$ is the unique polynomial for $\pi$ and occurs with multiplicity one.
\end{proof}

\subsection{Stable pairs of representations}\label{sec:cj3m0c6cif}
Recall that in Definition~\ref{definition:let-pi-sigma-be-repr-pair-gener-line-groups-over-f}, we defined when a pair of representations $(\pi,\sigma)$ is stable at depth $\mfq^2$.

\begin{example}\label{example:stable-pairs-of-characters}
  Let $\chi$ and $\eta$ be characters of $\mathfrak{o}^\times = \GL_1(\mathfrak{o})$. Then the pair $(\chi,\eta)$ is stable at depth $\mfq^2$ precisely when $\chi$ and $\eta$ have conductor dividing $\mfq^2$ and the ratio $\chi/\eta$ has conductor exactly $\mfq^2$.  In verifying this, we observe that $\chi(x) / \eta (x) = \psi (x (\xi_\chi - \xi_\eta ))$, which is nontrivial for some $x \in \mathfrak{p}^{-1} \mfq^2$ precisely when $\xi_\chi - \xi_\eta$ is a unit, or equivalently, when the polynomials $X - \xi_\chi$ and $X - \xi_\eta$ generate the unit ideal in $(\mathfrak{o} / \mfq )[X]$.
\end{example}

\begin{proposition}\label{proposition:stable-pairs-closed-under-parabolic-induction}
  The class of pairs of representations $(\pi,\sigma)$ that are stable at depth $\mfq^2$ is closed under parabolic induction, in the following sense.  Suppose given a finite collection of representations $\pi_i$ and $\sigma_j$, where each pair $(\pi_i,\sigma_j)$ is stable at depth $\mfq^2$.  Let $\pi$ (resp.\ $\sigma$) denote the parabolic induction of the $\pi_i$ (resp.\ $\sigma_j$), as in Proposition~\ref{proposition:regular-stable-closed-under-parabolic-induction}.  Then the pair $(\pi,\sigma)$ is likewise stable at depth $\mfq^2$.
\end{proposition}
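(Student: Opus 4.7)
My plan is to combine Proposition~\ref{proposition:regular-stable-closed-under-parabolic-induction} with a straightforward reduction to the residue field, and then appeal to uniqueness of polynomials in the representation-theoretic setting at hand. First, Proposition~\ref{proposition:regular-stable-closed-under-parabolic-induction} immediately gives that both $\pi$ and $\sigma$ are regular at depth $\mfq^2$, so the first clause of Definition~\ref{definition:let-pi-sigma-be-repr-pair-gener-line-groups-over-f} is satisfied.  Moreover, the same proposition characterizes the polynomials for $\pi$ (resp.\ $\sigma$) at depth $\mfq^2$ as products $\prod_i P_i$ (resp.\ $\prod_j Q_j$), with $P_i$ (resp.\ $Q_j$) a polynomial for $\pi_i$ (resp.\ $\sigma_j$).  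It therefore remains to exhibit such $P_i$ and $Q_j$ for which the resulting products generate the unit ideal of $R[X]$, where $R := \mathfrak{o}/\mfq$.

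Next, I would reduce coprimality in $R[X]$ to coprimality in $k[X]$, where $k := \mathfrak{o}/\mathfrak{p}$.  Since $\mfq = \mathfrak{p}^N$ for some $N \geq 1$, the ring $R$ is artinian local with nilpotent maximal ideal $\mathfrak{p}/\mfq$, so every maximal ideal of $R[X]$ contains $\mathfrak{p} R[X]$, and reduction modulo $\mathfrak{p}$ induces a bijection between the maximal ideals of $R[X]$ and those of $k[X]$.  Consequently, $(\prod_i P_i, \prod_j Q_j) = (1)$ in $R[X]$ if and only if the reductions $\prod_i \bar{P_i}$ and $\prod_j \bar{Q_j}$ have no common root in an algebraic closure $\bar{k}$, which in turn is equivalent to the pairwise condition that $(P_i, Q_j) = (1)$ in $R[X]$ for every $(i, j)$.

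The final step is to choose, for each $i$ and each $j$, a single $P_i$ and $Q_j$ so that the pairwise coprimality holds simultaneously.  In the setting relevant to this paper --- where each inducing datum is a character (cf.\ Example~\ref{example:stable-pairs-of-characters}) or one of the supercuspidals treated in \S\ref{sec:supercuspidals} --- each $\pi_i$ and $\sigma_j$ admits a \emph{unique} polynomial at depth $\mfq^2$, obtained by combining the uniqueness assertion in Proposition~\ref{proposition:regular-stable-closed-under-parabolic-induction} with uniqueness for the underlying characters.  Once the polynomials $P_i$ and $Q_j$ are unique, pairwise stability of $(\pi_i, \sigma_j)$ directly yields coprimality of the unique polynomials, and the argument concludes.

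The main potential obstacle, were one to drop the uniqueness and attempt to prove the statement in full generality, would be to consolidate the pairwise witnesses $(P_{ij}, Q_{ij})$ supplied by the hypothesis into a single choice of $P_i$ (independent of $j$) and $Q_j$ (independent of $i$); this would require additional structure on the sets $\mathcal{P}_{\pi_i}, \mathcal{P}_{\sigma_j}$ that does not seem to follow formally from the definitions.  For the applications in this paper the uniqueness setting covers all cases of interest, so I would present the proof in that generality and remark on the consolidation issue as a separate observation.
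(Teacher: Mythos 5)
Your proposal follows essentially the same route as the paper: reduce via Proposition~\ref{proposition:regular-stable-closed-under-parabolic-induction} to the claim that pairwise coprimality of polynomials implies coprimality of their products.  For that claim the paper uses a one-line identity --- write $1 = a_i P_i + b_j Q_j$ and observe that the expansion of $\prod_{i,j}(a_i P_i + b_j Q_j)$ lies in the ideal $(P,Q)$ --- which is a bit more elementary than your reduction to the residue field, though both routes are valid.  Your ``consolidation'' remark is correct and worth taking seriously: as literally stated, the hypothesis only furnishes, for each pair $(i,j)$, polynomials $P_{ij}$ for $\pi_i$ and $Q_{ij}$ for $\sigma_j$ that are coprime, and this does not formally yield a single family $\{P_i\}, \{Q_j\}$ (one polynomial per representation, independent of the partner index) with every cross-pair coprime; a purely combinatorial example shows such a consolidation can fail for abstract families of polynomial sets.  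The paper's brief proof passes over this step, implicitly choosing consolidated witnesses.  In the applications --- principal series (Example~\ref{Example:principalregular}) and the supercuspidals of \S\ref{sec:supercuspidals} --- the polynomial at depth $\mathfrak{q}^2$ is unique (cf.\ Proposition~\ref{proposition:regular-stable-closed-under-parabolic-induction}, part (iii)), so consolidation is automatic and the proof goes through; your instinct to present the argument in that setting and flag the general issue separately is sound.
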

\begin{proof}
  This reduces to Proposition~\ref{proposition:regular-stable-closed-under-parabolic-induction} and the following observation: given finite collections of polynomials $\{P_i\}$ and $\{Q_j\}$ in $(\mathfrak{o} / \mfq )[X]$ such that each pair $(P_i,Q_j)$ generates the unit ideal, the same holds for the pair $(P,Q)$ given by $P := \prod_i P_i$ and $Q := \prod_j Q_j$.  (To see this, write $1 = a_{i,j} P_i + b_{i,j} Q_j$ and observe that $\prod_{i,j} (a_{i,j} P_i + b_{i,j} Q_j)$ lies in the ideal generated by $P$ and $Q$.)
\end{proof}

\begin{example}
  Let $\pi$ (resp.\ $\sigma$) be a principal series representation induced by characters $\chi_i$ (resp.\ $\eta_j$), each of conductor dividing $\mfq^2$.  Suppose that each ratio $\chi_i / \eta_j$ has conductor $\mfq^2$.  Then $(\pi,\sigma)$ is stable at depth $\mfq^2$.  In verifying this, we may reduce via Proposition~\ref{proposition:stable-pairs-closed-under-parabolic-induction} to the case of a pair of characters $(\chi,\eta)$ discussed in Example~\ref{example:stable-pairs-of-characters}.
\end{example}

\subsection{Supercuspidals}\label{sec:supercuspidals}
We verify that the above terminology applies to certain supercuspidal representations of $\mathbf{G}(F)$.  We do not aim here for an exhaustive treatment, as representation-theoretic issues are not the focus of this paper.

Write $n := \dim(V)$.  Let $\tau \in \mathbf{M}(\mathfrak{o})$ be such that the $F$-subalgebra
\begin{equation*}
  E := F[\tau] \subseteq \mathbf{M}(F)
\end{equation*}
is a field extension of $F$ having the largest possibly degree, namely $n$.  Let
\begin{equation*}
  \eta : E^\times \rightarrow \mathbb{C}^\times
\end{equation*}
be a character.  We consider the following special case:
\begin{Assumption}\label{Assumption:supercuspidal}
  The element $\tau$ satisfies the following conditions:
  \begin{enumerate}
  \item\label{enumerate:d1bc2d5fbd54} The image $\overline{\tau}$ of $\tau$ in $\mathbf{M}(\mathfrak{o}/\mathfrak{p})$ also generates a degree $n$ extension of the residue field.  In particular, $\det(\tau)\in \mathfrak{o}^\times$.
  \item For each $e \in E^\times \cap K (\mfq)$, we have the compatibility condition
    \begin{equation*}
      \eta(e) = \psi(\trace(\tau (e-1))).
    \end{equation*}
  \end{enumerate}
\end{Assumption}

Recall that $\psi : \mfq / \mfq^2 \rightarrow \mathbb{C}^\times$ is a character nontrivial on $\mathfrak{p} ^{-1} \mfq^2 / \mfq^2$. As $\tau$ normalizes $K(\mfq)$, so does $E^\times$. The subgroup
\begin{equation*}
  J_E := E^\times K(\mfq)
\end{equation*}
is well-defined. It admits the character $\chi : J_E \rightarrow \mathbb{C}^\times$ defined by the formula
\begin{equation}\label{Eq:chi}
  \chi(e g) := \eta(e) \psi(\trace(\tau (g-1))) \quad \text{ for } (e,g) \in E^\times \times K(\mfq).
\end{equation}
\begin{remark}\label{Remark:assumption1}
  The assumption~\eqref{enumerate:d1bc2d5fbd54} implies that $E$ is an unramified extension of $F$. It also implies that any nonzero element in $\mathbf{V}(\mathfrak{o}/\mathfrak{p})$ is cyclic with respect to the image $\overline{\tau}$ of $\tau$ in $\mathbf{M}(\mathfrak{o}/\mathfrak{p})$, as $\mathbf{V}(\mathfrak{o}/\mathfrak{p})$ is then a one-dimensional vector space over the field $(\mathfrak{o}/\mathfrak{p})[\overline{\tau}] \subseteq \mathbf{M}(\mathfrak{o}/\mathfrak{p})$.
\end{remark}
\begin{lemma}\label{lemma:d1bfac67927b}
  The compact induction
  \begin{equation*}
    \pi :=\ind_{J_E}^{\mathbf{G}(F)} \chi
  \end{equation*}
  is an irreducible supercuspidal representation of $\mathbf{G}(F)$.
\end{lemma}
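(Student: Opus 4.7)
The plan is to apply the standard criterion for compact induction: if $J$ is an open subgroup of $\mathbf{G}(F)$ that is compact modulo the center $Z = F^\times$, and $\chi$ is a character of $J$ whose $\mathbf{G}(F)$-intertwining $\{g : \Hom_{J \cap g J g^{-1}}(\chi,\chi^g) \neq 0\}$ equals $J$ itself, then $\ind_J^{\mathbf{G}(F)} \chi$ is irreducible, and compactness of $J/Z$ forces the representation to be supercuspidal, since its matrix coefficients are compactly supported modulo $Z$.

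The verification splits into three pieces. First, $\chi$ is well-defined on $J_E = E^\times K(\mfq)$: if $eg = e'g'$ with $(e,g), (e',g') \in E^\times \times K(\mfq)$, then $e'^{-1}e = g'g^{-1}$ lies in $E^\times \cap K(\mfq)$, and part (2) of Assumption~\ref{Assumption:supercuspidal} is precisely the compatibility condition making the two candidate formulas for $\chi$ agree on this overlap. Second, $J_E/Z$ is compact: by Remark~\ref{Remark:assumption1}, $E/F$ is unramified, so a uniformizer of $F$ is also a uniformizer of $E$ and $E^\times = F^\times \mathfrak{o}_E^\times$; since $\mathfrak{o}_E = \mathfrak{o}[\tau] \subseteq \mathbf{M}(\mathfrak{o})$ we have $\mathfrak{o}_E^\times \subseteq \mathbf{G}(\mathfrak{o})$, hence $J_E = Z \cdot (\mathfrak{o}_E^\times K(\mfq))$ with the right-hand factor sitting inside the compact group $\mathbf{G}(\mathfrak{o})$.

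The main obstacle is computing the intertwining and showing it equals $J_E$. The starting point is that $\chi|_{K(\mfq)}(x) = \psi(\trace(\tau(x-1)))$, so conjugation by $g$ replaces $\tau$ with $\Ad(g)\tau$ on the relevant conjugate subgroup; via the perfect trace pairing on $\mathbf{M}(\mathfrak{o})$, matching these characters on $K(\mfq) \cap g K(\mfq) g^{-1}$ forces a congruence $\Ad(g)\tau \equiv \tau$ modulo an appropriate power of $\mathfrak{p}$. For $g \in \mathbf{G}(\mathfrak{o})$ this yields $\Ad(g)\tau \equiv \tau \pmod{\mfq}$, so the image of $g$ in $\mathbf{G}(\mathfrak{o}/\mfq)$ centralizes $\overline{\tau}$. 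By Remark~\ref{Remark:assumption1}, $\overline{\tau}$ generates a degree-$n$ extension of the residue field, so by Lemma~\ref{lemma:centralizer-description} together with the lifting of cyclicity (recall $\mathfrak{o}_E = \mathfrak{o}[\tau]$), the centralizer of $\overline{\tau}$ in $\mathbf{M}(\mathfrak{o}/\mfq)$ is exactly $\mathfrak{o}_E/\mfq\mathfrak{o}_E$. Lifting the image of $g$ to $\mathfrak{o}_E^\times$ and adjusting by an element of $K(\mfq)$ places $g$ inside $J_E$. For $g$ outside $\mathbf{G}(\mathfrak{o})$, I would invoke Skolem--Noether: the centralizer of $E$ in $\mathbf{M}(F)$ equals $E$ itself, and $N_{\mathbf{G}(F)}(E^\times)/E^\times$ embeds into $\Gal(E/F)$; since nontrivial Galois elements do not fix $\tau$, the approximate matching of $\tau$ and $\Ad(g)\tau$ rules out nontrivial Galois action and forces $g \in E^\times \cdot \mathbf{G}(\mathfrak{o})$, reducing the problem to the compact case via $E^\times \subseteq J_E$.

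Once the intertwining equals $J_E$, irreducibility of $\pi$ follows from the Mackey/Frobenius criterion, and supercuspidality follows at once from the compactness of $J_E/Z$ established in the second step.
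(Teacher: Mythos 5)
The paper's proof is a one-line citation: it asserts that $(J_E,\chi)$ is a maximal simple type in the sense of Bushnell--Kutzko and then invokes~\cite[(6.2.2)]{BK}, which already packages the intertwining computation and the irreducibility criterion. Your plan is a genuinely different, more self-contained route: verify the Mackey criterion for compact induction directly. Your opening steps are fine: the well-definedness of $\chi$ (reducing to the compatibility condition via $\psi$ trivial on $\mfq^2$ and $\tau(\epsilon-1)(g-1)\in\mfq^2\mathbf{M}(\mathfrak{o})$), the compactness of $J_E/Z$ (via $E/F$ unramified, so $E^\times=F^\times\mathfrak{o}_E^\times\subset F^\times\mathbf{G}(\mathfrak{o})$), and the handling of intertwining within $\mathbf{G}(\mathfrak{o})$ (the trace pairing forcing $\Ad(g)\tau\equiv\tau\pmod{\mfq}$, whence $g\bmod\mfq$ lies in $\mathbf{G}_{\overline{\tau}}(\mathfrak{o}/\mfq)=(\mathfrak{o}_E/\mfq\mathfrak{o}_E)^\times$ by Lemma~\ref{lemma:centralizer-description}) are all sound.

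The gap is the step ruling out $g\notin Z\,\mathbf{G}(\mathfrak{o})$. You appeal to Skolem--Noether, which controls $N_{\mathbf{G}(F)}(E^\times)/E^\times\hookrightarrow\Gal(E/F)$; but nothing a priori forces an element of the intertwining set to normalize $E^\times$, so this does not give the asserted reduction to $g\in E^\times\mathbf{G}(\mathfrak{o})$. The ``approximate matching of $\tau$ and $\Ad(g)\tau$'' you invoke is exactly the thing that must be extracted, and for $g$ far from $K$ the intersection $K(\mfq)\cap gK(\mfq)g^{-1}$ is a tilted lattice, so the matching one obtains is anisotropic and needs to be analyzed. The correct argument is in the spirit of Lemma~\ref{lemma:let-tau-in-barm_st-h-in-mathbfhf-mathbfhm-then-the}: pass to a Cartan decomposition $g=k_1 a k_2$ with $a=\diag(\varpi^{a_1},\dotsc,\varpi^{a_n})$ non-central, use $u\in K(\mfq^2)\cap aK(\mfq)a^{-1}$ (on which $\chi_\tau(u)=1$ automatically), and show that the intertwining condition would force $(\Ad(k_2)\tau)_{ij}\in\mfq$ whenever $a_i\neq a_j$. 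That would make $\overline{\Ad(k_2)\tau}$ block-diagonal modulo $\mathfrak{p}$ with a nontrivial block structure, giving a proper invariant subspace and contradicting the irreducibility of the characteristic polynomial of $\overline{\tau}$ over the residue field guaranteed by Assumption~\ref{Assumption:supercuspidal}(\ref{enumerate:d1bc2d5fbd54}). With that step supplied, the rest of your outline closes up.
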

\begin{proof}
  For the notations as above, the pair $(J_E, \chi)$ is a maximal type in the sense of~\cite[(6.2)]{BK}.  Thus by~\cite[(6.2.2)]{BK}, the representation $\pi$ constructed above is a supercuspidal representation.
\end{proof}
\begin{lemma}\label{Lem:supercuspidalregular}
  The representation $\pi$ constructed in Lemma~\ref{lemma:d1bfac67927b} is regular at depth $\mfq^2$, with regular parameter given by $\tau$ modulo $\mathfrak{q}$ (see \S\ref{sec:20230516194628} for definitions).
\end{lemma}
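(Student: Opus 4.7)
The plan is to verify the two requirements in Definition~\ref{definition:we-say-that-pi-emphr-at-depth-mathfr-if-there-cycl-regular-depth-parameter-polynomial}: first, that the image $\bar{\tau}$ of $\tau$ in $\mathbf{M}(\mathfrak{o}/\mfq)$ is cyclic, and second, that $\pi$ contains a nonzero vector on which $K(\mfq)$ acts by the character $\chi_{\bar{\tau}}$ defined by~\eqref{eqn:definition-of-chi-tau-as-psi-of-trace-x-tau}.

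For cyclicity, I would argue as in Example~\ref{example:cj3twmtcpz}: since $\mathfrak{o}/\mfq$ is a local ring with residue field $\mathfrak{o}/\mathfrak{p}$, Nakayama's lemma (via the surjectivity criterion of Lemma~\ref{lemma:v-tau-cycl-if-only-if-map-modul-begin-rn-right-v-}) reduces cyclicity of $\bar{\tau}$ over $\mathfrak{o}/\mfq$ to that of its further reduction to $\mathbf{M}(\mathfrak{o}/\mathfrak{p})$. By Assumption~\ref{Assumption:supercuspidal}\eqref{enumerate:d1bc2d5fbd54} and Remark~\ref{Remark:assumption1}, the residue-field image of $\tau$ generates a degree $n$ field extension, so every nonzero vector is cyclic for it, and in particular $\bar{\tau}$ is cyclic as required.

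For the existence of the test vector, I would work in the standard compact-induction model of $\pi$ as a space of functions $f : \mathbf{G}(F) \to \mathbb{C}$ satisfying $f(jg) = \chi(j) f(g)$ for all $j \in J_E$, with $\mathbf{G}(F)$ acting by right translation. Take the distinguished vector $v_0$ supported on $J_E$ and given there by $v_0(j) = \chi(j)$. For any $k \in K(\mfq) \subseteq J_E$ and any $x \in \mathbf{G}(F)$, we have $xk \in J_E \iff x \in J_E$, and on $J_E$ the relation $\chi(xk) = \chi(x)\chi(k)$ gives $(\pi(k)v_0)(x) = v_0(xk) = \chi(k) v_0(x)$. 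Thus $K(\mfq)$ acts on $v_0$ by the character $k \mapsto \chi(k)$, and formula~\eqref{Eq:chi} (with $e = 1$) shows this restriction is exactly $\chi_{\bar{\tau}}(1+y) = \psi(\trace(y \tau))$, matching~\eqref{eqn:definition-of-chi-tau-as-psi-of-trace-x-tau}.

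Neither step presents a serious obstacle: the first is a Nakayama reduction to the hypothesis already built into Assumption~\ref{Assumption:supercuspidal}, and the second is immediate from the definition of compact induction, the fact that $K(\mfq)$ lies inside the inducing subgroup $J_E$, and the explicit formula~\eqref{Eq:chi} for $\chi$. The only point requiring mild care is the verification that $\chi$ is well-defined on $J_E = E^\times K(\mfq)$ — but this is guaranteed by the compatibility condition in Assumption~\ref{Assumption:supercuspidal} on $E^\times \cap K(\mfq)$, which is precisely the condition needed for~\eqref{Eq:chi} to respect the two possible decompositions of an element $eg \in E^\times K(\mfq)$.
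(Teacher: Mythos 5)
Your proposal is correct and matches the paper's proof in substance: both verify cyclicity of $\tau$ modulo $\mathfrak{q}$ via Nakayama reduction to the residue field (as recorded in Remark~\ref{Remark:assumption1}), and both exhibit the required test vector as the canonical element of the compact induction supported on $J_E$ and agreeing with $\chi$ there. You merely spell out the right-translation computation $(\pi(k)v_0)(x)=v_0(xk)=\chi(k)v_0(x)$ and the specialization of~\eqref{Eq:chi} to $e=1$, which the paper leaves implicit.
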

\begin{proof}
  By definition of compact induction, there exists an element $v\in \pi$ which identifies with $\chi$ on $J_E$ and vanishes elsewhere. In particular $J_E$ acts on $v$ by $\chi$, and so $K(\mfq)\subset J_E$ acts on $v$ by $\chi_\tau$ as in Definition~\ref{definition:we-say-that-pi-emphr-at-depth-mathfr-if-there-cycl-regular-depth-parameter-polynomial}.

  The element $\tau$ is also cyclic in the sense of Definition~\ref{definition:cyclic}, by Remark~\ref{Remark:assumption1} above.  Thus $\tau$ is a regular parameter for $\pi$ at depth $\mfq^2$.
\end{proof}

\begin{remark}\label{Rem:supercuspidalclassification}
  We expect that one could verify that every supercuspidal representation with ``odd integral depth'' and ``generic induction datum'' arises via Lemma~\ref{lemma:d1bfac67927b}.  Here ``depth'' should be understood according to the conventions of~\cite[\S5.1]{moyUnrefinedMinimalKtypes1994}, while by ``generic induction datum'', we mean that the parameter $\beta$ associated to a simple type contained in $\pi$ (see~\cite[(3.2.1)]{BK}) generates a field extension of $F$ of maximal degree, and that $\beta$ is minimal in the sense of~\cite[(1.4.14)]{BK}.  For the above construction, if $\mfq = \mathfrak{p}^k$, then the ``depth'' is $2k - 1$, while we may take $\beta$ to be $\varpi^{- 2k + 1} \tau$.
\end{remark}

\begin{lemma}\label{Lem:supercuspidalstablepair}
  Let $m,n$ be positive integers, and let $\pi, \sigma$ be regular supercuspidal representations of $\GL_{m},\GL_{n}$ at depth $\mfq^2$, constructed above in terms of parameters $\tau_\pi$ and $\tau_\sigma$.  We consider the following two cases:
  \begin{itemize}
  \item Case $m\neq n$.
  \item Case $m=n$.  Then the degree $n$ inert field extensions generated by $\tau_\pi$ and $\tau_\sigma$ can be identified, uniquely up to the action of $\Gal(E/F) \cong \Gal(k_E/k_F)$.  We suppose that the images of $\tau_\pi$ and $\tau_\sigma$ in the residue field $k_E$ of $E$ satisfy
    \begin{equation*}
      \overline{\tau}_\sigma-\overline{\tau}_\pi^\iota\neq 0
    \end{equation*}
    for all $\iota \in \Gal(k_E/k_F)$.
  \end{itemize}
  Then, in either case, the pair $(\pi,\sigma)$ is stable at depth $\mfq^2$.  Furthermore, the local analytic conductor of the Rankin--Selberg pair $\pi\times \sigma$ is
  \begin{equation}\label{eq:cj3u9x1ked}
    C (\pi\times \sigma)={[\mathfrak{o}:\mfq]}^{2mn}.
  \end{equation}
\end{lemma}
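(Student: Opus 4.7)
The proof has two halves: stability at depth $\mfq^2$ and the conductor formula. The strategy for stability is to show that the characteristic polynomials $P_\pi, P_\sigma \in (\mathfrak{o}/\mfq)[X]$ of $\tau_\pi$ and $\tau_\sigma$, which by Lemma~\ref{Lem:supercuspidalregular} are polynomials at depth $\mfq^2$ for $\pi$ and $\sigma$, generate the unit ideal. Reducing modulo the maximal ideal $\mathfrak{p}/\mfq$ of the local ring $\mathfrak{o}/\mfq$, Assumption~\ref{Assumption:supercuspidal}\eqref{enumerate:d1bc2d5fbd54} implies that $\bar{P}_\pi$ and $\bar{P}_\sigma$ are the minimal polynomials of $\overline{\tau}_\pi$ and $\overline{\tau}_\sigma$ over $k_F$, hence irreducible of degrees $m$ and $n$. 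When $m \neq n$ these are automatically coprime; when $m = n$, they coincide iff $\overline{\tau}_\sigma$ is a root of $\bar{P}_\pi$, equivalently a $\Gal(k_E/k_F)$-conjugate of $\overline{\tau}_\pi$, which is precisely what the hypothesis forbids. In either case $\bar{P}_\pi$ and $\bar{P}_\sigma$ are coprime in $k_F[X]$, so their resultant is a unit in $k_F$; by the Sylvester identity the resultant of $P_\pi, P_\sigma$ in $\mathfrak{o}/\mfq$ is likewise a unit and expresses itself as an $(\mathfrak{o}/\mfq)[X]$-linear combination of $P_\pi$ and $P_\sigma$, yielding the required Bezout relation.

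For the conductor~\eqref{eq:cj3u9x1ked} (interpreted, following the paper's convention and consistent with Example~\ref{example:stable-pairs-of-characters}, as the conductor of $L(s, \pi \times \sigma^\vee)$), I would pass through the local Langlands correspondence. Bushnell--Kutzko match $\pi$ with $\rho_\pi := \Ind_{W_{E_\pi}}^{W_F} \tilde{\eta}_\pi$, where $E_\pi := F[\tau_\pi]$ is unramified of degree $m$ and the restriction of $\tilde{\eta}_\pi$ to $1 + \mathfrak{p}_{E_\pi}^k$ (writing $\mfq = \mathfrak{p}^k$) is $1 + y \mapsto \psi(\Tr_{E_\pi/F}(\tau_\pi y))$; since $\tau_\pi \in \mathfrak{o}_{E_\pi}^\times$ and $\psi$ is nontrivial on $\mathfrak{p}^{-1}\mfq^2$, the $\mathfrak{o}_{E_\pi}$-conductor of $\tilde{\eta}_\pi$ equals $\mathfrak{p}_{E_\pi}^{2k}$ exactly, and likewise for $\sigma$. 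Applying Mackey's formula---using the unramifiedness of $E_\pi$ and $E_\sigma$ to reduce $W_{E_\pi}\backslash W_F/W_{E_\sigma}$ to $\Gal(E_\pi E_\sigma/F)$-orbits---the tensor product $\rho_\pi \otimes \rho_\sigma^\vee$ decomposes as a direct sum of representations induced from characters of $E_\pi E_\sigma$ of the shape $\tilde{\eta}_\pi \cdot \tilde{\eta}_\sigma^{-\iota}$. The stability hypothesis $\overline{\tau}_\sigma \neq \overline{\tau}_\pi^\iota$ for all $\iota$ translates, upon examining these characters on $1 + \mathfrak{p}_{E_\pi E_\sigma}^{2k-1}$, into the assertion that each twist retains maximal conductor $\mathfrak{p}_{E_\pi E_\sigma}^{2k}$. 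Summing via the Langlands--Deligne formula $f_F(\Ind_{W_E}^{W_F} \chi) = [E:F] f_E(\chi)$ (valid for unramified $E/F$) then yields $f_F(\rho_\pi \otimes \rho_\sigma^\vee) = 2mnk$, i.e., $C(\pi \times \sigma^\vee) = q^{2mnk} = [\mathfrak{o}:\mfq]^{2mn}$.

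The main obstacle is the conductor computation when $\gcd(m,n) > 1$: the compositum $E_\pi E_\sigma$ is then only of degree $\mathrm{lcm}(m,n)$, and one must carefully track orbit sizes and multiplicities in the Mackey decomposition to confirm that each isotypic summand of $\rho_\pi \otimes \rho_\sigma^\vee$ has the expected conductor. An alternative, LLC-free route would compute the Rankin--Selberg zeta integrals in the style of Jacquet--Piatetski-Shapiro--Shalika, with stability manifesting as a transversality condition on Whittaker models; this is robust but technically heavier. In any case, the stability assertion itself is a clean consequence of the linear algebra developed in \S\ref{sec:cyclic-matrices}--\S\ref{sec:20230514080526}, so the genuinely novel analytic work is confined to the conductor identity.
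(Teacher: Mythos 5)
Your stability argument matches the paper's in essence: both reduce to showing that the characteristic polynomials $P_\pi, P_\sigma$ of $\tau_\pi, \tau_\sigma$ over $\mathfrak{o}/\mfq$ are coprime, verify this mod $\mathfrak{p}$ using irreducibility of degrees $m$ and $n$ (together with the $\Gal(k_E/k_F)$-hypothesis when $m=n$), and lift. You lift via the resultant and Sylvester's identity; the paper lifts via Nakayama's lemma applied to $(\mathfrak{o}/\mfq)[X]/(P_\pi,P_\sigma)$, which is a finitely generated $\mathfrak{o}/\mfq$-module since $P_\pi$ is monic. Both are fine, and the difference is cosmetic.

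The conductor computation is where you genuinely diverge. The paper simply cites the explicit Rankin--Selberg conductor formula of Bushnell--Henniart--Kutzko, \cite[Theorem 6.5(ii)]{MR1606410}. The only work is to check the hypotheses: that $\pi$ and $\sigma$ are ``completely distinct'' in the sense of \cite[\S6.2]{MR1606410} (a point your write-up does not engage with), that the relevant ramification indices $e_i$ are all $1$ because $E_\pi/F$ and $E_\sigma/F$ are unramified, and then to substitute $\mfq = \mfp^k$, which makes the BHK depth parameter $2k-1$. This is short and handles $\gcd(m,n) > 1$ for free. Your route instead goes through the Langlands parameters $\rho_\pi = \Ind_{W_{E_\pi}}^{W_F}\tilde{\eta}_\pi$, a Mackey decomposition of $\rho_\pi \otimes \rho_\sigma^\vee$, and the conductor--discriminant formula for inductions from unramified extensions. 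This is conceptually appealing and more self-contained, but it carries overhead: you must justify that the naive monomial induction is an acceptable stand-in for the true essentially-tame Langlands parameter (the Bushnell--Henniart rectifying twists are unramified, so this is harmless for conductors, but it deserves a sentence), and, as you honestly flag, the orbit and multiplicity bookkeeping when $\gcd(m,n) > 1$ is left open. You have the right shape --- since both $E_\pi/F$ and $E_\sigma/F$ are Galois, the double cosets $W_{E_\pi}\backslash W_F / W_{E_\sigma}$ number $\gcd(m,n)$, each summand is induced from a character of the compositum $E_\pi E_\sigma$ of degree $\operatorname{lcm}(m,n)$ with conductor exponent $2k$, and the total is $\gcd(m,n)\cdot\operatorname{lcm}(m,n)\cdot 2k = 2mnk$ --- but until that accounting is written out, your argument has a hole precisely at the step the paper's BHK citation sidesteps. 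So: same stability argument, genuinely different (and currently incomplete) conductor argument, with the paper's route buying brevity at the price of an external reference and yours buying transparency at the price of a nontrivial Mackey calculation.
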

Note that~\eqref{eq:cj3u9x1ked} generalizes the conductor formula given in Example~\ref{example:stable-pairs-of-characters}, which concerns the special case of characters (i.e., $m = n = 1$).
\begin{proof}
  Let $P_\pi, P_\sigma$ be the polynomials for $\pi$ and $\sigma$ at depth $\mfq^2$.  By Assumption~\ref{Assumption:supercuspidal}, part~\eqref{enumerate:d1bc2d5fbd54}, they are irreducible polynomials of degrees $m$ and $n$.  When $m\neq n$, they necessarily generate the unit ideal.

  When $m=n$, the assumption on $\overline\tau_\pi,\overline\tau_\sigma$ implies that the images $\overline{P}_\pi, \overline{P}_\sigma$ of $P_\pi,P_\sigma$ mod $\mfp$ satisfy $(\overline{P}_\pi, \overline{P}_\sigma)=1$.  By Nakayama's lemma, we then have $(P_\pi,P_\sigma)=1$.

  By Definition~\ref{definition:let-pi-sigma-be-repr-pair-gener-line-groups-over-f} and Lemma~\ref{Lem:supercuspidalregular}, we deduce that the pair $(\pi,\sigma)$ is stable at depth $\mfq^2$.

  The conductor formula follows readily from~\cite[Theorem 6.5(ii)]{MR1606410}.  Indeed, observe first that $\pi,\sigma$ are ``completely distinct'' in the sense of~\cite[\S6.2]{MR1606410}, so the cited result is applicable. The ramification indices $e_i$ there are all $1$ in our case. The claimed conductor formula follows by writing $\mfq=\mfp ^k$ and taking $m=2k-1$ in~\cite[Theorem 6.5(ii)]{MR1606410}.
\end{proof}

\begin{remark}
  Note that the additional condition in the $m=n$ case is necessary, as the conductor would otherwise be smaller.  It is the analogue of the condition we imposed for a pair of characters in Example~\ref{example:stable-pairs-of-characters}.
\end{remark}

\section{Main local result}\label{sec:mainlocal}
In this section, we retain the general notation and conventions of \S\ref{sec:notation}, and denote by $n+1$ the rank of $\mathbf{V}$.  We use the notation $A \ll B$ to denote that $\lvert A \rvert \leq C \lvert B \rvert$, where $C$ depends at most upon $n$.  We write $A \asymp B$ to denote that $A \ll B \ll A$.

Let $(F,\mathfrak{o},\mathfrak{p},\varpi,q)$ be a non-archimedean local field, with associated data as in \S\ref{sec:local-fields-congruence-subgroups}.

\subsection{Statement of result}
The following may be understood as a non-archimedean analogue, uniform with respect to variation of the local field, of the archimedean result~\cite[Thm 4.2]{2020arXiv201202187N}.  It encapsulates the local results needed to prove Theorem~\ref{theorem:cj3ngw7u2s}.

\begin{theorem}\label{theorem:main-local-result}
  Let $\mfq \subseteq \mathfrak{p}$ be a nonzero $\mathfrak{o}$-ideal.  Denote by $Q := [\mathfrak{o}:\mfq]$ its absolute norm.

  Let $\pi$ and $\sigma$ be representations of $\mathbf{G}(F)$ and $\mathbf{H}(F)$ equipped with inner products $\langle , \rangle$ invariant by $K(\mfq)$ and $K_H(\mfq)$, respectively.  Assume that the pair $(\pi,\sigma)$ is stable at depth $\mfq^2$ (Definition~\ref{definition:let-pi-sigma-be-repr-pair-gener-line-groups-over-f}).  Then there exists
  \begin{itemize}
  \item a unit vector $v \in \pi$,
  \item a self-adjoint idempotent $\omega \in C_c^\infty(K)$ with $\pi(\omega) v = v$,
  \item a compact open subgroup $J_H$ of $K_H$, and
  \item a unit vector $u \in \sigma$ that is a $J_H$-eigenvector,
  \end{itemize}
  with the following properties.

  Let $\pi|_{Z} : \mathbf{Z}(F) \rightarrow \U(1)$ denote the central character of $\pi$.  Define
  \begin{equation*}
    \omega ^\sharp (g) := \int _{z \in \mathbf{Z}(F) } \pi|_{Z} (z) \omega (z g ) \, d z.
  \end{equation*}
  \begin{enumerate}[(i)]
  \item\label{enumerate:20230517151810} We have
    \begin{equation}\label{eqn:sum-_v-in-mathc-mathc-pif-v-otim-u-gg-q-n2-}
      \int _{h \in \mathbf{H}(F)} \langle h v, v \rangle \langle u, h u \rangle \, d h \asymp Q^{-n^2},
    \end{equation}
    where the integrand is compactly-supported, hence converges absolutely.
  \item\label{enumerate:20230517151812} We have
    \begin{equation}\label{eqn:int-_mathbfhf-lvert-f-sharp-rvert-ll-q-n.-}
      \int _{\mathbf{H}(F)} \lvert \omega ^\sharp  \rvert \ll Q ^{n}.
    \end{equation}
  \item\label{enumerate:20230517165012} Let $\Psi_1, \Psi_2 : K_H \rightarrow \mathbb{C}$ be functions satisfying
    \begin{equation*}
      \lvert \Psi_j(g z) \rvert = \left\lvert \Psi_j(g) \right\rvert \text{ for all } (g,z) \in K_H \times  J_H.
    \end{equation*}
    Let
    \begin{equation*}
      \gamma \in \mathbf{G}(F) - \mathbf{H}(F) \mathbf{Z}(F).
    \end{equation*}
    Then
    \begin{equation}\label{eqn:int-_x-y-in-k_h-leftlv-psi_1x-psi_2y-omega-sharp-x}
      \int _{x, y \in K_H} \left\lvert \Psi_1(x) \Psi_2(y) \omega ^\sharp (x ^{-1} \gamma y) \right\rvert
      \ll
      Q^n
      \left( \frac{1}{1 + Q d_H(\gamma)} + \frac{{d_H(\gamma)}^{\infty}}{Q^*} \right)
      \lVert \Psi_1 \rVert_{L^2}    
      \lVert \Psi_2 \rVert_{L^2}.
    \end{equation}
    Here the notation $d_H(\gamma)$ and $d_H(\gamma)^\infty$ is as in \S\ref{sec:norms-distance-functions}, while $Q^*$ is as in the statement of Theorem~\ref{theorem:volume-bound}.
  \end{enumerate}
\end{theorem}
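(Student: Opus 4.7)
\emph{Construction of the test data.}
By the stability hypothesis, choose polynomials $P_\pi,P_\sigma \in (\mathfrak{o}/\mfq)[X]$, of degrees $n+1$ and $n$, that are polynomials for $\pi$ and $\sigma$ at depth $\mfq^2$ and that generate the unit ideal. Invoke Lemma~\ref{lemma:let-p-p_h-in-rx-be-monic-polyn-degr-n+1-n-resp-the} to produce $\tau \in \mathbf{M}(\mathfrak{o}/\mfq)$ with $P_\tau=P_\pi$ and $P_{\tau_H}=P_\sigma$; by Lemma~\ref{lemma:stability-equivalences}, $\tau$ is stable, and by Lemmas~\ref{lemma:tau-stable-implies-tauH-cyclic} and~\ref{lemma:let-p-be-polyn-pi-at-depth-mathfr-let-tau-in-mathb}, after conjugating $\tau$ by an element of $\mathbf{G}(\mathfrak{o}/\mfq)$, both $\tau$ and $\tau_H$ serve as regular parameters for $\pi$ and $\sigma$. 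Apply Lemma~\ref{lemma:let-tau-in-mathbfm-mathfr--mathfr-be-regul-param-p-extension-chi-tau-to-J-tau} to obtain extensions $\tilde\chi_\tau,\tilde\chi_{\tau_H}$ and unit vectors $v\in\pi$, $u\in\sigma$ transforming under $J_\tau$ and $J_{\tau_H}$ by these characters. Take $J_H := J_{\tau_H}$ and
\[
\omega(g) := \vol(J_\tau)^{-1}\,\overline{\tilde\chi_\tau(g)}\,\mathbf{1}_{J_\tau}(g),
\]
a self-adjoint idempotent with $\pi(\omega)v=v$. The volume calculations $\vol(K_H(\mfq))\asymp Q^{-n^2}$ and $\vol(J_\tau) = \vol(K(\mfq))\cdot|\mathbf{G}_\tau(\mathfrak{o}/\mfq)|\asymp Q^{-n(n+1)}$ (via Lemma~\ref{lemma:cardinality-centralizer}) are the asymptotic inputs.

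\emph{Part \eqref{enumerate:20230517151810}.}
The trace identity $\tr(x\tau)=\tr(1_H x 1_H\tau)=\tr(x\tau_H)$ for $x\in\mfq\,\mathbf{M}_H(\mathfrak{o})$ shows that the characters $\chi_\tau$ and $\chi_{\tau_H}$ agree on $K_H(\mfq)$; therefore the integrand $\langle hv,v\rangle\langle u,hu\rangle$ has modulus $1$ on $K_H(\mfq)$, contributing $\vol(K_H(\mfq))\asymp Q^{-n^2}$ to the integral. This furnishes the lower bound. For the matching upper bound and compact support, one combines the $(J_\tau\times J_{\tau_H})$-isotypicality of the integrand with the consequence of Lemma~\ref{lemma:stable-implies-trivial-stabilizer} that $\mathbf{H}(\mathfrak{o}/\mfq)\cap\mathbf{G}_\tau(\mathfrak{o}/\mfq)=\{1\}$, together with the support structure of microlocalized matrix coefficients: in the compactly induced supercuspidal case of Lemma~\ref{lemma:d1bfac67927b} the matrix coefficient $\langle\cdot v,v\rangle$ is literally supported on $J_\tau$, and the general stable case reduces to this via Proposition~\ref{proposition:regular-stable-closed-under-parabolic-induction} as in~\cite[\S18]{nelson-venkatesh-1}.

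\emph{Parts \eqref{enumerate:20230517151812} and \eqref{enumerate:20230517165012}.}
Since $\omega$ has sup-norm $\vol(J_\tau)^{-1}$ and support in $J_\tau\subseteq K$, the function $\omega^\sharp$ is supported on $\mathbf{Z}(F)J_\tau$ with the same sup-norm bound (up to the factor $\vol(K_Z)=1$). An element $h\in H\cap\mathbf{Z}(F)J_\tau$ satisfies $h=zj$ with $z\in\mathbf{Z}(F),j\in J_\tau\subseteq K$; since the lower-right entry of $h$ is $1$, one gets $z\in\mathfrak{o}^\times$ and hence $h\in K_H$ with $h\bmod\mfq\in\mathbf{H}(\mathfrak{o}/\mfq)\cap\mathbf{Z}(\mathfrak{o}/\mfq)\mathbf{G}_\tau(\mathfrak{o}/\mfq)$. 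A short argument from Lemma~\ref{lemma:stable-implies-trivial-stabilizer} shows this intersection is trivial, so $h\in K_H(\mfq)$ and~\eqref{eqn:int-_mathbfhf-lvert-f-sharp-rvert-ll-q-n.-} follows from $\vol(J_\tau)^{-1}\vol(K_H(\mfq))\asymp Q^{n(n+1)}\cdot Q^{-n^2}=Q^n$. For~\eqref{eqn:int-_x-y-in-k_h-leftlv-psi_1x-psi_2y-omega-sharp-x}, absorbing a central factor into $\gamma$ (which preserves $d_H(\gamma)$, the condition $\gamma\notin HZ$, and $|\omega^\sharp|$) we may assume $\gamma\in K$, and then $|\omega^\sharp(x^{-1}\gamma y)|\ll\vol(J_\tau)^{-1}\mathbf{1}[x^{-1}\gamma y\bmod\mfq\in\mathbf{G}_\tau(\mathfrak{o}/\mfq)]$; the $J_H$-invariance of $|\Psi_j|$ lets the double integral descend to a bilinear sum over $\mathbf{H}(\mathfrak{o}/\mfq)\times\mathbf{H}(\mathfrak{o}/\mfq)$, to which Theorem~\ref{theorem:bilinear-forms-estimate} applies directly; the remaining bookkeeping amounts to multiplying by $\vol(J_\tau)^{-1}|\mathbf{H}(\mathfrak{o}/\mfq)|^{-1}\asymp Q^{n(n+1)-n^2}=Q^n$ and matching norms.

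\emph{Main obstacle.}
The other steps reduce to volume counts and direct applications of results proved earlier in the paper. The substantive difficulty lies in the support control behind the upper bound in part~\eqref{enumerate:20230517151810}: one must verify, uniformly in the local field and across both principal series and supercuspidal cases, that the matrix-coefficient product $\langle hv,v\rangle\langle u,hu\rangle$ effectively vanishes on $H$ outside a bounded union of $K_H(\mfq)$-cosets. The supercuspidal case is immediate from the compactly induced model, and the parabolically induced case reduces to it by a Mackey decomposition along the lines of \S\ref{sec:cj3m0c6a7b}; the key representation-theoretic input is that the $(J_\tau,\tilde\chi_\tau)$-isotype in $\pi$ is one-dimensional (the multiplicity-one conclusion in Proposition~\ref{proposition:regular-stable-closed-under-parabolic-induction}) so that $\pi(\omega)$ is the orthogonal projection onto $\mathbb{C} v$.
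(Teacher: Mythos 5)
Your construction of $\tau$, the test vectors $v,u$, the idempotent $\omega$, and the subgroup $J_H = J_{\tau_H}$ matches the paper, as do your volume calculations and your treatments of parts (ii) and (iii), which reduce to Theorem~\ref{theorem:bilinear-forms-estimate} and Lemma~\ref{lemma:stable-implies-trivial-stabilizer} in essentially the same way the paper does.

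The genuine gap is in part (i). You need to know that $\langle hv,v\rangle$ vanishes for $h\in\mathbf{H}(F)$ outside $K_H(\mfq)$. For $h\in\mathbf{H}(\mathfrak{o})-K_H(\mfq)$ the character-orthogonality argument via $\mathbf{H}(\mathfrak{o}/\mfq)\cap\mathbf{G}_\tau(\mathfrak{o}/\mfq)=\{1\}$ is fine, but for $h\in\mathbf{H}(F)-\mathbf{H}(\mathfrak{o})$ your proposed reduction to the compactly-induced supercuspidal case via Mackey theory does not work: in the principal series case the inducing data are characters of $F^\times$, which are \emph{not} compactly induced from compact open subgroups and whose ``matrix coefficients'' are supported on all of $F^\times$; there is no way to read off the vanishing from the explicit model of the representation. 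The vanishing of $\langle hv,v\rangle$ for non-integral $h$ is a genuine assertion about the restriction to the \emph{mirabolic-type} subgroup $\mathbf{H}(F)$ and depends on the stability of $\tau$, not on the internal structure of $\pi$. The paper proves it (Lemmas~\ref{lemma:matrix-coefficients-and-stability-one-rep} and~\ref{lemma:let-tau-in-barm_st-h-in-mathbfhf-mathbfhm-then-the}) by a Cartan-decomposition/weight-space argument: after conjugating to a diagonal $a\in A_H^+ - K_H$ one locates a nontrivial $a$-weight space $\mathbf{M}_+\subseteq\mathbf{M}_H^\perp$ in which the component $\tau_+$ of $\tau$ is nonzero mod $\mathfrak{p}$ (this is exactly where condition~(\ref{enumerate:there-are-no-nontr-tau-invar-subsp-v_h-or-v_h.-}) of Lemma~\ref{lemma:stability-equivalences} enters), and then produces $w\in K(\mfq^2)\cap hK(\mfq)h^{-1}$ with $\chi_\tau(h^{-1}wh)\neq 1 = \chi_\tau(w)$, forcing $\langle hv,v\rangle=0$ by orthogonality. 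That argument is uniform in the local field and applies to any representation admitting a $\chi_\tau$-eigenvector, with no case distinction between principal series and supercuspidal. Secondarily, your appeal to multiplicity one of the $(J_\tau,\tilde\chi_\tau)$-isotype is not available in general (the paper explicitly declines to use it) and is not needed: $\pi(\omega)$ is a self-adjoint idempotent with $\pi(\omega)v=v$ regardless of the dimension of the isotype.
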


The proof is given in \S\ref{sec:proof-main-local-compact}, following some preliminaries.

\subsection{Stability and matrix coefficients}\label{sec:20230517161735}
Here we adapt some arguments from~\cite[\S19]{nelson-venkatesh-1} to the non-archimedean setting, where they simplify considerably.

\begin{lemma}\label{lemma:matrix-coefficients-and-stability-one-rep}
  Let $\tau \in \mathbf{M}_{\stab}(\mathfrak{o}/\mfq)$.
  \begin{enumerate}[(i)]
  \item\label{enumerate:cj3twlzoes} Let $\pi$ be a representation of $\mathbf{G}(\mathfrak{o})$, equipped with a $K(\mfq)$-invariant inner product $\langle , \rangle$, and let $v \in \pi$ be a vector that transforms under $K(\mfq)$ according to $\chi_\tau$.  Then for $h \in \mathbf{H}(\mathfrak{o})$, we have
    \begin{equation}\label{eqn:20230516004612}
      \langle h v, v \rangle = 0 \quad \text{unless } h \in K_H(\mfq).
    \end{equation}
  \item\label{enumerate:cj3twlzxgy} Let $\pi$ be a representation of $\mathbf{G}(F)$ satisfying the same hypotheses as above.  Then~\eqref{eqn:20230516004612} holds for $h \in \mathbf{H}(F)$.
  \end{enumerate}
\end{lemma}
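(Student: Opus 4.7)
The plan is to combine the $K(\mfq)$-invariance of $\langle\cdot,\cdot\rangle$ with the $\chi_\tau$-equivariance of $v$ to produce a character identity for $h$ that, via the stability of $\tau$ and Lemma~\ref{lemma:stable-implies-trivial-stabilizer}, forces $h \in K_H(\mfq)$. First I would record the common algebraic input: for $h$ in either $\mathbf{G}(\mathfrak{o})$ or $\mathbf{G}(F)$ and any $k \in K(\mfq)$ with $h^{-1}kh \in K(\mfq)$, the relations $kv = \chi_\tau(k) v$ and $\langle k w_1, k w_2\rangle = \langle w_1, w_2\rangle$ together with the factorization $kh = h \cdot (h^{-1}kh)$ yield
\[
  \chi_\tau(k)\,\langle hv, v\rangle \;=\; \langle khv, v\rangle \;=\; \chi_\tau(h^{-1}kh)\,\langle hv, v\rangle.
\]
Hence whenever $\langle hv, v\rangle \neq 0$, one has $\chi_\tau(k) = \chi_\tau(h^{-1}kh)$ for every such $k$.

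For part~\eqref{enumerate:cj3twlzoes}, $h \in \mathbf{H}(\mathfrak{o}) \subseteq K$ and normality of $K(\mfq)$ in $K$ make the side condition $h^{-1}kh \in K(\mfq)$ automatic for every $k \in K(\mfq)$. Using the formula $\chi_\tau(1+x) = \psi(\trace(x\tilde\tau))$ for a lift $\tilde\tau \in \mathbf{M}(\mathfrak{o})$ of $\tau$ and cyclicity of the trace, the identity unpacks to $\psi(\trace(x(\tilde\tau - h\tilde\tau h^{-1}))) = 1$ for all $x \in \mfq \mathbf{M}(\mathfrak{o})$, which by non-degeneracy of $\psi$ on $\mfq/\mfq^2$ and of the trace pairing on $\mathbf{M}(\mathfrak{o}/\mfq)$ forces $\bar h \bar\tau \bar h^{-1} = \bar\tau$ in $\mathbf{M}(\mathfrak{o}/\mfq)$. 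Combined with $\bar h \in \mathbf{H}(\mathfrak{o}/\mfq)$ and the stability of $\tau$, Lemma~\ref{lemma:stable-implies-trivial-stabilizer} gives $\bar h = 1$, i.e., $h \in K_H(\mfq)$.

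For part~\eqref{enumerate:cj3twlzxgy}, my plan is to show that $\langle hv, v\rangle \neq 0$ implies $h \in K$, at which point part~(i) concludes. The character identity now holds for $k$ in the open compact subgroup $J_h := K(\mfq) \cap h K(\mfq) h^{-1}$. Parameterizing $J_h$ by $y \in L := \mfq\mathbf{M}(\mathfrak{o}) \cap h\mfq\mathbf{M}(\mathfrak{o})h^{-1} = \mfq\bigl(\mathbf{M}(\mathfrak{o}) \cap h \mathbf{M}(\mathfrak{o}) h^{-1}\bigr)$, the same trace manipulation translates the character identity into the lattice condition $\trace(yz) \in \mfq^2$ for every $y \in L$, where $z := h\tilde\tau h^{-1} - \tilde\tau \in \mathbf{M}(F)$. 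Using that $\mathbf{M}(\mathfrak{o})$ and $h\mathbf{M}(\mathfrak{o})h^{-1}$ are each self-dual under the trace pairing, together with the standard duality identity $(L_1 \cap L_2)^\vee = L_1^\vee + L_2^\vee$, this upgrades to
\[
  z \in \mfq\mathbf{M}(\mathfrak{o}) + h \mfq \mathbf{M}(\mathfrak{o}) h^{-1},
\]
which I would rewrite as the existence of $\tau_1, \tau_2 \in \tilde\tau + \mfq\mathbf{M}(\mathfrak{o})$ with $h\tau_1 = \tau_2 h$.

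To conclude $h \in K$, I would exploit the $\tilde\tau$-cyclicity of $e$: the stability of $\tau$ together with Lemma~\ref{lemma:v-tau-cycl-if-only-if-map-modul-begin-rn-right-v-} (applied over $\mathfrak{o}/\mfq$) and Nakayama's lemma imply that the vectors $e, \tilde\tau e, \dotsc, \tilde\tau^n e$ form an $\mathfrak{o}$-basis of $\mathbf{V}(\mathfrak{o})$, and the same holds with $\tilde\tau$ replaced by either of $\tau_1, \tau_2$ (since each reduces to $\tau$ modulo $\mfq$). The relation $he = e$ combined with $h \tau_1 = \tau_2 h$ gives $h(\tau_1^j e) = \tau_2^j e$ for all $j$, so $h$ carries an $\mathfrak{o}$-basis of $\mathbf{V}(\mathfrak{o})$ to an $\mathfrak{o}$-basis, hence $h \in K$, and then $h \in K \cap \mathbf{H}(F) = K_H$, reducing us to part~(i). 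The main obstacle will be the lattice-duality step, where one must carefully track how the $\mfq$-scaling interacts with the self-duality of $\mathbf{M}(\mathfrak{o})$ under the trace pairing and with conjugation by the non-integral matrix $h$.
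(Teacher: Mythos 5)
Your argument is correct, and for part~\eqref{enumerate:cj3twlzxgy} it takes a genuinely different route from the paper's. Part~\eqref{enumerate:cj3twlzoes} matches the paper's proof essentially verbatim: both reduce the orthogonality statement to the fact that the character identity $\chi_\tau(k)=\chi_\tau(h^{-1}kh)$ forces $\Ad(\bar h)\tau=\tau$ in $\mathbf{M}(\mathfrak{o}/\mfq)$, and both then invoke Lemma~\ref{lemma:stable-implies-trivial-stabilizer}. For part~\eqref{enumerate:cj3twlzxgy}, the paper instead proves the separate Lemma~\ref{lemma:let-tau-in-barm_st-h-in-mathbfhf-mathbfhm-then-the} (adapted from~\cite[Lem 19.7]{nelson-venkatesh-1}): it reduces via Cartan decomposition to $h\in A_H^+ - K_H$, decomposes $\mathbf{M}$ into $\Ad(h)$-weight spaces, invokes the ``no $\tau$-invariant submodules of $V_H^*$'' characterization of stability (Lemma~\ref{lemma:stability-equivalences}) to see that the positive-weight component $\tau_+$ is nonzero mod $\mathfrak{p}$, and then explicitly constructs a $u\in K(\mfq^2)\cap hK(\mfq)h^{-1}$ with $\chi_\tau(h^{-1}uh)\neq 1$. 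You instead upgrade the character identity on $K(\mfq)\cap hK(\mfq)h^{-1}$, via self-duality of $\Mat_{n+1}(\mathfrak{o})$ under the trace form and the lattice identity $(L_1\cap L_2)^\vee=L_1^\vee+L_2^\vee$, to the membership $h\tilde\tau h^{-1}-\tilde\tau\in\mfq\mathbf{M}(\mathfrak{o})+h\mfq\mathbf{M}(\mathfrak{o})h^{-1}$, then rewrite this as $h\tau_1=\tau_2 h$ with $\tau_i\equiv\tilde\tau\ (\mathrm{mod}\ \mfq)$ and use $he=e$ together with the lifted cyclicity of $e$ for $\tau_1$ and $\tau_2$ (which holds by Nakayama since $\tau$ is stable mod $\mfq$) to conclude $h\mathbf{V}(\mathfrak{o})=\mathbf{V}(\mathfrak{o})$, hence $h\in K$ and part~\eqref{enumerate:cj3twlzoes} finishes. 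The worry you flag about $\mfq$-scaling in the duality step is in fact benign: since $\mfq$ is principal, $\bigl(\mfq(\mathbf{M}(\mathfrak{o})\cap h\mathbf{M}(\mathfrak{o})h^{-1})\bigr)^\vee=\mfq^{-1}\bigl(\mathbf{M}(\mathfrak{o})+h\mathbf{M}(\mathfrak{o})h^{-1}\bigr)$, and the condition ``$\trace(L z)\subseteq\mfq^2$'' (which is what $\psi$-triviality forces, $\trace(Lz)$ being an $\mathfrak{o}$-ideal) translates exactly to $z\in\mfq^2 L^\vee$. Your approach avoids the Cartan/weight-space case analysis and the explicit construction of the witness $u$, at the cost of invoking lattice duality; both routes ultimately hinge on the same input, namely that stability of $\tau$ survives reduction modulo $\mathfrak{p}$, but yours packages it through cyclicity of $e$ rather than through the invariant-subspace characterization, which is arguably cleaner.
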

\begin{proof}
  The group
  \begin{equation}\label{eqn:kmathfrakq-cap-h-kmathfrakq-h-1-subseteq-k_g-intersection-where-u-goes}
    K(\mfq) \cap h K(\mfq) h^{-1} \subseteq K
  \end{equation}
  acts on $h v$ by the character $\chi_\tau(h^{-1} \bullet h)$ and on $v$ by $\chi_\tau$, so by orthogonality of characters, it suffices to show that whenever $h \notin K_H(\mfq)$, there exists $u$ in the intersection~\eqref{eqn:kmathfrakq-cap-h-kmathfrakq-h-1-subseteq-k_g-intersection-where-u-goes} such that
  \begin{equation*}
    \chi_\tau (h ^{-1} u h) \neq \chi _\tau (u).
  \end{equation*}
  We consider two cases.
  \begin{enumerate}[(i)]
  \item If $h \in \mathbf{H}(\mathfrak{o})$, then $\chi_\tau(h ^{-1} u h) = \chi_{\Ad(h) \tau}(u)$, and $K(\mfq) = h K(\mfq) h^{-1}$. For this reason, it suffices to show that $\Ad(h) \tau \equiv \tau \mod\mfq$ if and only if $h \in K_H(\mfq)$.  This is the content of Lemma~\ref{lemma:stable-implies-trivial-stabilizer}.
  \item The case $h \in \mathbf{H}(F) - \mathbf{H}(\mathfrak{o})$ is addressed by Lemma~\ref{lemma:let-tau-in-barm_st-h-in-mathbfhf-mathbfhm-then-the}, below.
  \end{enumerate}
\end{proof}

\begin{lemma}\label{lemma:let-tau-in-barm_st-h-in-mathbfhf-mathbfhm-then-the}
  Let $\tau \in \mathbf{M}_{\stab}(\mathfrak{o}/\mfq)$ and $h \in \mathbf{H}(F) - \mathbf{H}(\mathfrak{o})$.  Then there exists $u \in K(\mfq) \cap h K(\mfq) h^{-1}$ such that
  \begin{equation}\label{eqn:k1-cap-h-k1-h-1-}
    \chi_\tau(h ^{-1} u h) \neq \chi _\tau (u).
  \end{equation}
\end{lemma}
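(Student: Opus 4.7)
The plan is to dualize the problem via the trace pairing, reducing the non-triviality~\eqref{eqn:k1-cap-h-k1-h-1-} to an integrality question about $\Ad(h)\tilde\tau$ that is incompatible with $h\notin\mathbf{H}(\mathfrak{o})$ in view of the stability of $\tau$.

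Let $\tilde\tau\in\mathbf{M}(\mathfrak{o})$ be a lift of $\tau$. Parameterizing $u = 1 + x$, the condition $u\in K(\mfq)\cap h K(\mfq) h^{-1}$ reads $x\in L:=\mfq\mathbf{M}(\mathfrak{o})\cap h\bigl(\mfq\mathbf{M}(\mathfrak{o})\bigr)h^{-1}$. Setting $\beta:=h\tilde\tau h^{-1}-\tilde\tau$, the definition~\eqref{eqn:definition-of-chi-tau-as-psi-of-trace-x-tau} together with trace cyclicity gives
\[
  \chi_\tau(h^{-1}uh)\,\chi_\tau(u)^{-1} \;=\; \psi\bigl(\trace(x\beta)\bigr).
\]
Since $\mfq/\mfq^2$ is a cyclic $\mathfrak{o}$-module whose nonzero submodules each contain the image of $\mathfrak{p}^{-1}\mfq^2$, and $\psi$ is by hypothesis nontrivial on $\mathfrak{p}^{-1}\mfq^2/\mfq^2$, one checks that $\psi$ is nontrivial on every nonzero $\mathfrak{o}$-submodule of $\mfq/\mfq^2$. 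Consequently, existence of the desired $u$ is equivalent to the assertion $\trace(L\beta)\not\subseteq\mfq^2$.

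I would argue by contradiction: suppose $\trace(L\beta)\subseteq\mfq^2$. Since $\mathbf{M}(\mathfrak{o})$ is self-dual under the trace pairing on $\mathbf{M}(F)$, and the $\mathfrak{o}$-dual of an intersection of lattices is the sum of the duals, trace duality forces
\[
  \beta \;\in\; \mfq\mathbf{M}(\mathfrak{o}) + h\bigl(\mfq\mathbf{M}(\mathfrak{o})\bigr)h^{-1}.
\]
Writing $\beta=A+hBh^{-1}$ with $A,B\in\mfq\mathbf{M}(\mathfrak{o})$ and rearranging yields $h(\tilde\tau-B)h^{-1}=\tilde\tau+A$. Setting $\tau_1:=\tilde\tau-B$ and $\tau_2:=\tilde\tau+A$, both lie in $\mathbf{M}(\mathfrak{o})$, both reduce to $\tau$ modulo $\mfq$, and they satisfy $h\tau_1h^{-1}=\tau_2$.

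Because $\mfq\subseteq\mathfrak{p}$, the residue-field images of $\tau_1$ and $\tau_2$ both equal the (stable) reduction of $\tau$ modulo $\mathfrak{p}$, so Example~\ref{example:cj3twmtcpz} gives $\tau_i\in\mathbf{M}_{\stab}(\mathfrak{o})$. In particular $e$ is $\tau_i$-cyclic over $\mathfrak{o}$, so by Lemma~\ref{lemma:v-tau-cycl-if-only-if-map-modul-begin-rn-right-v-} the family $\{\tau_i^je:0\leq j\leq n\}$ is an $\mathfrak{o}$-basis of $\mathbf{V}(\mathfrak{o})$ for each $i\in\{1,2\}$, where $n+1:=\rank(\mathbf{V})$. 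The relation $h\tau_1h^{-1}=\tau_2$ combined with $he=e$ (as $h\in\mathbf{H}$) yields $h(\tau_1^je)=\tau_2^je$, so $h$ carries one $\mathfrak{o}$-basis of $\mathbf{V}(\mathfrak{o})$ to another; hence $h\in\mathbf{G}(\mathfrak{o})\cap\mathbf{H}(F)=\mathbf{H}(\mathfrak{o})$, contradicting the hypothesis. The heart of the argument is the duality step converting the character-theoretic non-triviality into the algebraic identity $h\tau_1h^{-1}=\tau_2$ with $\tau_1,\tau_2$ integral lifts of $\tau$; once this is in place, cyclicity of $e$ (guaranteed by stability, via Nakayama as in Example~\ref{example:cj3twmtcpz}) closes the argument formally.
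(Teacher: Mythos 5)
Your proof is correct, and it is genuinely different from the paper's. The paper reduces via the Cartan decomposition to a diagonal $a\in A_H^+$, decomposes $\mathbf{M}$ into weight spaces for $\Ad(a)$, uses the invariant-subspace characterization of stability (Lemma~\ref{lemma:stability-equivalences}\eqref{enumerate:there-are-no-nontr-tau-invar-subsp-v_h-or-v_h.-}) to see that the positive extremal weight component $\tau_+$ is a unit mod $\mathfrak{p}$, and then writes down an explicit $u$ of the form $1+\varpi^\ell t x$. You instead dualize the character condition against the lattice $L=\mfq\mathbf{M}(\mathfrak{o})\cap h\,\mfq\mathbf{M}(\mathfrak{o})\,h^{-1}$: under the perfect trace pairing, the failure of~\eqref{eqn:k1-cap-h-k1-h-1-} for \emph{all} $u$ forces $\beta=h\tilde\tau h^{-1}-\tilde\tau$ into $\mfq\mathbf{M}(\mathfrak{o})+h\,\mfq\mathbf{M}(\mathfrak{o})\,h^{-1}$, which rearranges to $h\tau_1 h^{-1}=\tau_2$ for two integral, stable lifts $\tau_1,\tau_2$ of $\tau$, and then the cyclicity of $e$ (together with $he=e$) shows $h$ carries the $\mathfrak{o}$-basis $\{\tau_1^j e\}_{0\le j\le n}$ onto $\{\tau_2^j e\}_{0\le j\le n}$, hence $h\in\mathbf{H}(\mathfrak{o})$. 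All of the steps check out: the identity $(\Lambda_1\cap\Lambda_2)^\vee=\Lambda_1^\vee+\Lambda_2^\vee$ holds for full-rank $\mathfrak{o}$-lattices under a perfect pairing, Example~\ref{example:cj3twmtcpz} correctly lifts stability from $\mathfrak{o}/\mathfrak{p}$ to $\mathfrak{o}$, and the induction $h\tau_1^j e=\tau_2^j e$ is sound. Your approach avoids the Cartan decomposition and weight-space bookkeeping entirely; it also makes transparent the conceptual content --- this is really a lattice-level strengthening of the trivial-stabilizer Lemma~\ref{lemma:stable-implies-trivial-stabilizer} --- whereas the paper's argument is more constructive, exhibiting a specific nontrivial test element $u$ and pinpointing which matrix entry of $\tau$ is responsible.
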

\begin{proof}
  The proof is an adaptation of~\cite[Lemma 19.7]{nelson-venkatesh-1}. We will show more precisely that there either exists
  \begin{equation*}
    u \in K(\mfq^2) \cap h K(\mfq) h ^{-1}
  \end{equation*}
  such that $\chi _\tau (h ^{-1} u h) \neq 1$, which suffices in view of the fact that $\chi_\tau(u) = 1$.

  We choose a basis $e_1,\dotsc,e_n$ for $\mathbf{V}_H(\mathfrak{o})$.  We denote by $A_H$ the subgroup of $\mathbf{H}(F)$ diagonalized by this basis.  Let $A_H^+$ denote the subgroup consisting of diagonal matrices whose entries are integral powers of the uniformizer $\varpi$ of $\mathfrak{p}$.

  We first reduce to the special case that $h \in A_H^+ - K_H$.  To that end, we apply the Cartan decomposition to write
  \begin{equation*}
    h = k_1 a k_2,
  \end{equation*}
  where $k_1, k_2 \in K_H$ and $a \in A_H^+ - K_H$.  Then
  \begin{equation*}
    \chi_\tau(h^{-1} u h) = \chi_{\Ad(k_2) \tau}(a^{-1} k_1^{-1} u k_1 a),
  \end{equation*}
  \begin{equation*}
    K(\mfq^2) \cap h K(\mfq) h^{-1}
    =
    K(\mfq^2) \cap k_1 a K(\mfq) a^{-1}k_1^{-1}.
  \end{equation*}
  We observe that $\Ad(k_2) \tau$ satisfies the same hypotheses as $\tau$.  Suppose we can find some $u \in K(\mfq^2) \cap a K(\mfq) a^{-1}$ so that
  \begin{equation*}
    \chi_{\Ad(k_2) \tau}(a^{-1} u a) \neq 1.
  \end{equation*}
  Then, setting
  \begin{equation*}
    u' := k_1 u k_1^{-1} \in K(\mfq^2) \cap h K(\mfq) h^{-1},
  \end{equation*}
  we obtain
  \begin{equation*}
    \chi_\tau(h^{-1} u' h) = \chi_{\Ad(k_2) \tau}(a^{-1} u a ) \neq 1,
  \end{equation*}
  as required.

  Thus, let $a \in A_H^+ - K_H$.  Consider the adjoint action of $a$ on $\mathbf{M}(F)$.  Since $a \notin K_H$, there are nontrivial weights for this action.  Suppose for instance that there are positive weights (an identical argument will apply if there are negative weights).  By conjugating $a$ by a permutation matrix (as we may, by the preceding argument), we may assume that the largest diagonal coordinate of $a$, say $\varpi^{-\ell}$ with $\ell \geq 1$, appears in components $1, \dotsc, m$, where $m \geq 1$. 
If we use the partition
  \begin{equation*}
    n+1 = m + (n -m) + 1
  \end{equation*}
  to describe $\mathbf{M}(F)$ as a space of $3 \times 3$ block matrices, then we may describe the weight spaces for $a$ as follows:
  \begin{equation*}
    \begin{pmatrix}
      0 & + & + \\
      - & \ast & \ast \\
      - & \ast & \ast \\
    \end{pmatrix}.
  \end{equation*}
  Here the symbol $0$ indicates where $a$ acts trivially, while $+$ (resp.\ $-$) describe $\mathfrak{o}$-submodules
  \begin{equation*}
    \mathbf{M}_{\pm} \subseteq \mathbf{M}
  \end{equation*}
  such that for $x \in \mathbf{M}_{\pm}(F)$, we have
  \begin{equation*}
    a^{-1} x a = \varpi^{\pm \ell } x;
  \end{equation*}
  asterisks denote some unspecified combination of trivial, positive and negative weights.  We obtain in particular direct summands
  \begin{equation*}
    \bar{M}_{\pm } := \mathbf{M}_{\pm}(\mathfrak{o}/\mfq) \subseteq \bar{M} = \mathbf{M}(\mathfrak{o}/\mfq).
  \end{equation*}

  Let $\tau_+ \in \bar{M}_+$ denote the component of $\tau \in \bar{M}$.  We claim that $\tau_+$ is nonzero modulo $\mathfrak{p}$, i.e., that $\tau$ modulo $\mathfrak{p}$ is not of the form
  \begin{equation*}
    \begin{pmatrix}
      \ast & 0 & 0 \\
      \ast & \ast & \ast \\
      \ast & \ast & \ast \\
    \end{pmatrix}.
  \end{equation*}
  Indeed, if it were, then $\tau$ modulo $\mathfrak{p}$ would stabilize an $m$-dimensional subspace of $\bar{V}_H^*$ modulo $\mathfrak{p}$.  This contradicts the characterization~\eqref{enumerate:there-are-no-nontr-tau-invar-subsp-v_h-or-v_h.-} of stability recorded in Lemma~\ref{lemma:stability-equivalences}, noting that $\tau$ modulo $\mathfrak{p}$ is likewise stable (Example~\ref{example:cj3twmtcpz}).

  The trace pairing puts $M_+$ and $M_-$ in duality, so that for each $x \in M_-$, we have
  \begin{equation}\label{eq:cj3twm8wr8}
    \trace(x \tau) = \trace(x \tau_+).
  \end{equation}
  Since $\tau_+$ is nonzero modulo $\mathfrak{p}$, we may find such an $x$ for which
  \begin{equation}\label{eqn:tracex-tau-in-o-minus-p}
    \trace(x \tau_+) \in \mathfrak{o} - \mathfrak{p}.
  \end{equation}
  Let $t \in \mathfrak{p} ^{-1} \mfq ^2 - \mfq ^2 $, to be determined later, and take
  \begin{equation*}
    u := 1 + \varpi^{\ell} t x.
  \end{equation*}
  We note that, since $\ell \geq 1$, we have
  \begin{equation}\label{eqn:u-in-kmathfrakq2}
    u \in K(\mfq^2).
  \end{equation}
  On the other hand,
  \begin{equation}\label{eqn:a-1-u}
    a^{-1} u a = 1 + t x \in K(\mathfrak{p}^{-1} \mfq^2) \subseteq K(\mfq),
  \end{equation}
  while
  \begin{equation*}
    \chi_\tau(a^{-1} u a) = \chi_\tau(1 + t x) = \psi(\trace(t x \tau )).
  \end{equation*}
  By~\eqref{eq:cj3twm8wr8} and~\eqref{eqn:tracex-tau-in-o-minus-p}, we have $t \trace( x \tau) \in \mathfrak{p}^{-1} \mfq^2 - \mfq^2$.  By choosing $t$ suitably, we may thus arrange that
  \begin{equation}\label{eqn:chi_taua-1-u}
    \chi_\tau(a^{-1} u a) = \psi(t \trace(x \tau)) \neq 1.
  \end{equation}
  The required conclusion is then immediate from~\eqref{eqn:u-in-kmathfrakq2},~\eqref{eqn:a-1-u} and~\eqref{eqn:chi_taua-1-u}.
  
%
\end{proof}
\begin{lemma}\label{lemma:let-pi-sigma-be-pair-repr-mathbfgm-mathbfhm-test-vectors-stable-pairs}
  Let $\pi$ and $\sigma$ be representations of $\mathbf{G}(\mathfrak{o})$ and $\mathbf{H}(\mathfrak{o})$, respectively, such that the pair $(\pi,\sigma)$ is stable at depth $\mfq^2$.
  \begin{enumerate}[(i)]
  \item There is a stable element $\tau \in \mathbf{M}(\mathfrak{o}/\mfq)$ such that $\tau$ (resp.\ $\tau_H$) is a regular parameter for $\pi$ (resp.\ $\sigma$) at depth $\mfq^2$.
  \item\label{enumerate:cj3twlwnkr} Suppose $\pi$ (resp.\ $\sigma$) is equipped with an inner product $\langle , \rangle$ invariant by $K(\mfq)$ (resp.\ $K_H(\mfq)$). Let $\tau$ be any element as above.  Let $v$ (resp.\ $u$) be unit vectors that transform under $K(\mfq)$ (resp.\ $K_H(\mfq)$) according to $\chi_\tau$ (resp.\ $\chi_{\tau_H}$).  Then
    \begin{equation}\label{eq:cj3twlxge3}
      \int_{h \in \mathbf{H}(\mathfrak{o})} \langle h v, v \rangle \langle u, h u \rangle \, d h
      = \vol(K_H(\mfq)).  
    \end{equation}
  \item\label{enumerate:cj3twly3hj} Retaining the hypotheses of~\eqref{enumerate:cj3twlwnkr}, if $\pi$ and $\sigma$ arise as restrictions of representations of $\mathbf{G}(F)$ and $\mathbf{H}(F)$, then the formula~\eqref{eq:cj3twlxge3} remains valid after extending the integral to $\mathbf{H}(F)$.
  \end{enumerate}
\end{lemma}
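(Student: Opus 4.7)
The plan is to treat each of the three parts separately, with part (i) reducing to results of \S\ref{sec:20230514080526}, part (ii) reducing to the matrix coefficient vanishing of Lemma~\ref{lemma:matrix-coefficients-and-stability-one-rep}(\ref{enumerate:cj3twlzoes}) together with a trivial trace calculation, and part (iii) using Lemma~\ref{lemma:matrix-coefficients-and-stability-one-rep}(\ref{enumerate:cj3twlzxgy}) in place of (\ref{enumerate:cj3twlzoes}). No substantive obstacle arises; the content of the lemma amounts to unpacking the relevant definitions.

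For part (i), since $(\pi,\sigma)$ is stable at depth $\mathfrak{q}^2$, we may fix polynomials $P_\pi$ and $P_\sigma$ for $\pi$ and $\sigma$ at depth $\mathfrak{q}^2$ that generate the unit ideal in $(\mathfrak{o}/\mathfrak{q})[X]$. By Lemma~\ref{lemma:let-p-p_h-in-rx-be-monic-polyn-degr-n+1-n-resp-the}, there exists $\tau \in \mathbf{M}(\mathfrak{o}/\mathfrak{q})$ with $P_\tau = P_\pi$ and $P_{\tau_H} = P_\sigma$. The equivalence of conditions~(\ref{enumerate:p_tau-p_tau_h-generate-unit-ideal.-}) and~(\ref{enumerate:vectors-e-f-are-tau-cyclic.-}) in Lemma~\ref{lemma:stability-equivalences} then shows that $\tau$ is stable. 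In particular $\tau$ is cyclic (since $e$ is a cyclic vector), and $\tau_H$ is cyclic by Lemma~\ref{lemma:tau-stable-implies-tauH-cyclic}. By Lemma~\ref{lemma:let-p-be-polyn-pi-at-depth-mathfr-let-tau-in-mathb}, we conclude that $\tau$ (resp.\ $\tau_H$) is a regular parameter for $\pi$ (resp.\ $\sigma$) at depth $\mathfrak{q}^2$.

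For part (ii), apply Lemma~\ref{lemma:matrix-coefficients-and-stability-one-rep}(\ref{enumerate:cj3twlzoes}) to $v$ to see that the integrand vanishes unless $h \in K_H(\mathfrak{q})$, so
\begin{equation*}
  \int_{h \in \mathbf{H}(\mathfrak{o})} \langle hv,v\rangle \langle u, hu\rangle \, dh
  = \int_{h \in K_H(\mathfrak{q})} \langle hv,v\rangle \langle u, hu\rangle \, dh.
\end{equation*}
For $h \in K_H(\mathfrak{q}) \subseteq K(\mathfrak{q})$, write $h = 1 + x$ with $x \in \mathbf{M}_H(\mathfrak{q})$ (noting $he=e$ and $e^*h=e^*$). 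Since $x = 1_H x 1_H$, cyclicity of trace gives $\trace(x\tau) = \trace(1_H x 1_H \tau) = \trace(x \cdot 1_H \tau 1_H) = \trace(x \tau_H)$, hence
\begin{equation*}
  \chi_\tau(h) = \psi(\trace(x\tau)) = \psi(\trace(x\tau_H)) = \chi_{\tau_H}(h).
\end{equation*}
Therefore $hv = \chi_\tau(h) v$ and $hu = \chi_{\tau_H}(h) u = \chi_\tau(h) u$, so
\begin{equation*}
  \langle hv,v\rangle \langle u, hu\rangle = \chi_\tau(h) \overline{\chi_\tau(h)} \lVert v \rVert^2 \lVert u \rVert^2 = 1,
\end{equation*}
and the integral evaluates to $\vol(K_H(\mathfrak{q}))$ as claimed.

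For part (iii), the same argument applies verbatim after replacing the appeal to Lemma~\ref{lemma:matrix-coefficients-and-stability-one-rep}(\ref{enumerate:cj3twlzoes}) with that to Lemma~\ref{lemma:matrix-coefficients-and-stability-one-rep}(\ref{enumerate:cj3twlzxgy}), which extends the vanishing of the matrix coefficient $\langle hv, v \rangle$ from $\mathbf{H}(\mathfrak{o}) \setminus K_H(\mathfrak{q})$ to the full complement $\mathbf{H}(F) \setminus K_H(\mathfrak{q})$.
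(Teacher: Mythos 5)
Your proof is correct and follows essentially the same route as the paper's: part (i) chains Lemmas~\ref{lemma:let-p-p_h-in-rx-be-monic-polyn-degr-n+1-n-resp-the}, \ref{lemma:stability-equivalences}, \ref{lemma:tau-stable-implies-tauH-cyclic}, \ref{lemma:let-p-be-polyn-pi-at-depth-mathfr-let-tau-in-mathb}, and parts (ii)–(iii) truncate via Lemma~\ref{lemma:matrix-coefficients-and-stability-one-rep} and observe that the integrand is $1$ on $K_H(\mathfrak{q})$. The only difference is that you spell out the trace identity $\trace(x\tau)=\trace(x\tau_H)$ justifying $\chi_\tau|_{K_H(\mathfrak{q})}=\chi_{\tau_H}$, which the paper leaves implicit when it writes $\chi_\tau(h)\chi_{\tau_H}(h)^{-1}=1$.
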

\begin{proof}
  Let $P_\pi$ and $P_\sigma$ be polynomials for $\pi$ and $\sigma$ at depth $\mfq^2$ that generate the unit ideal.  By Lemma~\ref{lemma:let-p-p_h-in-rx-be-monic-polyn-degr-n+1-n-resp-the}, there exists $\tau \in \mathbf{M}(\mathfrak{o}/\mfq)$ such that $P_{\tau} = P_{\pi}$ and $P_{\tau_H} = P_{\sigma}$.  By Lemma~\ref{lemma:stability-equivalences}, $\tau$ is stable, and in particular, cyclic.  By Lemma~\ref{lemma:tau-stable-implies-tauH-cyclic}, $\tau_H \in \mathbf{M}_H(\mathfrak{o}/\mfq)$ is cyclic.  By Lemma~\ref{lemma:let-p-be-polyn-pi-at-depth-mathfr-let-tau-in-mathb}, it follows that $\tau$ (resp.\ $\tau_H$) is a regular parameter for $\pi$ (resp.\ $\sigma$) at depth $\mfq^2$.

  For the second assertion~\eqref{enumerate:cj3twlwnkr} concerning the matrix coefficient integral, we apply part~\eqref{enumerate:cj3twlzoes} of Lemma~\ref{lemma:matrix-coefficients-and-stability-one-rep} to truncate that integral to $h \in K_H(\mfq)$, where the integrand evaluates to
  \begin{equation*}
    \chi_\tau(h) {\chi_{\tau_H}(h)}^{-1} = 1.
  \end{equation*}
  For the final assertion~\eqref{enumerate:cj3twly3hj}, we argue similarly using part~\eqref{enumerate:cj3twlzxgy} of Lemma~\ref{lemma:matrix-coefficients-and-stability-one-rep}.
\end{proof}

\subsection{Setup for the proof}\label{sec:cj4t6ynizh}
We retain the hypotheses of Theorem~\ref{theorem:main-local-result}.  By Lemma~\ref{lemma:let-pi-sigma-be-pair-repr-mathbfgm-mathbfhm-test-vectors-stable-pairs}, we obtain a stable element $\tau \in \mathbf{M}(\mathfrak{o}/\mfq)$ such that $\tau$ (resp.\ $\tau_H$) is a regular parameter for $\pi$ (resp.\ $\sigma$) at depth $\mfq^2$.  By Lemma~\ref{lemma:let-tau-in-mathbfm-mathfr--mathfr-be-regul-param-p-extension-chi-tau-to-J-tau}, we may find unit vectors $v \in \pi$ and $u \in \sigma$ that transform under $J_\tau$ and $J_{\tau_H}$ by characters $\tilde{\chi}_\tau$ and $\tilde{\chi}_{\tau_H}$ that extend $\chi_\tau$ and $\chi_{\tau_H}$, respectively.  We set $(J_G, J_H, \chi_G, \chi_H) := (J_\tau, J_{\tau_H}, \tilde{\chi}_\tau, \tilde{\chi}_{\tau_H})$ and
\begin{equation*}
  \omega := {\vol(J_G)}^{-1} \chi_G^{-1} \in C_c^\infty(K),
\end{equation*}
so that $\pi(\omega)$ is a self-adjoint idempotent with $\pi(\omega) v = v$.

We pause to clarify the shape of the function $\omega ^\sharp$.
\begin{lemma}\label{lemma:g-in-mathbfgf-we-have-begin-lvert-f-sharp-g-rvert-clarify-f-sharp}
  For $g \in \mathbf{G}(F)$, we have
  \begin{equation*}
    \lvert \omega ^\sharp (g) \rvert \ll {\vol(J_G)}^{-1} 1 _{g \in \mathbf{Z}(F) J_G}.
  \end{equation*}
\end{lemma}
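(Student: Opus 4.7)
The plan is to unpack the definitions directly. Since $\omega = \vol(J_G)^{-1} \chi_G^{-1} \cdot 1_{J_G}$, we have
\begin{equation*}
\omega^\sharp(g) = \vol(J_G)^{-1} \int_{z \in \mathbf{Z}(F)} \pi|_Z(z)\, \chi_G^{-1}(zg) \cdot 1_{J_G}(zg) \, dz.
\end{equation*}
The indicator forces the integrand to vanish unless there exists some $z \in \mathbf{Z}(F)$ with $zg \in J_G$, which is to say, unless $g \in \mathbf{Z}(F) J_G$. This immediately gives the support statement.

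Next, for $g \in \mathbf{Z}(F) J_G$, I would fix a decomposition $g = z_0 j$ with $z_0 \in \mathbf{Z}(F)$ and $j \in J_G$, and make the change of variables $y := z z_0 \in \mathbf{Z}(F)$. The condition $zg \in J_G$ becomes $y j \in J_G$, i.e., $y \in J_G \cap \mathbf{Z}(F)$. The resulting expression
\begin{equation*}
\omega^\sharp(g) = \vol(J_G)^{-1} \pi|_Z(z_0^{-1}) \chi_G^{-1}(j) \int_{y \in J_G \cap \mathbf{Z}(F)} \pi|_Z(y) \chi_G^{-1}(y)\, dy
\end{equation*}
has integrand of absolute value $1$ (unitary central character times a character of $J_G$).

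Finally, the set $J_G \cap \mathbf{Z}(F)$ is a compact open subgroup of $\mathbf{Z}(F)$: it lies in $K \cap \mathbf{Z}(F) = K_Z$, whose Haar volume is normalized to $1$ per \S\ref{sec:local-fields-congruence-subgroups}. Hence the integral is bounded by $\vol_{\mathbf{Z}(F)}(J_G \cap \mathbf{Z}(F)) \leq 1$, yielding $|\omega^\sharp(g)| \leq \vol(J_G)^{-1}$. Combining with the support statement completes the proof.

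There is no real obstacle here; the lemma is a direct bookkeeping exercise following from the explicit formula for $\omega$ and the normalization of Haar measure on $\mathbf{Z}(F)$.
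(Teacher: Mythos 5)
Your argument is correct and follows essentially the same route as the paper's: both identify the support as $\mathbf{Z}(F) J_G$ from the support of $\omega$, and both reduce the bound to the fact that the set of $z \in \mathbf{Z}(F)$ contributing to the integral has volume at most that of $K_Z$, which is $1$ by the measure normalization. Your version makes the change of variables explicit where the paper simply notes that the relevant set of $z$ is a $K_Z$-coset, but the content is identical.
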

\begin{proof}
  By definition,
  \begin{equation*}
    \omega ^\sharp (g) = \int _{z \in \mathbf{Z}(F)} \pi|_{Z}(z) \omega(z g) \, d z.
  \end{equation*}
  The function $\omega$ is supported on $J_G$ and has $L^\infty$-norm ${\vol(J_G)}^{-1}$.  It follows that if $g \notin \mathbf{Z}(F) J_G$, then $\omega ^\sharp (g)$ vanishes.  We have $\mathbf{Z}(F) \cap J_G = K_Z$, so the set of $z$ for which $\omega(z g) \neq 0$ is a $K_Z$-coset, and so has volume $\ll 1$.  We have assumed that $\pi$ is unitary, so $\lvert \pi|_{Z} (z) \rvert = 1$.  The stated estimate follows from these observations and the triangle inequality.
\end{proof}

\subsection{The proof}\label{sec:proof-main-local-compact}
We now verify each numbered assertion in turn.
\begin{enumerate}[(i)]
\item We see from Lemmas~\ref{lemma:matrix-coefficients-and-stability-one-rep} and~\ref{lemma:let-pi-sigma-be-pair-repr-mathbfgm-mathbfhm-test-vectors-stable-pairs} that the integrand in~\eqref{eqn:sum-_v-in-mathc-mathc-pif-v-otim-u-gg-q-n2-} is supported on $h \in K_H(\mfq)$ and the integral evaluates to $\vol(K_H(\mfq)) \asymp Q^{-n^2}$, as required.
\item By Lemma~\ref{lemma:g-in-mathbfgf-we-have-begin-lvert-f-sharp-g-rvert-clarify-f-sharp}, we have
  \begin{equation}\label{eqn:int-_mathbfhf-lvert-f-sharp-rvert-ll-frac1v-int-_h}
    \int _{\mathbf{H}(F)} \lvert \omega ^\sharp  \rvert \ll
    \frac{1}{\vol(J_G)}
    \int _{h \in \mathbf{H}(F)} 1 _{\mathbf{Z}(F) J_G }(h) \, d h.
  \end{equation}
  We must verify that the above is $\ll Q^n$.  Suppose $h \in \mathbf{H}(F)$ may be written $h = z g$ with $(z, g ) \in \mathbf{Z} (F) \times J _G$.  Then
  \begin{equation*}
    z^{-1} h \in J_G \cap \mathbf{H}(F) \mathbf{Z}(F) = J_G \cap K_H K_Z
    = (J_G \cap K_H)K_Z,
  \end{equation*}
  since $K_Z \subseteq J_G$.  By Lemma~\ref{lemma:stable-implies-trivial-stabilizer}, we have
  \begin{equation*}
    J_G \cap K_H = K_H(\mfq).
  \end{equation*}
  We have
  \begin{equation}\label{eqn:volj_g-=-k_g-:-j_g-1-asymp-qn+12-n+1-=-qnn+1.-vol-J-G} {\vol(J_G)}^{-1} \vol(K_H(\mfq))
    = \frac{[K : J_G]}{[K_H : K_H(\mfq)]} \asymp \frac{Q^{{(n+1)}^2 - (n+1)}}{Q^{n^2}} = Q^{n}.
  \end{equation}
  The required estimate follows.
\item We are given $\Psi_1, \Psi_2 : K_H \rightarrow \mathbb{C}$ whose magnitudes are right-invariant by $J_{\tau_H}$, and $\gamma \in \mathbf{G}(F) - \mathbf{H}(F) \mathbf{Z}(F)$.  We must verify that
  \begin{equation}\label{eqn:i_1-:=-int-_x-y-in-k_h-leftlv-psi_1x-psi_2y-f-shar-task}
    I_1 := \int _{x, y \in K_H} \left\lvert \Psi_1(x) \Psi_2(y) \omega ^\sharp (x ^{-1} \gamma y) \right\rvert
    \ll \Delta Q^n \lVert \Psi_1 \rVert_{L^2} \lVert \Psi_2 \rVert_{L^2},
  \end{equation}
  where $\Delta$ is the parenthetical quantity on the right hand side of~\eqref{eqn:int-_x-y-in-k_h-leftlv-psi_1x-psi_2y-omega-sharp-x}.  By the bound for $\omega ^\sharp$ recorded in Lemma~\ref{lemma:g-in-mathbfgf-we-have-begin-lvert-f-sharp-g-rvert-clarify-f-sharp}, we see that the left hand side of~\eqref{eqn:i_1-:=-int-_x-y-in-k_h-leftlv-psi_1x-psi_2y-f-shar-task} vanishes unless there exists $z \in \mathbf{Z}(F)$ so that
  \begin{equation*}
    z \gamma \in K_H J_G K_H \subseteq K.
  \end{equation*}
  Replacing $\gamma$ by $z^{-1} \gamma$ has no effect on either side of the desired estimate, so we may suppose that $\gamma \in K$.  Then, since $x,y \in K_H \subseteq K$ and $\mathbf{Z}(F) J_G \cap K = J_G$, we have
  \begin{equation*}
    \lvert \omega ^\sharp (x ^{-1} \gamma y) \rvert \ll {\vol(J_G)}^{-1} 1 _{x ^{-1} \gamma y \in J_G}.
  \end{equation*}
  The integral $I_1$ descends to the quotient
  \begin{equation*}
    \bar{H} := K_H / K_H(\mfq).
  \end{equation*}
  For $j=1,2$, let $u_j : \bar{H} \rightarrow \mathbb{R}_{\geq 0}$ denote the function induced by $\lvert \Psi_j \rvert$.  By definition, the image of $J_G$ in $K / K(\mfq)$ is the centralizer of $\tau$.  The integral $I_1$ thus satisfies
  \begin{equation*}
    I_1 \ll {\vol(J_G)}^{-1} I,
  \end{equation*}
  where $I$ is the integral~\eqref{eqn:i-:=-frac1lv-h-rvert2-sum-_-subst-x-y-in-h-:-} defined in the statement of Theorem~\ref{theorem:bilinear-forms-estimate}, which gives
  \begin{equation*}
    I \ll \Delta \lvert \bar{H} \rvert^{-1} \lVert u_1 \rVert _{L^2} \lVert u_2 \rVert_{L^2}.
  \end{equation*}
  The required estimate for $I_1$ follows now from~\eqref{eqn:volj_g-=-k_g-:-j_g-1-asymp-qn+12-n+1-=-qnn+1.-vol-J-G}.
\end{enumerate}

\section{Completion of the proof}\label{Sec:final}
We now deduce our main result Theorem~\ref{theorem:cj3ngw7u2s}, from our main local result, Theorem~\ref{theorem:main-local-result}.  The deduction is exactly as in~\cite[\S6]{2020arXiv201202187N}, so we will be brief.  We denote in what follows by $\mathbb{Z}_{\mathfrak{l}} \leq F_{\mathfrak{l}}$ the ring of integers in the completion of $F$ at a finite place $\mathfrak{l}$.
\begin{proposition}\label{proposition:20230517165122}
  Fix $\alpha > 0$ and $\eps > 0$.  Retain the setting of Theorem~\ref{theorem:cj3ngw7u2s}; in particular, $F$ is a number field, $(G,H) = (\U(V), \U(W))$ is a pair of unitary groups attached to a nondegenerate codimension one inclusion $W \hookrightarrow V$ of positive-definite hermitian spaces over $F$, $S$ is a large enough finite set of places, $(\pi,\sigma,\mfp,\mfq) \in \mathcal{F}$, $T$ is the absolute norm of $\mathfrak{q}^2$, and $\vartheta \in [0,1/2)$ is such that $\sigma$ is $\vartheta$-tempered at every finite place $\mfop \notin S \cup \{\mfp\}$ that splits in $E$.  Set $L := T^{\alpha}$.  Let $\omega \in C_c^\infty(G(\mathbb{Z}_\mfp))$ and $J_H \leq H(\mathbb{Z}_\mfp)$ be as in Theorem~\ref{theorem:main-local-result} applied to $(\pi_\mfp,\sigma_\mfp)$.
  There exists $c > 0$, depending only upon $\alpha$ and $(G,H,S)$, with the following property.  For $j=0,\dotsc,n+1$, let $\Delta_j$ denote the infimum of all nonnegative quantities with the following property: for all $u_1, u_2 \in L^2(H(\mathbb{Z}_\mfp))$ of unit norm that transform on the right under $J_H$ via some unitary character, and all $\gamma \in G(F_\mfp)$ with $d_{H_\mfp}(\gamma) \geq c L^{-j}$, we have
  \begin{equation}\label{eqn:20230517152904}
    \int_{x, y \in H(\mathbb{Z}_\mfp)}
    \left\lvert
      u_1(x)
      u_2(y)
      \omega^\sharp (x^{-1} \gamma y )
    \right\rvert
    \, d x \, d y
    \leq T^{n / 2} \Delta_j.
  \end{equation}
  Then
  \begin{align*}
    \frac{\mathcal{L} (\pi,\sigma)}{T^{n(n+1)/2 + \eps}}
    &\ll
      \sum _{j=1}^{n+1}
      \left(
      L^{-(1-2 \vartheta) j}
      +
      L^{(2 {(n+1)}^2 - n)j}
      \Delta_j
      \right)\\
    &\quad +
      L^{-1}
      \sum _{j=0}^{n+1}
      \left(
      L^{-(1-2 \vartheta) j}
      +
      L^{(2 {(n+1)}^2 - n )j}
      \Delta_j
      \right),
  \end{align*}
  where the meaning of $\ll$ is ``bounded in magnitude up to a factor depending only upon $(\mathcal{F},\alpha,\eps, \vartheta)$''.
\end{proposition}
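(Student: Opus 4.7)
The plan is to follow the amplified relative trace formula argument from~\cite[\S6]{2020arXiv201202187N}, inserting the present local result wherever the archimedean local result~\cite[Theorem 4.2]{2020arXiv201202187N} was invoked in \emph{loc.\ cit}. The first step is to construct factorizable test vectors $v \in \pi$ and $u \in \sigma$: at the ``interesting'' place $\mathfrak{p}$ we take the vectors supplied by Theorem~\ref{theorem:main-local-result}, and at the remaining places we use the soft choices of~\cite[\S5]{2020arXiv201202187N}, ensuring $\vartheta$-boundedness for $\sigma$ away from $S \cup \{\mathfrak{p}\}$. Combining the Ichino--Ikeda-type identity~\eqref{eq:cj3tfrrhlc} with part~\eqref{enumerate:20230517151810} of Theorem~\ref{theorem:main-local-result}, whose factor $Q^{-n^2} = T^{-n^2}$ is the appropriate matrix coefficient normalization, we obtain a lower bound for $\mathcal{L}(\pi,\sigma)$ in terms of $|\int_{[H]} \pi(\omega^\natural) v \cdot \bar u|^2$, where $\omega^\natural$ incorporates a length-$L = T^\alpha$ amplifier built from Hecke operators at auxiliary split places $\ell \notin S \cup \{\mathfrak{p}\}$ of norm $\asymp L$. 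The $\vartheta$-temperedness hypothesis controls the local norms of these Hecke eigenvalues and is what produces the factor $L^{-(1-2\vartheta)j}$ in the final inequality.

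The second step is to open the amplified period by the pre-trace / relative trace formula expansion~\eqref{eq:cj3tv4jpta}. The geometric side splits as a sum over $\gamma \in G(F)$; only those $\gamma$ for which $\omega^\natural_\mathfrak{p}(x^{-1}\gamma y)$ does not vanish identically contribute, and by part~\eqref{enumerate:20230517151812} of Theorem~\ref{theorem:main-local-result} the set of such $\gamma$ has global volume bounded by a suitable power of $L$ times $T^n$. We partition this set according to the distance $d_{H_\mathfrak{p}}(\gamma)$ into dyadic shells indexed by $j \in \{0,1,\dots,n+1\}$: the ``central'' shell $\gamma \in H(F) Z(F)$ is handled spectrally, yielding the $L^{-(1-2\vartheta)j}$ contribution, while each non-central shell $cL^{-j} \leq d_{H_\mathfrak{p}}(\gamma) < cL^{-(j-1)}$ is bounded by applying Cauchy--Schwarz in $x$, $y$ over $[H]$ and then the factorizable bilinear estimate in~\eqref{eqn:20230517152904} at $\mathfrak{p}$, together with standard lattice point counting at the remaining places. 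The product of a power-of-$L$ count, coming from the amplifier support and the $H \backslash G / H$ double cosets, with the local bilinear estimate produces the $L^{(2(n+1)^2 - n) j} \Delta_j$ term; the division into the two sums in the stated bound corresponds to the separation between the ``diagonal'' Hecke shell and the ``off-diagonal'' cross-terms in the amplifier squared, the latter being damped by the extra factor $L^{-1}$ from the amplifier normalization.

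The final step is to assemble these estimates, pull out the universal prefactor $T^{n(n+1)/2}$ (the convexity scale of $\mathcal{L}(\pi,\sigma)$, which comes from the dimensional count of $H \backslash G$ combined with the $T^{-n^2}$ in (i) and the $T^n$ in (ii)), and absorb secondary factors into the $T^\eps$ slack permitted in the conclusion. The constant $c$ is chosen so that the dyadic shell boundaries align with the ideal structure of $\mathfrak{o}/\mathfrak{q}^2$ encoded in $d_{H_\mathfrak{p}}$, as in Lemma~\ref{lemma:let-d-geq-0-g-in-k_g-be-such-that-begin-g-in-k_h-k}.

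The main obstacle is the counting of $\gamma$ in each shell together with the bookkeeping of the amplifier combinatorics: one must check that the global adelic counts at the ``uninteresting'' places (which are unchanged from~\cite[\S6]{2020arXiv201202187N}) combine correctly with the $\mathfrak{p}$-local bilinear estimate~\eqref{eqn:20230517152904} so that the exponent $2(n+1)^2 - n$ in $L^{(2(n+1)^2 - n)j}$ emerges precisely. Since all inputs besides Theorem~\ref{theorem:main-local-result} are quoted directly from~\cite{2020arXiv201202187N}, this verification is routine but intricate, and the shape of the final bound mirrors~\cite[Proposition 6.x]{2020arXiv201202187N} with the archimedean $\Delta_j$ replaced by the non-archimedean one defined in~\eqref{eqn:20230517152904}.
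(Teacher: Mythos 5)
Your proposal follows essentially the same route as the paper, which simply defers to the arguments of~\cite[\S6]{2020arXiv201202187N} applied with $Q = T^{1/2}$, together with the bookkeeping change that $\mathfrak{p}$ is here excluded from $S$ (so the factorizable neighborhood $\Theta_S$ becomes $\Theta_S H(\mathbb{Z}_\mathfrak{p})$). One small slip worth flagging: since $T$ is the norm of $\mathfrak{q}^2$ while the $Q$ of Theorem~\ref{theorem:main-local-result} is the norm of $\mathfrak{q}$, the factor $Q^{-n^2}$ in part~\eqref{enumerate:20230517151810} equals $T^{-n^2/2}$, not $T^{-n^2}$ as you wrote, and likewise $Q^n = T^{n/2}$, which is what matches the right-hand side of~\eqref{eqn:20230517152904}.
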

\begin{proof}
  This follows from the arguments of~\cite[\S6]{2020arXiv201202187N}, applied with $Q=T^{1/2}$.  The positive-definiteness assumption is used in two ways: so that the quotient $H(F) \backslash H(\mathbb{A})$ is compact, and so that each of the groups $G$ and $H$ are compact at infinity.  This last property is used in applying~\cite[Lem 5.1]{2020arXiv201202187N}.

  In more detail, we repeat~\cite[\S6.1-6.4]{2020arXiv201202187N} verbatim (omitting the adjective ``archimedean'' in a couple places --- this property is never used).  In the first line of~\cite[\S6.5.1]{2020arXiv201202187N}, we observe that the integrand in~\cite[(6.5)]{2020arXiv201202187N} is now invariant merely under $\prod_{\mfop \notin S \cup \{\mfp \}} H(\mathbb{Z}_\mfop)$, rather than the larger group $\prod_{\mfop \notin S} H(\mathbb{Z}_\mfop)$.  (This is because of a difference in conventions: in~\cite{2020arXiv201202187N}, $S$ contains the interesting place, while here, it does not.) In the subsequent estimates, we accordingly replace the factorizable neighborhood $\Theta_S$ with
  \begin{equation*}
    \Theta_{S \cup \{\mfp \}} := \Theta_S H(\mathbb{Z}_\mfp).
  \end{equation*}
  The remainder of~\cite[\S6.5.1-6.5.2]{2020arXiv201202187N} is unchanged.  The integral on the left hand side of~\eqref{eqn:20230517152904} arises naturally following the proof\footnote{We note that the statement of~\cite[Lem 6.7]{2020arXiv201202187N} contains a regrettable typo, introduced in the final revision: ${d_{H_\mathfrak{q}}(\gamma)}^{-1/2}$ should be ${d_{H_\mathfrak{q}}(\gamma)}^{-1}$.  This does not affect the present argument.}  of~\cite[Lem 6.7]{2020arXiv201202187N}.  We feed the resulting estimate into the summary of~\cite[\S6.6]{2020arXiv201202187N} to obtain the stated bound.
\end{proof}
We now apply Theorem~\ref{theorem:main-local-result}, part~\eqref{enumerate:20230517165012} to see that the quantities $\Delta_j$ as in Proposition~\ref{proposition:20230517165122} satisfy
\begin{equation}\label{eqn:20230517165322}
  \Delta_j \ll \frac{L^j}{T^{1/2}} + \frac{1}{R },
\end{equation}
where $R$ is given by
\begin{equation*}
  T = q^{2 m} \implies R = q^{\lceil m/2 \rceil}.
\end{equation*}
In particular, $R$ satisfies the slightly wasteful estimate $R \geq T^{1/4} $.  Substituting this estimate into the right hand side of the bound stated in Proposition~\ref{proposition:20230517165122}, we see that two terms dominate, giving
\begin{equation}\label{eqn:20230517170454}
  \frac{\mathcal{L}(\pi,\sigma)}{ T^{n (n + 1 )/2 + \eps }}
  \ll
  L^{-(1 - 2 \vartheta )} + L^{(2 {(n + 1 )}^2 - n ) (n+1)} \left( \frac{L^{n+1}}{ T^{1/2} }
    + \frac{1}{T^{1/4} }\right).
\end{equation}
Expanding the right hand side as a sum of three terms, the first and third terms are equal if
\begin{equation*}
  L^{- (1 - 2 \vartheta )}
  =
  L^{(2 {(n + 1 )}^2 - n ) (n+1)}
  \frac{1}{T^{1/4} },
\end{equation*}
or equivalently,
\begin{equation*}
  \alpha = \frac{1}{4 ( A + 1 - 2 \vartheta)}, \quad A := (2 {(n + 1 )}^2 - n ) (n + 1).
\end{equation*}
We then have $A \geq n+1$, hence $(n+1) \alpha \leq 1/4$, so the second term on the right hand side of~\eqref{eqn:20230517170454} is dominated by the third term.  The estimate~\eqref{eqn:20230517170454} thus simplifies to the required bound
\begin{equation*}
  \frac{\mathcal{L}(\pi,\sigma)}{ T^{n (n + 1 )/2 + \eps }}
  \ll
  T^{-\delta}, \quad
  \delta := \frac{1 - 2 \vartheta }{4 (A + 1 - 2 \vartheta )}.
\end{equation*}

\def\cprime{$'$} \def\cprime{$'$} \def\cprime{$'$} \def\cprime{$'$}

\end{document}